\documentclass[12pt]{amsart}
\usepackage{amssymb, amsmath, amsfonts, amsthm, graphics}
\usepackage[hmargin=1 in, vmargin = 1 in]{geometry}
\usepackage[T1]{fontenc}
\usepackage{libertinus}

\usepackage{epstopdf}
\usepackage{graphicx}
\usepackage{tikz-cd}
\usepackage{hyperref}
\usepackage{enumitem}
\usepackage[all]{xy}
\usepackage{stmaryrd}
\usepackage{booktabs}

\usepackage{mathrsfs}
\usepackage[most]{tcolorbox}
\usepackage[most]{tcolorbox}
\usepackage{xcolor}

\tcbset{
  myhighlight/.style={
    colback=yellow!20,
    colframe=yellow!50!black,
    boxrule=0.4pt,
    arc=1mm,
    left=1mm,right=1mm,top=1mm,bottom=1mm
  }
}
\usepackage{mathabx,epsfig}

\usepackage{amsthm}

\numberwithin{equation}{section}
\newtheorem{theorem}{Theorem}[section]        
\newtheorem{proposition}[theorem]{Proposition}

\newtheorem{lemma}[theorem]{Lemma}
\newtheorem{corollary}[theorem]{Corollary}
\newtheorem{example}[theorem]{Example}

\theoremstyle{definition}

\newtheorem{definition}[theorem]{Definition}
\newtheorem{remark}[theorem]{Remark}

\newcommand{\restr}[2]{\left.\kern-\nulldelimiterspace #1 \vphantom{\big|}\right|_{#2}}

\newcommand{\jet}[1]{ [\hspace{-0,5mm}[ {#1} ]\hspace{-0,5mm}] }
\newcommand{\rjet}[1]{ (\hspace{-0,7mm}( {#1} )\hspace{-0,7mm}) }

\newcommand{\Bun}{\mathrm{Bun}}

\DeclareMathOperator{\Aut}{Aut}
\DeclareMathOperator{\Pic}{Pic}
\DeclareMathOperator{\Hom}{Hom}
\DeclareMathOperator{\Ext}{Ext}
\DeclareMathOperator{\End}{End}

\DeclareMathOperator{\Ker}{Ker}
\DeclareMathOperator{\Coker}{Coker}
\DeclareMathOperator{\Spec}{Spec}

\newcommand{\im}{\operatorname{im}}
\renewcommand{\Im}{\mathop{\mathrm{Im}}}

\DeclareMathOperator{\rk}{rk}
\DeclareMathOperator{\gr}{gr}
\DeclareMathOperator{\supp}{supp}

\newcommand{\GL}{\mathrm{GL}}
\newcommand{\PGL}{\mathrm{PGL}}
\newcommand{\mat}[4]{\begin{pmatrix} #1 & #2 \\ #3 & #4 \end{pmatrix}}
\newcommand{\smat}[4]{\left(\begin{smallmatrix} #1 & #2 \\ #3 & #4 \end{smallmatrix}\right)}

\usepackage[most]{tcolorbox}
\usepackage{xcolor}

\tcbset{
  myhighlight/.style={
    colback=yellow!20,
    colframe=yellow!50!black,
    boxrule=0.4pt,
    arc=1mm,
    left=1mm,right=1mm,top=1mm,bottom=1mm
  }
}

\renewcommand{\AA}{\mathbb{A}}

\newcommand{\CC}{\mathbb{C}}

\newcommand{\FF}{\mathbb{F}}
\newcommand{\GG}{\mathbb{G}}

\newcommand{\PP}{\mathbb{P}}

\newcommand{\RR}{\mathbb{R}}

\newcommand{\cE}{\mathcal{E}}
\newcommand{\cF}{\mathcal{F}}

\newcommand{\cL}{\mathcal{L}}

\newcommand{\cO}{\mathcal{O}}
\newcommand{\cP}{\mathcal{P}}
\newcommand{\cQ}{\mathcal{Q}}

\newcommand{\cU}{\mathcal{U}}

\newcommand{\fg}{\mathfrak{g}}

\theoremstyle{definition}

\theoremstyle{definition}

\usepackage{amsmath,amssymb}

\makeatletter
\newcommand{\xRrightarrow}[2][]{\ext@arrow 0359\Rrightarrowfill@{#1}{#2}}
\newcommand{\Rrightarrowfill@}{\arrowfill@\equiv\equiv\Rrightarrow}
\newcommand{\xLleftarrow}[2][]{\ext@arrow 3095\Lleftarrowfill@{#1}{#2}}
\newcommand{\Lleftarrowfill@}{\arrowfill@\Lleftarrow\equiv\equiv}
\newcommand{\fullstopbelow}{\makebox[0pt][l]{\,.}} 

\makeatother

\begin{document}

\title{Eigenforms and graphs of Hecke operators with wild ramification}
\author{Rudrendra Kashyap}
\email{ruk26@pitt.edu}
\address{Rudrendra Kashyap, University of Pittsburgh, Pittsburgh,
PA, USA}

\author{Vladyslav Zveryk}
\email{zverik.vladislav@gmail.com}
\address{Vladyslav Zveryk, Yale University, New Haven,
CT, USA}

\keywords{Hecke operators, Hecke graphs, automorphic forms over function fields, ramification, moduli of bundles, Harder--Narasimhan stratification, Hecke eigenspaces}
\thanks{The first author was partially supported by NSF grant DMS-2402553. The second author was partially supported by NSF grant DMS-2501558.}

\begin{abstract}
Hecke operators on moduli of bundles over a global function field become substantially more complicated in the presence of ramification. We show that far enough in the Harder-Narasimhan cone of $\Bun_G$, this extra complexity has a simple structure, which allows to reduce most of the study to the unramified case. Using the theory of graphs of Hecke operators, we transform this statement into a combinatorial condition. Utilizing the combinatorial language, we obtain tight bounds, and for generic eigenvalues exact formulas, for the dimensions of Hecke eigenspaces with arbitrary ramification for \(\Bun_{\PGL_2}\). We compare these formulas to the known results in the theory of Eisenstein series. Moreover, our methods allow to construct eigenforms explicitly.

\end{abstract}
\maketitle
\tableofcontents
\section{Introduction}
Hecke operators for bundles over global function fields admit both an adelic and a geometric description. On the adelic side, isomorphism classes of principal \(G\)-bundles on a smooth projective curve \(X\) over a finite field are described by a double quotient, and imposing level structure along an effective divisor \(D\) amounts to replacing the standard maximal compact subgroup by a smaller compact open subgroup. Hecke operators are then defined by convolution with elements of the corresponding Hecke algebra. On the geometric side, the same operators are realized by Hecke correspondences on the moduli stack \(\Bun_{G,D}\) of \(G\)-bundles with \(D\)-level structure. More precisely, for a closed point \(x\in |X|\) and a dominant coweight \(\mu\in X_*(T)^+\), one considers the correspondence \(\mathrm{Hecke}^\mu_{G,D,x}\) classifying pairs of \(G\)-bundles with \(D\)-level structure related by a Hecke modification of type \(\mu\) at \(x\), with compatibility with the prescribed level structure away from \(x\) and with the local \(K(D)_x\Delta_x^\mu K(D)_x\)-condition at \(x\) when \(x\in \operatorname{supp}(D)\). Passing to \(k\)-points yields the Hecke graphs studied in this paper: the vertices are isomorphism classes of bundles with level structure, and the edges record Hecke modifications of the chosen type.

This graph-theoretic packaging of Hecke operators was developed systematically by Lorscheid \cite{Lor13} and later by Alvarenga and others in several directions, including higher rank, elliptic curves, and explicit spectral questions for low-genus curves \cite{Alv19,Alv20,AB24}. It is particularly useful when one seeks explicit information about automorphic forms over function fields, since it turns Hecke operators into concrete adjacency operators on combinatorial objects. At the same time, the underlying geometry remains that of Hecke correspondences on moduli stacks of bundles, so the graph-theoretic viewpoint may be regarded as a concrete shadow of the geometry of \(\Bun_G\) and its Hecke modifications.

\subsection{Main results}

Let \(X\) be a smooth geometrically connected projective curve over a finite field \(k=\mathbb F_q\), let \(F=\mathbb F_q(X)\) be its function field, and let \(G\) be a connected split reductive group over \(k\). For an effective divisor \(D\) on \(X\), we write \(\Bun_{G,D}\) for the moduli stack of \(G\)-bundles with \(D\)-level structure. Fix a closed point \(x\in |X|\), and consider the Hecke graph attached to a Hecke modification at \(x\). Its vertices are isomorphism classes of \(k\)-points of \(\Bun_{G,D}\), and its edges are given by Hecke modifications at \(x\). In other words, the Hecke graph is the graph whose adjacency matrix is precisely the matrix of the Hecke operator acting on functions on \(\Bun_{G,D}\). The basic question is then the following: how does the added ramification along \(D\) affect the Hecke graph, and does this effect stabilize in the cuspidal region?

Our first group of results answers this question. Fix a decomposition $D=D_1+D_2$ of $D$ into a sum of effective divisors such that
\[
\operatorname{supp}(D_1)\cap\operatorname{supp}(D_2)=\varnothing,
\qquad
x\notin\operatorname{supp}(D_1).
\]

We give an explicit description of the fibers of the forgetful map
\[
\Bun_{G,D}\longrightarrow \Bun_{G,D_2}
\]
over bundles lying deep in the Harder-Narasimhan cusp (see Definition \ref{def:Dcusp}). The main statement is the following.
\begin{theorem}\
\begin{enumerate}[label=(\roman*)]
    \item If \(\operatorname{supp}(D_2)\neq\{x\}\), then the forgetful map on the full
subgraphs on the deep cusp vertices
\begin{equation}\label{eq:projection_restricted_to_cusps}
    p_{D,D_2}:\Bun_{G,D}^{\mathrm{cusp}}
\longrightarrow
\Bun_{G,D_2}^{\mathrm{cusp}}
\end{equation}
is a disjoint covering of constant degree
\[
N=
\frac{|T(\mathcal O_{D_1})|}
{|T(k)|^{\delta_{D_2,0}}}
\cdot
|(G/B)(\mathcal O_{D_1})|,
\]
where $\delta$ is the Kronecker delta function.
\item If \(\operatorname{supp}(D_2)=\{x\}\), then the map (\ref{eq:projection_restricted_to_cusps}) is an $N:1$ topological covering of graphs with monodromy given by $T(k)$.
\end{enumerate}

\end{theorem}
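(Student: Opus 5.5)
Here is the plan. First I describe the fibres of $p_{D,D_2}$ over a deep cusp vertex. Then I describe how edges lift.

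\emph{Fibres.} Let $\cE$ be a $G$-bundle with $D_2$-level structure lying deep in the cusp (Definition \ref{def:Dcusp}). Deep in the cusp, $\cE$ has a canonical reduction $\cE_B$ to $B$ (Harder--Narasimhan), and the automorphism group is
\[
\Aut(\cE)=T(k)\ltimes U_{\cE},
\]
where $U_{\cE}$ is the unipotent group of global sections of the twisted unipotent radical. Adding a $D_1$-level structure means choosing a trivialization of $\cE|_{D_1}$, and these choices form a $G(\cO_{D_1})$-torsor. So the fibre is
\[
p^{-1}([\cE]) = \Aut(\cE)\backslash G(\cO_{D_1}),
\]
with $\Aut(\cE)$ acting through restriction to $D_1$, modulo the automorphisms already fixing the $D_2$-structure. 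The key step is to show that deep in the cusp, restriction $U_{\cE}\to U(\cO_{D_1})$ is surjective. This is a Riemann--Roch/vanishing of $H^1$ argument: the line bundles $\cL_\alpha(-D_1)$ attached to positive roots have large degree, so $H^1(X,\cL_\alpha(-D_1))=0$. This is where the depth hypothesis is used.

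\emph{Counting the fibre.} Using the Iwasawa-type decomposition $G(\cO_{D_1}) = B(\cO_{D_1})\cdot(\text{coset reps})$, we have
\[
|G(\cO_{D_1})/U(\cO_{D_1})| = |T(\cO_{D_1})|\cdot|(G/B)(\cO_{D_1})|.
\]
It remains to quotient by $T(k)$ in the case $D_2=0$. If $D_2\neq 0$, the automorphisms preserving the $D_2$-level structure must be trivial on $T$ along $D_2$, so $T(k)$ acts trivially (the stabilizer is the kernel of $T(k)\to T(\cO_{D_2})$, which is trivial). If $D_2=0$, $T(k)$ acts freely on the $T(\cO_{D_1})$ factor. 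Either way the fibre has $N$ elements, and $N$ is independent of $\cE$.

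\emph{Edges.} A Hecke modification at $x\notin\supp(D_1)$ does not touch $\cE|_{D_1}$, so the $D_1$-trivialization transports canonically along each edge. Deep in the cusp, every neighbour of a vertex is again deep in the cusp (possibly after shrinking the cusp region slightly, as presumably arranged in Definition \ref{def:Dcusp}). Hence each edge downstairs has exactly $N$ lifts, one starting at each point of the fibre, and $p$ is a topological covering of graphs.

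The monodromy comes from the identification of fibres along an edge, which is only well defined up to automorphisms. Composing around a loop gives an element of $\Aut(\cE)$ modulo the part acting trivially on the fibre, namely an element of $T(k)$ acting on the $T(\cO_{D_1})$-factor.

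\emph{The dichotomy.} In case (i), with $\supp(D_2)\neq\{x\}$, either $D_2=0$, so $T(k)$ has already been quotiented out, or $D_2$ has a point $y\neq x$. In the latter case the $D_2$-level at $y$ rigidifies the $T$-part canonically, since the Hecke modification at $x$ does not touch $y$. The monodromy is then trivial and the cover is disjoint, a union of $N$ copies.

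In case (ii), the level structure lives only at $x$, where the modification happens. An edge can then twist the $D_2$-trivialization by a torus element, so the rigidification changes along the edge and nontrivial $T(k)$ monodromy appears.

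\emph{Main obstacle.} The hardest step is making the monodromy computation in case (ii) precise. This requires tracking how the local $K(D)_x\Delta^\mu_x K(D)_x$ condition interacts with the torus part of $\Aut(\cE)$. I would first do this for $G=\PGL_2$ with explicit matrices, then generalize via the Borel reduction.
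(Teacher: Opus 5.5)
Your outline has the same skeleton as the paper: the fibre is $G(\cO_{D_1})$ modulo the image of $\Aut(\cE,\phi_{D_2})$, $H^1$-vanishing handles the unipotent part, and a point $y\neq x$ of $\supp D_2$ pins down the torus part. However, the two steps that carry the theorem are asserted rather than proved.

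\emph{The covering property.} Transporting $\psi\mapsto\psi\circ f|_{D_1}$ along a modification $f$ only shows that each edge $p(v)\to w$ lifts uniquely to an edge out of $v$. A covering also needs the incoming-edge condition: lifts of one downstairs edge that start at distinct fibre points must end at distinct fibre points. Otherwise some upstairs vertex receives two lifts of a single edge and another receives none. This is not automatic. The fibres are quotients by $\Aut(\cE,\phi_{D_2})$ and $\Aut(\cE',\phi'_{D_2})$, and $f$ does not intertwine these groups in general.

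The missing argument is that for every $\alpha\in\Aut(\cE',\phi'_{D_2})$, the element $(f\alpha f^{-1})|_{D_1}$ is the restriction of an automorphism of $(\cE,\phi_{D_2})$. In the cusp this holds for three reasons:
\begin{enumerate}
\item $f$ carries the HN $B$-reduction to the HN $B$-reduction (Proposition~\ref{prop:mu-mod-preserves-BHN}), so $f|_{D_1}\in B(\cO_{D_1})$ in compatible trivializations.
\item $T(k)U(\cO_{D_1})$ and $U(\cO_{D_1})$ are normalized by $B(\cO_{D_1})$.
\item These two groups are exactly the images in Theorem~\ref{thm:deep_cusp_reductive}(ii).
\end{enumerate}
Injectivity plus equal fibre sizes $N$ then gives bijectivity by double counting, as in Lemma~\ref{l:covering_map_G}.

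Two smaller points. To describe the automorphisms that are trivial on $D_2$, you need surjectivity onto $U(\cO_D)$, i.e.\ $H^1(X,\cL_\alpha(-D))=0$, not just onto $U(\cO_{D_1})$. Also, neighbours of cusp vertices do \emph{not} stay in the cusp under any shrinking. This is harmless only because one works with full subgraphs and the cusp condition depends only on the underlying bundle.

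\emph{The monodromy.} Composing Hecke modifications around a loop does not give an element of $\Aut(\cE)$. It gives $f\in\Aut(\cE|_{X\setminus x})$, which typically does not extend across $x$, so the structure $\Aut(\cE)=T(k)\ltimes U_{\cE}$ tells you nothing about it. What is needed is the following. First, $f$ preserves $\cE_B$ (Proposition~\ref{prop:mu-mod-preserves-BHN}, applied edge by edge), so $f|_{D'}\in B(\cO_{D'})$. Second, its torus component acts on each $\cL_\chi|_{X\setminus x}$ by an element of $\Gamma(X\setminus x,\cO_X^\times)=k^\times$, hence is a \emph{constant} $g_T\in T(k)$.

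This constancy is exactly what makes your rigidification at $y$ work. The constant $g_T$ agrees at $y$ with the torus part of the automorphism matching the two $D_2$-structures, hence agrees with it on all of $D_1$. A non-constant element of $T(\cO_{D'})$ that is trivial at $y$ would say nothing about $D_1$. When $D_2=0$, constancy is likewise what puts $f|_{D_1}$ into $T(k)U(\cO_{D_1})$. The unipotent remainder is then absorbed by Theorem~\ref{thm:deep_cusp_reductive}(ii).

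The same argument with $D_2=d_x[x]$ shows that the discrepancy on $D'$ lies in $T(k)\ltimes U(\cO_{D'})$, and removing the unipotent part leaves some $t\in T(k)$. Together with the covering lemma and the fibre count, this is all of part (ii), so no explicit $\PGL_2$ matrix computation is needed. It is, however, precisely the step your proposal leaves open.
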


This is Theorems~\ref{t:graph_for_ramification_at_cusp} and
\ref{t:graph_for_ramification_at_cusp_over_x} for $\GL_n$ and Theorems \ref{thm:deep_cusp_splitting_reductive} and
\ref{thm:deep_cusp_torus_monodromy_reductive} for general $G$.

The second theme of the paper is spectral. For a graph $\Gamma$ and $\lambda\in\CC$, let $\dim_\lambda\Gamma$ be the dimension of the $\lambda$-eigenvalues of the adjacency matrix of $\Gamma$ acting on $\CC\Gamma$. Our main result in this section is the following.
\begin{theorem}
    Choose $\lambda\in \CC$, and let a graph $\Gamma$ admit a decomposition $\Gamma=\Gamma'\sqcup\Gamma_1\sqcup\Gamma_2\sqcup\ldots$ such that the adjacency linear maps $A_i:\CC\Gamma_i\to\CC\Gamma_{i-1}$ induced by the arrows from vertices of $\Gamma_{i-1}$ to $\Gamma_i$ are surjective. Then 
    
    $$
    \sup_{i\ge 1}|\Gamma_i|\le\dim_\lambda\Gamma\le\dim_\lambda\Gamma'+\sup_{i\ge 1}|\Gamma_i|.
    $$
    
    In particular, if the maps $A_i$ are isomorphisms, then
    $$
    |\Gamma_1|\le\dim_\lambda\Gamma\le\dim_\lambda\Gamma'+|\Gamma_1|.
    $$
\end{theorem}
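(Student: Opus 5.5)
The plan is to turn the eigenvalue equation into a recursion along the chain $\Gamma_1,\Gamma_2,\ldots$ and then count solutions as an inverse limit. I read the hypothesis as saying that the only edges leaving a vertex of $\Gamma_{i-1}$ (for $i\ge 2$) go to $\Gamma_i$, and that the edges out of $\Gamma'\sqcup\Gamma_1$ account for the remaining adjacency. Under this reading the adjacency operator $\Phi$ acts on a function $f=(f',f_1,f_2,\ldots)$, with $f_i\in\CC\Gamma_i$, layer by layer. Concretely, the eigen-equation $\Phi f=\lambda f$ splits into two parts:
\begin{enumerate}[label=(\alph*)]
\item a finite system $(\ast)$ on $\Gamma'$ involving only $f'$ and $f_1$;
\item the recursion $A_i f_i=\lambda f_{i-1}$ for $i\ge 2$, together with the analogous equation relating $f_1$ to $f_2$ (if the convention lets $\Gamma_1$ also see $\Gamma'$, this picks up an extra term depending on $f'$).
\end{enumerate}
The first step is to fix these conventions precisely and write out the equations. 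The point of the surjectivity hypothesis is that, once $f_{i-1}$ is known, the equation on $\Gamma_{i-1}$ can always be solved for $f_{i}$.

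The second step is to identify the space of tail solutions. Let $\mathcal T$ be the space of sequences $(f_1,f_2,\ldots)$ satisfying the recursion. It is the inverse limit of the system $\cdots\to\CC\Gamma_{i}\xrightarrow{\ }\CC\Gamma_{i-1}\to\cdots$, with transition maps $A_i$ (rescaled by $\lambda^{-1}$ when $\lambda\neq0$). The case $\lambda=0$ needs separate handling, since there the recursion becomes $A_if_i=0$. All transition maps are surjective, so the Mittag-Leffler condition holds trivially and each projection $\mathcal T\to\CC\Gamma_i$ is surjective. Hence $\dim\mathcal T\ge|\Gamma_i|$ for every $i$. Conversely, if $\sup|\Gamma_i|<\infty$, then the surjections between spaces of nondecreasing, bounded dimension are eventually isomorphisms, so $\dim\mathcal T=\sup_i|\Gamma_i|$. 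The same argument gives the ``in particular'' statement when all the $A_i$ are isomorphisms, because then $\mathcal T\cong\CC\Gamma_1$.

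The third step glues the tail to $\Gamma'$ via the restriction map $r:E_\lambda\to\CC\Gamma'$, $f\mapsto f'$, where $E_\lambda$ is the $\lambda$-eigenspace. For the upper bound, I will show that $\ker r$ injects into $\mathcal T$, and that the image of $r$ is controlled by $\dim_\lambda\Gamma'$. The image of $r$ consists of those $f'$ for which $(\ast)$ admits some $f_1$ extending to a tail. I will compare this to the $\lambda$-eigenspace of $\Gamma'$ alone by subtracting the contribution of $f_1$, possibly after replacing $\Gamma'$ by $\Gamma'\sqcup\Gamma_1$ and re-indexing. This gives $\dim E_\lambda\le\dim_\lambda\Gamma'+\sup|\Gamma_i|$. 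For the lower bound, I will construct an injection $\mathcal T\hookrightarrow E_\lambda$. Given a tail, surjectivity of the first adjacency map lets me solve $(\ast)$ on the boundary, that is, pick $f'$, or absorb $\Gamma'$ into the first layer, so that the eigen-equation holds at every vertex. Injectivity follows because the $\Gamma_i$-components are preserved.

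I expect the main obstacle to be this boundary step at $\Gamma'$. There are two separate issues. First, I need to match the graph conventions so that the equation on $\Gamma'$ is genuinely solvable for any tail; this is exactly where surjectivity of the map into $\Gamma'$ must be used. Second, I need to check that the two-sided bound is not spoiled by interaction between eigenvectors of $\Gamma'$ and tails. My first attempt will be to treat $\Gamma'$ as the ``$\Gamma_0$'' layer and run the inverse-limit argument uniformly, tracking the discrepancy as a subspace of dimension at most $\dim_\lambda\Gamma'$.
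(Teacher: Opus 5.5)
The decisive gap is the boundary step at $\Gamma'$, and both claims you plan to use there are false. Write $M$ for the adjacency matrix of $\Gamma'$ and $A:\CC\Gamma_1\to\CC\Gamma'$ for the map induced by arrows $\Gamma'\to\Gamma_1$. The equations at the vertices of $\Gamma'$ then read $(\lambda-M)f'=Af_1$.

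\emph{Lower bound.} You want every tail to extend by solving this equation for $f'$. Nothing in the hypotheses makes $A$ surjective, and in any case solvability needs $Af_1\in\im(\lambda-M)$, which fails when $\lambda$ is an eigenvalue of $M$. Concretely, take $\Gamma'=\{v\}$ with a loop, $\Gamma_i=\{w_i\}$, and edges $v\to w_1$ and $w_{i-1}\to w_i$, so that all $A_i$ are isomorphisms and your own restricted reading holds. For $\lambda=1$ the equation at $v$ forces $f(w_1)=0$, so no nonzero tail extends. Yet $\dim_1\Gamma=1$, because of the eigenvector supported at $v$.

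\emph{Upper bound.} The claim $\dim r(E_\lambda)\le\dim_\lambda\Gamma'$ fails in the same graph at $\lambda=2$. There $\dim_2\Gamma'=0$, while $f(v)=1$, $f(w_i)=2^{i-1}$ is an eigenvector with $r(f)\neq 0$. The true bound is $\dim r(E_\lambda)\le\dim_\lambda\Gamma'+\dim\im A$, but combined with $\dim\ker r\le\sup_i|\Gamma_i|$ it overshoots by $\dim\im A$.

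Treating $\Gamma'$ as a layer $\Gamma_0$ does not help, since $A$ then plays the role of $A_1$ and is not assumed surjective. Enlarging $\Gamma'$ to $\Gamma'\sqcup\Gamma_1$ changes the quantity $\dim_\lambda\Gamma'$ in the statement, so that does not help either.

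The missing idea is to count the boundary equations in the pair $(f',f_1)$. The space $P=\{(f',f_1):(\lambda-M)f'=Af_1\}$ is the kernel of a linear map $\CC\Gamma'\oplus\CC\Gamma_1\to\CC\Gamma'$, so $\dim P\ge|\Gamma_1|$ by rank--nullity. Its projection to $\CC\Gamma_1$ (not to $\CC\Gamma'$) has kernel $\ker(\lambda-M)$, so $\dim P\le|\Gamma_1|+\dim_\lambda\Gamma'$. Every element of $P$ extends to an eigenvector, and the extensions form an affine space of dimension $\sum_{i\ge2}\dim\ker A_i=\sup_i|\Gamma_i|-|\Gamma_1|$. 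Hence $\dim_\lambda\Gamma=\dim P-|\Gamma_1|+\sup_i|\Gamma_i|$, which gives both bounds at once.

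For the extension step, your reading of the graph is also too narrow. In the intended block-tridiagonal setting, vertices of $\Gamma_i$ have edges inside $\Gamma_i$ (record them as $M_i$) and back to $\Gamma_{i-1}$ (record them as $B_i$), and $\Gamma_1$ has edges to $\Gamma'$ (record them as $B$). This is the situation in the Hecke graphs the theorem is applied to; for example, in the unramified $\PGL_2$ cusp each vertex has an edge of multiplicity $q^{r}$ pointing back towards $\Gamma'$. The recursion is therefore $A_{i+1}f_{i+1}=(\lambda-M_i)f_i-B_if_{i-1}$, with $Bf'$ in place of $B_1f_0$, not $A_if_i=\lambda f_{i-1}$. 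So the tail is not an inverse system independent of $f'$. Count successively instead: given $f_{i-1}$ and $f_i$, surjectivity of $A_{i+1}$ makes the admissible $f_{i+1}$ a coset of $\ker A_{i+1}$. This also shows that $\lambda=0$ needs no separate treatment.
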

We show that Hecke graphs for $\PGL_2$ admit a similar type decomposition, with $\Gamma'$ being the so-called ``nucleus'' \cite{Lor13,Alv19,AB24}, and $\Gamma_i$ lying in the deep cusp of $\Bun_{\PGL_2,D}$.
This allows us to determine dimensions of spaces of $\PGL_2$-eigenforms. The results are the following. Set \(\Gamma:=\Bun_{G,D}\) to be the graph of a Hecke operator at $x$.
\begin{theorem}
Let $r:=\deg x$. There exists a finite subgraph $\Gamma'$ of $\Gamma$ depending on $D$ such that
    \begin{enumerate}[label=(\roman*)]
    \item If \(D=0\), then
\[
r|\Pic^0(X)(\mathbb F_q)|
\leq
\dim_\lambda\Gamma
\leq
\dim_\lambda\Gamma'
+
r|\Pic^0(X)(\mathbb F_q)|.
\]
    \item If $D\ne 0$ and \(x\notin\operatorname{supp}(D)\), then 
\[
r|\Pic^0(X)(\mathbb F_q)|
\frac{q^{2(\deg D-\deg D_{\mathrm{red}})}}{q-1}
\prod_{y\in\operatorname{supp}D}(q^{2\deg y}-1)
\leq
\dim_\lambda\Gamma
\]
and
\[
\dim_\lambda\Gamma
\leq
\dim_\lambda\Gamma'
+
r|\Pic^0(X)(\mathbb F_q)|
\frac{q^{2(\deg D-\deg D_{\mathrm{red}})}}{q-1}
\prod_{y\in\operatorname{supp}D}(q^{2\deg y}-1),
\]
where $D_{\mathrm{red}}:=\sum_{y\in\supp D}1\cdot[y]$.
\item If \(D=D'+d_x[x]\) with
\(d_x\geq1\) and $x\notin\supp D'$, then
\[
r|\Pic^0(X)(\mathbb F_q)|
\frac{(q^r-1)q^{2(\deg D'-\deg D'_{\mathrm{red}})+r(d_x-1)}}{q-1}
\prod_{y\in\operatorname{supp}D'}(q^{2\deg y}-1)
\leq
\dim_\lambda\Gamma
\]
and
$$
\dim_\lambda\Gamma
\leq
\dim_\lambda\Gamma'
+
r|\Pic^0(X)(\mathbb F_q)|
\frac{(q^r-1)q^{2(\deg D'-\deg D'_{\mathrm{red}})+r(d_x-1)}}{q-1}
\prod_{y\in\operatorname{supp}D'}(q^{2\deg y}-1).
$$
\end{enumerate}
In all cases, these inequalities are equalities for all but finitely many
\(\lambda\). 
\end{theorem}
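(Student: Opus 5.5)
The plan is to combine the abstract spectral estimate (the second theorem above) with the covering description of the deep cusp (the first theorem), applied to $G=\PGL_2$ and the Hecke operator $\Phi_x$ at $x$ of type $\mu=(1,0)$.

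\textbf{Step 1: decompose $\Gamma$.} For $D=0$, I would start from Lorscheid's description of the graph of $\Phi_x$ on $\Bun_{\PGL_2}$. There is a finite nucleus $\Gamma'$, and its complement is a union of cusps. Each cusp is a ray whose vertices are the split bundles $\cO\oplus L$ with $\deg L = n$ large, taken modulo $\Pic$. Passing from degree $n$ to degree $n+r$ is a single arrow of multiplicity $q^r$ in the outward direction. The returning arrow goes back to degree $n-r$ with multiplicity $1$, and there are further edges to vertices of the same degree.

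I would set $\Gamma_i$ to be the set of cusp vertices at ``level'' $i$, where a level collects the $r$ consecutive degrees $n_0+ir,\dots,n_0+ir+r-1$. Counting degrees mod $r$ together with classes in $\Pic^0$ gives $|\Gamma_i| = r|\Pic^0(X)(\FF_q)|$. The adjacency from $\Gamma_{i-1}$ to $\Gamma_i$ is a bijection scaled by the edge weights, so each $A_i$ is an isomorphism.

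\textbf{Step 2: pass to ramified $D$.} For $D\neq 0$, I would use the first theorem with $D_2=0$ in case (ii), and $D_2=d_x[x]$, $D_1=D'$ in case (iii). Deep in the cusp, $\Bun_{G,D}^{\mathrm{cusp}}\to\Bun_{G,D_2}^{\mathrm{cusp}}$ is a covering of degree $N$. For a disjoint covering, the pulled-back levels $\Gamma_i$ again have surjective (indeed bijective) transition maps $A_i$. In case (iii) the covering has $T(k)$-monodromy. There one must check that the lifts of the level-to-level maps are still bijections between level sets; this holds because a topological covering lifts the outward edges uniquely.

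The cardinalities then come from computing $N$. For $T$ the torus of $\PGL_2$ we have $|T(\cO_{D_1})| = \prod_y |\cO_{D_1,y}^\times|$, where $|\cO_{D_1,y}^\times| = q^{\deg y(d_y-1)}(q^{\deg y}-1)$. We also have $|\PP^1(\cO_{D_1,y})| = q^{\deg y(d_y-1)}(q^{\deg y}+1)$. Multiplying these gives the product $q^{2(\deg D-\deg D_{\mathrm{red}})}\prod(q^{2\deg y}-1)$. The factor $1/(q-1)$ is $|T(k)|^{-1}$ when $D_2=0$.

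In case (iii) we must also count the fiber of $\Bun_{D_2}\to\Bun$ over cusp vertices at $x$ itself. That count contributes $(q^r-1)q^{r(d_x-1)}/(q-1)$, coming from the torus part at $x$ modulo $T(k)$. The Borel part gets absorbed, since the Hecke modification at $x$ mixes flags with the level direction. I expect this to be the main obstacle. One must show that in the ramified-at-$x$ case the levels still have transition maps that are isomorphisms, and that the count is exactly the stated one. To settle this I would analyse $K(D)_x\Delta_x K(D)_x$ cosets acting on split bundles with level structure explicitly.

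\textbf{Step 3: apply the spectral theorem and handle equality.} Applying the second theorem in its ``isomorphism'' form gives both inequalities. For equality for all but finitely many $\lambda$, note that $\Gamma'$ is finite, so $\dim_\lambda\Gamma'=0$ unless $\lambda$ is among the finitely many eigenvalues of the finite adjacency matrix on $\Gamma'$. Away from those values, the upper and lower bounds coincide.
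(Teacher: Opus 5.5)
Your treatment of (i) and (ii) is the paper's. You use bands of $r$ consecutive values of $\deg\cL_1-\deg\cL_2$ deep in the cusp, with $|\Gamma_i|=r|\Pic^0(X)(\FF_q)|$ and bijective outward arrows. You then multiply by $N$ through the $D_2=0$ covering and use finiteness of the spectrum of $\Gamma'$. There is one harmless slip: by Proposition \ref{p:cusp_connections_degrees}, the outward arrow $\cL_1\oplus\cL_2\to\cL_1\oplus\cL_2(-x)$ has multiplicity $1$, the inward one has multiplicity $q^r$, and there are no same-degree edges.

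Case (iii) has a genuine gap, and the framework you sketch for it would fail, not merely remain incomplete.

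\textbf{Why your framework fails.} Since $x\in\supp D$, the operator is the characteristic function of $K(D)_x\Delta_xK(D)_x$. The covering results require $x\notin\supp D_1$, so $\Gamma_{d_x[x]}$ is not a covering of the unramified graph; out-degrees are $q^r$ versus $q^r+1$. Your count is also misattributed. By Corollary \ref{cor:deep_cusp_fiber_size_reductive}, the fiber of $\Bun_{\PGL_2,d_x[x]}\to\Bun_{\PGL_2}$ over a cusp bundle has $\frac{(q^r-1)q^{r(d_x-1)}}{q-1}\cdot(q^r+1)q^{r(d_x-1)}$ points, and nothing absorbs the $\PP^1(\cO_{d_x[x]})$ factor.

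Most importantly, full degree bands do not give a propagative decomposition. By Theorem \ref{t:ramified_Hecke_correspondences}(i), every edge out of $(\cL_1\oplus\cL_2,a)$ lands in a single subbundle. That subbundle is $\cL_1(-x)\oplus\cL_2$, one step \emph{inward}, whenever $a_{11}$ is a unit, i.e.\ for every level structure over $\PP^1(k_x)\setminus\{\infty\}$. Those vertices send no arrows to the next band, so their rows of $A_i$ vanish. Hence $A_i$ is not surjective and your Step 3 does not apply.

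\textbf{What is missing} is the explicit Case I/Case II computation.

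\emph{The propagating chain.} Normalize level structures modulo $T(k)\ltimes U(\cO_{d_x[x]})$ and the center. For $a_{11}\in\pi_x\cO_{d_x[x]}$ the modification goes outward and lowers $v(a_{11})$ by one, or makes it a unit. The exception is the locus $a_{11}=0$, i.e.\ level structures $\smat{0}{1}{u}{0}$ with $u\in\cO_{d_x[x]}^\times/k^\times$. The coset $C=0$ maps this locus bijectively onto the same locus one band out. Only this locus forms a strictly propagative chain. Its size per band, $r|\Pic^0(X)(\FF_q)|\cdot(q^r-1)q^{r(d_x-1)}/(q-1)$, is where the factor $|T(\cO_{d_x[x]})/T(k)|$ comes from.

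\emph{Forced propagation for the rest.} Vertices with $a_{11}$ a unit have a single out-neighbor, of multiplicity $q^r$, one band inward. So for $\lambda\neq 0$ the values are successively determined: first over $0$, then over $\PP^1(k_x)\setminus\{0,\infty\}$, then on the strata $v(a_{11})=1,\dots,d_x-1$ over $\infty$. This is what allows an infinite $\Gamma''$ with $\dim_\lambda\Gamma''=\dim_\lambda\Gamma'$ for a finite $\Gamma'$ (Remark \ref{r:ramified_case_is_stictly_propagative}). The division by $\lambda$ is why Theorems \ref{t:PGL2_ramified_at_x} and \ref{t:PGL2_ramified_at_x_and_D} assume $\lambda\neq 0$. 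Your plan never needs this, which is a sign it misses the mechanism.

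Once this is in place, your passage to $D'$ via the $T(k)$-monodromy covering is sound. A covering preserves exactly the local edge structure that both the chain bijection and the forced propagation use. The paper instead cites the disjoint-copies theorem there, whose hypothesis $\supp D_2\neq\{x\}$ actually fails.
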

These are Theorems~\ref{t:PGL2_ramified_not_at_x},
\ref{t:PGL2_ramified_at_x}, and
\ref{t:PGL2_ramified_at_x_and_D}. In addition, we extend these results to a general surjectivity theorem.
\begin{theorem}
    In all the above theorems, assume that $\lambda\in\CC$ is not an eigenvalue for the adjacency matrix of $\Gamma'$, and denote the Hecke operator in consideration by $\Phi$. Then the operator 
    $$
    \Phi-\lambda\cdot\mathrm{id}:\CC^\Gamma\to \CC^\Gamma
    $$
    is surjective. In particular, for any fixed $g\in\operatorname{Fun}(\Bun_{G,D}(k))$, the equation
    \begin{equation}
    (\Phi-\lambda)(f)=g
    \end{equation}
    has a finite-dimensional affine space of solutions whose dimension is given by the lower bound in the above theorem.
\end{theorem}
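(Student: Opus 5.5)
Write $\Gamma=\Gamma'\sqcup\Gamma_1\sqcup\Gamma_2\sqcup\cdots$ as in the decomposition theorem above. The adjacency operator $\Phi$ on $\CC^\Gamma$ maps a function $f$ to $(\Phi f)(v)=\sum_{v\to w}f(w)$, and each vertex has finitely many neighbours. The plan is to solve $(\Phi-\lambda)f=g$ by first prescribing $f$ on $\Gamma'\sqcup\Gamma_1$ and then extending $f$ to $\Gamma_2,\Gamma_3,\dots$ one layer at a time.

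\textbf{Structural input.} From the explicit description of the $\PGL_2$ Hecke graphs (the nucleus $\Gamma'$ together with the deep-cusp layers described by the covering theorems), I take the following. Edges go only between $\Gamma'$ and itself or $\Gamma_1$, and between consecutive layers $\Gamma_{i-1},\Gamma_i,\Gamma_{i+1}$. For $v\in\Gamma_{i-1}$, the sum $(\Phi f)(v)$ involves $f|_{\Gamma_{i-2}}$, $f|_{\Gamma_{i-1}}$ and, through the map $A_i$, $f|_{\Gamma_i}$. In the $\PGL_2$ cases the maps $A_i$ are bijections on vertex sets of the same cardinality $|\Gamma_1|$, which is the lower-bound number appearing in the theorem.

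\textbf{Recursive construction.} Since $\lambda$ is not an eigenvalue of $\Phi_{\Gamma'}$ (the restriction of $\Phi$ to $\Gamma'$), the equation restricted to $\Gamma'$ reads
$$(\Phi_{\Gamma'}-\lambda)(f|_{\Gamma'})=g|_{\Gamma'}-B(f|_{\Gamma_1}),$$
where $B$ records the edges from $\Gamma'$ to $\Gamma_1$. For any choice of $f|_{\Gamma_1}\in\CC^{\Gamma_1}$ this has a unique solution $f|_{\Gamma'}$. Now suppose $f$ has been fixed on $\Gamma'\sqcup\Gamma_1\sqcup\dots\sqcup\Gamma_{i-1}$. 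The equation at the vertices of $\Gamma_{i-1}$ becomes $A_i(f|_{\Gamma_i})=h_i$, where $h_i$ depends only on $g$ and on the values already fixed. Because $A_i$ is surjective, a solution $f|_{\Gamma_i}$ exists, and it is unique when $A_i$ is an isomorphism. Induction on $i$ then produces a function $f$ on all of $\Gamma$. It satisfies the equation at every vertex, since each vertex lies in $\Gamma'$ or in some $\Gamma_{i-1}$ and is treated at exactly one step. This proves that $\Phi-\lambda$ is surjective.

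\textbf{Dimension count.} Every solution arises from this construction, because the recursion is forced once $f|_{\Gamma_1}$ is fixed. The solution set of $(\Phi-\lambda)f=g$ is therefore an affine space parametrized by $\CC^{\Gamma_1}$ (taking the $A_i$ to be isomorphisms), so its dimension is $|\Gamma_1|$. Taking $g=0$ gives $\dim_\lambda\Gamma=|\Gamma_1|$, which agrees with the lower bound and is consistent with equality holding for generic $\lambda$.

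\textbf{Main obstacle.} The hard part is checking that, in each of the three ramification cases, the Hecke graph really has this layered shape. Concretely, edges from $\Gamma_{i-1}$ must reach only the adjacent layers, and $A_i$ must be invertible with $|\Gamma_i|=|\Gamma_1|$ equal to the stated count. To establish this I would invoke the deep-cusp covering theorems, Theorems~\ref{t:graph_for_ramification_at_cusp} and \ref{t:graph_for_ramification_at_cusp_over_x}. These reduce the question to the unramified $\PGL_2$ cusp, where the cusp is a union of $r|\Pic^0|$ rays, each vertex having a single neighbour further out. In case (iii) one must additionally check that the $T(k)$-monodromy does not spoil the bijectivity of $A_i$. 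I expect this to hold because the monodromy only permutes the sheets.
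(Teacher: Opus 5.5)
Your recursion is exactly the paper's argument: choose $f|_{\Gamma_1}$ freely, solve uniquely on $\Gamma'$ since $\lambda-M$ is invertible, then solve layer by layer using invertibility of the $A_i$. For Theorem~\ref{t:PGL2_ramified_not_at_x} ($x\notin\supp D$) your structural input is also correct, because Theorem~\ref{t:graph_for_ramification_at_cusp} with $D_2=0$ makes the cusp a disjoint union of copies of the unramified cusp.

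The gap is in the two ramified-at-$x$ theorems, where your plan for the ``main obstacle'' cannot work. The covering theorems only forget level structure along $D_1$, and they require $x\notin\supp D_1$. So they reduce $\Gamma_D$ at best to $\Gamma_{d_x[x]}$, never to the unramified graph. In fact $\Gamma_{d_x[x]}$ does not even cover the unramified graph: by Theorem~\ref{t:ramified_Hecke_correspondences} its out-degree is $|k_x|$, not $|k_x|+1$.

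Its cusp also lacks the layered shape you assume:
\begin{itemize}
\item Vertices whose level structure has $a_{11}$ a unit have a single out-edge, of multiplicity $|k_x|$, pointing \emph{inward} to $\cL_1(-x)\oplus\cL_2$.
\item In the $\infty$-fiber, vertices with $1\le v(a_{11})=s<d$ point outward to valuation $s-1$.
\item Only the part with $a_{11}=0$ in $\cO_{d[x]}$ has an edge to a vertex of the same kind further out.
\end{itemize}
With layers cut by degree gap, as in your sketch, the inward-pointing vertices have no edge into the next layer, so $A_i$ has zero rows and is not surjective. These infinitely many rays cannot sit in a finite $\Gamma'$. Accordingly, the lower bound is not $r|\Pic^0(X)(\FF_q)|$ times a covering degree. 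It counts only the level-$d$ $\infty$-vertices, namely $|\cO_{d[x]}^\times/k^\times|$ per bundle.

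The paper (Section~\ref{ss:ramified_PGL2} and Remark~\ref{r:ramified_case_is_stictly_propagative}) instead takes as layers only these level-$d$ vertices, where the $C=0$ edge gives a bijection $A_j$. Everything else goes into an \emph{infinite} set $\Gamma''\supset\Gamma'$. Your first step then needs an ingredient you never use. On $\Gamma''\setminus\Gamma'$ the values are forced by back-substitution, $f(v)=\lambda^{-1}\bigl(\sum_{v\to w}f(w)-g(v)\bigr)$. One proceeds outward along the $0$- and $\PP^1\setminus\{0,\infty\}$-rays, then through the $\infty$-levels $1,\dots,d-1$; this works because every out-edge lands on an already determined vertex.

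This back-substitution needs $\lambda\neq0$. That is assumed in Theorems~\ref{t:PGL2_ramified_at_x} and \ref{t:PGL2_ramified_at_x_and_D}, and for $\lambda=0$ two inward-pointing vertices with the same target give incompatible conditions.

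Moreover, each level-$d$ vertex has $|k_x|-1$ edges into $\Gamma''$. The equation at the $j$-th layer is therefore $A_jx_{j+1}=\lambda x_j+g_j-B_jy_{j+1}$, which is solvable only because $y_{j+1}$ was fixed beforehand.

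With these changes your recursion goes through. Your monodromy remark for case (iii) is then fine, since lifting through the covering $\Gamma_D\to\Gamma_{d_x[x]}$ of Theorem~\ref{t:graph_for_ramification_at_cusp_over_x} preserves bijectivity of the $A_j$.
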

This is Theorem \ref{t:Phi-lambda_is_surjective}.

Our results place the asymptotic geometry of ramified Hecke graphs into a uniform framework built from Harder--Narasimhan theory, explicit control of torus monodromy, and propagation on infinite graphs. They also suggest a natural connection with geometric Langlands. Indeed, the graphs studied here are a combinatorial shadow of the same Hecke correspondences on \(\Bun_G\) that underlie Hecke functors and Hecke eigensheaves. Our focus in this paper is entirely on the explicit graph-theoretic and automorphic side, but the stabilization phenomenon we prove may be viewed as a concrete asymptotic manifestation of the geometry of Hecke correspondences with level structure.

The Hecke correspondences considered in this paper are the same geometric correspondences that appear in the geometric Langlands program. More precisely, Hecke correspondences define Hecke functors on categories of sheaves or \(D\)-modules on moduli stacks of bundles, and Hecke eigensheaves encode Langlands-dual local systems \cite{FGV02}. This connection remains meaningful in ramified settings. See, for example \cite{Dri87,Hei04,HNY13,Yun16,Bos22}. Thus, although the methods of the present paper are entirely automorphic and graph-theoretic, the objects studied here belong to the same general geometric framework as ramified Hecke eigensheaves. For this reason, the explicit ramified Hecke graphs studied in this paper may be viewed as concrete automorphic models attached to geometric Hecke correspondences with level structure.

We also note that we study a more general ramification in \cite{KZ2} building on results and methods of this paper. We give more examples there and obtain some structural differences from the ramification considered in this paper, e.g. the fact that the forgetful map $p_{D,D_2}$ does not always have same-sized fibers over the deep Harder-Narasimhan cusp.
\subsection{Overview}
The paper is organized as follows. In Section \ref{s:preliminaries_on_Hecke_graphs}, we fix notation and give preliminaries on Hecke graphs. 

In Section~\ref{section 2}, we develop the geometric and adelic description of ramified Hecke graphs. Subsections~\ref{Hecke Corres.GLn} and ~\ref{Hecke Corres.G} describe Hecke correspondences for \(\GL_n\) and general reductive groups with level structure respectively. 

Section~\ref{GLn case} proves the cusp structural results for \(\GL_n\), including splitting of the Harder--Narasimhan filtration, triangularity of automorphisms, fiber counts for the forgetful map, and the resulting decomposition of the ramified graph in the cusp. Subsection~\ref{s:General graphs} introduces the general propagation formalism for infinite graphs. Subsections~\ref{ss:unramified_PGL2} and~\ref{ss:ramified_PGL2} apply this formalism to spaces of eigenforms for \(\PGL_2\), first when the level is away from \(x\), and then when the level is ramified at \(x\). We also prove that for all but finitely many eigenvalues of the Hecke operator $\Phi$, the operator $\Phi-\lambda\cdot\mathrm{id}$ is surjective, which gives dimension formulas for its preimages, and in particular, generalized eigenspaces. Subsection~\ref{PGLn case} briefly discusses the higher-rank \(\PGL_n\) situation. 

Section~\ref{s:general_G} treats the general split reductive case: after fixing notation and standing assumptions, we define the relevant cusp loci, prove the reductive analogues of the \(\GL_n\) cusp results, classify cusp Hecke modifications of type \(\mu\), and establish the general cusp splitting theorem.

\section{Graphs of Hecke operators and notation}\label{s:preliminaries_on_Hecke_graphs}
For nice expositions, see \cite{Lor13,Alv19,AB24}. Fix the following notation, which we will use throughout the paper:
\begin{itemize}
    \item $X$ is a smooth projective geometrically connected curve $X$ over a finite field $k=\FF_q$.
    \item $|X|$ is the set of closed points in $X$. Choose $x\in |X|$.
    \item $F=k(X)$ is the function field, $\cO_y$ is the completion of the local ring of $X$ at $y$, $F_y$ is its fraction field. Let $\pi_y$ denote any uniformizer of $\cO_y$ and $k_x:=\cO_{x}/(\pi_x)$ the residue field of $x$.
    \item $\AA=\prod'_{y\in |X|}F_y$ is the adele ring of $X$.
    \item $D=\sum_y d_y[y]$ is an effective divisor over $X$. We often write $D=D'+d_x[x]$, where $D'=\sum_{y\ne x}d_y[y]$.
    \item $D=D_1+D_2$ for some effective divisors $D_1,D_2$ with $\supp D_1\cap\supp D_2=\varnothing$ and $x\notin\supp D_1$.
    \item $\Bun_{G,D}(\FF_q)$ is the set of isomorphism classes of principal $G$-bundles with ramification at $D$. A map $$p_{D,D_2}:\Bun_{G,D}(\FF_q)\to \Bun_{G,D_2}(\FF_q)$$ is the canonical projection forgetting the ramification at $D_1$. We set $p_D:=p_{D,0}$
    \item $\deg D:=\sum_y d_y\deg(y)$ and $D_{\mathrm{red}}:=\sum_{y\in\operatorname{supp} D}1\cdot [y]$ is the reduced divisor.
    \item $\cO_D:=\prod_{y\in |X|}\cO_{y}/\pi_{y}^{d_y}$ is the ring of functions on $D$.
    \item $G$ is a connected split reductive group over $k$, $B$ is a Borel subgroup and $T$ is a maximal torus in $B$.
    \item $K(D):=\prod_{y\in |X|}K(D)_y$ with $K(D)_y:=\ker(G(\cO_y)\to G(\cO_y/\pi_y^{d_y}))$.
\end{itemize}
For convenience, we also list here notation for some terms to be defined below:
\begin{itemize}
    \item $\mu$ is a dominant cocharacter of $T$ and $\Delta^\mu$ is the corresponding element of $K(D)\backslash G(\AA)/K(D)$.
    \item $\Phi^\mu_{D,x}$ is the Hecke operator corresponding to $\Delta^\mu$.
    \item $\Gamma^\mu_{D,x}$ is the graph of $\Phi^\mu_{D,x}$.
    \item $\dim_\lambda\Gamma^\mu_{D,x}$ denotes the dimension of the $\lambda$-eigenspace of $\Phi^\mu_{D,x}$.
\end{itemize}

Any smooth compactly-supported function 
$$
\Phi\in \operatorname{Fun}(K(D)\backslash G(\AA)/K(D),\CC)
$$
gives rise to an operator on the space of functions on $G(F)\backslash G(\AA)/K(D)$ via convolution:
$$
(\Phi f)(g)=\int_{G(\AA)}\Phi(h)f(gh)\, dh.
$$
Such an operator $\Phi$ is called a {\bfseries Hecke operator}. 

In this paper, we will be mostly interested in Hecke operators corresponding to characteristic functions on certain elements in $K(D)\backslash G(\AA)/K(D)$. To specify those, fix a uniformizer $\pi_x$ of $\cO_x$, equivalently, an isomorphism $k_x\rjet t\simeq F_x$ mapping $k_x\jet t$ to $\cO_x$. Define $\Delta^\mu\in G(\AA)$ via $\Delta_y^\mu=\mathrm{id}$ for $y\ne x$ and $\Delta_x^\mu$ as the image of $\mu$ via the series of inclusions
$$
X_{*}(T)=\Hom(\GG_m,T)\subset \Hom(\Spec k[t,t^{-1}],G)\subset \Hom(\Spec k_x\rjet t,G)\simeq G(F_x).
$$
\begin{remark}
{\rm
    If there is no ramification at $x$, then the double coset $K\Delta K$ does not depend on the choice of the uniformizer $\pi_x$. In general, if $d_x \ge 1$, the double coset $K(D)\Delta^\mu K(D)$ depends on the choice of uniformizer $\pi_x$ modulo $\pi_x^{d_x}$ (the ideal $\pi_x^{d_x}\cO_x$ is independent of any choices). Specifically, changing the uniformizer to $u\pi_x$ for $u \in \mathcal{O}_x^\times$ modifies the double coset by the left action of $\mu(u) \in T(\mathcal{O}_x)$. This is not an issue for us since we will work with one choice of the uniformizer throughout the whole paper.
}
\end{remark}

The Hecke operator corresponding to the characteristic function of $K(D)\Delta^\mu K(D)$ will be denoted by $\Phi^\mu_{D,x}$. We can also describe it as follows. Write a decomposition into left cosets
$$
K(D)\Delta^\mu K(D)=\bigsqcup_i\tau_i\Delta^\mu K(D)
$$
for some $\tau_i\in K(D)$. Note that since $\Delta^\mu_y$ is non-trivial only for $y=x$, we can (but don't have to) assume that $\tau_i$ satisfy the same property. Then (see the proof of \cite[Lemma 1.2]{Lor13})
$$
(\Phi^\mu_{D,x}f)(g)=\sum_{i}f(g\tau_i\Delta^\mu).
$$
Because of this, it makes sense to define the {\bfseries graph $\Gamma^\mu_{D,x}$ of $\Phi^\mu_{D,x}$} in the following way: the set of vertices is $G(F)\backslash G(\AA)/K(D)$, and $G(F)xK(D)$ is connected to $G(F)x\tau_i\Delta K(D)$ by a directed edge for all $i$. Note that with this definition, it is possible to have several edges between two vertices. We sometimes treat them as one edge and call their number the {\bfseries edge multiplicity}. With this description, we have
$$
(\Phi^\mu_{D,x}f)(g)=\sum_{g\to h}f(h),
$$
and therefore $\Phi^\mu_{D,x}$ becomes the adjacency matrix for $\Gamma^\mu_{D,x}$ if we treat the vertices of $\Gamma^\mu_{D,x}$ as the characteristic functions of elements of $G(F)\backslash G(\AA)/K(D)$.

An {\bfseries automorphic form} is a function on $G(F)\backslash G(\AA)/K(D)$ satisfying a certain moderate growth condition. For a precise meaning of this, we refer to \cite{AB24} since this condition will turn out to always be satisfied in our examples (see Remark \ref{r:bounded_growth}). We are interested in the dimension of the space of $\lambda$-eigenforms for $\Phi^\mu_{D,x}$, that is, automorphic forms $f$ satisfying $\Phi^\mu_{D,x}f=\lambda f$. As we just said, this will often coincide with the dimension of the space of all $\lambda$-eigenvectors for $\Phi^\mu_{D,x}$, which we will denote by $\dim_\lambda\Gamma^\mu_{D,x}$.

\section{Transition between adelic and geometric sides}\label{section 2}
In this section, we outline the geometric interpretation of Hecke operators and their graphs for an arbitrary smooth geometrically connected projective curve $X$ over $\FF_q$. For nice expositions in the $\GL_n$ case, see \cite{Lor13,Alv19,AKM25}.

First, recall the bijection
\begin{equation*}
    G(F)\backslash G(\AA)/G(\cO)\simeq \Bun_G(X)(\FF_q).
\end{equation*}
Note that the right-hand side is considered not as the groupoid of all $G$-bundles on $X$, but as the set of their equivalence classes. We will use this convention in the whole paper. The bijection goes as follows. Choose a principal bundle $\cP$ on $X$ and its trivializations $\phi_x:\cP|_{\cO_x}\simeq G_{\cO_x}$ for any $x\in X$, including the generic point $\eta$. This gives a composition of isomorphisms
$$
G_F=G_{\cO_x}|_\eta\xrightarrow{\phi_x^{-1}|_\eta}\cP|_{\cO_x}|_\eta=\cP_\eta\xrightarrow{\phi_\eta}G_F.
$$
Because we got a multiplication-invariant automorphism of $G_F$, it is given by multiplication by an element $g_x\in G(F)$. These form an element $(g_x)_{x\in |X|}\in G(\AA)$. This element depends on our choices of the trivializations $\phi_x$ which are unique up to the coordinate change by $G(\cO_x)$, and $\phi_\eta$ which is unique up to $G(F)$. Therefore, the element $(g_x)_{x\in |X|}\in G(\AA)$ is well-defined as an element of $G(F)\backslash G(\AA)/G(\cO)$.

We can generalize this bijection to the ramified case. Let $D$ be an effective divisor on $X$, and let $\cO_{D}$ be the ring of functions on $D$.

\begin{definition}
    The stack $\Bun_{G,D}(X)$ is given by 
    $$
    \Bun_{G,D}(X)(S):=\{(\cP, \psi):\cP\in\Bun_G(X)(S), \psi:\cP|_{\operatorname{Spec}\cO_{D}\times S}\simeq G_{\operatorname{Spec}{\cO_D}\times S}\}
    $$
    It is equipped with a map $p_{D}:\Bun_{G,D}(X)\to \Bun_{G}(X)$ sending $(\cP,\psi)$ to $\cP$.
\end{definition}

On the automorphic side, let $K(D)$ be the kernel of the projection map $G(\cO)\to G(\cO_{D})$. Then
\begin{equation*}
    G(F)\backslash G(\AA)/K(D)\simeq \Bun_{G,D}(X)(\FF_q).
\end{equation*}

The map is similar to the one in the unramified case. Choose $(\cP, \phi)\in\Bun_{G,D}(X)(\FF_q)$. As before, we choose trivializations $\phi_y:\cP|_{\cO_y}\simeq G_{\cO_y}$ for any $y\in X$, but we require that $\phi_{y}|_{\cO_{y}/\pi_{y}^{d_y}}=\psi$. We prove that such extensions exist and form a $K(D)'$-torsor, where 
$$
K(D)':=\ker\left(\prod_{y}G(\cO_{y})\to G(\cO_D)\right).
$$
Choose some trivialization $\tau:\cP|_{\prod_{i}\cO_{x_i}}\simeq G_{\prod_{i}\cO_{x_i}}$ and let $h\in G(\cO_D)$ be the element giving the automorphism $\psi\circ(\tau^{-1}|_{\cO_D})$. Then the choices for $\phi_{x_i}$ are in bijection with $g_i\in G(\cO_{x_i})$ such that $g_i\equiv h\pmod{\pi_{x_i}^{d_i}}$. Because of smoothness of $G$ and completeness of $\cO_x$, the maps $G(\cO_{x_i})\to G(\cO_{x_i}/\pi_{x_i}^{d_i})$ are surjective. This proves our claim.

With this choice, we just got an element in $G(F)\backslash G(\AA)/K(D)$, and therefore a diagram
\begin{center}
    \begin{tikzcd}
{G(F)\backslash G(\AA)/G(\cO)}         & \Bun_G(X)(\FF_q)\arrow[l, "\sim"']           \\
G(F)\backslash G(\AA)/K(D) \arrow[u,"p_{D}"'] & {\Bun_{G,D}(X)(\FF_q)} \arrow[l, "\sim"'] \arrow[u,"p_{D}"']\fullstopbelow
\end{tikzcd}
\end{center}
Moreover, we see that the map $p_{D}$ of stacks is a torsor over $K(D)'$ considered as a group scheme over $\FF_p$ in a suitable way.

\subsection{Hecke correspondences for $\GL_n$} \label{Hecke Corres.GLn}
We use the notation from Section \ref{s:preliminaries_on_Hecke_graphs}. For $\GL_n$, there is an equivalence between principal $G$-bundles and vector bundles of rank $n$. Therefore, we will utilize the language of vector bundles. Let $\omega_r$ be the $r$-th fundamental coweight of $\GL_n$ and $\Phi_{D,x}^{\omega_r}$ be the Hecke operator corresponding to it. Note that $\Delta_x = \mathrm{diag}(\pi_x I_r, I_{n-r})$ and $\Delta_y=\mathrm{id}$ for $y\ne x$. For convenience, we set $K:=K(D)$.

\begin{proposition}\label{p:Hecke_correspondence_for_vector_bundles}
    Choose $(\cE,a)\in\mathrm{Bun}_{G, D}(\FF_q)$. Edges $(\cE,a)\to(\cE',b)$ in $\mathrm{Bun}_{G, D}(\FF_q)$ for the Hecke operator $\Phi_x^{\omega_r}$ are in bijection with equivalence classes of exact sequences
$$
\{(\cE',b),\,0\to \cE'\xrightarrow{f}\cE\to k_x^{\oplus r}\to 0\}/\Aut(\cE')\times\GL(k_x^r)
$$
 such that 
    \begin{itemize}
        \item For certain (equivalently, any) $\cO_x$-linear lifts $\tilde{a}: \cE_x \xrightarrow{\sim} \cO_x^n$ and $\tilde{b}: \cE'_x \xrightarrow{\sim} \cO_x^n$ of the level structures $a$ and $b$, the localization $f_x$ satisfies the compatibility condition
$$
    \tilde{a} \circ f_x \circ \tilde{b}^{-1} \in K_{x} \Delta_x K_{x}.
$$
    \item For each $y\ne x$, $f|_{\cO_{d_y[y]}}\circ b|_{\cO_{d_y[y]}}=a|_{\cO_{d_y[y]}}$. In particular, $b|_{\cO_D}$ is uniquely determined by $f$ and $a$.
    \end{itemize}
\end{proposition}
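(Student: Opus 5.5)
We translate the adelic description of edges into lattice language and check that the level-structure conditions match. Fix $(\cE,a)$ corresponding to $g\in G(F)\backslash G(\AA)/K$. Choose local trivializations $\phi_y:\cE_y\simeq\cO_y^n$ lifting $a$ at every $y\in\supp D$, together with a generic trivialization $\phi_\eta$. By the adelic description, the edges out of $g$ are the cosets $g\tau_i\Delta K$, where $K\Delta K=\bigsqcup_i\tau_i\Delta K$ and each $\tau_i$ is supported at $x$. The approach is to build, for each $i$, a bundle $\cE'$ with level structure $b$ and an injection $f:\cE'\to\cE$, and then to show this assignment is a bijection onto the stated equivalence classes.

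\emph{Construction.} Let $\cE'$ agree with $\cE$ away from $x$. At $x$, set $\cE'_x:=\phi_x^{-1}\big(\tau_i\Delta_x\cO_x^n\big)\subset\cE_x$. Since $\tau_i\Delta_x\cO_x^n\subset\cO_x^n$ has colength $r$, this lattice gives a subsheaf $f:\cE'\hookrightarrow\cE$ with cokernel isomorphic to $k_x^{\oplus r}$. For the level structure, take $b:=a\circ f$ at each $y\neq x$, which is forced by the second bullet. At $x$ (when $d_x\ge1$), take $b$ to be the reduction of $\tilde b:=\Delta_x^{-1}\tau_i^{-1}\phi_x\circ f_x$. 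Then $\tilde a\circ f_x\circ\tilde b^{-1}=\tau_i\Delta_x\in K_x\Delta_xK_x$, so both bullets hold, and the adelic class of $(\cE',b)$ is $g\tau_i\Delta$.

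\emph{Inverse map and independence of choices.} Conversely, given a sequence $0\to\cE'\xrightarrow{f}\cE\to k_x^{\oplus r}\to0$ satisfying the conditions, choose lifts $\tilde a,\tilde b$. The matrix $\tilde a f_x\tilde b^{-1}$ lies in $K_x\Delta_xK_x$, so it equals $\tau_i\Delta_x k$ for a unique $i$ and some $k\in K_x$. Replacing $\tilde b$ by $k\tilde b$, which reduces to the same $b$ because $k\in K$, puts the sequence into the form constructed above. Away from $x$ the map $f$ is an isomorphism, so the trivializations of $\cE'$ can be taken to be those of $\cE$ composed with $f$. The compatibility $f\circ b=a$ on $\cO_{d_y[y]}$ ensures that these trivializations satisfy the level condition at $y$.

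We then check three points. First, changing the lifts $\tilde a,\tilde b$ multiplies the matrix on the left and right by elements of $K_x$, so the ``any/certain'' clause holds and the index $i$ is well defined. Second, two sequences related by $\Aut(\cE')\times\GL(k_x^r)$ have the same image lattice $f(\cE')\subset\cE$ and the same induced $b$ up to isomorphism; conversely, equal indices $i$ give equal image lattices. Third, the multiplicity of edges is respected: distinct $\tau_i$ correspond to distinct sublattices, namely distinct left cosets $\tau_i\Delta K$, and hence to inequivalent sequences even when the targets $(\cE',b)$ are isomorphic.

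The main obstacle is this last bookkeeping step. We must show that the equivalence relation (modding out by $\Aut(\cE')$ acting on $f$ and $b$ jointly) identifies exactly the sequences with the same coset $\tau_i\Delta K$, no more and no fewer. Automorphisms of $\cE'$ that do not preserve $b$ must not be quotiented away. The first thing to try is to define equivalence as isomorphism of the pair $(\cE',b)$ over $\cE$, and to verify that this is precisely the coset datum $\tau_i\Delta_xK_x$ at $x$ together with the forced data at the other points $y\neq x$.
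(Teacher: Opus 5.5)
\paragraph{Verdict.} Your construction is the one the paper uses, but the injectivity step as you argue it is wrong in exactly the new case $d_x\ge1$.

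\paragraph{Where you agree with the paper.} The paper also starts from the neighbours $g\tau_i\Delta$ of $g$. It rewrites them as the change of trivializations $\Delta_x\phi'_x=\tau_i^{-1}\phi_x$, which is your $\tilde b=\Delta_x^{-1}\tau_i^{-1}\phi_x\circ f_x$, with the level structure forced by $f$ at $y\neq x$. Your inverse map is more explicit than the paper's sketch.

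\paragraph{The gap.} You leave the injectivity bookkeeping open, and the one argument you give for it is false. You claim that distinct $\tau_i$ give distinct sublattices. A sublattice $h\cO_x^n$ only remembers the coset $hG(\cO_x)$, not $hK_x$. By Theorem~\ref{t:ramified_Hecke_correspondences}(i), when $d_x\ge1$ all the lattices $\tau_i\Delta_x\cO_x^n$ coincide with $\Delta_x\cO_x^n$. So all $|k_x|^{r(n-r)}$ edges out of $(\cE,a)$ share a single subsheaf $f(\cE')\subset\cE$, and they are distinguished only by $b$ at $x$. An equivalence detected by the image lattice would therefore produce one edge instead of $|k_x|^{r(n-r)}$.

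A smaller imprecision: the index $i$ is well defined only after fixing $\tilde a=\phi_x$. Replacing $\tilde a$ by $k'\tilde a$ with $k'\in K_x$ permutes the cosets $\tau_i\Delta_xK_x$. This is harmless, since it amounts to changing the representative $g$.

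\paragraph{How to close it.} Let $\Aut(\cE')$ act jointly by $(f,b)\mapsto(f\sigma,b\sigma)$; the factor $\GL(k_x^r)$ only changes the quotient map.
\begin{itemize}
\item Take two admissible data $(\cE'_j,b_j,f_j)$, $j=1,2$. Set $h_j:=\phi_x f_{j,x}\tilde b_j^{-1}\in K_x\Delta_xK_x$ and $\sigma:=f_1^{-1}f_2$.
\item A priori $\sigma$ is an isomorphism $\cE'_2\to\cE'_1$ only over $X\setminus x$. It is the only candidate, since any $\sigma$ with $f_1\sigma=f_2$ equals it.
\item Then $h_1^{-1}h_2=\tilde b_1\sigma_x\tilde b_2^{-1}$. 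Hence $h_1K_x=h_2K_x$ if and only if $\sigma$ extends to an isomorphism $\cE'_2\simeq\cE'_1$ with $b_1\circ\sigma\equiv b_2\pmod{\pi_x^{d_x}}$ at $x$.
\item At $y\neq x$, the equality $b_1\sigma=b_2$ is automatic from $b_j=a\circ f_j$.
\end{itemize}
So two data are equivalent exactly when they give the same coset $\phi_xf_x\tilde b^{-1}K_x=\tau_i\Delta_xK_x$. Combined with your construction and inverse map, this gives the bijection with the correct multiplicities.
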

\begin{proof}
The Hecke modification of bundles gives the desired exact sequence. Since the support of $\mathcal{E} / \mathcal{E}'$ is exactly $\{x\}$, the morphism $f$ induces an isomorphism of vector bundles over $X \setminus x$. 

Since the support of $D'$ is entirely contained in $X \setminus x$, the restriction of $f$ to the formal neighborhood of any $y \in \operatorname{supp}D\setminus x$ is an isomorphism of free $\mathcal{O}_{d_y[y]}$-modules.

Decompose $K\Delta K=\sqcup_i \tau_i\Delta K$ into right $K$-cosets. In the bijection described above, the pair $(\cE,a)$ is given by trivializations $(\phi_y)_y$ with $\phi_{y}$ lifting $\tilde a|_{\cO_{d_y[y]}}$. They give $g=(\phi_\eta\phi_y^{-1})_y\in G(\AA)$. Hecke correspondence asserts that the neighbors of $g$ in the Hecke graph are precisely $g\tau_i\Delta$. In terms of trivializations, the family $(\phi_y)_y$ is connected to $(\phi_y')_y$ with $\phi_{y}'^{-1}=\phi_{y}^{-1}\tau_i\Delta$ for $y\in\operatorname{supp}D$ and $\phi_y'=\phi_y$ for $y\notin\operatorname{supp}D$. Again, we treat $\phi'_y$ as lifting $\tilde b$. The first condition can be rewritten as
$$
\Delta_y\circ \phi_y'=\tau_i^{-1}\circ\phi_y,\quad y\in \operatorname{supp} D.
$$

By the unramified case, $\tau_i^{-1}|_{\cO_y}$ is precisely the map $f_y$. If $y=x$, this gives the precise condition stated. If $y\ne x$, then $\Delta_y=1$ and there is a unique solution for $\phi_y'$ over $\cO_{y}$ and over $\cO_{d_y[y]}$, given by the desired condition. Therefore, we can restrict modulo $\pi_{y}^{d_y}$ in this case, so we are done.
\end{proof}

\begin{theorem}\label{t:ramified_Hecke_correspondences}
 For a point $(\cE, a) \in \mathrm{Bun}_{G, d[x]}(\FF_q)$, where $a: \cE_x \otimes (\cO_x/\pi_x^d) \xrightarrow{\sim} (\cO_x/\pi_x^d)^n$ is a level structure and $d\ge 1$, we have the following: 
\begin{enumerate}[label=(\roman*)]
    \item For all edges $(\cE,a)\to(\cE', b)$,  the subbundle $\cE' \subset \cE$ is unique, independent of the outgoing edge, and satisfies $\cE/\cE'\simeq k_x^{\oplus r}$.
    \item The double coset $K_x\Delta_x K_x$ is decomposed into left $K$-cosets as
    $$  
    K_x\Delta_x K_x=\bigsqcup_{C}\mat{I_r}{\pi_x^{d}C}0{I_{n-r}}\Delta_x K_x=\bigsqcup_{C}\Delta_x\mat{I_r}{\pi_x^{d-1}C}0{I_{n-r}} K_x,
    $$
    where $C$ runs through matrices in $M_{r \times (n-r)}(k_x)$. In particular, the outgoing degree of each vertex equals $|k_x|^{r(n-r)}$.
\end{enumerate}
\end{theorem}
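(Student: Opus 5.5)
The plan is to prove (ii) first by a direct matrix computation in $\GL_n(F_x)$, and then to read (i) off from (ii) using Proposition~\ref{p:Hecke_correspondence_for_vector_bundles}.

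For (ii), write $K_x=\ker(\GL_n(\cO_x)\to\GL_n(\cO_x/\pi_x^d))$. Since $d\ge1$, every element of $K_x$ is congruent to $I$ modulo $\pi_x$. Set $\Delta=\Delta_x=\mathrm{diag}(\pi_xI_r,I_{n-r})$.

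\begin{enumerate}
\item Compute the conjugate $\Delta^{-1}K_x\Delta$ blockwise. For $k=\smat{A}{B}{C'}{E}\in K_x$ we get
$$
\Delta^{-1}k\Delta=\mat{A}{\pi_x^{-1}B}{\pi_x C'}{E},
$$
with $A\equiv I$, $E\equiv I$ and $B,C'\equiv0$ modulo $\pi_x^d$.
\item Determine the left cosets $\tau\Delta K_x$ with $\tau\in K_x$. Two elements $\tau,\tau'\in K_x$ give the same coset exactly when $\tau^{-1}\tau'\in K_x\cap\Delta K_x\Delta^{-1}$. By the computation above, $\Delta K_x\Delta^{-1}$ consists of the matrices $\smat{A}{\pi_x B}{\pi_x^{-1}C'}{E}$. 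So $K_x\cap\Delta K_x\Delta^{-1}$ is the subgroup of $K_x$ whose upper-right block is divisible by $\pi_x^{d+1}$.
\item Hence $K_x/(K_x\cap\Delta K_x\Delta^{-1})$ is identified with the upper-right block modulo $\pi_x^{d+1}$, given that it is $\equiv0$ modulo $\pi_x^d$. That is exactly $\pi_x^dC$ with $C\in M_{r\times(n-r)}(k_x)$.
\item To justify this identification, factor $k\in K_x$ as $\smat{I}{\pi_x^dC}{0}{I}\cdot k'$. This is possible because $K_x$ has an Iwahori-type factorization: lower unipotent times Levi times upper unipotent, all congruent to $I$. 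This shows that the matrices $\smat{I}{\pi_x^dC}{0}{I}$ form a full set of representatives.
\item For the second form of the decomposition, use the identity
$$
\mat{I}{\pi_x^dC}0I\Delta=\Delta\mat{I}{\pi_x^{d-1}C}0I.
$$
\item The count of cosets is $|k_x|^{r(n-r)}$, which gives the outgoing degree.
\end{enumerate}

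For (i), fix the adelic representative $g$ of $(\cE,a)$. By (ii), its neighbours are $g\tau_C\Delta$. In terms of lattices at $x$, the new lattice is $\phi_x^{-1}\tau_C\Delta\,\cO_x^n$. Since $\tau_C\in K_x\subset\GL_n(\cO_x)$, we have $\tau_C\Delta\cO_x^n=\tau_C(\pi_x\cO_x^r\oplus\cO_x^{n-r})$. Because $\tau_C\equiv I\pmod{\pi_x}$ and $d\ge1$, this lattice equals $\pi_x\cO_x^r\oplus\cO_x^{n-r}$ for every $C$. Indeed, $\tau_C$ only adds $\pi_x^d C$ times the second block to the first block, and that lies in $\pi_x\cO_x^r$. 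So the underlying subsheaf $\cE'\subset\cE$ is the same for all edges; only the level structure $b$ varies with $C$. The quotient $\cE/\cE'\simeq\cO_x^r/\pi_x\cO_x^r\simeq k_x^{\oplus r}$. Proposition~\ref{p:Hecke_correspondence_for_vector_bundles} translates this back into the exact-sequence language.

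The main obstacle is the representative count in (ii). One must verify that the factorization of $K_x$ really holds, and that distinct $C$ give distinct cosets. The second point amounts to showing $\smat{I}{\pi_x^d(C-C')}{0}{I}\notin\Delta K_x\Delta^{-1}$ unless $C=C'$, which is immediate from the divisibility criterion in step 2.
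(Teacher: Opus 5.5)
Your proposal is correct and is essentially the paper's argument. You use the same identification $K_x/(K_x\cap\Delta_xK_x\Delta_x^{-1})\simeq K_x\Delta_xK_x/K_x$, the same description of the intersection as the elements of $K_x$ whose upper-right block is divisible by $\pi_x^{d+1}$, the same representatives, and the same observation for (i) that $K_x$ preserves the lattice $\pi_x\cO_x^r\oplus\cO_x^{n-r}$. The differences are only cosmetic: you prove (ii) first and check lattice invariance just for the representatives $\tau_C$, whereas the paper conjugates arbitrary $k_1\in K_x$ and then cites Beauville--Laszlo. Also, your covering step needs no Iwahori factorization: since $d\ge 1$, the map sending $k$ to its upper-right block divided by $\pi_x^d$, reduced mod $\pi_x$, is a homomorphism $K_x\to M_{r\times(n-r)}(k_x)$ whose kernel is exactly that intersection.
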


\begin{proof}
Let $\tilde{a}: \cE_x \xrightarrow{\sim} \cO_x^n$ be an $\cO_x$-linear lift of the level structure $a$. A Hecke modification assigns an inclusion of $\cO_x$-modules $f: \cE'_x \hookrightarrow \cE_x$ and a new lifted level structure $\tilde{b}: \cE'_x \xrightarrow{\sim} \cO_x^n$ such that the matrix representing $f$ satisfies:
\begin{equation*}
    \tilde{a} \circ f \circ \tilde{b}^{-1} = H \in K_x \Delta_x K_x.
\end{equation*}
The image of $f$ inside $\cE_x$ corresponds under $\tilde{a}$ to a lattice $N \subset \cO_x^n$. By definition, $N = H \cO_x^n$. Since $H$ lies in the double coset, we may write $H = k_1 \Delta_x k_2$ for some $k_1, k_2 \in K_x$. Because $k_2 \in \mathrm{GL}_n(\cO_x)$, it stabilizes the standard lattice, hence $N = k_1 \Delta_x \cO_x^n$. 

We claim that $N = \Delta_x \cO_x^n$ for all $k_1 \in K_x$. To verify this, we compute the conjugation $\Delta_x^{-1} k_1 \Delta_x$. Write $k_1 = I_n + \pi_x^d X$ for some matrix $X \in M_n(\cO_x)$. Then:
\begin{align*}
    \Delta_x^{-1} k_1 \Delta_x &= I_n + \pi_x^d \begin{pmatrix} \pi_x^{-1} I_r & 0 \\ 0 & I_{n-r} \end{pmatrix} \begin{pmatrix} X_{11} & X_{12} \\ X_{21} & X_{22} \end{pmatrix} \begin{pmatrix} \pi_x I_r & 0 \\ 0 & I_{n-r} \end{pmatrix} \\
    &= I_n + \begin{pmatrix} \pi_x^d X_{11} & \pi_x^{d-1} X_{12} \\ \pi_x^{d+1} X_{21} & \pi_x^d X_{22} \end{pmatrix}\in\GL(\cO_x).
\end{align*}
This implies that $k_1 \Delta_x \cO_x^n = \Delta_x \cO_x^n$. The $\cO_x$-lattice $N$ is therefore invariant under the action of the entire double coset. We then have $\cE'_x=\tilde a^{-1}(\Delta_x\cO_x^n)\subset \cE_x$, and via this identification, $f$ is the inclusion map. By the Beauville-Laszlo theorem, the category of vector bundles on $X$ is equivalent to the category of gluing data $(\cE'_{X \setminus x}, \cE'_x, \beta)$, where $\cE'_{X \setminus x}$ is a vector bundle on the punctured curve, $\cE'_x$ is a free $\cO_x$-module of rank $n$ on the formal disk $D_x = \mathrm{Spec}(\cO_x)$, and $\beta: \cE'_{X \setminus x} \otimes_{\cO_{X \setminus x}} F_x \xrightarrow{\sim} \cE'_x \otimes_{\cO_x} F_x$ is an isomorphism over the formal punctured disk $D_x^\times = \mathrm{Spec}(F_x)$. This proves that the subbundle $\cE'$ is uniquely determined. 

To prove the second part, we use the isomorphism
$$
K_x/(\Delta_xK_x\Delta_x^{-1}\cap K_x)\simeq K_x\Delta_x K_x/K_x
$$
sending the trivial coset to $\Delta_xK_x/K_x$. Taking the transpose of the above computation, we get that
$$
\Delta_xK_x\Delta_x^{-1}\cap K_x=I_n + \begin{pmatrix} \pi_x^d X_{11} & \pi_x^{d+1} X_{12} \\ \pi_x^{d} X_{21} & \pi_x^d X_{22} \end{pmatrix}.
$$
For $C\in M_{r \times (n-r)}(k_x)$, we have
\begin{align*}
    \mat{I_r}{\pi_x^{d}C}0{I_{n-r}}(\Delta_xK_x\Delta_x^{-1}\cap K_x)&=I_n + \begin{pmatrix} \pi_x^d (X_{11}+\pi_x^dCX_{21}) & \pi_x^{d+1} X_{12}+\pi_x^d C(1+\pi_x^dX_{22}) \\ \pi_x^{d} X_{21} & \pi_x^d X_{22} \end{pmatrix}\\
    &=I_n + \begin{pmatrix} \pi_x^d X_{11}' & \pi_x^d C+\pi_x^{d+1} X_{12}' \\ \pi_x^{d} X_{21} & \pi_x^d X_{22} \end{pmatrix},
\end{align*}
where $X_{11}':=X_{11}+\pi_x^dCX_{21}$ and $X_{12}':=X_{12}+\pi_x^{d-1}CX_{22}$. These cosets are clearly disjoint and cover all $K_x$, so we are done.

\end{proof}

We finish this section with a description of the way to transfer from $G=\GL_n$ to $\bar G=\PGL_n$. Let $Z$ be the center of $G$. As before, assume ramification at a divisor $D=D'+d[x]$ and denote $K:=K(D)$.
\begin{proposition}\label{p:from_GL_to_PGL}
Let $\Gamma_G$ and $\Gamma_{\bar G}$ be the Hecke graphs for $G$ and $\bar G$ associated with $\Phi_x^{\omega_r}$. Then
\begin{enumerate}[label=(\roman*)]
    \item The quotient of the set of vertices of $\Gamma_G$ by the action of the abelian group $Z(F)\backslash Z(\AA)/(K\cap Z(\AA))$ is precisely $\bar G(F)\backslash \bar G(\AA)/\bar K=\Gamma_{\bar G}$. Geometrically, vertices of $\Gamma_{\bar G}$ are vector bundles with a level structure at $D'$ up to tensoring with a line bundle with a level structure at $D'$.
    \item For $a,b\in \Gamma_{\bar G}$, fix a lift $\tilde a$ of $a$ to $\Gamma_G$. Then the projection between $\Gamma_G$ and $\Gamma_{\bar G}$ gives a bijection between the set of edges $a\to b$ in $\Gamma_{\bar G}$ and the set of edges between $a$ and all possible lifts of $b$ to $\Gamma_G$. In other words, $\Gamma_{\bar G}$ is obtained from $\Gamma_G$ by quotient of the set of vertices by the action of $Z(F)\backslash Z(\AA)/(K\cap Z(\AA))$ while keeping the edges.
\end{enumerate}
\end{proposition}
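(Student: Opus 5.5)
Both parts are verified on the adelic side, using the bijection $G(F)\backslash G(\AA)/K(D)\simeq \Bun_{G,D}(X)(\FF_q)$ and the description of edges as $g\mapsto g\tau_i\Delta$ with $K\Delta K=\bigsqcup_i\tau_i\Delta K$. Write $\bar K$ for the image of $K=K(D)$ in $\bar G(\AA)$.

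\emph{Part (i).} First I claim that $G(\AA)\to \bar G(\AA)$ is surjective. By Hilbert 90, $H^1(L,\GG_m)=0$ for every field $L$, so $G(L)\to\bar G(L)$ is surjective for $L=F,F_y$. For almost all $y$ the same holds integrally, $\GL_n(\cO_y)\to\PGL_n(\cO_y)$, since $\Pic(\cO_y)=0$. Hence
$$
\bar G(F)\backslash\bar G(\AA)/\bar K = G(F)Z(\AA)\backslash G(\AA)/K .
$$
The right-hand side is the quotient of $G(F)\backslash G(\AA)/K$ by left multiplication by $Z(\AA)$, which is central. This action factors through $Z(F)\backslash Z(\AA)/(K\cap Z(\AA))$, because $Z(F)\subset G(F)$ and $Z(\AA)\cap K$ acts through right multiplication by $K$.

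One point needs checking: $\bar K$ must equal the image of $K$, not the possibly larger group $\ker(\PGL_n(\cO)\to\PGL_n(\cO_D))$. I will use $\bar K:=\mathrm{im}(K)$ as the level for $\bar G$; this is the convention implicit in the statement. The geometric reformulation then follows from the dictionary between $Z(\AA)/(Z(\AA)\cap K)$ and line bundles with level structure at $D$. Here a line bundle trivialized along $D$ acts by tensoring, and the level structure at $D$ is adjusted via the identification $Z\simeq\GG_m$.

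\emph{Part (ii).} The Hecke double coset for $\bar G$ is the image $\bar K\bar\Delta\bar K$. So I will take the coset representatives $\bar\tau_i$ to be the images of the $\tau_i$, and show that the images of the cosets $\tau_i\Delta K$ are pairwise distinct and exhaust $\bar K\bar\Delta\bar K/\bar K$. This is the main point. Exhaustion is immediate. For distinctness, suppose $\tau_i\Delta K$ and $\tau_j\Delta K$ have the same image, i.e.\ $\tau_i\Delta k=z\tau_j\Delta$ with $z\in Z(F_x)$. Taking determinants forces $z\in Z(\cO_x)$, and then $z\in Z(\cO_x)\cap K\Delta K\Delta^{-1}K$. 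Reducing modulo $\pi_x^{d}$, using the explicit representatives in Theorem~\ref{t:ramified_Hecke_correspondences}, shows that $z\equiv 1$ modulo $\pi_x^d$. Hence $z\in K$, so $i=j$. When $d_x=0$ the unramified case is standard.

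Finally I lift edges. Fix the lift $\tilde a=G(F)gK$ of $a$. The edges out of $a$ in $\Gamma_{\bar G}$ are $\bar g\bar\tau_i\bar\Delta$, indexed by $i$. These correspond bijectively to the edges $g\tau_i\Delta$ out of $\tilde a$, and each such edge lands in some lift of its target. Grouping by target $b$ gives the asserted bijection with multiplicities. I expect the distinctness of cosets modulo the center to be the only step requiring care.
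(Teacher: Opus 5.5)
Your proposal is correct and follows essentially the same route as the paper. The paper reduces (ii) to injectivity of $K\Delta K/K\to K\Delta KZ(\AA)/KZ(\AA)$ and checks at $x$, by comparing diagonal blocks modulo $\pi_x^{d}$, that the central factor lies in $K_x$; this is exactly your distinctness step. Your worry about $\bar K$ is unnecessary: $\PGL_n(R)=\GL_n(R)/R^\times$ for local $R$, and $\cO_y^\times\to(\cO_y/\pi_y^{d_y})^\times$ is surjective, so the image of $K$ already equals the congruence kernel of $\PGL_n(\cO)\to\PGL_n(\cO_D)$.
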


\begin{proof}
The first item is clear. To prove $(ii)$, recall that after decomposing $K\Delta K$ into left cosets $\bigsqcup_i\tau_i\Delta K$, we get that all neighbors of $g\in \Gamma_G$ are $G(F)g\tau_i K$, counted with multiplicities. Same applies to $\bar G$. Therefore, item $(ii)$ is equivalent to the map
$$
K\Delta K/K\to K\Delta KZ(\AA)/KZ(\AA)
$$
being bijective. Equivalently,
$$
\Delta^{-1} K\Delta Z(\AA)\cap K=\Delta^{-1} K\Delta\cap K.
$$

Since $Z(\AA)\subset \prod_{y\in |X|}Z(F_y)$, it is sufficient to prove that
$$
\Delta_y^{-1} K_y\Delta_y Z(F_y)\cap K_y=\Delta_y^{-1} K_y\Delta_y\cap K_y,\qquad \forall_{y\in |X|}.
$$
For $y\ne x$, this is evident since $\Delta_y=\mathrm{id}$ in this case. Thus, we are left with the case $y=x$.

Take $\gamma \in \Delta_x K_x\Delta_x^{-1} Z(F_x)\cap K_x$. Write
$$
    \gamma = \Delta^{-1}_x p \Delta_x z,\qquad p\in K_x,z\in Z(F_x).
$$
It is enough to prove that $z \in K_x$, which means that $z\in Z(\cO_x)$ and $z \equiv I_n \pmod{\pi_x^d}$. 

Write the matrix $p \in K_x$ in block form corresponding to the partition $(r, n-r)$:
$$
    p = \begin{pmatrix} A & B \\ C & D \end{pmatrix}.
$$
Writing $z=\zeta\cdot\mathrm{id}$ for $\zeta\in F_x^\times$, we compute
$$
    \gamma=\Delta^{-1} p \Delta z= \begin{pmatrix} \pi_x^{-1} I_r & 0 \\ 0 & I_{n-r} \end{pmatrix} \begin{pmatrix} A & B \\ C & D \end{pmatrix} \begin{pmatrix} \pi_x\zeta I_r & 0 \\ 0 & \zeta I_{n-r} \end{pmatrix} = \begin{pmatrix} A\zeta & \pi_x^{-1} B\zeta \\ \pi_x C\zeta & D\zeta \end{pmatrix}.
$$
Since $\gamma,p\in K_x$, we must have
$$
    A, A\zeta \equiv I_r \pmod{\pi_x^d}.
$$
This implies that $\zeta \equiv 1 \pmod{\pi_x^d}$, and therefore $z\in K_x$, as desired.
\end{proof}

\subsection{Hecke correspondences for general $G$}\label{Hecke Corres.G} We use the notation from Section \ref{s:preliminaries_on_Hecke_graphs}. We describe the Hecke operator $\Phi_{D, x}^\mu$ in adelic and geometric languages. Since the proofs are similar to the $\GL_n$ case, we will omit them.

\begin{proposition}\label{t:general_hecke_correspondence}
Fix a vertex $(\mathcal{P}, \psi) \in \Bun_{G,D}(k)$. The directed edges $(\mathcal{P}, \psi) \rightarrow (\mathcal{P}', \psi')$ in the Hecke graph $\Gamma_{D, \mu, x}$ of $\Phi_{D, x}^\mu$ are in bijection with the set of equivalence classes of pairs $(\mathcal{P}', \beta)$, where $\mathcal{P}'$ is a principal $G$-bundle on $X$ and $\beta: \mathcal{P}|_{X \setminus x} \xrightarrow{\sim} \mathcal{P}'|_{X \setminus x}$ is an isomorphism, such that:
\begin{enumerate}[label=(\roman*)]
    \item For any trivializations $\tilde{\psi}_x, \tilde{\psi}'_x$ of $\mathcal{P}|_{\mathcal{O}_x},\mathcal{P}'|_{\mathcal{O}_x}$ lifting $\psi_x,\psi_x'$, respectively, the local transition element $\tilde{\psi}_x \circ \beta^{-1} \circ (\tilde{\psi}'_x)^{-1} \in G(F_x)$ lies in the double coset $K(D) \Delta^\mu K(D)$.
    \item The level structure is strictly preserved away from the modification point, meaning $\psi'_y \circ \beta|_{D'} = \psi_y$ for any $y\in\operatorname{supp}D'$.
\end{enumerate}

\end{proposition}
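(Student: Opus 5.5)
We transfer the adelic description of the Hecke graph to the geometric side, exactly as in the proof of Proposition~\ref{p:Hecke_correspondence_for_vector_bundles}, replacing lattices and exact sequences by modifications of $G$-torsors. Represent $(\cP,\psi)$ by an adele $g=(\phi_\eta\phi_y^{-1})_y\in G(\AA)$, where the local trivializations $\phi_y$ of $\cP|_{\cO_y}$ lift $\psi$ at $y\in\supp D$; such lifts exist and form a $K(D)$-torsor by the discussion preceding the diagram in Section~\ref{section 2}. Decompose $K(D)\Delta^\mu K(D)=\bigsqcup_i\tau_i\Delta^\mu K(D)$ with $\tau_i$ supported at $x$. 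By definition the outgoing edges of $g$ are the classes $g\tau_i\Delta^\mu$, counted with multiplicity. Since $\tau_i\Delta^\mu$ is trivial away from $x$, the adele $g\tau_i\Delta^\mu$ corresponds to the bundle $\cP'$ obtained by Beauville--Laszlo gluing of $\cP|_{X\setminus x}$ with the trivial torsor on $\Spec\cO_x$. The gluing uses the transition function changed at $x$ by $\tau_i\Delta^\mu$. This produces the pair $(\cP',\beta)$, where $\beta:\cP|_{X\setminus x}\simeq\cP'|_{X\setminus x}$ is the tautological identification. The trivializations $\phi'_y=\phi_y$ for $y\neq x$ induce the level structure $\psi'$, which gives condition (ii). At $x$, the new trivialization $\phi'_x$ satisfies $\phi_x\circ\beta^{-1}\circ\phi_x'^{-1}=\tau_i\Delta^\mu$, so condition (i) holds for these particular lifts.

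Next we check that condition (i) is independent of the chosen lifts. Changing $\tilde\psi_x$ or $\tilde\psi'_x$ multiplies the transition element on the left or right by an element of $K(D)_x$, which preserves the double coset $K(D)_x\Delta^\mu_xK(D)_x$. This justifies the phrase ``for any trivializations''. For $y\in\supp D'$, condition (ii) determines $\psi'_y$ uniquely from $\beta$ and $\psi_y$. This is the same argument as in the $\GL_n$ case, where $\Delta_y=1$ forces $\phi'_y$ to agree with $\phi_y$ modulo $\pi_y^{d_y}$.

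For the converse, start with a pair $(\cP',\beta)$ satisfying (i) and (ii). Choose lifts and write the transition element as $h\in K(D)_x\Delta^\mu_xK(D)_x$, so that $h=\tau_i\Delta^\mu k$ for a unique $i$ and some $k\in K(D)_x$. Replacing $\tilde\psi'_x$ by $k^{-1}\tilde\psi'_x$, which is still a lift of $\psi'_x$, the pair yields precisely the edge $g\to g\tau_i\Delta^\mu$. It remains to show that the two assignments are mutually inverse on equivalence classes. Here $(\cP'_1,\beta_1)\sim(\cP'_2,\beta_2)$ means there is an isomorphism $\cP'_1\simeq\cP'_2$ compatible with the $\beta$'s and the level structures.

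The main point needing care is this last bijectivity with multiplicities. Distinct indices $i$ must give inequivalent pairs, even when the target vertices coincide. An isomorphism of pairs is determined on $X\setminus x$ by compatibility with the $\beta_j$, and hence determined everywhere. Comparing the trivializations at $x$, it therefore identifies the cosets $\tau_i\Delta^\mu K(D)_x$ and $\tau_j\Delta^\mu K(D)_x$, which forces $i=j$. Conversely, equal cosets give equivalent pairs. With this injectivity, together with the surjectivity shown above, the proposition follows.
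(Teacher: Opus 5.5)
Your proposal is correct and follows the paper's route. The paper omits this proof and points to the $\GL_n$ argument of Proposition \ref{p:Hecke_correspondence_for_vector_bundles}, which likewise represents the vertex by an adele $g$, identifies its out-neighbours with the classes $g\tau_i\Delta^\mu$, and reads off conditions (i) and (ii) from the transported trivializations; your independence-of-lifts and multiplicity (injectivity) checks fill in details the paper leaves implicit. One small slip: from $h=\tau_i\Delta^\mu k$ you should replace $\tilde\psi'_x$ by $k\tilde\psi'_x$ (not $k^{-1}\tilde\psi'_x$), so that the transition element becomes exactly $\tau_i\Delta^\mu$.
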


\section{The $\GL_n$ case} \label{GLn case}
\subsection{Geometry of Hecke graphs in the cusp of $\Bun_G$}
 Let $G=\GL_n$ for $n\ge 2$. We continue using notation from Section \ref{s:preliminaries_on_Hecke_graphs}. Fix a decomposition $D=D_1+D_2$ into a sum of effective $D_1$ and $D_2$ satisfying
\begin{equation}\label{eq:d1_d2_condition}
    \operatorname{supp}(D_1)\cap\operatorname{supp}(D_2)=\varnothing,
\qquad
x\notin\operatorname{supp}(D_1).
\end{equation}
We have a map
$$
p_{D,D_2}:\Bun_{D}(X)(\FF_q)\to \Bun_{D_2}(X)(\FF_q)
$$
forgetting the level structure outside $D_2$.

Our goal is to understand the graph of $\Phi_{D,x}^{\omega_r}$ on the domain in terms of the graph on the codomain of $p_{D,D_2}$. For this, we recall the Harder-Narasimhan stratification. We follow the exposition in \cite[Section 5]{Hei10}.
\begin{definition}
    Let $\cE$ be a rank $n$ vector bundle. Its {\bfseries degree}, denoted by $\deg E$, is defined as the degree of the corresponding {\bfseries determinant line bundle} $\det \cE:=\bigwedge^n \cE$. For properties of the degree, see  \cite[\href{https://stacks.math.columbia.edu/tag/0AYQ}{0AYQ}]{Stacks}.

    The {\bfseries slope} of $\cE$ is defined as
    $$
    \mu(\cE):=\frac{\deg \cE}{\rk \cE}.
    $$

    We say that a vector bundle $\cF$ is a {\bfseries subbundle} of a vector bundle $\cE$ if $\cF\subset \cE$ and $\cE/\cF$ is a vector bundle. A vector bundle $\cE$ is called {\bfseries (semi)stable} if for any vector subbundle $\cF\subset \cE$ we have $\mu(\cF)<\mu(\cE)$ ($\mu(\cF)\le\mu(\cE)$).
\end{definition}

\begin{remark}\label{r:semistability_with_embedded_bundles}
    {\rm
        Let $\cE_1\hookrightarrow\cE_2$ be an embedding of vector bundles. Let $D$ be the divisor of the torsion part of $\cE_2/\cE_1$. Then $\cE_1(D)\subset\cE_2$ is a subbundle, and we have 
        $$
        \deg\cE_1(D)=\deg \cE_1+\deg D\cdot\rk\cE_1\ge \deg\cE_1,
        $$ 
        and hence
        $$
        \mu(\cE_1(D))=\mu (\cE_1)+\deg D\ge \mu(\cE_1).
        $$
        In particular, if $\cE_2$ is semistable, then $\mu(\cE_1)\le\mu(\cE_2)$.
    }
\end{remark}

\begin{theorem}[Harder-Narasimhan filtration]\label{t:HN_filt}
    For any vector bundle $\cE$ on $X$, there exists a unique filtration
    $$
    0=\cE_0\subset \cE_1\subset\ldots\subset \cE_k=\cE
    $$
    of subbundles such that
    \begin{itemize}
        \item [(i)] The bundles $\cE_i/\cE_{i-1}$ are semistable,
        \item [(ii)] $\mu(\cE_1/\cE_0)>\mu(\cE_2/\cE_1)>\ldots>\mu(\cE_k/\cE_{k-1})$.
    \end{itemize}
\end{theorem}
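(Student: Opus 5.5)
We prove existence by induction on the rank $n=\rk\cE$ and uniqueness by a slope comparison; the trivial case $\cE=0$ (or $n=1$, where $\cE$ is itself semistable) starts the induction.

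\textbf{Existence.} Let $\cE$ have rank $n\ge 1$.

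First we show that the slopes of nonzero subsheaves $\cF\subset\cE$ are bounded above. Fix an ample line bundle and an embedding of $\cE$ into a direct sum of line bundles of bounded degree (for instance, $\cE\hookrightarrow \cO(m)^{\oplus N}$ for $m\gg 0$ after dualizing a surjection). Every subsheaf then embeds into such a sum, and its degree is bounded by a constant depending only on $\cE$. Since degrees lie in $\mathbb{Z}$ and ranks lie in $\{1,\dots,n\}$, the set of slopes $\mu(\cF)$ is a discrete set bounded above, so the maximum
$$
\mu_{\max}:=\max\{\mu(\cF):0\neq\cF\subset\cE\}
$$
is attained.

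Next we choose $\cE_1$ to be a subsheaf attaining $\mu_{\max}$ with maximal rank. By Remark \ref{r:semistability_with_embedded_bundles}, saturating $\cF$ to $\cF(D)$ does not decrease the slope, so $\cE_1$ is automatically saturated, i.e.\ a subbundle. It is semistable, because any subbundle of $\cE_1$ is also a subsheaf of $\cE$ and so has slope at most $\mu_{\max}$.

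We then apply the induction hypothesis to $\cE/\cE_1$, which has smaller rank, and pull its filtration back to $\cE$. The remaining point is the slope inequality $\mu(\cE_2/\cE_1)<\mu(\cE_1)$. Suppose instead that $\mu(\cE_2/\cE_1)\ge\mu_{\max}$. Then the exact sequence $0\to\cE_1\to\cE_2\to\cE_2/\cE_1\to0$ and additivity of degree and rank give $\mu(\cE_2)\ge\mu_{\max}$. As $\cE_2\subset\cE$ we also have $\mu(\cE_2)\le\mu_{\max}$, so $\mu(\cE_2)=\mu_{\max}$. But $\rk\cE_2>\rk\cE_1$, contradicting the maximality of the rank of $\cE_1$.

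\textbf{Uniqueness.} The key lemma is: if $\cA,\cB$ are semistable with $\mu(\cA)>\mu(\cB)$, then $\Hom(\cA,\cB)=0$. To prove it, take a nonzero map $\cA\to\cB$ with image $\cI$. Semistability of $\cA$ gives $\mu(\cI)\ge\mu(\cA)$, since $\cI$ is a quotient of $\cA$. Semistability of $\cB$, together with Remark \ref{r:semistability_with_embedded_bundles}, gives $\mu(\cI)\le\mu(\cB)$. These two inequalities contradict $\mu(\cA)>\mu(\cB)$.

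Now let $\cE'_\bullet$ be a second filtration with properties (i) and (ii). Consider the composite
$$
\cE'_1\to\cE\to\cE/\cE_{k-1}.
$$
We first check that $\mu(\cE'_1)\ge\mu(\cE_1)$: the subbundle $\cE_1$ maps nontrivially to some graded piece $\cE'_j/\cE'_{j-1}$, so the lemma gives $\mu(\cE_1)\le\mu(\cE'_j/\cE'_{j-1})\le\mu(\cE'_1)$. By symmetry the two slopes are equal. The lemma then shows that $\cE'_1$ maps to zero in every graded piece $\cE_i/\cE_{i-1}$ with $i\ge2$, so $\cE'_1\subset\cE_1$; symmetrically $\cE_1\subset\cE'_1$, hence $\cE_1=\cE'_1$. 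Passing to $\cE/\cE_1$ and using induction finishes the proof.

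\textbf{Main obstacle.} The delicate step is the boundedness of slopes of subsheaves, which we expect to need a Riemann–Roch or $h^0$ estimate. Since this is standard (see \cite[Section 5]{Hei10}), we would cite it rather than reprove it.
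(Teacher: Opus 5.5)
Your proof is correct. The paper gives no proof of its own here; it quotes the statement from \cite[Section~5]{Hei10}. Your argument is the usual proof from such references: existence via a subsheaf of maximal slope and, among those, maximal rank, then induction on $\cE/\cE_1$; uniqueness via $\Hom(\cA,\cB)=0$ for semistable $\cA,\cB$ with $\mu(\cA)>\mu(\cB)$, which is exactly the paper's Lemma~\ref{l:homs_from_high_to_low}.

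A few precisions would tighten it.
\begin{enumerate}
\item To conclude that $\cE_1$ is automatically saturated, you need that saturation \emph{strictly} raises the degree whenever $\cE/\cF$ has torsion, not merely that it ``does not decrease'' the slope.
\item Phrase this with the saturation $\cF^{\mathrm{sat}}\subset\cE$ (same rank, degree increased by $\dim_k$ of the torsion of $\cE/\cF$), not with $\cF(D)$. The bundle $\cF(D)$ need not lie in $\cE$: for example, $\cO(-x)\oplus\cO\subset\cO\oplus\cO$ has torsion divisor $D=[x]$, but $\cO\oplus\cO(x)\not\subset\cO\oplus\cO$. Remark~\ref{r:semistability_with_embedded_bundles} has to be read in that corrected form.
\item ``$\cE_1$ maps nontrivially to some graded piece'' should be made precise by taking $j$ minimal with $\cE_1\subset\cE'_j$.
\item $\cE'_1\subset\cE_1$ should be obtained by descending induction starting from the composite $\cE'_1\to\cE/\cE_{k-1}$, as your display suggests.
\item The boundedness step you planned to cite is already complete with your embedding $\cE\hookrightarrow\cL^{\oplus N}$, where $\cL=\cO(m)$. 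A rank-$r$ subsheaf $\cF\subset\cL^{\oplus N}$ has $\det\cF\hookrightarrow(\cL^{\otimes r})^{\oplus\binom{N}{r}}$, so $\mu(\cF)\le\deg\cL$, and no Riemann--Roch estimate is needed.
\end{enumerate}
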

\begin{definition}
    A {\bfseries Harder-Narasimhan (HN) filtration} on a vector bundle $\cE$ is the filtration from \ref{t:HN_filt}.

    A {\bfseries Harder-Narasimhan (HN) type} of $\cE$ is the sequence of points $(\rk \cE_i,\deg \cE_i)$ in $\RR^2$, where $\cE_\bullet$ is the HN stratification of $\cE$. By definition, these points form a convex polygon in $\RR^2$.

    Fix a $HN$ type $(d_i,r_i)$. The corresponding {\bfseries Harder-Narasimhan (HN) stratum} $\Bun_G^{(r_i,d_i)}$ is the stack whose $S$-points are all vector bundles $\cE$ on $X\times S$ such that for any $s\in S$, the restriction $\cE|_{X\times \{s\}}$ has HN type $(r_i,d_i)$.
\end{definition}

Now, let us state a sequence of useful lemmas.
\begin{lemma}\label{l:slopes_in_exact_seq}
    Let 
    $$
    0\to\cE_1\to\cE_2\to\cE_3\to0
    $$
    be an exact sequence of vector bundles. Then
    \begin{itemize}
        \item [(i)] $\mu(\cE_1)>\mu(\cE_2)$ iff $\mu(\cE_1)>\mu(\cE_3)$ iff $\mu(\cE_2)>\mu(\cE_3)$.
        \item [(ii)] The same holds with $>$ replaced by $\ge,\le,<$.
        \item [(iii)] $\min\{\mu(\cE_1), \mu(\cE_3)\}\le \mu(\cE_2) \le \max\{\mu(\cE_1), \mu(\cE_3)\}$.
    \end{itemize}
\end{lemma}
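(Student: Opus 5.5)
The plan is to reduce everything to additivity of degree and rank in short exact sequences. From $0\to\cE_1\to\cE_2\to\cE_3\to0$ we have $\rk\cE_2=\rk\cE_1+\rk\cE_3$. Since $\det\cE_2\simeq\det\cE_1\otimes\det\cE_3$, we also have $\deg\cE_2=\deg\cE_1+\deg\cE_3$. Write $r_i:=\rk\cE_i$ and $d_i:=\deg\cE_i$, and assume first that $r_1,r_3>0$; the degenerate cases are handled at the end.

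The basic observation is that the slope of $\cE_2$ is a weighted average of the other two slopes:
$$
\mu(\cE_2)=\frac{d_1+d_3}{r_1+r_3}=\frac{r_1}{r_1+r_3}\mu(\cE_1)+\frac{r_3}{r_1+r_3}\mu(\cE_3),
$$
with both weights strictly positive. Part (iii) is immediate from this, since a convex combination lies between its endpoints.

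For (i), compute the differences directly:
$$
\mu(\cE_1)-\mu(\cE_2)=\frac{r_3}{r_1+r_3}\bigl(\mu(\cE_1)-\mu(\cE_3)\bigr),\qquad \mu(\cE_2)-\mu(\cE_3)=\frac{r_1}{r_1+r_3}\bigl(\mu(\cE_1)-\mu(\cE_3)\bigr).
$$
The coefficients are positive, so the three differences $\mu(\cE_1)-\mu(\cE_2)$, $\mu(\cE_1)-\mu(\cE_3)$ and $\mu(\cE_2)-\mu(\cE_3)$ all have the same sign, which is zero, positive or negative simultaneously. This gives the three equivalences in (i). Because the sign agreement covers the zero case as well, the same identities give (ii) for $\ge$, $\le$ and $<$ at once.

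The only subtle point is the degenerate case where $\cE_1$ or $\cE_3$ has rank $0$. A vector bundle of rank $0$ is the zero sheaf, and its slope is undefined. I would either state that the lemma assumes nonzero bundles, or note that if $\cE_1=0$ then $\cE_2\simeq\cE_3$, and if $\cE_3=0$ then $\cE_1\simeq\cE_2$, so that only the meaningful comparisons remain and they are trivial. Apart from this, the whole argument is the multiplicativity of $\det$ in exact sequences, which is the standard fact cited from the Stacks Project in the definition of degree.
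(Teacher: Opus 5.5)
Your proof is correct and follows the paper's argument: additivity of rank, and of degree via $\det\cE_2\simeq\det\cE_1\otimes\det\cE_3$, followed by an elementary comparison of the fractions $d_1/r_1$, $(d_1+d_3)/(r_1+r_3)$, $d_3/r_3$. Your identities expressing $\mu(\cE_1)-\mu(\cE_2)$ and $\mu(\cE_2)-\mu(\cE_3)$ as positive multiples of $\mu(\cE_1)-\mu(\cE_3)$ spell out the step the paper calls ``easy to check,'' and your remark on the rank-zero case is a harmless addition.
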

\begin{proof}
    Let $r_i$ be the ranks and $d_i$ be the degrees of the corresponding bundles. Exactness implies that $r_2=r_1+r_3$ and
    $$
    \det \cE_2\simeq \det \cE_1\otimes \det \cE_3, 
    $$
    and therefore $d_2=d_1+d_3$. Then
    $$
    \mu(\cE_2)=\frac{d_1+d_3}{r_1+r_3},
    $$
    and it is easy to check the first and second statement with $\mu(E_i)$ replaced by $\frac{d_1}{r_1}$, $\frac{d_1+d_3}{r_1+r_3}$, and $\frac{d_3}{r_3}$. The last part follows from $(i)$ and $(ii)$.
\end{proof}

\begin{lemma}\label{l:homs_from_high_to_low}
    Let $\cE_1$ and $\cE_2$ be semistable bundles with $\mu(\cE_1)>\mu(\cE_2)$. Then
    $$
    \Hom(\cE_1,\cE_2)=0.
    $$
    In particular, $H^0(X,\cE)=0$ for any semistable bundle $\cE$ with $\mu(\cE)<0$.
\end{lemma}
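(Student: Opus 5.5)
Let $\varphi\colon\cE_1\to\cE_2$ be a morphism; the aim is to show $\varphi=0$. I will argue by contradiction and compare slopes of the image with Lemma \ref{l:slopes_in_exact_seq} and Remark \ref{r:semistability_with_embedded_bundles}. Suppose $\varphi\neq 0$ and set $\cI:=\operatorname{im}\varphi\subset\cE_2$ and $\cK:=\ker\varphi\subset\cE_1$. On a smooth curve, a subsheaf of a locally free sheaf is torsion-free, hence locally free. So $\cI$ is a nonzero vector bundle, and $\cK$ is a vector bundle.

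\textbf{Step 1: $\cK$ is a subbundle of $\cE_1$.} The quotient $\cE_1/\cK\simeq\cI$ is locally free, so $\cK$ is a subbundle in the sense of the paper.

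\textbf{Step 2: $\mu(\cI)\ge\mu(\cE_1)$.} If $\cK=0$, then $\cI\simeq\cE_1$ and this is immediate. Otherwise we have the exact sequence $0\to\cK\to\cE_1\to\cI\to0$ of vector bundles. Semistability of $\cE_1$ gives $\mu(\cK)\le\mu(\cE_1)$. Lemma \ref{l:slopes_in_exact_seq}(ii), in the ``$\le$'' version of the equivalence ``$\mu(\cE_1)\le\mu(\cE_2)$ iff $\mu(\cE_2)\le\mu(\cE_3)$'', then yields $\mu(\cE_1)\le\mu(\cI)$.

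\textbf{Step 3: $\mu(\cI)\le\mu(\cE_2)$.} The inclusion $\cI\hookrightarrow\cE_2$ is an embedding of vector bundles and $\cE_2$ is semistable. Remark \ref{r:semistability_with_embedded_bundles} therefore gives $\mu(\cI)\le\mu(\cE_2)$; the saturation $\cI(D)$ only increases the slope. Combining Steps 2 and 3 gives $\mu(\cE_1)\le\mu(\cE_2)$, which contradicts the hypothesis. Hence $\varphi=0$.

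\textbf{The ``in particular'' claim.} Global sections of $\cE$ correspond to $\Hom(\cO_X,\cE)$, and $\cO_X$ is a line bundle, hence stable, with $\mu(\cO_X)=0>\mu(\cE)$. Applying the first part with $\cE_1=\cO_X$ and $\cE_2=\cE$ gives $H^0(X,\cE)=0$.

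The only point needing care is the claim that images and kernels of maps between vector bundles on $X$ are again vector bundles, i.e.\ torsion-free implies locally free over a Dedekind scheme. This is standard; otherwise the argument is just bookkeeping with the slope inequalities.
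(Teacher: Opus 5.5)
Your proof is correct and takes essentially the same route as the paper. Both arguments use semistability of $\cE_1$ on $\Ker\varphi$ together with Lemma \ref{l:slopes_in_exact_seq} to get $\mu(\cE_1)\le\mu(\Im\varphi)$, and Remark \ref{r:semistability_with_embedded_bundles} with semistability of $\cE_2$ to get $\mu(\Im\varphi)\le\mu(\cE_2)$; the only cosmetic difference is that you handle the injective case directly, whereas the paper first rules it out with the same remark.
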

\begin{proof}
    Suppose that there is a nonzero $f\in \Hom(\cE_1,\cE_2)$, and consider two exact sequences
    \begin{align*}
        0&\to\Ker f\to \cE_1\to\Im f\to0,\\
        0&\to\Im f\to \cE_2\to\Coker f\to0.
    \end{align*}
    Because $\Im f\subset \cE_2$ and $\ker f\subset \cE_1$, they are torsion-free, and hence are vector bundles. By Remark \ref{r:semistability_with_embedded_bundles}, $\Ker f\ne 0$, because otherwise $f$ would embed $\cE_1$ to $\cE_2$, contradicting the assumption $\mu(\cE_1)> \mu(\cE_2)$ and semistability of $\cE_2$. Since $\cE_1$ is semistable, we have $\mu(\Ker f)\le\mu(\cE_1)$. Applying Lemma \ref{l:slopes_in_exact_seq} and semistability of $\mu(\cE_2)$, we get
    $$
    \mu(\cE_1)\le \mu(\Im f)\le \mu(\cE_2),
    $$
    a contradiction to our assumption. This proves the statement.

    The last part is implied by the observation that $H^0(X,\cE)\simeq \Hom(\cO_X,\cE)$, and $\cO_X$ is semistable (as any line bundle) of slope $0$.
\end{proof}

\begin{lemma}\label{l:HN_functoriality}
    
    Let $\cE$ be a vector bundle with HN filtration $\cE_\bullet$. Then any morphism $f\in\End(\cE)$ preserves the filtration, i.e. $f(\cE_i)\subset \cE_i$ for any $i$.
\end{lemma}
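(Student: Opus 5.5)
The plan is to prove that $f(\cE_i)\subset\cE_i$ by induction on $i$, using Lemma \ref{l:homs_from_high_to_low} as the only real input. The key claim is the following: for each $i\ge 1$ and each $j>i$ (ranging over the graded pieces above index $i$), the composite
$$
\cE_i\hookrightarrow\cE\xrightarrow{f}\cE\to\cE/\cE_{j-1}
$$
vanishes as soon as it lands in $\cE_j/\cE_{j-1}$. Because the indices are confusing here, I will organize the argument as follows. Fix $i$ and consider the composite $g:\cE_i\to\cE\to\cE/\cE_i$; the goal is to show $g=0$.

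First I will show that $\Hom(\cE_m/\cE_{m-1},\cE/\cE_i)=0$ for every $m\le i$. The quotient $\cE/\cE_i$ carries the filtration $\cE_j/\cE_i$ for $j>i$, with graded pieces $\cE_j/\cE_{j-1}$ that are semistable of slopes $\mu(\cE_j/\cE_{j-1})<\mu(\cE_m/\cE_{m-1})$ for all $j>i\ge m$. By Lemma \ref{l:homs_from_high_to_low}, any map $\cE_m/\cE_{m-1}\to\cE_j/\cE_{j-1}$ is zero. An induction on the length of the filtration of $\cE/\cE_i$ then finishes this step. Indeed, a map $\cE_m/\cE_{m-1}\to\cE/\cE_i$ composed to the top graded piece $\cE/\cE_{k-1}$ is zero, so the map factors through $\cE_{k-1}/\cE_i$. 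Repeating this downward, the map becomes zero.

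Second, by the same dévissage on the source, I will get $\Hom(\cE_i,\cE/\cE_i)=0$. Here $\cE_i$ is filtered by $\cE_m$ for $m\le i$. A map $h:\cE_i\to\cE/\cE_i$ restricts to $\cE_1=\cE_1/\cE_0$, and on that piece it vanishes by the first step. Hence $h$ factors through $\cE_i/\cE_1$, which is filtered by the pieces $\cE_m/\cE_{m-1}$ for $2\le m\le i$. Iterating gives $h=0$. Applying this to $g$ yields $f(\cE_i)\subset\cE_i$.

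There is no real obstacle; the only care needed is the bookkeeping of the two nested dévissages. One point requires justification: Lemma \ref{l:homs_from_high_to_low} needs both bundles to be semistable, and this holds for the graded pieces by Theorem \ref{t:HN_filt}(i). The inequality of slopes, in turn, comes from Theorem \ref{t:HN_filt}(ii).
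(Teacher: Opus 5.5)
Your proof is correct and is essentially the paper's argument. Both rest on Lemma \ref{l:homs_from_high_to_low} applied to the semistable graded pieces, and the paper's step ``take the smallest $k$ with $f(\cE_1)\subset\cE_k$'' is exactly your target-side d\'evissage for $m=i=1$. The only difference is bookkeeping: the paper then inducts by passing to $\cE/\cE_1$, implicitly using that $\cE_\bullet/\cE_1$ is its HN filtration, whereas your double d\'evissage proves $\Hom(\cE_i,\cE/\cE_i)=0$ directly and avoids that identification. The garbled ``key claim'' in your first paragraph is never used and can be dropped.
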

\begin{proof}
    Let $k$ be the smallest integer such that $f(\cE_1)\subset \cE_k$. Then the induced map $f:\cE_1\to\cE_k/\cE_{k-1}$ is non-zero, which by Lemma \ref{l:homs_from_high_to_low} implies that $k\le 1$, thus $f(\cE_1)\subset \cE_1$. By induction hypothesis applied to $\cE/\cE_1$, we get that $f(\cE_k)\subset \cE_k$ for all $k>1$ as well, which finishes the proof.
\end{proof}

In particular, this implies that any map $f:\cE\to\cE$ induces a map $\gr f:\gr\cE_\bullet\to\gr\cE_\bullet$. Therefore, we have an embedding
$$
\End(\cE)\hookrightarrow \End(\gr\cE_\bullet).
$$

\begin{proposition}\label{p:structure_of_very_unstable_bundles}
    Let $\cE$ be a vector bundle with HN filtration $\cE_\bullet$ satisfying 
    $$
    \mu(\cE_i/\cE_{i-1})-\mu(\cE_{i+1}/\cE_{i})>2g-2
    $$
    for any $i$ for which the above quotients are nonzero. Then
    $$
    \cE\simeq \gr\cE_\bullet.
    $$
    In particular,
    $$
    \End(\cE)\simeq \End(\gr\cE_\bullet)\simeq\bigoplus_{i\ge j}\Hom(\cE_i/\cE_{i-1},\cE_j/\cE_{j-1}).
    $$
\end{proposition}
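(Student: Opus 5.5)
We split off one step of the filtration at a time, by induction on the length $k$ of the HN filtration. If $k=1$ there is nothing to prove. Otherwise consider the short exact sequence
$$
0\to \cE_{k-1}\to \cE\to \cE/\cE_{k-1}\to 0 .
$$
It splits precisely when its extension class in $\Ext^1(\cE/\cE_{k-1},\cE_{k-1})$ vanishes. By Serre duality,
$$
\Ext^1(\cE/\cE_{k-1},\cE_{k-1})\simeq \Hom(\cE_{k-1},(\cE/\cE_{k-1})\otimes\omega_X)^\vee .
$$
The quotient $\cE/\cE_{k-1}$ is semistable of slope $\mu_k$. Twisting by the line bundle $\omega_X$ keeps it semistable, with slope $\mu_k+2g-2$.

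Next we show this $\Hom$ vanishes. The bundle $\cE_{k-1}$ is filtered by $\cE_1\subset\dots\subset\cE_{k-1}$, with semistable graded pieces of slopes $\mu_1>\dots>\mu_{k-1}$. The hypothesis gives $\mu_{k-1}>\mu_k+2g-2$, hence every $\mu_j>\mu_k+2g-2$ for $j\le k-1$. By Lemma \ref{l:homs_from_high_to_low}, each $\Hom(\cE_j/\cE_{j-1},(\cE/\cE_{k-1})\otimes\omega_X)$ is zero. Applying left exactness of $\Hom(-,\mathcal{G})$ to the sequences $0\to\cE_{j-1}\to\cE_j\to\cE_j/\cE_{j-1}\to0$ and inducting on $j$, we get $\Hom(\cE_{k-1},(\cE/\cE_{k-1})\otimes\omega_X)=0$. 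So the sequence splits and $\cE\simeq \cE_{k-1}\oplus \cE/\cE_{k-1}$.

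Now $\cE_{k-1}$ has HN filtration $\cE_1\subset\dots\subset\cE_{k-1}$. This is immediate from the uniqueness in Theorem \ref{t:HN_filt}, since the graded pieces are semistable with strictly decreasing slopes. The slope-gap hypothesis therefore still holds for $\cE_{k-1}$, and induction gives $\cE_{k-1}\simeq\bigoplus_{j<k}\cE_j/\cE_{j-1}$. Hence $\cE\simeq\gr\cE_\bullet$. The only genuinely non-formal input is this Serre duality vanishing, and the hypothesis $>2g-2$ is calibrated exactly so that it works.

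For the endomorphism statement, write $\cE=\bigoplus_i \cF_i$ with $\cF_i:=\cE_i/\cE_{i-1}$. Then $\End(\cE)=\bigoplus_{i,j}\Hom(\cF_i,\cF_j)$. By Lemma \ref{l:homs_from_high_to_low}, $\Hom(\cF_i,\cF_j)=0$ whenever $\mu_i>\mu_j$, that is, whenever $i<j$. Only the components with $i\ge j$ survive, which are the block-triangular maps. This agrees with Lemma \ref{l:HN_functoriality}: such maps automatically preserve the filtration, and the identification $\End(\cE)\simeq\End(\gr\cE_\bullet)$ comes from the chosen splitting.
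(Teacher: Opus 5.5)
Your proof is correct, and it uses a different decomposition from the paper's.

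\textbf{How the two arguments differ.} The paper only treats the adjacent two-step extensions $0\to\cF_i\to\cE_{i+1}/\cE_{i-1}\to\cF_{i+1}\to0$. It identifies $\Ext^1(\cF_{i+1},\cF_i)\simeq H^0(\cF_i^\vee\otimes\cF_{i+1}\otimes\omega_X)^*$ and concludes from the negative slope of that bundle. You instead induct on the length of the filtration and split off the top quotient from all of $\cE_{k-1}$ at once. You kill $\Hom(\cE_{k-1},\cF_k\otimes\omega_X)$ by d\'evissage along the filtration together with Lemma~\ref{l:homs_from_high_to_low}.

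\textbf{What your route buys.} First, splitting the adjacent pieces does not by itself give $\cE\simeq\gr\cE_\bullet$ once there are three or more steps. If $\cE_2\simeq\cF_1\oplus\cF_2$, the class of $\cE_3$ lies in $\Ext^1(\cF_3,\cF_1)\oplus\Ext^1(\cF_3,\cF_2)$, and only the second component is the class of $\cE_3/\cE_1$. The paper's argument therefore tacitly needs $\Ext^1(\cF_j,\cF_i)=0$ for all $i<j$. This does hold, since $\mu_i-\mu_j\ge\mu_{j-1}-\mu_j>2g-2$, and your induction uses exactly this inequality automatically.

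Second, the paper's step ``negative degree, hence no sections'' requires $\cF_i^\vee\otimes\cF_{i+1}\otimes\omega_X$ to be semistable. Over $\FF_q$ a tensor product of semistable bundles need not be semistable. You read the dual group as $\Hom(\cE_{k-1},\cF_k\otimes\omega_X)$ and only use that twisting by the line bundle $\omega_X$ preserves semistability. So your argument works for graded pieces of arbitrary rank in any characteristic.

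\textbf{What the paper's route buys.} It is shorter, and it is entirely adequate in the case used later (Theorem~\ref{t:gln_image_of_aut_group}), where every $\cF_i$ is a line bundle. You also prove the endomorphism decomposition, which the paper's proof leaves implicit.
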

\begin{proof}
    It is sufficient to show that any short exact sequence
    $$
    0\to \cE_i/\cE_{i-1}\to \cE_{i+1}/\cE_{i-1}\to \cE_{i+1}/\cE_{i}\to 0
    $$
    splits. Its splitting is equivalent to the corresponding extension class in $\Ext^1_{\cO_X}(\cF_{i+1},\cF_i)$ to be zero, where we denoted $\cE_i/\cE_{i-1}$ by $\cF_i$ for simplicity. Using Serre duality, we compute
    \begin{align*}
        \Ext^1_{\cO_X}(\cF_{i+1},\cF_i)&\simeq H^1(X,\cF_i\otimes \cF_{i+1}^\vee)\\
        &\simeq H^0(X,\cF_i^\vee\otimes \cF_{i+1}\otimes \omega_X)^*\\
        &=\Hom(\cO_X,\cF_i^\vee\otimes \cF_{i+1}\otimes \omega_X)^*.
    \end{align*}

    We have
    $$
    \mu(\cF_i^\vee\otimes \cF_{i+1}\otimes \omega_X)=-\mu(\cF_i)+\mu(\cF_{i+1})+2g-2<0,
    $$
    which implies that the degree of this bundle is negative, and hence
    $$
    \Hom(\cO_X,\cF_i^\vee\otimes \cF_{i+1}\otimes \omega_X)^*=0.
    $$
    This finishes the proof.
\end{proof}

Write $D=D_1+D_2$ for effective divisors $D_1$ and $D_2$ with $\supp D_1\cap\supp D_2=\varnothing$, and recall that $\cO_D$ is the ring of functions on $D$.

\begin{theorem}\label{t:gln_image_of_aut_group}
    Let $\cE$ be a vector bundle with HN filtration $\cE_\bullet$, with the conditions that the subquotients $\cL_i:=\cE_i/\cE_{i-1}$ are line bundles and $\deg\cL_i-\deg\cL_{i+1}>2g-2+\deg D$. Let also $\phi_{D_2}$ be a level structure at $D_2$ (if $D_2=0$, we mean that there is no level structure). Then 
    \begin{itemize}
        \item [(i)] $\cE\simeq \bigoplus_{i}\cL_i$,
        \item [(ii)] The image of $\Aut((\cE,\phi_{D_2}))$ in $\Aut(\cE|_{\cO_{D_1}})\simeq \GL_n(\cO_{D_1})$ under a trivialization of $\cE$ over $\cO_{D_1}$ sending $\cL_i$ to the $i$-th coordinate line of $\cO_{D_1}^{\oplus n}$ coincides with $T(k)\ltimes U(\cO_{D_1})$ if $D_2=0$ and $U(\cO_{D_1})$ if $D_2\ne 0$.
    \end{itemize}
\end{theorem}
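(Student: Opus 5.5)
Part (i) follows directly from Proposition~\ref{p:structure_of_very_unstable_bundles}: the HN subquotients are the line bundles $\cL_i$, and the hypothesis $\deg\cL_i-\deg\cL_{i+1}>2g-2+\deg D\ge 2g-2$ is exactly the slope-gap condition there. So $\cE\simeq\bigoplus_i\cL_i$, and
$$
\End(\cE)\simeq\bigoplus_{i\le j}\Hom(\cL_j,\cL_i)=\bigoplus_{i\le j}H^0(X,\cL_i\otimes\cL_j^{-1}).
$$
Here I index so that $\cL_1$ has the largest degree. Lemma~\ref{l:homs_from_high_to_low} kills the terms with $i>j$, so endomorphisms are upper triangular. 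The diagonal entries lie in $H^0(\cO_X)=k$. Hence $\Aut(\cE)$ consists of upper triangular matrices with diagonal in $T(k)=(k^\times)^n$ and off-diagonal entries $s_{ij}\in H^0(\cL_i\otimes\cL_j^{-1})$ for $i<j$. Restricting to $\cO_{D_1}$ through the chosen trivialization, the image lands in $T(k)\ltimes U(\cO_{D_1})$.

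For the reverse inclusion when $D_2=0$, I need the restriction maps
$$
H^0(X,\cL_i\otimes\cL_j^{-1})\to (\cL_i\otimes\cL_j^{-1})|_{D_1}\simeq\cO_{D_1},\qquad i<j,
$$
to be surjective. Their cokernel injects into $H^1(X,\cL_i\otimes\cL_j^{-1}(-D_1))$. By Serre duality this is dual to $H^0(\omega_X\otimes\cL_j\otimes\cL_i^{-1}(D_1))$, a line bundle of degree $2g-2+\deg D_1-(\deg\cL_i-\deg\cL_j)<0$, so it vanishes. I also need the gap between non-adjacent indices, which follows by summing consecutive gaps: $\deg\cL_i-\deg\cL_j\ge\deg\cL_i-\deg\cL_{i+1}$ for $i<j$, using that degrees strictly decrease. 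Since $\Aut(\cE)$ is a group containing $T(k)$ (diagonal scalars) and all upper unitriangular automorphisms with arbitrarily prescribed entries, a matrix with any prescribed off-diagonal restrictions in $U(\cO_{D_1})$ is realized. One can simply take the unipotent automorphism $1+\sum s_{ij}E_{ij}$, which is invertible because it is unitriangular. This gives all of $T(k)\ltimes U(\cO_{D_1})$.

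When $D_2\neq 0$, the automorphism must preserve $\phi_{D_2}$, meaning its restriction to $\cO_{D_2}$ is the identity. The diagonal constants $t_i\in k^\times$ must then be $1$, since $D_2\ne0$ and a constant restricting to $1$ on a nonempty divisor is $1$. The off-diagonal sections must vanish on $D_2$, i.e.\ $s_{ij}\in H^0(\cL_i\otimes\cL_j^{-1}(-D_2))$. So the image is contained in $U(\cO_{D_1})$.

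The main step is surjectivity of
$$
H^0(\cL_i\otimes\cL_j^{-1}(-D_2))\to\cO_{D_1}.
$$
Because $\supp D_1\cap\supp D_2=\varnothing$, this cokernel injects into $H^1(\cL_i\otimes\cL_j^{-1}(-D_1-D_2))=H^1(\cL_i\otimes\cL_j^{-1}(-D))$, which is dual to the sections of a bundle of degree $2g-2+\deg D-(\deg\cL_i-\deg\cL_j)<0$. This is exactly where the full $\deg D$ in the hypothesis is used. The resulting unitriangular automorphism is the identity mod $D_2$, so it preserves $\phi_{D_2}$, and its restriction to $D_1$ is arbitrary in $U(\cO_{D_1})$.
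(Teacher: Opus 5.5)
Your proposal is correct and follows essentially the same route as the paper: part (i) comes from Proposition~\ref{p:structure_of_very_unstable_bundles}, then the upper-triangular description of $\Aut(\cE)$ with diagonal in $T(k)$, and then surjectivity of the off-diagonal restrictions from the vanishing of $H^1(X,\cL_i\otimes\cL_j^{-1}(-D))$. The only difference is organizational: the paper first shows the image in $\Aut(\cE|_{\cO_D})$ is $T(k)\ltimes U(\cO_D)$ and then intersects with $G(\cO_{D_1})\times\{\mathrm{id}\}$, whereas you work directly with sections of $\cL_i\otimes\cL_j^{-1}(-D_2)$; you also usefully make explicit that the gap bound for non-adjacent indices follows from the strict decrease of HN slopes, which the paper leaves implicit.
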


\begin{proof}
    The first part of the Theorem follows immediately from \ref{p:structure_of_very_unstable_bundles}. From the second part of the same theorem, it follows that
    $$
    \Aut(\cE)\simeq \bigoplus_i \Aut(\cL_i)\oplus \bigoplus_{i>j}H^0(X,\cL_i\otimes\cL_j^\vee),
    $$
    where the multiplication is given by visualising the above expression in the upper-triangular matrix form:

    \[
\renewcommand{\arraystretch}{1.8} 
\setlength{\arraycolsep}{6pt}     
\begin{pmatrix}
    k^\times & H^0(\cL_1\otimes\cL_2^\vee) & H^0(\cL_1\otimes\cL_3^\vee) & \cdots & H^0(\cL_1\otimes\cL_n^\vee) \\
    0 & k^\times & H^0(\cL_2\otimes\cL_3^\vee) & \cdots & H^0(\cL_2\otimes\cL_n^\vee) \\
    \vdots & \vdots & \ddots & \cdots & \vdots \\
    0 & 0 & 0 & \ddots & H^0(\cL_{n-1}\otimes\cL_n^\vee) \\
    0 & 0 & 0 & \cdots & k^\times
\end{pmatrix}.
\]

In particular, the image of $\Aut(\cE)$ in $\Aut(\cE|_{\cO_D})$ equals
\[
\renewcommand{\arraystretch}{1.8} 
\setlength{\arraycolsep}{6pt}     
\begin{pmatrix}
    k^\times & H^0(\cL_1\otimes\cL_2^\vee)_{D} & H^0(\cL_1\otimes\cL_3^\vee)_{D} & \cdots & H^0(\cL_1\otimes\cL_n^\vee)_{D} \\
    0 & k^\times & H^0(\cL_2\otimes\cL_3^\vee)_{D} & \cdots & H^0(\cL_2\otimes\cL_n^\vee)_{D} \\
    \vdots & \vdots & \ddots & \cdots & \vdots \\
    0 & 0 & 0 & \ddots & H^0(\cL_{n-1}\otimes\cL_n^\vee)_{D} \\
    0 & 0 & 0 & \cdots & k^\times
\end{pmatrix},
\]
where by $H^0(\cL)_{D}$ we denote the image of the restriction map
$$
H^0(X,\cL)\to H^0(X,\cL|_{D})=H^0(D,\cL|_{D}).
$$
To compute these images, we apply the long exact sequence for cohomology to
$$
0\to\cL(-D)\to\cL\to \cL|_{D}\to 0,\qquad \cL=\cL_i\otimes \cL_{j}^\vee,i>j
$$
to get
$$
0\to H^0(\cL(-D))\to H^0(\cL)\to H^0(\cL|_{D})\to H^1(\cL(-D)).
$$
Our assumption that $\deg\cL_i-\deg\cL_j>2g-2+\deg D$ implies that $H^1(\cL(-D))=0$. Therefore, $H^0(\cL)_{D}=H^0(\cL|_{D})\simeq \cO_{D}$. This shows that the image of $\Aut(\cE)$ in $\Aut(\cE|_{\cO_{D}})$ equals $T(k)\ltimes U(\cO_{D})$. Now, $\Aut((\cE,\phi_{D_2}))$ is the preimage of 
$$
\Aut(\cE|_{\cO_{D_1}})\times\operatorname{id}\subset \Aut(\cE|_{\cO_{D_1}})\times \Aut(\cE|_{\cO_{D_2}})=\Aut(\cE|_{\cO_{D}}).
$$
in $\Aut(\cE)$. This shows that 
$$
(T(k)\ltimes U(\cO_{D}))\cap (G(\cO_{D_1})\times\operatorname{id})=
\begin{cases}
    T(k)\ltimes U(\cO_{D_1}),&D_2=0,\\
    U(\cO_{D_1}),&D_2\ne 0
\end{cases}
$$ 
is the image of $\Aut((\cE,\phi_{D_2}))$ in $\Aut(\cE|_{\cO_{D}})$. Projection to $\cO_{D_1}$ finishes the proof.
\end{proof}

The next corollary generalizes and proves \cite[Conjecture 5.16]{AB24}.
\begin{corollary}\label{cor:gln_preimage_size}
    Let $k=\FF_q$ and $\cE$ be a vector bundle satisfying the assumptions of Theorem \ref{t:gln_image_of_aut_group}. Then 
    $$
    \# p_{D,D_2}^{-1}((\cE,\phi_x))=|T(\cO_{D_1})|/|T(\FF_q)|^{\delta_{D_2,0}}\cdot|\operatorname{Fl}_n(\cO_{D_1})|,
    $$
    where $\operatorname{Fl}_n$ is the flag variety for $\GL_n$ and $\delta$ is the Kronecker delta-function. More explicitly,
    $$
    \# p_{D,D_2}^{-1}(\cE)=\frac{q^{(\deg D_1-\deg (D_1)_{\mathrm{red}})\frac{n(n+1)}2}}{(q-1)^{n\delta_{D_2,0}}}\prod_{y\in\operatorname{supp}D_1}\prod_{k=1}^n(q^{k\deg y}-1),
    $$
    where $(D_1)_{\mathrm{red}}:=\sum_{y\in\operatorname{supp}D_1}1\cdot[y]$.
\end{corollary}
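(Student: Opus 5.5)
The plan is to identify the fiber of $p_{D,D_2}$ over $(\cE,\phi_{D_2})$ with an orbit set, and then count it using Theorem \ref{t:gln_image_of_aut_group}. A point of the fiber is a level structure $\psi$ at $D_1$ on $\cE$, i.e. a trivialization $\cE|_{\cO_{D_1}}\simeq \cO_{D_1}^{\oplus n}$, taken up to isomorphisms of $(\cE,\phi_{D_2})$. Since $\supp D_1\cap\supp D_2=\varnothing$, we have $\cO_D=\cO_{D_1}\times\cO_{D_2}$, so the two level structures are independent data. Fixing one reference trivialization identifies the set of all $\psi$ with a $\GL_n(\cO_{D_1})$-torsor. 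Hence
$$
p_{D,D_2}^{-1}((\cE,\phi_{D_2}))\simeq \GL_n(\cO_{D_1})/\operatorname{Im}\bigl(\Aut(\cE,\phi_{D_2})\to \GL_n(\cO_{D_1})\bigr).
$$
It is a quotient by the image, not by the group itself, because automorphisms acting trivially on $\cE|_{\cO_{D_1}}$ fix every $\psi$. Some care is needed to check that the $G(F)\backslash G(\AA)/K(D)$ description gives exactly this orbit set, but it follows from the torsor description of $p_D$ given in Section \ref{section 2}.

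We now apply Theorem \ref{t:gln_image_of_aut_group}(ii), choosing the reference trivialization to be adapted to the splitting $\cE\simeq\bigoplus\cL_i$. The image is then $T(k)\ltimes U(\cO_{D_1})$ if $D_2=0$, and $U(\cO_{D_1})$ if $D_2\neq0$. Its order is $|T(\FF_q)|^{\delta_{D_2,0}}|U(\cO_{D_1})|$, where $T(k)$ embeds into $T(\cO_{D_1})$ because $D_1\neq0$. If $D_1=0$, both sides of the claimed formula are $1$, so that case is trivial. Therefore
$$
\#p_{D,D_2}^{-1}=\frac{|\GL_n(\cO_{D_1})|}{|U(\cO_{D_1})|\,|T(\FF_q)|^{\delta_{D_2,0}}}=\frac{|T(\cO_{D_1})|}{|T(\FF_q)|^{\delta_{D_2,0}}}\cdot|\GL_n(\cO_{D_1})/B(\cO_{D_1})|.
$$

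It remains to show $\GL_n(\cO_{D_1})/B(\cO_{D_1})=\operatorname{Fl}_n(\cO_{D_1})$. We use the Chinese remainder theorem to reduce to the local artinian rings $\cO_y/\pi_y^{d_y}$. Over such a local ring, $\GL_n$ acts transitively on flags of free direct summands, which is standard lifting for local rings, and the stabilizer of the standard flag is $B$.

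For the explicit formula, we compute factor by factor. For $R=\cO_y/\pi_y^{d}$ with residue field of size $Q=q^{\deg y}$, we have $|R^\times|=(Q-1)Q^{d-1}$. Moreover $|\GL_n(R)|=Q^{n^2(d-1)}|\GL_n(\FF_Q)|$ and $|B(R)|=Q^{\frac{n(n+1)}2(d-1)}(Q-1)^nQ^{\binom n2}$, using smoothness to count the fibers of reduction mod $\pi_y$. Combining these with $|\GL_n(\FF_Q)|=Q^{\binom n2}\prod_{k=1}^n(Q^k-1)$ gives
$$
|T(R)|\,|\operatorname{Fl}_n(R)|=\frac{|\GL_n(R)|}{|U(R)|}=Q^{(d-1)\frac{n(n+1)}2}\prod_{k=1}^n(Q^k-1).
$$
Taking the product over $y$ and dividing by $(q-1)^{n\delta_{D_2,0}}$ yields the stated expression, since $\sum_y(d_y-1)\deg y=\deg D_1-\deg(D_1)_{\mathrm{red}}$.

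The main obstacle is the first step: making precise that the fiber is the quotient of $\GL_n(\cO_{D_1})$ by the image of $\Aut(\cE,\phi_{D_2})$, and in particular that the relevant group is the stabilizer of $\phi_{D_2}$, which is exactly what Theorem \ref{t:gln_image_of_aut_group} computes. The counting itself is routine.
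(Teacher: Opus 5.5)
Your proposal is correct and follows the paper's route. You identify the fiber with $\GL_n(\cO_{D_1})$ modulo the image of $\Aut(\cE,\phi_{D_2})$ computed in Theorem \ref{t:gln_image_of_aut_group}, split off $|T(\cO_{D_1})|/|T(\FF_q)|^{\delta_{D_2,0}}$ via $B=T\ltimes U$, and identify $\GL_n(\cO_{D_1})/B(\cO_{D_1})$ with $\operatorname{Fl}_n(\cO_{D_1})$, which is exactly what the paper spells out in its reductive version, Corollary \ref{cor:deep_cusp_fiber_size_reductive}.

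One small slip concerns the degenerate case $D_1=0$. There the right-hand side equals $1$ only if $D_2\neq 0$. For $D_1=D_2=0$ it equals $(q-1)^{-n}$, so the statement must be read with $D_1\neq 0$ (or $D_2\neq 0$); that case is not trivially true as you claim.
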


\begin{proof}
    This is a direct consequence of Theorem \ref{t:gln_image_of_aut_group} and a standard computation of the number of elements in $\GL_n$.
\end{proof}

 Now, let us study the graph of the Hecke operator $\Phi_x^{w_k}$ for $k=1\ldots n$ close to the cusp of $\Bun_{G,D}$. We will use results of Section \ref{Hecke Corres.GLn}. Let us choose two rank $n$ vector bundles 
 $$
 (\cE,\phi),(\cE',\phi')\in\Bun_{G,D}
 $$
 satisfying the assumptions of Theorem \ref{t:gln_image_of_aut_group}. Let $\cE_\bullet$ and $\cE_\bullet'$ be the corresponding HN filtrations and $\cL_i$ and $\cL_i'$ be the corresponding subquotients. We want to compute the number of arrows $(\cE,\phi)\to(\cE',\phi')$, equivalently, the number of isomorphism classes of extensions from Proposition \ref{p:Hecke_correspondence_for_vector_bundles}.

\begin{proposition}\label{p:cusp_connected_to_cusp}
Let in addition $\cE$ satisfy the condition that $\deg\cL_{i}>\deg\cL_{i+1}+\deg x$. Then
\begin{itemize}
    \item[(i)] $f^{-1}(\cE_i)=\cE_i'$.
    \item[(ii)] All $\cL_i'$ are line bundles and $f$ identifies $\cL_i'$ with $\cL_i(-p_ix)$ for some $0\le p_i\le 1$ such that $\sum_i p_i=r$. In particular,
$$
\deg \cL_i'\le\deg\cL_i\le\deg\cL_i'+\deg x.
$$
\end{itemize}
\end{proposition}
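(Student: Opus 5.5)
We work with an edge $(\cE,\phi)\to(\cE',\phi')$, which by Proposition~\ref{p:Hecke_correspondence_for_vector_bundles} is given by an injection $f:\cE'\hookrightarrow\cE$ with cokernel $k_x^{\oplus r}$. Both $\cE$ and $\cE'$ satisfy the hypotheses of Theorem~\ref{t:gln_image_of_aut_group}, so their HN subquotients are line bundles $\cL_i,\cL'_i$, and $\cE\simeq\bigoplus\cL_i$, $\cE'\simeq\bigoplus\cL'_i$. Set $\cF_i:=f^{-1}(\cE_i)\subset\cE'$. This is a saturated subsheaf of $\cE'$ of rank $i$, because $\cE'/\cF_i\hookrightarrow\cE/\cE_i$ is torsion free. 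Moreover $\cE_i/f(\cF_i)\hookrightarrow\cE/f(\cE')\simeq k_x^{\oplus r}$, so $f$ identifies $\cF_i$ with a subsheaf of $\cE_i$ of colength at most $r\deg x$. The plan is to show $\cF_i=\cE'_i$ using the rigidity of HN filtrations, and then read off (ii) from the graded pieces.

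For (i), the approach is induction on $i$, comparing $\cF_i$ with $\cE'_i$ through homomorphisms between line bundles.
\begin{itemize}
\item[(a)] First I show $\cE'_i\subset\cF_i$. Suppose $\cE'_i\subset\cF_{i}$ fails. Take the smallest $j\le i$ with $\cE'_j\not\subset\cF_{i}$. Then $\cL'_j\to\cE'/\cF_i\hookrightarrow\cE/\cE_i$ is nonzero, so some component $\cL'_j\to\cL_k$ with $k>i$ is nonzero. This forces $\deg\cL'_j\le\deg\cL_k\le\deg\cL_{i+1}$.
\item[(b)] Next I bound $\deg\cL'_j$ from below. The composite $\cL'_j\hookrightarrow\cE'\to\cE$ is injective, so $\cL'_j$ maps nontrivially to some $\cL_m$. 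Also $f(\cE')\supset\cE(-x)$, so $\cL_m(-x)\subset f(\cE')$, and $\cL_m(-x)$ maps nontrivially to some $\cL'_l$. These give the rough inequalities relating the two sets of degrees.
\item[(c)] A cleaner route to the lower bound uses ranks and degrees. Since $\cE'_j$ has colength at most $r\deg x$ in its saturation, and $\deg\cE=\deg\cE'+r\deg x$, the HN polygon of $\cE'$ lies within $r\deg x$ of that of $\cE$, with polygon points matching. Combining this with the gap $\deg\cL_i-\deg\cL_{i+1}>\deg x$ should contradict (a).
\end{itemize}
I expect the main obstacle to be exactly this bookkeeping: setting up the polygon comparison so that the gap $\deg x$ (rather than $r\deg x$) suffices.

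A more direct plan for the comparison is to avoid polygons. Since $\cE(-x)\subset f(\cE')$, we have $\cE_i(-x)\subset f(\cF_i)\subset\cE_i$, so the subquotients of the filtration $\cF_\bullet$ are line bundles $\cM_i$ with $\cL_i(-x)\subset\cM_i\subset\cL_i$. Hence $\cM_i=\cL_i(-p_ix)$ with $p_i\in\{0,1\}$, because $x$ is a closed point and $\cL_i/\cL_i(-x)=k_x$ is simple. Then $\deg\cM_i-\deg\cM_{i+1}\ge\deg\cL_i-\deg\cL_{i+1}-\deg x>0$, and each $\cM_i$ is semistable as a line bundle. So $\cF_\bullet$ is a filtration by subbundles with semistable quotients of strictly decreasing slopes, and uniqueness in Theorem~\ref{t:HN_filt} gives $\cF_\bullet=\cE'_\bullet$. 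This is (i), and it also yields $\cL'_i=\cM_i\simeq\cL_i(-p_ix)$. The relation $\sum p_i=r$ follows from the length of the cokernel: summing the colengths of $\cM_i$ in $\cL_i$ gives $\sum_i p_i\deg x=r\deg x$. The degree inequalities are then immediate.

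The only step that needs care here is the claim $\cE_i(-x)\subset f(\cF_i)$. It holds because $\cE_i(-x)\subset\cE(-x)\subset f(\cE')$ and $\cE_i(-x)\subset\cE_i$, so $\cE_i(-x)\subset f(\cE')\cap\cE_i=f(\cF_i)$. Passing to graded pieces then gives $\cL_i(-x)\subset\cM_i$.
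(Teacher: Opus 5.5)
Your final ``direct plan'' is correct and is essentially the paper's proof: you pull back the HN filtration along $f$, identify the graded pieces with $\cL_i(-p_ix)$ for $p_i\in\{0,1\}$, and conclude from the degree gap and uniqueness of the HN filtration. The only difference is how you get $p_i\le 1$: you use $\cE(-x)\subset f(\cE')$, while the paper notes that $\cO_x/\pi_x^{p_i}$ is not a $k_x$-vector space for $p_i>1$, which is the same point. The exploratory steps (a)--(c) can be deleted. So can the a priori assumption that $\cE'$ has line-bundle HN subquotients, since your argument proves it.
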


\begin{proof}
    Consider the filtration $\cE_i'':=f^{-1}(\cE_i)$ on $\cE'$, and let $\cL_i'':=\cE_i''/\cE_{i-1}''$. Then, because $\cE$ and $\cE'$ differ only at $x$ by a finite length sheaf, for each $i$ there is a nonnegative integer $p_i$ such that
    \begin{align*}
        0&\to\cL_i''\xrightarrow{f}\cL_i\to k_x^{\oplus p_i}\to0,\\
        0&\to\cE_i''\xrightarrow{f}\cE_i\to k_x^{\oplus (p_1+\ldots+p_i)}\to0.
    \end{align*}

This implies that $f(\cL_i'')=\cL_i(-p_ix)$, and therefore $\deg\cL_i''=\deg\cL_i-p_i\deg x$. Moreover, since $\cL_i/\cL_i(-p_ix)\simeq \cO_x/\pi_x^{p_i}$ is not isomorphic to $k_x^{\oplus p_i}$ as an $\cO_X$-module if $p_i>1$, then we get $0\le p_i\le 1$.

Because we assumed that $\deg\cL_i>\deg\cL_{i+1}+\deg x$, we get
$$
\deg\cL_i''\ge\deg\cL_i-\deg x>\deg\cL_{i+1}\ge \deg\cL_{i+1}'',
$$
which by uniqueness of the HN filtration implies that $\cE_\bullet''$ is the HN filtration of $\cE'$. Therefore, $\cE'_\bullet=\cE_\bullet''$ and $\cL_i'=\cL_i''$, which finishes the proof.
\end{proof}

Because of this proposition, we get the following picture. Let $\cE$ satisfy
$$
\deg\cL_i>\deg\cL_{i+1}+2g-2+\deg D+\deg x.
$$
 Then any neighbor of $\cE$ in the Hecke graph is a map $f\in\Hom(\bigoplus_i\cL_i(-p_ix_i),\bigoplus_i\cL_i)$ for some  partition $r=p_1+\ldots+p_n$ with $0\le p_i\le 1$. This map must be an embedding with cokernel $k_x^{\oplus r}$. This condition is equivalent to the following:
 \begin{itemize}
     \item  The induced maps $\cL_i(-p_ix)\to\cL_i$ are nonzero. Then they are constant multiples of the canonical injections $\iota_{p_ix}$.
     \item The reduction $\bar f$ of $f$ to $k_x$ has rank $n-r$. Since the matrix of $\bar f$ in the bases $\cL_i$, $\cL_i(-p_ix)$ is upper-triangular with zeroes at precisely $r$ diagonal entries, it must have zeroes at the entries $(i,j)$ where $p_i=p_j=1$. This amounts to the statement that the induced maps $\cL_i(-p_ix)\to\cL_j$ factor through $\cL_i\to \cL_j$.
 \end{itemize}

Therefore, all isomorphism classes of extensions from Proposition \ref{p:Hecke_correspondence_for_vector_bundles} are classified by
\begin{itemize}
    \item A partition $r=p_1+\ldots+p_n$ with $0\le p_i\le 1$.
    \item $Y/W$, where $Y\subset \Hom(\bigoplus_i\cL_i(-p_ix_i),\bigoplus_i\cL_i)$ is the set of all maps whose induced maps $\cL_i(-p_ix)\to\cL_i$ are the canonical injections $\iota_{p_ix}$, the maps $\cL_i(-p_ix)\to\cL_j$ factor through $\cL_i\to\cL_j$ for $i\ne j$ with $p_i=p_j=1$, and $W\subset\Aut(\bigoplus_i\cL_i)$ is the set of all automorphisms whose induced maps on $\cL_i\to\cL_i$ are the identity maps.
    
    In other words, it is the quotient of the space
     \[
\renewcommand{\arraystretch}{1.8} 
\setlength{\arraycolsep}{6pt}     
Y=\begin{pmatrix}
    \iota_{p_1x} & H^0(\cL_1\otimes\cL_2^\vee(\delta_{p_1,0}p_2x)) &  \cdots & H^0(\cL_1\otimes\cL_n^\vee(\delta_{p_1,0}p_nx)) \\
    0 & \iota_{p_2x} &  \cdots & H^0(\cL_2\otimes\cL_n^\vee(\delta_{p_2,0}p_nx)) \\
    \vdots & \vdots &  \ddots & \vdots \\
    0 & 0 &  \cdots & H^0(\cL_{n-1}\otimes\cL_n^\vee(\delta_{p_{n-1},0}p_nx)) \\
    0 & 0 &  \cdots & \iota_{p_nx}
\end{pmatrix}
\]
by the group of matrices multiplying on the right
$$
\renewcommand{\arraystretch}{1.8} 
\setlength{\arraycolsep}{6pt}     
W=\begin{pmatrix}
    1 & H^0(\cL_1\otimes\cL_2^\vee(p_2x-p_1x)) &  \cdots & H^0(\cL_1\otimes\cL_n^\vee(p_nx-p_1x)) \\
    0 & 1 &  \cdots & H^0(\cL_2\otimes\cL_n^\vee(p_nx-p_2x)) \\
    \vdots & \vdots &  \ddots & \vdots \\
    0 & 0 &  \cdots & H^0(\cL_{n-1}\otimes\cL_n^\vee(p_nx-p_{n-1}x)) \\
    0 & 0 &  \cdots & 1
\end{pmatrix}.
$$

\end{itemize}

\begin{remark}
    {\rm
        The partitions $r=p_1+\ldots+p_n$ with $0\le p_i\le 1$ can be described in the following way. Recall that the Hecke operator $\Phi^{w_r}$ corresponds to the fundamental coweight $w_r$, which in the standard basis for the diagonal matrices in $\GL_n$ is represented by $(1,...,1,0,...,0)$ with $r$ ones. If in Proposition \ref{p:Hecke_correspondence_for_vector_bundles} we assumed $\tilde{a} \circ f_x \circ \tilde{b}^{-1}\in \sigma\Delta_x\sigma^{-1} G(\cO_x)$, where $\sigma$ is a permutation matrix treated as an element of the Weyl group, then this would correspond to having $(p_1,...,p_n)=\sigma(w_r)$. Note that the degree of each vertex should equal the number of points in
        $$
        G(\cO_x)\Delta_x G(\cO_x)/G(\cO_x)\simeq G(k_x[[t]])t^{w_r} G(k_x[[t]])/G(k_x[[t]])\simeq \mathrm{Gr}(k,n)(k_x).
        $$
    }
\end{remark}

\begin{proposition}\label{p:cusp_connections_degrees}
    The action of $W$ on $Y$ is free. Therefore, the number of edges from $\cE$ to $\cE'$ equals
\begin{align*}
    |Y/W|=|Y|/|W|&=\prod_{i>j}\frac{\#H^0(\cL_j\otimes\cL_i^\vee(\delta_{p_j,0}p_ix))}{\#H^0(\cL_j\otimes\cL_i^\vee(p_ix-p_jx))}\\
    &=q^{\deg x\left(\sum_{i>j}p_j-\frac{r(r-1)}{2}\right)}\\
    &=q^{\deg x\left(\sum_i(n-i)p_i-\frac{r(r-1)}{2}\right)},
\end{align*}
    and the total outgoing degree of $\cE$ equals
    $$
    \sum_{\substack{0\le p_1,\dots,p_n\le 1\\ r=\sum_{i=1}^np_i}}q^{\deg x\left(\sum_i(n-i)p_i-\frac{r(r-1)}{2}\right)}=\binom{n}{r}_{q^{\deg x}}.
    $$
\end{proposition}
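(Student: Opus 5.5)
The plan has three steps: show the $W$-action is free, count $|Y|$ and $|W|$ entrywise by Riemann--Roch, then sum over partitions and identify the result as a Gaussian binomial. Throughout I use the standing assumption $\deg\cL_i>\deg\cL_{i+1}+2g-2+\deg D+\deg x$ and the classification of edges by pairs $(p_\bullet, Y/W)$ obtained just before the statement. By Proposition \ref{p:cusp_connected_to_cusp}, the partition $p_\bullet$ is determined by the target, since $\cL_i'=\cL_i(-p_ix)$. So for fixed $\cE'$ the edges $\cE\to\cE'$ are exactly the $W$-orbits on $Y$ for that one partition.

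\emph{Freeness.} First I check that right multiplication by $W$ preserves $Y$. The diagonal of $yw$ is still $\iota_{p_ix}$ because $w$ is unipotent upper triangular. The factorization condition for pairs $i\ne j$ with $p_i=p_j=1$ is preserved because the entries of $w$ in such positions lie in $H^0(\cL_j\otimes\cL_i^\vee)$; this is an entry-by-entry check of the twists. Now suppose $yw=y$. Then $y\circ(w-1)=0$. Every $y\in Y$ is a generically invertible map of vector bundles, since its diagonal entries $\iota_{p_ix}$ are nonzero. Hence $y$ is injective as a map of sheaves, and so $w=1$.

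\emph{Counting.} Both $Y$ and $W$ are products over the entries above the diagonal, so $|Y|/|W|=\prod_{j<i}$ of the ratio of the corresponding $H^0$'s. Each line bundle appearing has the form $\cL_j\otimes\cL_i^\vee(mx)$ with $j<i$ and $m\in\{-1,0,1\}$, so its degree exceeds $2g-2$ by the standing assumption. Hence $H^1=0$ and $h^0=\deg+1-g$ by Riemann--Roch. The ratio for the entry $(j,i)$ is therefore $q^{\deg x\,(\delta_{p_j,0}p_i-p_i+p_j)}$. Checking the cases $p_j=0$ and $p_j=1$ shows that the exponent equals $\deg x\,(p_j-p_ip_j)$. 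Summing over $j<i$ gives
\[
\sum_{j<i}p_j-\sum_{j<i}p_ip_j=\sum_j(n-j)p_j-\binom r2,
\]
which is the claimed formula.

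\emph{Total degree.} The exponent $\sum_j(n-j)p_j-\binom r2$ counts the pairs $j<i$ with $p_j=1$ and $p_i=0$, i.e.\ the inversions of the $0/1$ word $p$. Summing $Q^{\mathrm{inv}(p)}$ over words with $r$ ones gives the Gaussian binomial $\binom nr_Q$; I take $Q=q^{\deg x}$. I would cite this standard identity, or prove it by induction on $n$ by conditioning on $p_1$. It also agrees with the count $|\mathrm{Gr}(r,n)(k_x)|$ from the preceding Remark.

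The main obstacle I expect is the bookkeeping in the freeness step: verifying that $Y$ is genuinely $W$-stable with the stated twists, and that the degree hypothesis is strong enough for every twisted bundle, including those twisted by $-x$. This needs care only in the inequalities, not in the ideas.
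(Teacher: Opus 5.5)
Your route is essentially the paper's: prove freeness, count $|Y|/|W|$ entrywise by Riemann--Roch, and identify the sum with $\binom{n}{r}_{q^{\deg x}}$. Two of your steps are cleaner variants of the paper's. For freeness, you note that every $y\in Y$ is injective as a map of sheaves, so $y(w-1)=0$ forces $w=1$; the paper instead runs an induction on superdiagonals. For the total degree, you read the exponent as the inversion number of the word $(p_1,\dots,p_n)$, which is equivalent to the paper's bijection with Young diagrams in an $r\times(n-r)$ box. Your per-entry exponent $p_j(1-p_i)$ and its sum are correct.

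The step that fails is your check that right multiplication by $W$ preserves $Y$. Take $p_j=p_i=1$ and expand $(yw)_{ji}=\sum_{j\le k\le i}y_{jk}w_{ki}$. Each term with $j<k<i$ and $p_k=0$ is a composite $\cL_i(-x)\xrightarrow{w_{ki}}\cL_k\xrightarrow{y_{jk}}\cL_j$. Its value at $x$ is arbitrary, so the factorization condition is not preserved.

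Concretely, for $n=3$ and $p=(1,0,1)$ one gets $(yw)_{13}=\iota_x w_{13}+y_{12}w_{23}+y_{13}$, and $y_{12}w_{23}$ need not vanish at $x$. The problem needs $n\ge 3$. It is really a defect in the description of $Y$ before the statement, and the paper's proof never checks stability either. The condition ``zero at positions $(j,i)$ with $p_j=p_i=1$'' is not equivalent to $\operatorname{rank}\bar f=n-r$. For $p=(1,0,1)$ the correct condition is $\bar f_{13}=\bar f_{12}\bar f_{23}$. The literal $Y$ even contains maps with cokernel $\cO_x/\pi_x^2$, e.g.\ $\bar f_{12},\bar f_{23}\neq 0$ and $\bar f_{13}=0$.

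The repair is to let $Y$ be the set of upper-triangular $f$ with diagonal $\iota_{p_ix}$ and $\operatorname{rank}\bar f=n-r$. This set is automatically $W$-stable, since precomposing with an automorphism of the source does not change the image. It also has the same cardinality. Put $S=\{k:p_k=0\}$. The $S\times S$ block of $\bar f$ is unitriangular, and the rank condition says its Schur complement vanishes. For each $j<i$ with $p_j=p_i=1$, this prescribes the value at $x$ of $f_{ji}\in H^0(\cL_j\otimes\cL_i^\vee(x))$ as a function of entries in other positions only. Your degree bound makes evaluation at $x$ surjective with kernel $H^0(\cL_j\otimes\cL_i^\vee)$. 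Hence $|Y|=\prod_{j<i}\#H^0(\cL_j\otimes\cL_i^\vee(\delta_{p_j,0}p_ix))$ still holds, and the rest of your argument goes through unchanged.
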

\begin{proof}
    Take matrices $A=(a_{ij})\in Y$ and $g=(g_{ij})\in W$ and assume that $A=Ag$. We will prove that $g_{i,i+k}=0$ for all $i$ by induction on $k\ge 1$. Looking at the entries right above the main diagonal, we see that $A=Ag$ implies
    $$
    a_{i,i+1}=a_{i,i}g_{i,i+1}+a_{i,i+1}g_{i+1,i+1}=\iota_{d_ix}g_{i,i+1}+a_{i,i+1}.
    $$
    Since $\iota_{d_ix}$ is injective, this implies that $g_{i,i+1}=0$.

    Now, assume that the induction hypothesis is true for $1,2,\ldots,k$. We then have
    \begin{align*}
        a_{i,i+k+1}=\sum_{i\le j\le i+k+1} a_{i,j}g_{j,i+k+1}&=a_{i,i}g_{i,i+k+1}+a_{i,i+k+1}g_{i+k+1,i+k+1}\\
        &=\iota_{d_ix}g_{i,i+k+1}+a_{i,i+k+1},
    \end{align*}
    which implies that $g_{i,i+k+1}=0$. This finishes the proof of the freeness of the action.

    To prove the second formula, note that
    \begin{align*}
    \prod_{i>j}\frac{\#H^0(\cL_j\otimes\cL_i^\vee(\delta_{p_j,0}p_ix))}{\#H^0(\cL_j\otimes\cL_i^\vee(p_ix-p_jx))}&=\prod_{i>j}\frac{\#H^0(\cL_j\otimes\cL_i^\vee(p_ix))}{\#H^0(\cL_j\otimes\cL_i^\vee(p_ix-p_jx))}\cdot \prod_{\substack{i>j\\ p_i=p_j=1}}\frac{\#H^0(\cL_j\otimes\cL_i^\vee)}{\#H^0(\cL_j\otimes\cL_i^\vee(x))}\\
    &=q^{\deg x\cdot\sum_{i>j}p_j}q^{-\deg x\cdot \frac{r(r-1)}{2}}.
\end{align*}
    
    To prove the last formula, note that the coefficient of $q^{\alpha\deg x}$ on the left-hand side is the number of Young diagrams with $\alpha$ boxes inside an $r\times(n-r)$ rectangle. The desired bijection takes a decomposition $r=\sum p_i$, assigns to it a collection of indices $i_1,\ldots, i_r$ such that $p_{n+1-i_j}=1$, and to this collection assigns a partition
    $$
    \alpha=\sum_{j=1}^r(i_{j}-j).
    $$
    The inverse map takes a partition $\alpha=\sum_{j=1}^n \lambda_j$ with $\lambda_1\le\ldots\le\lambda_r$ and returns the collection $p_i$ with $p_i=1$ if and only if $i=n+1-j-\alpha_j$ for some $j$. The number of such Young diagrams is the number of Schubert cells in $\operatorname{Gr}(r,n)$ isomorphic to $\AA^\alpha$. Therefore, the total sum is the number of points in $\operatorname{Gr}(r,n)(\FF_{q^{\deg x}})$, which is known to be equal to the right-hand side.
\end{proof} 

This describes the Hecke graph of $\Phi_{x}^{\omega_x}$ at the cusp without any ramification. Our next goal is to show that, as soon as we know the graph of $\Phi_{D_2,x}^{\omega_x}$ at the cusp with ramification $D_2$, we know the graph of $\Phi_{D,x}^{\omega_x}$ at the cusp with ramification $D_1+D_2$. We start with the following definition.

\begin{definition}\label{def:covering_map}
    Let $\Gamma_1$ and $\Gamma_2$ be two oriented graphs such that every vertex has finite ingoing and outgoing degree. A {\bfseries covering map} $p:\Gamma_1\to\Gamma_2$ of graphs is a map on vertices such that
    \begin{itemize}
        \item $p$ is surjective on vertices.
        \item For each $v\in\Gamma_1$ and $w\in\Gamma_2$, $p$ induces a bijection between edges $p(v)\to w$ and $v\to p^{-1}(w)$, where we take the union of edges over all preimages of $w$.
        \item For each $v\in\Gamma_1$ and $w\in\Gamma_2$, $p$ induces a bijection between edges $p^{-1}(w)\to v$ and $w\to v$, where we take the union of edges over all preimages of $v$.
    \end{itemize}
\end{definition}

If we think of oriented graphs as one-dimensional CW complexes, then a covering map is a topological covering map preserving the CW structure and orientations of one-dimensional cells. See Figure \ref{fig:graph_covering} for a visualization. In particular, such a map is completely determined by $\Gamma_2$, the set of vertices of $\Gamma_1$, and the way topological loops lift.

\begin{figure}
    \centering
\scalebox{.8}{

\includegraphics{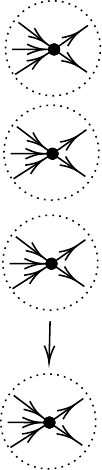}

}
\caption{Covering map of graphs.}
\label{fig:graph_covering}
\end{figure}

Let $\Gamma_{D}:=\Gamma_{D,x}^{\omega_r}$ and $\Gamma_{D_2}:=\Gamma_{D_2,x}^{\omega_r}$. Let 
$$
\Gamma_{D}^{>k}:=\{(\cE,\phi)\in \Gamma_{D}:\rk(\cL_i)=1,\deg\cL_i>\deg\cL_{i+1}+2g-2+k+\delta_{d_x,0}\deg x\}.
$$
Recall that we fixed a decomposition $D=D_1+D_2$ satisfying (\ref{eq:d1_d2_condition}).
\begin{lemma}\label{l:covering_map_GL}
    The map $p_{D,D_2}:\Gamma_{D}^{>k}\to \Gamma_{D_2}^{>k}$ is a covering map of graphs for $k\ge \deg D$.
\end{lemma}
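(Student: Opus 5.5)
We have to check the three conditions of Definition \ref{def:covering_map} for $p=p_{D,D_2}$ restricted to $\Gamma_D^{>k}$.

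\textbf{Surjectivity.} This is clear. Any level structure at $D_2$ extends to one at $D$, since the fibres are torsors by Corollary \ref{cor:gln_preimage_size}. The HN condition defining $\Gamma^{>k}$ depends only on the underlying bundle $\cE$, so preimages of deep-cusp vertices are again deep-cusp vertices.

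\textbf{Outgoing edges.} Fix $v=(\cE,\phi)$ with $\phi=(\phi_{D_1},\phi_{D_2})$ and $w=(\cE',\phi'_{D_2})\in\Gamma_{D_2}^{>k}$. By Proposition \ref{p:Hecke_correspondence_for_vector_bundles}, an edge $v\to(\cE',\psi)$ is an equivalence class of pairs $(f,\psi)$, where $f:\cE'\hookrightarrow\cE$ is a modification at $x$. The conditions on $(f,\psi)$ are:
\begin{itemize}
\item at $x$ (which lies in $\supp D_2$ or outside $\supp D$), a condition involving only $\phi_{D_2}$ and $\psi_{D_2}$;
\item away from $x$, $\psi|_{\cO_{D_1}}=\phi_{D_1}\circ f|_{\cO_{D_1}}$, because $x\notin\supp D_1$.
\end{itemize}
So $\psi_{D_1}$ is uniquely determined by $f$. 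Since equivalence is up to $\Aut(\cE')\times\GL(k_x^r)$ acting on the whole datum, this gives a bijection
\[
\{\text{edges } v\to p^{-1}(w)\}\;\longleftrightarrow\;\{\text{edges } p(v)\to w\}.
\]
The only point to check is that two pairs $(f,\psi)$ and $(f\circ\alpha,\psi\circ\alpha)$ with $\alpha\in\Aut(\cE',\phi'_{D_2})$ represent the same edge downstairs exactly when they represent the same edge upstairs with the same target class. This holds tautologically, because the upstairs target is counted over all of $p^{-1}(w)$, i.e.\ up to isomorphism of $(\cE',\psi)$ through automorphisms of $(\cE',\phi'_{D_2})$. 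Equivalently, in the adelic picture the cosets $\tau_i\Delta K(D)$ can be chosen with $\tau_i$ supported at $x$, and $K(D_2)\Delta K(D_2)/K(D_2)\leftarrow K(D)\Delta K(D)/K(D)$ is a bijection, since $K(D)_y=K(D_2)_y$ at $y=x$ and $\Delta_y=1$ for $y\in\supp D_1$. This adelic formulation gives the outgoing bijection immediately: the neighbours $g\tau_i\Delta$ map to the neighbours of the image.

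\textbf{Incoming edges.} This is the main obstacle, because the adelic argument does not apply directly. Given an edge $w'\to p(v)$ downstairs, we need a unique lift starting at some vertex of $p^{-1}(w')$ and ending at $v$. Rather than reversing the construction, which would fail if lifted edges landed at different preimages, I would count. Each upstairs vertex $u\in p^{-1}(w')$ has its outgoing edges distributed over $p^{-1}(w)$, and $p_*$ commutes with the adjacency operator in the sense of the outgoing bijection.

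It then suffices to show that the group $H:=\Aut(\cE_{D_2\text{-level}})$ image (described in Theorem \ref{t:gln_image_of_aut_group}(ii)) acts simply transitively on each fibre, compatibly with edges. The fibre $p^{-1}(w)$ is the set of $D_1$-level structures modulo the image of $\Aut(\cE',\phi'_{D_2})$. An edge, given by $f$, transports level structures via $\psi_{D_1}=\phi_{D_1}\circ f$. Since $f$ is an isomorphism near $D_1$, the map $\phi_{D_1}\mapsto\phi_{D_1}\circ f$ is a bijection of $\GL_n(\cO_{D_1})$-torsors. It descends to a bijection between fibres because of the following claim, where the hypothesis $k\ge\deg D$ enters.

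\emph{Claim.} Conjugation by $f$ identifies the image of $\Aut(\cE,\phi_{D_2})$ in $\GL_n(\cO_{D_1})$ with that of $\Aut(\cE',\phi'_{D_2})$.

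To prove the claim:
\begin{itemize}
\item Proposition \ref{p:cusp_connected_to_cusp} shows that $f$ preserves the HN filtrations, with $\cL'_i=\cL_i(-p_ix)$. Hence $f|_{\cO_{D_1}}$ is upper triangular in the adapted bases.
\item Theorem \ref{t:gln_image_of_aut_group}(ii) shows that both images equal $U(\cO_{D_1})$, or $T(k)\ltimes U(\cO_{D_1})$ when $D_2=0$. The gap condition in $\Gamma^{>k}$ with $k\ge\deg D$ (plus $\deg x$) is exactly what is needed, for both endpoints, to have $H^1(\cL_i\otimes\cL_j^\vee(-D))=0$.
\item The group $B(\cO_{D_1})$ normalizes $U(\cO_{D_1})$, and when $D_2=0$ it normalizes $T(k)U(\cO_{D_1})$ as well. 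Here we need that the diagonal of $f|_{\cO_{D_1}}$ lies in $T(\cO_{D_1})$, which commutes with $T(k)$.
\end{itemize}

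Thus each downstairs edge $w'\to p(v)$ induces a bijection $p^{-1}(w')\to p^{-1}(p(v))$. Hence exactly one preimage of $w'$ has a lifted edge landing at $v$, which is the incoming-edge condition.
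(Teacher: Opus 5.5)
Your proof is correct and essentially the paper's. In both, the decisive step is that $f|_{\cO_{D_1}}$ is upper triangular by Proposition~\ref{p:cusp_connected_to_cusp}. Conjugating by it therefore carries the image $U(\cO_{D_1})$ (resp.\ $T(k)\ltimes U(\cO_{D_1})$ when $D_2=0$) of one endpoint's automorphism group to that of the other, and Theorem~\ref{t:gln_image_of_aut_group}(ii) lifts the result back to a global automorphism.

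The only difference is packaging. The paper uses just the inclusion from target to source to get injectivity on incoming edges, then concludes by double counting with the equal fibre sizes of Corollary~\ref{cor:gln_preimage_size}. You use both inclusions to make $[\phi_{D_1}]\mapsto[\phi_{D_1}\circ f]$ a bijection of fibres directly. Your aside that this image group \emph{acts simply transitively on each fibre} is misstated, since the fibre is the quotient by that group, but nothing downstream depends on it.
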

\begin{proof}
    Surjectivity on points is clear. Choose $(\cE,\phi)\in\Gamma_{D}^{>k}$ and $(\cE',\phi')\in \Gamma_{D_2}^{>k}$. Since an edge 
    $$(\cE,\phi|_{D_2})\to (\cE',\phi')$$
    in $\Gamma_{D_2}^{>k}$ is represented by an inclusion $f:\cE'\to\cE$ which is an isomorphism outside $D_2$, it lifts uniquely to an edge $(\cE,\phi)\to (\cE',\tilde\phi')$ in $\Gamma_{D}^{>k}$ for some extension $\tilde\phi'$ of $\phi'$ to $D$. This shows the second part in the definition of the covering map.
    
    For the third part, take $v\in\Gamma_{D}^{>k}$ and $w\in\Gamma_{D_2}^{>k}$, and set $p:=p_{D,D_2}$, $v':=p(v)$.
    We claim that the number of edges from  $p^{-1}(w)$ to $v$ is at most the number of edges from $w$ to $v'$. Let us show how this claim proves the third property of a covering map. By the second property just proven, the total number of edges from $p^{-1}(w)$ to $p^{-1}(v')$ equals $|p^{-1}(w)|$ times the number of edges from $w$ to $v'$. On the other hand, by our claim, the number of edges from $p^{-1}(w)$ to $p^{-1}(v')$ is at most $|p^{-1}(v')|$ times the number of edges from $w$ to $v'$, with equality if and only if the third property is satisfied. But equality must be satisfied since $|p^{-1}(v')|=|p^{-1}(w)|$. So, we are done as soon as we prove the claim.

    Suppose the contrary: that there are more edges from $p^{-1}(w)$ to $v$ than from $w$ to $v'$. Choose representatives in each isomorphism class of level structures for $w=(\cE,\phi_{D_2})$, $v=(\cE',\phi_{D_2}',\phi_{D_1}')$, and $(\cE,\phi_{D_2},\psi_{D_1})$ for any element in $p^{-1}(w)$. By Proposition \ref{p:Hecke_correspondence_for_vector_bundles}, edges from each $(\cE,\phi_{D_2},\psi_{D_1})$ to $(\tilde\cE,\tilde\phi_{D_2},\tilde\psi_{D_1})$ are given by pairs $(f,\tau)$, where $f:\tilde \cE\hookrightarrow\cE$ is an isomorphism over $X\setminus x$ and $\tau$ is one of the chosen set of representatives $\tau_i$ of $K\Delta K=\sqcup \tau_i K$ ($\tau$ is redundant if $x\notin\supp D$). By our assumption, there exist at least two edges from $p^{-1}(w)$ to $v$ given by the same pair $(f,\tau)$. Write those edges as
    \begin{align*}
        (\cE,\phi_{D_2},\psi_{D_1}^1)&\to (\tilde \cE,\tilde\phi_{D_2},\psi_{D_1}^1\circ f),\\
        (\cE,\phi_{D_2},\psi_{D_1}^2)&\to (\tilde \cE,\tilde\phi_{D_2},\psi_{D_1}^2\circ f),
    \end{align*}
    where the first two elements in the triples on the right-hand side are the same since we use the same $(f,\tau)$ for both connections. By assumption, the level structures on the right represent the same vertex $v$, hence there exists an isomorphism
    $$
    \alpha: (\tilde \cE,\tilde\phi_{D_2},\psi_{D_1}^1\circ f)\simeq (\tilde \cE,\tilde\phi_{D_2},\psi_{D_1}^2\circ f).
    $$
    
    Using decompositions
    \begin{align*}
        \cE&=\cL_1\oplus\ldots\oplus\cL_n\\
        \tilde\cE&=\cL_1(-p_1x)\oplus\ldots\oplus\cL_n(-p_nx)
    \end{align*}
    as in Proposition \ref{p:cusp_connected_to_cusp}, we can regard $f$ and $\alpha$ as upper-triangular matrices with diagonal entries in $T(k)$. Moreover, $\alpha$ is strictly upper-triangular if $D_2\ne 0$ (see Theorem \ref{t:gln_image_of_aut_group}). Then $\beta_{D_1}:=(f\alpha f^{-1})|_{D_1}$ lies in $U(\cO_{D_1})$ if $D_2\ne 0$ and in $T(k)U(\cO_{D_1})$ otherwise. By Theorem \ref{t:gln_image_of_aut_group}, it lifts to an isomorphism
    $$
    \beta: (\cE,\phi_{D_2},\psi_{D_1}^1)\to (\cE,\phi_{D_2},\psi_{D_1}^2),
    $$
    which implies that $\psi_{D_1}^1=\psi_{D_1}^2$ since we chose one representative from each isomorphism class. This is a contradiction, so we are done.
\end{proof}

\begin{theorem}\label{t:graph_for_ramification_at_cusp}
    Assume that $\supp D_2\ne \{x\}$. There is a decomposition of $\Gamma_{D}^{>\deg D}=\bigsqcup_{i=1}^N\Gamma_i$ into
    $$
    N=|T(\cO_{D_1})|/|T(\FF_q)|^{\delta_{D_2,0}}\cdot|\operatorname{Fl}_n(\cO_{D_1})|
    $$
    subgraphs such that
    \begin{enumerate}
        \item There are no edges between $\Gamma_i$ and $\Gamma_j$ for any $i\ne j$,
        \item The projection $p_{D,D_2}$ maps $\Gamma_i$ isomorphically onto $\Gamma_{D_2}^{>\deg D}$.
    \end{enumerate}
\end{theorem}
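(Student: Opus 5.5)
The plan is to combine the covering property of Lemma \ref{l:covering_map_GL} with the fiber count of Corollary \ref{cor:gln_preimage_size}. The missing ingredient is a proof that the covering is \emph{trivial}, i.e.\ has no monodromy. Concretely, I will construct a global, edge-compatible labelling of each fiber of $p:=p_{D,D_2}$ by a fixed set $S$ with $|S|=N$. Then I will set $\Gamma_s:=\{v:\text{label}(v)=s\}$ and check that every edge preserves labels.

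First I construct the labelling. For $(\cE,\phi_{D_2})\in\Gamma_{D_2}^{>\deg D}$, Theorem \ref{t:gln_image_of_aut_group} gives a splitting $\cE\simeq\bigoplus\cL_i$ with canonical HN filtration. The filtration is canonical, but the splitting is only canonical up to $\Aut$. A lift to $\Gamma_D$ is a choice of $D_1$-level structure $\psi_{D_1}$ on $\cE|_{\cO_{D_1}}$ modulo the image $H$ of $\Aut(\cE,\phi_{D_2})$. By Theorem \ref{t:gln_image_of_aut_group}, $H=U(\cO_{D_1})$ if $D_2\ne0$, and $H=T(k)U(\cO_{D_1})$ otherwise. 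Transporting the filtration $\cE_\bullet|_{D_1}$ via $\psi_{D_1}$ gives a point of $\operatorname{Fl}_n(\cO_{D_1})$. Comparing $\psi_{D_1}$ with the trivializations of $\cE|_{D_1}$ adapted to the filtration gives a class in $T(\cO_{D_1})$, modulo $T(k)$ when $D_2=0$. I take the label to be the class of $\psi_{D_1}$ in $\GL_n(\cO_{D_1})/H$. This set is independent of the bundle once we fix, for the graded pieces, an identification of the fiber torsor that is canonical up to $H$. That is exactly what the quotient by $U(\cO_{D_1})$ (resp.\ $T(k)U$) provides: $\GL_n(\cO_{D_1})/U(\cO_{D_1})$ is canonically the set of pairs (flag, trivialization of graded lines). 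Its cardinality is $N$, consistent with Corollary \ref{cor:gln_preimage_size}.

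Next I check edge compatibility. Take an edge $(\cE,\phi_{D_2},\psi_{D_1})\to(\cE',\phi'_{D_2},\psi_{D_1}\circ f)$. By Proposition \ref{p:Hecke_correspondence_for_vector_bundles}, $f$ is an isomorphism over $D_1$, since $x\notin\supp D_1$. By Proposition \ref{p:cusp_connected_to_cusp}, $f^{-1}(\cE_i)=\cE'_i$ and $f$ induces $\cL_i(-p_ix)\to\cL_i$ on graded pieces. Hence $f|_{D_1}$ carries the HN flag to the HN flag, so the flag label is preserved. The graded pieces of $f|_{D_1}$ are the restrictions of the canonical inclusions $\iota_{p_ix}$, which are identities away from $x$, so the torus part is preserved as well. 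I must make sure the labelling convention on $\cE'$ identifies $\cL'_i|_{D_1}$ with $\cL_i|_{D_1}$ through $\iota$. This is where a global rigidification of graded lines is needed, and it is the main obstacle. The graded lines $\cL_i$ are only determined up to $k^\times$ scaling, and an edge may act on them by the diagonal entries $T(k)$ of $f$.

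When $D_2\ne0$ with a point $y\ne x$ in its support, I will remove this ambiguity using the $D_2$-level structure at $y$. Since $\Aut(\cE,\phi_{D_2})$ is unipotent, $\phi_{D_2}$ restricted to the graded pieces at $y$ rigidifies each $\cL_i$ up to no scalars. Because $f$ is an isomorphism at $y$ compatible with $\phi_{D_2}$, the diagonal entries of $f$ are forced to be $1$ after this normalization. I would normalize each $\cL_i$ so that its fiber trivialization at $y$ matches $\phi_{D_2}$, and then compare at $D_1$. When $D_2=0$, the torus labels are taken modulo $T(k)$, so the scalars are harmless. This is exactly where the hypothesis $\supp D_2\neq\{x\}$ enters: at $x$ the map $f$ is not an isomorphism, and the $T(k)$ monodromy of the later theorem appears.

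Finally, since each label class meets each fiber exactly once and edges preserve labels, the covering property of Lemma \ref{l:covering_map_GL} shows that $p|_{\Gamma_s}$ is a bijection on vertices and on edges. This gives items (1) and (2).
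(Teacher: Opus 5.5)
\textbf{Gap: the label is never actually defined.} The step you call ``compare at $D_1$'' is the unproven one. To identify the fibre over $(\cE,\phi_{D_2})$ with the fixed set $\GL_n(\cO_{D_1})/H$, you need trivializations $t_i$ of $\cL_i|_{\cO_{D_1}}$ with three properties. They must be determined exactly when $D_2\neq0$ (up to $k^\times$ when $D_2=0$). They must be invariant under isomorphisms of vertices. They must satisfy $t_i'=t_i\circ\gr_i(f)|_{D_1}$ along every edge. The description of $\GL_n(\cO_{D_1})/U(\cO_{D_1})$ as ``flag plus graded trivializations'' does not supply these $t_i$; it presupposes them.

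Your proposed fix, normalizing $\cL_i$ at $y$ via $\phi_{D_2}$, only removes the $k^\times$ ambiguity of a global isomorphism. A trivialization along $d_y[y]$ carries no information about $D_1$. Two candidate choices of $t_i$ differ by an arbitrary element of $\cO_{D_1}^\times$, not of $k^\times$. With an arbitrary choice, a single edge shifts your label by the uncontrolled element $\bigl(t_i\circ\gr_i(f)|_{D_1}\circ t_i'^{-1}\bigr)_i\in T(\cO_{D_1})$. There is also a smaller point: $\phi_y$ only identifies $\cL_i|_{d_y[y]}$ with the graded pieces of the flag $\phi_y(\cE_\bullet|_{d_y[y]})$, which have no canonical generators. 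You need a fixed choice of generators for each flag. That choice is harmless, because this flag is unchanged along edges (as $f$ is an isomorphism at $y$ with $f^{-1}(\cE_i)=\cE_i'$) and under isomorphisms of vertices.

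\textbf{How to repair it, and the comparison with the paper.} What is missing is a choice of $D_1$-trivializations compatible with twisting at $x$. The operation $(\cL,s)\mapsto(\cL(-x),s\circ\iota|_{d_y[y]})$ on isomorphism classes of line bundles rigidified along $d_y[y]$ is free, since it lowers the degree by $\deg x$. So you can choose $t$ on one representative of each orbit and transport it along the inclusions $\iota$ and their inverses. Rigidified line bundles have no nontrivial automorphisms, so this gives a natural assignment $(\cL_i,s_i)\mapsto t_i$. Then $t_i'=t_i\circ\gr_i(f)|_{D_1}$ holds, because $\gr_i(f)$ identifies $(\cL_i',s_i')$ with $(\cL_i(-p_ix),s_i\circ\iota)$. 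For $D_2=0$, do the same on $\Pic(X)(k)$ with $t$ taken modulo $k^\times$.

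With this added, your labelling is well defined and edge-invariant, and your final step via Lemma \ref{l:covering_map_GL} is correct. The result is a global trivialization of the covering. The paper reaches the same conclusion component by component and avoids all global choices. It fixes one vertex in each connected component of $\Gamma_{D_2}^{>\deg D}$ and composes the Hecke maps along a path joining two of its lifts. It then uses the level structure at $y\neq x$ to kill the $T(k)$-part, so the discrepancy on $D_1$ lies in $U(\cO_{D_1})$ (resp.\ $T(k)U(\cO_{D_1})$). That discrepancy lifts to an automorphism by Theorem \ref{t:gln_image_of_aut_group}.
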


\begin{proof}
    Let $\Gamma_{D_2}'$ be a connected component of $\Gamma_{D_2}^{>\deg D}$. Choose $(\cE,\phi_{D_2})\in \Gamma_{D_2}'$ and let 
    $$
    p_{D,D_2}^{-1}((\cE,\phi_{D_2})):=\{(\cE^i,\phi_{D}^i)\}_{i=1..N}.
    $$
    
    Let $\Gamma_i'$ be the connected component of $\Gamma_{D}^{>\deg D}$ containing $(\cE^i,\phi_{D}^i)$. We claim that it satisfies the properties $(1)$ and $(2)$ with respect to $\Gamma_{D_2}'$, from which we will have the theorem proven after taking the union over all connected components $\Gamma_{D_2}'$.

    First, we prove $(1)$. Suppose that $\Gamma_i'=\Gamma_j'$ for some $i,j$. It means that there is a sequence of bundles
    $$
    \cE^i=\cE_1,\cE_2,\ldots,\cE_k=\cE^j
    $$
    with level structures at $D$ such that $\cE_s$ is connected to $\cE_{s+1}$ by an edge. This means that there is a map between $\cE_s$ and $\cE_{s+1}$ which is an isomorphism outside of $x$ and preserves the level structures at $D$. By Proposition \ref{p:cusp_connected_to_cusp}, we can write 
    \begin{equation}\label{eq:decomp_in_a_loop}
        \cE_s=\cL_1(p_1^sx)\oplus\ldots\oplus\cL_n(p_n^sx)
    \end{equation}
    for certain line bundles $\cL_s$ and integers $0\le p_t^s\le 1$. By the same Proposition, all the maps between $\cE_s$ and $\cE_{s+1}$ are upper-triangular with canonical embeddings at the diagonal in terms of the decomposition (\ref{eq:decomp_in_a_loop}). Write $D=D'+d_x[x]$ for $x\notin\supp D'$. When we restrict to $D'$, all these maps become strictly upper-triangular isomorphisms, and therefore compose to a strictly upper-triangular isomorphism $f:\cE^i|_{D'}\to\cE^j|_{D'}$. 

    Our assumptions on $D_2$ leave us with two cases: $D_2=0$ or $D_2$ contains a point $y\ne x$ in its support. Assume the first case. By Theorem \ref{t:gln_image_of_aut_group}, since $f|_{D}\in U(\cO_D)$, this restriction extends to an automorphism of $\cE_1=\cE$. It identifies $\phi^i$ with $\phi^j$, which proves the claim in this case.
    
    Assume now that $D_2$ contains a point $y\ne x$ in its support. Since $f$ is composed by Hecke modifications and it is an isomorphism over $D'$, the level structures $\phi^i|_{D'}$ and $\phi^j|_{D'}$ are isomorphic. Let $a$ and $b$ be automorphisms of $\cE_1=\cE_k$ such that $a$ maps $\phi^i|_{D_2}$ to $\phi^j|_{D_2}$, and $(b\circ f)|_{D'}$ maps $\phi^i|_{D'}$ to $\phi^j|_{D'}$. We want to show that the automorphism
    $$
    ((b\circ f)|_{D_1},a|_{D_2})
    $$
    of $\cE_1|_{D}$ extends to an automorphism of $\cE_1$, which will prove the claim because this automorphism will identify $\phi^i$ with $\phi^j$. Equivalently, we want to show that $(a^{-1}bf)|_{D_1}$ lifts to an automorphism of $\cE_1$ preserving $\phi^1|_{D_2}$. By Theorem \ref{t:gln_image_of_aut_group}, it is enough to show that $(a^{-1}bf)|_{D_1}\in U(\cO_{D_1})$.

    We already know that $(a^{-1}bf)|_{D'}\in T(k)\ltimes U(\cO_{D'})$. However, since both $bf$ and $a$ send $\phi^i|_{\supp D_2\cap \supp D'}$ to $\phi^j|_{\supp D_2\cap \supp D'}$, we have
    $$
    a^{-1}bf\phi^i|_{\supp D_2\cap \supp D'}=a^{-1}\phi^j|_{\supp D_2\cap \supp D'}=\phi^i|_{\supp D_2\cap \supp D'}.
    $$
     This shows that $a^{-1}bf|_{\supp D_2\cap \supp D'}=\mathrm{id}$, which in particular implies that $a^{-1}bf|_{D'}\in U(\cO_{D'})$ (recall that $T(k)$ is embedded diagonally into $T(k)\ltimes U(\cO_{D'})$). Further restriction to $D_1$ finishes the proof of (1).

    According to Lemma \ref{l:covering_map_GL}, the restriction of the map $p_{D,D_2}$ to $\Gamma_i'$ is a covering map to $\Gamma_{D_2}'$. By above, this map is a bijection on vertices. Therefore, it is an isomorphism, which finishes the proof.
\end{proof}

The above proof suggests that in the case $D_2=d_x[x]$, the monodromy of the projection map  $\Gamma_{D}^{>\deg D}\to \Gamma_{d_x[x]}^{>\deg D}$ lies in $T(k)$. The precise statement is the following.
\begin{theorem}\label{t:graph_for_ramification_at_cusp_over_x}
    Assume that $d_x\ge 1$. Take $(\cE,\phi^1), (\cE,\phi^2)\in \Gamma_{D}^{>\deg D}$ lying in the same connected component of $\Gamma_{D}^{>\deg D}$ such that $(\cE,\phi^1|_{d_x[x]})= (\cE,\phi^2|_{d_x[x]})$ in $\Gamma_{d_x[x]}^{>\deg D}$. Then there exists an automorphism $\alpha\in\Aut(\cE)$ such that $\alpha$ identifies $\phi^1|_{d_x[x]}$ with $\phi^2|_{d_x[x]}$ and $\alpha\phi^2|_{D'}=t\phi^1|_{D'}$ for some 
    $$
    t\in T(k)\simeq \bigoplus_i \Aut(\cL_i)\subset \Aut(\cE).
    $$
\end{theorem}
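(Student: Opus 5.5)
We adapt the argument used for part (1) of Theorem \ref{t:graph_for_ramification_at_cusp}. The difference is that the level structure at $x$ no longer forces the diagonal part of the transported automorphism to be trivial. We record that diagonal part as $t$.

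\textbf{Step 1: transport along a path.} Since $(\cE,\phi^1)$ and $(\cE,\phi^2)$ lie in the same connected component of $\Gamma_{D}^{>\deg D}$, choose a path of Hecke edges
$$
(\cE,\phi^1)=(\cE_1,\varphi_1),\ (\cE_2,\varphi_2),\ \ldots,\ (\cE_k,\varphi_k)=(\cE,\phi^2),
$$
with edges in either direction. By Proposition \ref{p:cusp_connected_to_cusp}:
\begin{itemize}
\item each $\cE_s$ decomposes as $\bigoplus_i\cL_i(p_i^sx)$ with $0\le p_i^s\le 1$;
\item each edge map is upper-triangular in these decompositions, with diagonal entries the canonical embeddings.
\end{itemize}
Write $D=D'+d_x[x]$. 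Restricting to $D'$, every edge map becomes an isomorphism which is unipotent upper-triangular. By Proposition \ref{p:Hecke_correspondence_for_vector_bundles}, it carries the $D'$-level structures into each other. Composing along the path gives an isomorphism $f\colon\cE|_{D'}\to\cE|_{D'}$ with $f\in U(\cO_{D'})$ and $\varphi_k|_{D'}\circ f=\varphi_1|_{D'}$, up to the chosen representatives of the vertices.

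Here is the main subtlety. The vertices are isomorphism classes, so at each step the identification of the endpoint with the next starting point is only an automorphism of the bundle $\cE_{s+1}$. By Theorem \ref{t:gln_image_of_aut_group}, applied with $D_1=D'$ and $D_2=d_x[x]\neq0$, such an automorphism preserving the level structure at $x$ restricts on $D'$ into $U(\cO_{D'})$. Absorbing these correction automorphisms therefore keeps the composite $f$ in $U(\cO_{D'})$, and possibly in $T(k)\ltimes U(\cO_{D'})$ if we are careless. I expect this bookkeeping to be the main obstacle. The point to isolate is that the Hecke edges themselves restrict to unipotent maps on $D'$, while all torus contributions come from automorphisms of the intermediate vertices.

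\textbf{Step 2: use the hypothesis at $x$.} Since $(\cE,\phi^1|_{d_x[x]})=(\cE,\phi^2|_{d_x[x]})$ in $\Gamma_{d_x[x]}$, there is $a\in\Aut(\cE)$ with $a\phi^2|_{d_x[x]}=\phi^1|_{d_x[x]}$. We take $\alpha=a$; it identifies the level structures at $x$, as the statement requires. It remains to compare $\alpha\phi^2|_{D'}$ with $\phi^1|_{D'}$.

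By Step 1, $\phi^2|_{D'}$ and $\phi^1|_{D'}$ differ by an element $u$ of $T(k)\ltimes U(\cO_{D'})$, namely $f$ together with the correction automorphisms, all of which lie in $\Aut(\cE)$ restricted to $D'$. By Theorem \ref{t:gln_image_of_aut_group} with $D_2=0$, the element $\alpha$ restricted to $D'$ also lies in $T(k)\ltimes U(\cO_{D'})$, with diagonal part $t_\alpha\in T(k)$. Hence $\alpha\phi^2|_{D'}=g\,\phi^1|_{D'}$ for some $g=t\,u'$ with $t\in T(k)$ and $u'\in U(\cO_{D'})$. Here $g$ is expressed in terms of the canonical decomposition of $\cE$ via its trivialization.

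\textbf{Step 3: remove the unipotent part.} We want to kill $u'$ without disturbing the level structure at $x$. By Theorem \ref{t:gln_image_of_aut_group}(ii) with $D_1=D'$ and $D_2=d_x[x]$, the element $u'$ lifts to $\beta\in\Aut(\cE,\phi^1|_{d_x[x]})$. Replace $\alpha$ by $\beta^{-1}\alpha$, after conjugating $u'$ past $t$, which stays in $U$ since $T(k)$ normalizes $U$. This new $\alpha$ still identifies $\phi^2|_{d_x[x]}$ with $\phi^1|_{d_x[x]}$, and now $\alpha\phi^2|_{D'}=t\phi^1|_{D'}$ with $t\in T(k)\simeq\bigoplus_i\Aut(\cL_i)$, which is the claim.
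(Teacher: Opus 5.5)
Your proposal is correct and is essentially the paper's argument. The paper also shows that the discrepancy $(a^{-1}bf)|_{D'}$ between the two $D'$-level structures lies in $T(k)\ltimes U(\cO_{D'})$, splits off $t\in T(k)$, and uses Theorem~\ref{t:gln_image_of_aut_group} with $D_2=d_x[x]\neq 0$ to lift the unipotent part together with $a^{-1}|_{d_x[x]}$ in one step, which amounts to your correction of $a$ by $\beta$.

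One small point: your aside in Step~1 that the correction automorphisms restrict into $U(\cO_{D'})$ is not justified. Those automorphisms carry one full $D$-level structure to another rather than fixing the one at $x$, so they can have nontrivial $T(k)$-part, and this is exactly the source of $t$. You only use the weaker $T(k)\ltimes U(\cO_{D'})$ statement, so nothing breaks.
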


\begin{proof}
    Repeating the lines and the notation from the proof of Theorem \ref{t:graph_for_ramification_at_cusp} with $\cE^i=\cE^j=\cE$, we can write
    $$
    (a^{-1}bf)|_{D'}=ut,\qquad t\in T(k), u\in U(\cO_{D'}).
    $$
    Then $(a^{-1}bft^{-1})|_{D'}\in U(\cO_{D'})$, which by Theorem \ref{t:gln_image_of_aut_group} implies that the automorphism
    $$
    (t(b\circ f)^{-1}|_{D'},a^{-1}|_{d_x[x]})
    $$
    of $\cE|_{D}$ lifts to an automorphism $\alpha$ of $\cE$. It sends $\phi^2|_{d_x[x]}$ to $\phi^1|_{d_x[x]}$ and $\phi^2|_{D'}$ to $t\phi^1|_{D'}$.
\end{proof}

See Example \ref{ex:nontrivial_monodromy_when_ramified_at_x} showing that the monodromy can be non-trivial. 

\begin{remark}\label{r:GL_to_PGL}
    {\rm
        Using Proposition \ref{p:from_GL_to_PGL}, we can see that results in Theorem \ref{t:gln_image_of_aut_group}, Corollary \ref{cor:gln_preimage_size}, and Theorem \ref{t:graph_for_ramification_at_cusp} stay the same for $G=\PGL_n$ if we replace the torus $T$ for $\GL_n$ by the torus for $\PGL_n$. The final formula in Corollary \ref{cor:gln_preimage_size} will then be
        $$
        \# p_{D,D_2}^{-1}(\cE)=\frac{q^{(\deg D_1-\deg (D_1)_{\mathrm{red}})\frac{(n+2)(n-1)}2}}{(q-1)^{(n-1)\delta_{D_2,0}}}\prod_{y\in\operatorname{supp}D_1}\prod_{k=2}^n(q^{k\deg y}-1).
        $$
        For a more direct proof, see Corollary \ref{cor:deep_cusp_fiber_size_reductive}.
    }
\end{remark}

\subsection{Generalities on infinite graphs} \label{s:General graphs}
Let $\Gamma$ be a graph for which the ingoing and outgoing degrees of each vertex are finite. Let $A(\Gamma)$ be the adjacency matrix of $\Gamma$. To say that $v\in\CC\Gamma:=\CC^\Gamma$ is an eigenvector of $A(\Gamma)$ with eigenvalue $\lambda$ is equivalent to saying that
$$
\lambda v_i=\sum_{i\to j}v_j,
$$
where edges $i\to j$ are counted with multiplicities. Note that this is the exact same problem we were solving for graphs of Hecke operators, so we can think of the Hecke operator as the adjacency matrix of its graph $\Gamma$ acting on $\CC\Gamma$. 

\begin{definition}
    We call a disjoint decomposition
    \begin{equation}\label{eq:graph_decomp}
        \Gamma=\Gamma'\sqcup\Gamma_1\sqcup\Gamma_2\sqcup\ldots 
    \end{equation}
     into finite subsets {\bfseries propagative} if the linear adjacency maps $\CC\Gamma_i\to\CC\Gamma_{i-1}$ induced by the arrows from vertices of $\Gamma_{i-1}$ to the vertices of $\Gamma_i$ are surjective for every $i>1$.

    We call such a decomposition {\bfseries strictly propagative} if the above mentioned maps are isomorphisms.

    For $\lambda\in\CC$, we denote by $\dim_\lambda \Gamma$ the dimension of the $\lambda$-eigenspace of the adjacency matrix of $\Gamma$.
\end{definition}
\begin{theorem}\label{t:eigenforms_for_graphs_with_decomposition}
    Choose $\lambda\in \CC$, and let a graph $\Gamma$ admit a propagative decomposition (\ref{eq:graph_decomp}). Let $A_i:\CC\Gamma_i\to\CC\Gamma_{i-1}$ be the linear adjacency maps induced by the arrows from vertices of $\Gamma_{i-1}$ to the vertices of $\Gamma_i$. Then 
    
    $$
    \sup_{i\ge 1}|\Gamma_i|\le\dim_\lambda\Gamma\le\dim_\lambda\Gamma'+\sup_{i\ge 1}|\Gamma_i|.
    $$
    
    In particular, if the decomposition is strictly propagative, then
    $$
    |\Gamma_1|\le\dim_\lambda\Gamma\le\dim_\lambda\Gamma'+|\Gamma_1|.
    $$
\end{theorem}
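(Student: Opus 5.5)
The plan is to build eigenvectors layer by layer and control the freedom at each stage by dimension counting. I read the decomposition (\ref{eq:graph_decomp}) with the implicit locality that arrows leaving a vertex of $\Gamma_{i-1}$ land only in $\Gamma'\cup\Gamma_1\cup\dots\cup\Gamma_i$, and arrows leaving $\Gamma'$ land only in $\Gamma'\cup\Gamma_1$. This is the situation for the Hecke graphs in the cusp by Proposition \ref{p:cusp_connected_to_cusp}. Under this reading, the eigenvalue equation at a vertex $u\in\Gamma_{i-1}$ becomes
$$
\lambda v_u-\sum_{u\to w,\ w\notin\Gamma_i}v_w=(A_i\,v|_{\Gamma_i})_u .
$$
Its left-hand side depends only on the values of $v$ on earlier layers.

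For $n\ge1$, let $V_n$ be the space of functions $v$ on $\Gamma'\cup\Gamma_1\cup\dots\cup\Gamma_n$ that satisfy the eigenvalue equation at every vertex of $\Gamma'\cup\Gamma_1\cup\dots\cup\Gamma_{n-1}$. Every such equation involves only vertices in this finite set, so $V_n$ is well defined. Restriction gives maps $V_{n+1}\to V_n$, and $\lambda$-eigenvectors of $A(\Gamma)$ are exactly compatible families, so the $\lambda$-eigenspace is $\varprojlim V_n$.

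First I compute $\dim V_1$. It has $|\Gamma'|+|\Gamma_1|$ unknowns and $|\Gamma'|$ linear equations, so $\dim V_1\ge|\Gamma_1|$. The restriction map $V_1\to\CC\Gamma_1$ has kernel equal to the $\lambda$-eigenspace of the adjacency matrix of $\Gamma'$ (functions supported on $\Gamma'$ satisfying the equations there). Hence $\dim V_1\le\dim_\lambda\Gamma'+|\Gamma_1|$.

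Next, given $v\in V_n$, the extensions to an element of $V_{n+1}$ are the solutions $w\in\CC\Gamma_{n+1}$ of $A_{n+1}w=c(v)$ for a known vector $c(v)\in\CC\Gamma_n$. Since $A_{n+1}$ is surjective, the restriction $V_{n+1}\to V_n$ is surjective with fibres that are torsors under $\ker A_{n+1}$. Surjectivity also forces $|\Gamma_{n+1}|\ge|\Gamma_n|$, so $\dim\ker A_{n+1}=|\Gamma_{n+1}|-|\Gamma_n|$. Telescoping then gives
$$
\dim V_n=\dim V_1+|\Gamma_n|-|\Gamma_1|,
$$
so $|\Gamma_n|\le\dim V_n\le\dim_\lambda\Gamma'+|\Gamma_n|$.

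Finally I pass to the limit. For an inverse system of finite-dimensional vector spaces with surjective transition maps, the projection $\varprojlim V_n\to V_m$ is surjective for every $m$. Indeed, any element lifts step by step, and this is a Mittag-Leffler argument. Consequently $\dim\varprojlim V_n=\sup_n\dim V_n$, with the convention that both sides are infinite when the sizes $|\Gamma_n|$ are unbounded. Combining this with the bounds on $\dim V_n$ gives $\sup_i|\Gamma_i|\le\dim_\lambda\Gamma\le\dim_\lambda\Gamma'+\sup_i|\Gamma_i|$.

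In the strictly propagative case all $|\Gamma_i|$ are equal to $|\Gamma_1|$. Every $V_{n+1}\to V_n$ is then an isomorphism, and the second statement follows.

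The main obstacle I expect is that the statement leaves the locality of the arrows implicit. I will state it explicitly: arrows from $\Gamma_{i-1}$ reach nothing beyond $\Gamma_i$, and only $\Gamma_1$ receives arrows from $\Gamma'$. I also need to handle the possibly infinite supremum in the inverse-limit step carefully.
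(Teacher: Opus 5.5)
Your proof is correct and follows the same route as the paper. Your $\dim V_1$ is exactly the paper's $p$, the dimension of the solution space of $(\lambda-M)v'=Av_1$, bounded between $|\Gamma_1|$ and $|\Gamma_1|+\dim_\lambda\Gamma'$, and each further layer adds $\dim\ker A_i=|\Gamma_i|-|\Gamma_{i-1}|$ by surjectivity of $A_i$. You also make explicit the locality of arrows that the paper's block decomposition assumes silently (your version is slightly weaker, since it allows backward arrows to any earlier layer), and you justify the passage to infinitely many layers via the inverse limit, which the paper's proof passes over.
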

\begin{proof}
    Complete the picture by the following adjacency linear maps:
    \begin{align*}
        B_i&:\CC\Gamma_{i-1}\to\CC\Gamma_{i},\\
        M_i&:\CC\Gamma_{i}\to\CC\Gamma_{i},\\
        B&:\CC\Gamma'\to\CC\Gamma_{1},\\
        A&:\CC\Gamma_{1}\to\CC\Gamma',\\
        M&:\CC\Gamma'\to\CC\Gamma'.
    \end{align*}
    The whole adjacency matrix of $\Gamma$ is decomposed into blocks with these matrices.

    Let us try to construct a $\lambda$-eigenvector $v$ of $\Gamma$. Write $v=(v',v_i)$, where $v_i\in\CC\Gamma_i$ and $v'\in\Gamma'$. The eigenvalue condition translates to:
    \begin{itemize}
        \item $(\lambda-M)v'=Av_1$,
        \item $\lambda v_1=M_1v_1+Bv'+A_2 v_{2}$,
        \item $\lambda v_i=M_iv_i+B_iv_{i-1}+A_{i+1} v_{i+1}$.
    \end{itemize}
    
    Let $p$ be the dimension of the space of solutions of the first equation in $v'$ and $v_1$. Note that with fixed $v'$ and $v_1$, the space of solutions of the second equation has dimension $\dim \ker A_2$. Similarly, with fixed $v_i$ and $v_{i-1}$, the space of solutions of the third equation as dimension $\dim \ker A_{i}$. Therefore,
    $$
    \dim_\lambda\Gamma=p+\sum_{i\ge 2}\dim\ker A_i.
    $$
    Since $A_i$ are epimorphisms, we have $\dim\ker A_i=|\Gamma_i|-|\Gamma_{i-1}|$. Therefore,
    $$
    \dim_\lambda\Gamma=p-|\Gamma_1|+\sup_{i\ge 1}|\Gamma_i|.
    $$
    
    Now, let us give estimates on $p$. By definition,
    $$
    p=\dim(\im(\lambda-M)\cap\im A)+\dim\ker(\lambda-M)+\dim\ker A.
    $$
    By standard linear algebra,
    $$
    \dim\im A-\dim\ker(\lambda-M)\le\dim(\im(\lambda-M)\cap\im A)\le\dim\im A.
    $$
    Since $\dim\im A+\dim\ker A=|\Gamma_1|$, this gives
    $$
    |\Gamma_1|\le p\le |\Gamma_1|+\dim\ker(\lambda-M).
    $$
    Plugging this in the above expression for $\dim_\lambda\Gamma$ finishes the proof.
\end{proof}

\begin{remark}
    {\rm
        Note that the position of $\dim_\lambda\Gamma$ between the bounds in Theorem \ref{t:eigenforms_for_graphs_with_decomposition} depends only on the intersection of $\im(M-\lambda)$ and $\im A$. Indeed, assume that the dimensions of these spaces are known. If their intersection has the minimal possible dimension, then $\dim_\lambda\Gamma$ equals the lower bound. If the intersection dimension has the maximal possible dimension, then the upper bound is attained. Because the minimal intersection represents the generic case for arbitrary two linear maps with known ranks, it is reasonable to conjecture that the lower bound is always attained for graphs of Hecke operators. All computations in our paper and \cite{AB24} support this conjecture; however, we would love to get more computational evidence before putting faith in it.
    }
\end{remark}
\begin{corollary}\label{cor:generic_eigenspace_for_graphs}
    Keep the assumptions of Theorem \ref{t:eigenforms_for_graphs_with_decomposition}. Then, for all but finitely many $\lambda$,
    $$
    \dim_\lambda\Gamma=\sup_{i\ge 1}|\Gamma_i|.
    $$
\end{corollary}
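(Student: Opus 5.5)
The plan is to go back to the exact formula obtained in the proof of Theorem \ref{t:eigenforms_for_graphs_with_decomposition}:
$$
\dim_\lambda\Gamma=p-|\Gamma_1|+\sup_{i\ge 1}|\Gamma_i|,
$$
where $p$ is the dimension of the solution space of $(\lambda-M)v'=Av_1$ in the unknowns $(v',v_1)\in\CC\Gamma'\oplus\CC\Gamma_1$. This formula holds for every $\lambda$. The corollary therefore reduces to showing that $p=|\Gamma_1|$ for all but finitely many $\lambda$.

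The key observation is that $M$ is the adjacency matrix of the finite subgraph $\Gamma'$, so it is a fixed square matrix of size $|\Gamma'|$, independent of $\lambda$. It has at most $|\Gamma'|$ eigenvalues. For any $\lambda$ outside this finite set, $\lambda-M$ is invertible on $\CC\Gamma'$. Then for every choice of $v_1\in\CC\Gamma_1$ the first equation has the unique solution $v'=(\lambda-M)^{-1}Av_1$. The solution space is thus the graph of a linear map $\CC\Gamma_1\to\CC\Gamma'$, which gives $p=|\Gamma_1|$. The same conclusion follows from the two-sided estimate $|\Gamma_1|\le p\le|\Gamma_1|+\dim\ker(\lambda-M)$ established in that proof, since the right-hand term vanishes once $\lambda$ is not an eigenvalue of $M$.

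Substituting $p=|\Gamma_1|$ gives $\dim_\lambda\Gamma=\sup_{i\ge1}|\Gamma_i|$ for all $\lambda$ outside the spectrum of $M$, which is a finite set.

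There is no real obstacle. The only point to check is that the exceptional set is indeed finite. This holds because $\Gamma'$ is a finite set of vertices: the decomposition is into finite subsets by the definition of a propagative decomposition. Finiteness of $\Gamma'$ is used here and nowhere else. One should also note that if $\sup_i|\Gamma_i|=\infty$, both sides are infinite and the statement holds trivially.
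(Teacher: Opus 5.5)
Your proof is correct and follows essentially the same route as the paper. The paper likewise notes that the finite matrix $M$ has finitely many eigenvalues, so $\dim_\lambda\Gamma'=\dim\ker(\lambda-M)=0$ away from them, and the two bounds of Theorem \ref{t:eigenforms_for_graphs_with_decomposition} then coincide; your observation that $p=|\Gamma_1|$ when $\lambda-M$ is invertible is just that step written out explicitly.
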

\begin{proof}
    Since $|\Gamma'|$ is finite, its adjacency matrix has finitely many eigenvalues. We have $\dim_\lambda \Gamma'=0$ for any $\lambda$ not equal to any of them, in which case Theorem \ref{t:eigenforms_for_graphs_with_decomposition} gives the desired equality.
\end{proof}

We note one beautiful corollary of the proof of Theorem \ref{t:eigenforms_for_graphs_with_decomposition}. 
    Assume that a graph $\Gamma$ admits a strictly propagative decomposition (\ref{eq:graph_decomp}) with $|\Gamma_1|<\infty$ and $|\Gamma'|<\infty$ (this condition on $\Gamma'$ can be weakened: see Remark \ref{r:vector_bundle_of_eigenforms_in_ramification}).
    Let $M$ be the adjacency matrix of $\Gamma'$, and define
    \begin{align*}
    R&:=\{\lambda\in\CC^\times:\det(\lambda-M)\ne 0\},\\
    Q&:=\{(\lambda,f)\in R\times\CC[\Gamma]:\Phi(f)=\lambda f\}.
    \end{align*}
We have an obvious projection map $p:Q\to R$.
\begin{corollary}\label{cor:vector_bundle_of_eigenforms}
    The map $p:Q\to R$ turns $Q$ into a trivial algebraic vector bundle over $R$ of rank $|\Gamma_1|$, in the sense that the projections
    \begin{align*}
        Q&\to \CC,\\
        (\lambda,f)&\mapsto f(x)
    \end{align*}
    for all $x\in\Gamma$ are algebraic.
 \end{corollary}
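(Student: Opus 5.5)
The plan is to make the construction in the proof of Theorem \ref{t:eigenforms_for_graphs_with_decomposition} explicit and parametrized by $\lambda$. I would split $f\in\CC[\Gamma]$ as $(v',v_1,v_2,\ldots)$ with $v'\in\CC\Gamma'$ and $v_i\in\CC\Gamma_i$. The eigen-equations are then
\[
(\lambda-M)v'=Av_1,\qquad A_2v_2=(\lambda-M_1)v_1-Bv',\qquad A_{i+1}v_{i+1}=(\lambda-M_i)v_i-B_iv_{i-1}.
\]
For $\lambda\in R$ the first equation determines $v'$ uniquely as
\[
v'=(\lambda-M)^{-1}Av_1 .
\]
By Cramer's rule, this is a rational function of $\lambda$ whose denominator $\det(\lambda-M)$ does not vanish on $R$. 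Hence $v'$ depends algebraically on $(\lambda,v_1)$ over $R$, and linearly in $v_1$.

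Since the decomposition is strictly propagative, each $A_{i+1}\colon\CC\Gamma_{i+1}\to\CC\Gamma_i$ is an isomorphism of finite-dimensional spaces. Its inverse is a fixed matrix that does not depend on $\lambda$. By induction on $i$, the remaining equations therefore determine
\[
v_{i+1}=A_{i+1}^{-1}\bigl((\lambda-M_i)v_i-B_iv_{i-1}\bigr)
\]
uniquely, with $v_0:=v'$ and $B_1:=B$. Each $v_i$ is thus given by a matrix, polynomial in $\lambda$ and in the entries of $(\lambda-M)^{-1}$, applied to $v_1$. This yields a map
\[
\Psi\colon R\times\CC\Gamma_1\to Q,\qquad (\lambda,v_1)\mapsto\bigl(\lambda,f_{\lambda,v_1}\bigr).
\]
Every coordinate $f_{\lambda,v_1}(x)$ is a regular function on $R\times\CC\Gamma_1$, namely a polynomial in $\lambda$, $\det(\lambda-M)^{-1}$ and $v_1$, linear in $v_1$.

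Next I would check that $\Psi$ is a bijection over each $\lambda$. Any eigenvector $f$ with $\Phi f=\lambda f$ satisfies the equations above, so its components are forced by $v_1=f|_{\Gamma_1}$. Conversely, every $v_1$ produces a genuine eigenvector, because the recursion solves all the equations. This matches the count $\dim_\lambda\Gamma=|\Gamma_1|$ from the proof of the theorem, since $\ker(\lambda-M)=0$ forces $p=|\Gamma_1|$. The inverse map $(\lambda,f)\mapsto(\lambda,f|_{\Gamma_1})$ is visibly algebraic. Hence $Q\cong R\times\CC^{|\Gamma_1|}$, a trivial vector bundle, and the evaluation maps $(\lambda,f)\mapsto f(x)$ are the regular functions computed above.

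The main obstacle is that $Q$ is infinite-dimensional data inside $R\times\CC[\Gamma]$, so one must say in what sense it is algebraic. I would \emph{define} its algebraic structure by transport along $\Psi$, so that the content of the claim is exactly that each coordinate function is regular. This needs the $\lambda$-independence of $A_{i+1}^{-1}$, which comes from strict propagativity: with only surjective $A_i$ a section would have to be chosen. It also needs the finiteness of $\Gamma'$ and $\Gamma_i$, so that each step involves finitely many polynomial operations.
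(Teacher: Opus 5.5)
Your proposal is correct and follows essentially the same route as the paper. You identify $Q$ with $R\times\CC\Gamma_1$ via $(\lambda,f)\mapsto(\lambda,f|_{\Gamma_1})$, solve $v'=(\lambda-M)^{-1}Av_1$ with entries regular on $R$, and propagate outward using the $\lambda$-independent inverses of the isomorphisms $A_{i+1}$; your write-up is also slightly more careful than the paper's, e.g.\ in noting that the $v_i$ depend polynomially rather than linearly on $\lambda$, and in checking that every $v_1$ really extends to an eigenvector.
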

\begin{proof}
    Using the notation of the proof of Theorem \ref{t:eigenforms_for_graphs_with_decomposition}, we define a map
    \begin{align*}
        Q&\to R\times \CC^{|\Gamma_1|},\\
        (\lambda,v)&\mapsto (\lambda,v_1).
    \end{align*}
    This map is clearly an isomorphism. To prove that this isomorphism is algebraic, we need to show that entries of $v'$ and $v_{i}$ for $i\ge 2$ depend algebraically on $v_1$ and $
    \lambda$. Since entries of $v_i$ for $i\ge 2$ depend linearly on $v'$, $v_1$, and $\lambda$, it is enough to prove that $v'$ depends algebraically on $v_1$. We have
    $$
    v'=(M-\lambda)^{-1}Av_1.
    $$
    
    By definition of $R$, the entries of the (finite) matrix $(M-\lambda)^{-1}$ are elements in the coordinate ring of $R$. This makes $v'$ depend algebraically on $v_1$ and $\lambda$, as desired.
\end{proof}

\subsection{Spaces of eigenforms for $\PGL_2$ unramified at $x$}\label{ss:unramified_PGL2}

Fix a point $x\in |X|$ of degree $r$ and a divisor $D$ on $X$ such that $x\notin\operatorname{supp}D$. Let $\Gamma$ be the graph of the corresponding Hecke operator. Use the following decomposition:
\begin{itemize}
    \item $\Gamma_i:=\{\cL_1\oplus\cL_2:ri<\deg \cL_1-\deg\cL_2-(2g-2+\deg D)\le r(i+1)\}/\sim$,
    \item $\Gamma':=\Gamma\setminus\bigsqcup_{i\ge 1}\Gamma_i$,
\end{itemize}
where $\sim$ is the equivalence relation given by tensoring with a line bundle with a level structure at $D$. In other words, $\Gamma_i$ cover all the bundles deep in the cusp of $\Bun_G$. By Proposition \ref{p:cusp_connected_to_cusp} and Theorem \ref{t:eigenforms_for_graphs_with_decomposition}, the edges from $\Gamma_i$ to $\Gamma_{i+1}$ are of the form
$$
(\cL_1\oplus\cL_2)\to(\cL_1\oplus\cL_2(-x))
$$
such that the level structures are compatible with the canonical embedding of the right term into the left term. This shows that the arrows between $\Gamma_i$ and $\Gamma_{i+1}$ form a bijection between these sets. With the notations from the previous section, the graph is presented on Figure \ref{fig:PGL2_unramified}.

\begin{figure}
    \centering
\scalebox{.8}{
\includegraphics{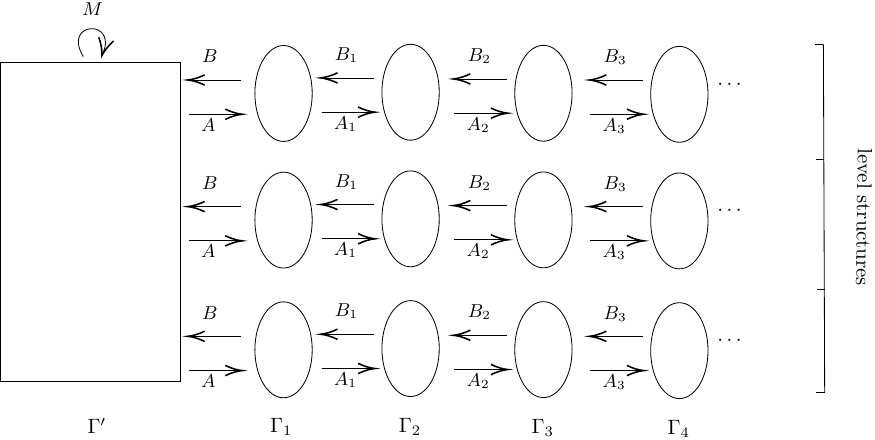}
}
\caption{Graph for $\PGL_2$ unramified at $x$.}
\label{fig:PGL2_unramified}
\end{figure}

Therefore, the adjacency linear map $\CC\Gamma_{i+1}\to\CC\Gamma_i$ is an isomorphism. We have
$$
|\Gamma_i|=r\cdot|\operatorname{Pic}^0(X)(\FF_q)|,
$$
hence by Theorem \ref{t:eigenforms_for_graphs_with_decomposition} and Remark \ref{r:GL_to_PGL}, we get

\begin{align*}
r\cdot|\operatorname{Pic}^0(X)(\FF_q)|\cdot |T(\cO_D)/T(\FF_q)|&\cdot|\operatorname{Fl}_2(\cO_D)|\le\dim_\lambda\Gamma\le \\
&\le\dim_\lambda\Gamma'+r\cdot|\operatorname{Pic}^0(X)(\FF_q)|\cdot |T(\cO_D)/T(\FF_q)|\cdot|\operatorname{Fl}_2(\cO_D)|
\end{align*}
if $D\ne 0$, and
\begin{align*}
r\cdot|\operatorname{Pic}^0(X)(\FF_q)|\le\dim_\lambda\Gamma\le \dim_\lambda\Gamma'+r\cdot|\operatorname{Pic}^0(X)(\FF_q)|
\end{align*}
if $D=0$. Substituting 
\[
|T(\cO_D)|/|T(\mathbb F_q)|\cdot |\operatorname{Fl}_2(\cO_D)|
=
\frac{q^{2(\deg D-\deg D_{\mathrm{red}})}}{q-1}\prod_{y\in\operatorname{supp}D} (q^{2\deg y}-1),
\]
we finally obtain
\begin{theorem} \label{t:PGL2_ramified_not_at_x}
    Let $x$ be a point of degree $r$, and let $\Phi_x$ be the corresponding Hecke operator. Consider ramification at divisor $D=\sum_y d_y[y]$, whose support doesn't contain $x$. If $D\ne 0$, then
    \begin{align*}
r\cdot|\operatorname{Pic}^0(X)(\FF_q)|&\cdot\frac{q^{2(\deg D-\deg D_{\mathrm{red}})}}{q-1}\prod_{y\in\operatorname{supp}D} (q^{2\deg y}-1)\le\dim_\lambda\Gamma\le\\
&\le\dim_\lambda\Gamma'+r\cdot|\operatorname{Pic}^0(X)(\FF_q)|\cdot \frac{q^{2(\deg D-\deg D_{\mathrm{red}})}}{q-1}\prod_{y\in\operatorname{supp}D}(q^{2\deg y}-1),
\end{align*}
where $D_{\mathrm{red}}=\sum_{y\in\operatorname{supp}D}1\cdot[y]$. If $D=0$, then
\begin{align*}
r\cdot|\operatorname{Pic}^0(X)(\FF_q)|\le\dim_\lambda\Gamma\le \dim_\lambda\Gamma'+r\cdot|\operatorname{Pic}^0(X)(\FF_q)|.
\end{align*}
These turn into equalities for all but finitely many $\lambda$.
\end{theorem}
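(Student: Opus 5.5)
The plan is to apply Theorem \ref{t:eigenforms_for_graphs_with_decomposition} to the decomposition $\Gamma=\Gamma'\sqcup\Gamma_1\sqcup\Gamma_2\sqcup\ldots$ introduced just before the statement, and then to compute $|\Gamma_1|$ with Corollary \ref{cor:gln_preimage_size} and Remark \ref{r:GL_to_PGL}. The argument has three steps.

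\emph{Strict propagativity.} First we check that $\Gamma'$ is finite. Vertices of $\Gamma'$ are either not of split HN type with bounded slope gap, or have slope gap $\deg\cL_1-\deg\cL_2\le 2g-2+\deg D+r$. Modulo tensoring by line bundles, there are only finitely many isomorphism classes of bundles with bounded HN polygon, and each has finitely many level structures, so $\Gamma'$ is finite.

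Next, for $v=\cL_1\oplus\cL_2\in\Gamma_i$ with $i\ge1$, the gap exceeds $2g-2+\deg D+r$. Hence Proposition \ref{p:cusp_connected_to_cusp} applies, and every neighbor is either $\cL_1(-x)\oplus\cL_2$ or $\cL_1\oplus\cL_2(-x)$. Their gaps are the old gap shifted by $-r$ or $+r$, so they lie in $\Gamma_{i-1}$ (or $\Gamma'$ when $i=1$) or in $\Gamma_{i+1}$. Proposition \ref{p:cusp_connections_degrees} gives exactly one edge of the second type: the case $p_2=1$ contributes $q^{0}=1$. Together with the uniqueness of the induced level structure from Proposition \ref{p:Hecke_correspondence_for_vector_bundles}, since $x\notin\supp D$, this edge goes to a single vertex of $\Gamma_{i+1}$.

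Conversely, each vertex $w\in\Gamma_{i+1}$ has a unique predecessor of this form in $\Gamma_i$. We recover it by twisting $\cL_2$ back by $x$ and transporting the level structure along the canonical embedding, which is an isomorphism near $D$. Passing to $\PGL_2$ via Proposition \ref{p:from_GL_to_PGL}, the quotient by $Z(F)\backslash Z(\AA)/(K\cap Z(\AA))$ preserves edges, so it preserves this bijection. Therefore each $A_{i+1}$ is a permutation matrix, and the decomposition is strictly propagative.

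\emph{Counting $|\Gamma_1|$.} Without level structure, the vertices of $\Gamma_1$ modulo twisting are determined by the class of $\cL_1\otimes\cL_2^{\vee}$, whose degree lies in a window of length $r$. Each degree contributes $|\Pic^0(X)(\FF_q)|$ classes, giving $r|\Pic^0(X)(\FF_q)|$ in total. Because the slope gap exceeds $2g-2+\deg D$, Corollary \ref{cor:gln_preimage_size} applies with $D_1=D$ and $D_2=0$, in its $\PGL_2$ form from Remark \ref{r:GL_to_PGL}. Each unramified vertex therefore has exactly $|T(\cO_D)|/|T(\FF_q)|\cdot|\operatorname{Fl}_2(\cO_D)|$ preimages. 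Here $T$ is the $\PGL_2$ torus, $T(\cO_D)=\cO_D^\times$, and $\operatorname{Fl}_2=\PP^1$. The standard count
$$|\cO_{d[y]}^\times|\,|\PP^1(\cO_{d[y]})|=q^{2(d-1)\deg y}(q^{2\deg y}-1),$$
multiplied over $y\in\supp D$, yields the factor in the statement. When $D=0$ this factor is simply $1$.

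\emph{Conclusion.} Theorem \ref{t:eigenforms_for_graphs_with_decomposition} gives both inequalities. Corollary \ref{cor:generic_eigenspace_for_graphs}, using finiteness of $\Gamma'$, gives equality for all but finitely many $\lambda$.

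The main obstacle is the bijectivity of $A_{i+1}$ in the ramified $\PGL_2$ setting. One must verify that the unique outgoing edge of type $\cL_2\mapsto\cL_2(-x)$ does not collapse distinct level structures, that every level structure on the target is hit, and that both properties survive the central quotient. If a direct check is awkward, I would instead use Lemma \ref{l:covering_map_GL} and Theorem \ref{t:graph_for_ramification_at_cusp} with $D_2=0$. These show that $\Gamma^{>\deg D}_D$ is a disjoint union of copies of the unramified cusp graph, where the bijection is transparent.
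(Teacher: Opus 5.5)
Your proposal is correct and follows essentially the same route as the paper. It uses the same slope-gap decomposition, the bijectivity of the up-edges $\cL_1\oplus\cL_2\to\cL_1\oplus\cL_2(-x)$ via Proposition \ref{p:cusp_connected_to_cusp}, and the count of $|\Gamma_1|$ as $r|\Pic^0(X)(\FF_q)|$ times the fiber size from Corollary \ref{cor:gln_preimage_size} and Remark \ref{r:GL_to_PGL}. It then concludes with Theorem \ref{t:eigenforms_for_graphs_with_decomposition} and Corollary \ref{cor:generic_eigenspace_for_graphs}.

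Your finiteness check for $\Gamma'$ and your care about the bijectivity of $A_{i+1}$ go slightly beyond what the paper writes. You can close the uniqueness point without the fallback: the same count gives $|\Gamma_i|=|\Gamma_{i+1}|$, and every vertex has exactly one up-edge. Together with your existence-of-predecessor argument, this already makes $A_{i+1}$ a permutation matrix.
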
 
\begin{remark}\label{r:bounded_growth}
    {\rm
        Since we are interested in automorphic forms, we need to check the moderate growth condition. Since the matrices $A_i$ are the same matrix, the growth of each eigenform over $\Gamma_n$ is asymptotically bounded by $(\lambda \lambda')^{n\deg x}$, where $\lambda'$ is the highest eigenvalue of $A_i$. Since the valuation of bundles in $\Gamma_n$ is $O(q^{n\deg x})$, the moderate growth condition is satisfied. See \cite{AB24} for details on this condition.
    }
\end{remark}

\subsection{Spaces of eigenforms for $\PGL_2$ ramified at $x$} \label{ss:ramified_PGL2}
We study the graph of the Hecke operator $\Phi_x$ with ramification at $d[x]$. Let us pick a bundle $\cL_1\oplus\cL_2$ in the cusp with 
$$
\deg \cL_1-\deg\cL_2>2g-2+d\cdot\deg x.
$$
We will write a level structure on this bundle as a matrix
$$
\begin{pmatrix}
    a_{11}&a_{12}\\
    a_{21}&a_{22}
\end{pmatrix},
$$
where $a_{ij}:\cL_i|_{\cO_{d[x]}}\to \cO_{d[x]}e_j$, where $e_1,e_2$ is the standard basis of $\cO_{d[x]}^2$. We study what bundles can be connected to this bundle with level structure $\tilde a=(a_{ij})$. Lift this matrix to an invertible matrix $(\tilde a_{ij})$ over $\cO_x$. We fix trivializations $l_1$ and $l_2$ of $\cL_1$ and $\cL_2$ over $\cO_x$, so that we can treat $(\tilde a_{ij})$ as a matrix with entries in $\cO_{x}$. According to Theorem \ref{t:ramified_Hecke_correspondences}, there is exactly one bundle connected to this level structure, identified by $\tilde a^{-1}\Delta\cO_x^{\oplus 2}\subset \cL_1\oplus\cL_2$. Explicitly,
$$
\tilde a^{-1}\Delta\cO_x^{\oplus 2}=\{(p,q)\in \cL_1\oplus\cL_2:\tilde a_{11} p+\tilde a_{12}q\in\pi_x\cO_x\}.
$$
From this we see that if $a_{11}\in\pi_x\cO_{d[x]}$, then the bundle is $\cL_1\oplus\cL_2(-x)$, and if $a_{11}\in\cO_{d[x]}^\times$, then the bundle is $\cL_1(-x)\oplus\cL_2$. Let us study these two cases:

{\bfseries Case I: $a_{11}\in\pi_x\cO_{d[x]}$.} The matrix $(a_{ij})$ is defined up to an automorphism of $\cL_1\oplus\cL_2$, which by the proof of Theorem \ref{t:gln_image_of_aut_group} acts by right multiplication by an upper-triangular matrix from the group
$$
T(k)\ltimes U(\cO_{d[x]})=\begin{pmatrix}
    k^\times&\cO_{d[x]}\\
    0&k^\times
\end{pmatrix}.
$$
Using this group and the center of $\GL_2(\cO_{d[x]})$ (the group we are having all matrices in is $\PGL_2$), we reduce the matrix $(a_{ij})$ to the form
\begin{equation}\label{eq:matrix_form_at_infinity}
    \begin{pmatrix}
    \pi_x\cO_{d[x]}&1\\
    \cO_{d[x]}^\times/k^\times&0
\end{pmatrix},
\end{equation}
which is precisely the fiber of $\infty\in\PP^1(k_x)$ under the projection
\begin{equation}\label{eq:projection_to_P1_for_ramified_case}
    G(\cO_{d[x]})/(T(k)\ltimes U(\cO_{d[x]}))\to G(k_x)/B(k_x)\simeq \PP^1(k_x).
\end{equation}
Note that this reduction corresponds to a different decomposition of $\cL_1\oplus\cL_2$ in the direct sum of $\cL_1$ and $\cL_2$, which also changes the chosen trivialization of $\cL_1\oplus\cL_2$. From now, we fix a decomposition $\cL_1\oplus\cL_2$ and trivializations $l_1,l_2$ of $\cL_1,\cL_2$ such that the matrix $(a_{ij})$ has the form \ref{eq:matrix_form_at_infinity}. Moreover, fix a trivialization of $\cL_1\oplus\cL_2(-x)$ such that the inclusion matrix is given by
$$
\sigma:=\mat100{\pi_x}.
$$

Write $a_{11}=\pi_x^sp$ for some $s\ge 1$ and $p\in k_x^\times$. Let the level structure on $\cL_1\oplus\cL_2(-x)$ be given by a matrix $(b_{ij})$ written in terms of the above trivialization (equivalently, in the basis $l_1$ and $\pi_x l_2$). By Theorem \ref{t:ramified_Hecke_correspondences}, any such level structure $(b_{ij})$ is given by 
\begin{align}
    (b_{ij})&=\Delta^{-1}\begin{pmatrix}
    1&\pi_x^{d}C\\
    0&1
\end{pmatrix}\tilde a \sigma\nonumber\\
&=\begin{pmatrix}
    \pi_x^{-1}&\pi_x^{d-1}C\\
    0&1
\end{pmatrix}\begin{pmatrix}
    \pi_x^{s}p&\pi_x\\
    a_{12}&0
\end{pmatrix}\nonumber\\
&=\begin{pmatrix}
    \pi_x^{s-1}p+a_{12}\pi_x^{d-1}C&1\\
    a_{12}&0
\end{pmatrix}.\label{eq:ramified_connections_of_infinity}
\end{align}
 Observe that the matrix $(b_{ij})$ must satisfy $b_{21}\in\cO_{d[x]}^\times$. Therefore, it lies in the preimage of $\PP^1(k_x)\setminus 0$ under the projection (\ref{eq:projection_to_P1_for_ramified_case}). Splitting the preimage of $\infty$  under this projection into regions corresponding to the $\pi_x$-valuation of $\tilde a_{11}$, we get Figure \ref{fig:PGL2_ramified_partial}, where we are yet to add arrows which go from right to left.

\begin{figure}
    \centering
\scalebox{.9}{
\includegraphics{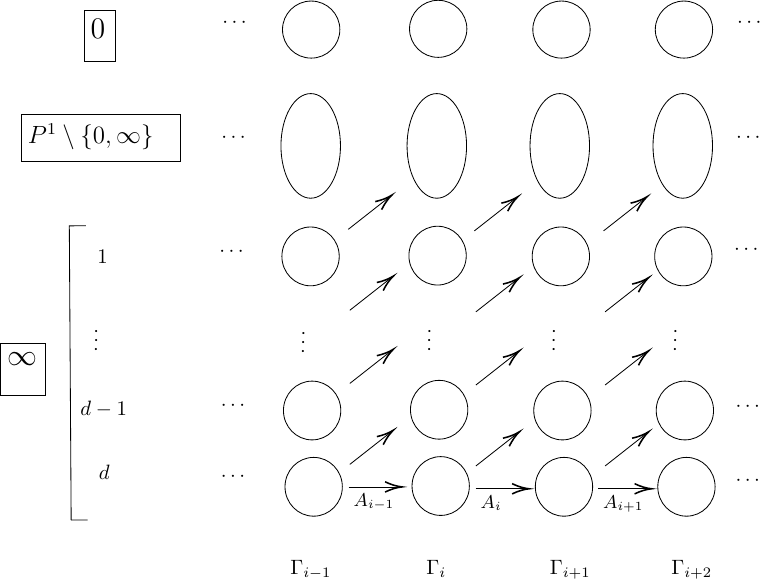}
}
\caption{$\infty$-part of the $\PGL_2$-graph ramified at $d[x]$.}
\label{fig:PGL2_ramified_partial}
\end{figure}

The matrices $A_i$ on this figure correspond to $s=d$, $p=0$, $C=0$, in which case
$$
(a_{ij})=
\begin{pmatrix}
    0&a_{12}\\
    a_{21}&0
\end{pmatrix},\quad(b_{ij})=
\begin{pmatrix}
    0&a_{12}\\
    a_{21}&0
\end{pmatrix}.
$$
This shows that $A_i$ are bijections on vertices of this locus, and hence the corresponding matrices are invertible. We also remark that all matrices (\ref{eq:ramified_connections_of_infinity}) are of the form (\ref{eq:matrix_form_at_infinity}), and hence give pairwise distinct level structures. Therefore, all edges have multiplicity $1$ in this case. 

{\bfseries Case II:  $a_{11}\in\cO_{d[x]}^\times$.} Like in the previous case, we can reduce the matrix $(a_{ij})$ to the form
\begin{equation}\label{eq:matrix_form_away_from_infinity}
(a_{ij})\in
    \begin{pmatrix}
    1&0\\
    \cO_{d[x]}&\cO_{d[x]}^\times/k^\times
\end{pmatrix}.
\end{equation}
In this case, the bundle is $\cL_1(-x)\oplus\cL_2$, so we use its trivialization such that the inclusion matrix is
$$
\tau:=\mat{\pi_x}001.
$$
Then
\begin{align}
    (b_{ij})&=\Delta^{-1}\begin{pmatrix}
    1&\pi_x^{d}C\\
    0&1
\end{pmatrix}\tilde a \tau\nonumber\\
&=\begin{pmatrix}
    \pi_x^{-1}&\pi_x^{d-1}C\\
    0&1
\end{pmatrix}\begin{pmatrix}
    \pi_x&0\\
    \pi_xa_{21}&a_{22}
\end{pmatrix}\nonumber\\
&\equiv\begin{pmatrix}
    1&\pi_x^{d-1}a_{22}C\\
    \pi_xa_{21}&a_{22}
\end{pmatrix}\sim \begin{pmatrix}
    1&0\\
    \pi_xa_{21}&a_{22}
\end{pmatrix},\label{eq:ramified_connections_away_from_infinity}
\end{align}
where we wrote equivalence modulo $\pi_x^d$ in the last line and brought the matrix to the form (\ref{eq:matrix_form_away_from_infinity}) in the last step using right multiplication by $\begin{pmatrix}
    1&-\pi_x^{d-1}C\\
    0&1
\end{pmatrix}$. Note that we get the same matrix for any $C$, so every edge in this case has multiplicity $|k_x|$, and hence every vertex is connected to exactly one other vertex. Split the preimage of $0\in\PP^1(k_x)$ under the projection map (\ref{eq:projection_to_P1_for_ramified_case}) into regions corresponding to the  $\pi_x$-valuation of $a_{21}$, we finally get the graph on Figure \ref{fig:PGL2_ramified_full} ($q^r:=|k_x|$).

\begin{figure}
\centering
\scalebox{.9}{
\includegraphics{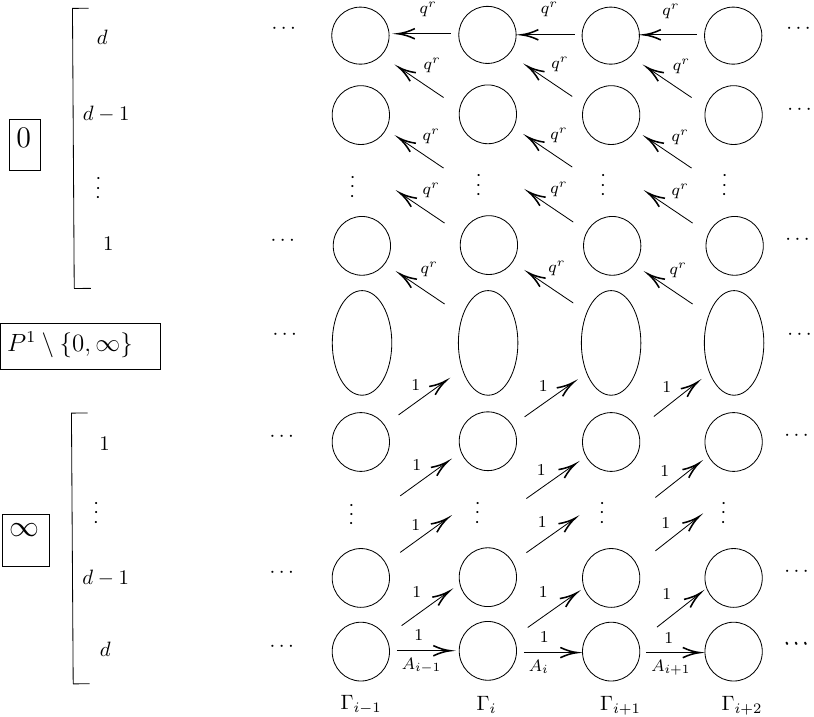}
}
\caption{$\PGL_2$-graph ramified at $d[x]$, where $|k_x|=q^r$.}
\label{fig:PGL2_ramified_full}
\end{figure}

To analyze eigenforms, define $\Gamma'$ as in Figure \ref{fig:PGL2_ramified_computation}.

\begin{figure}
\centering
\scalebox{.9}{
\includegraphics{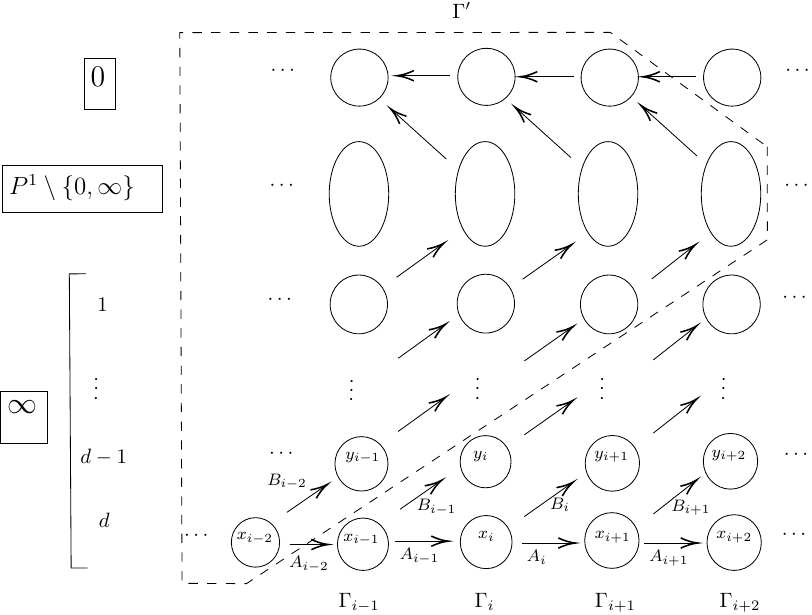}
}

\caption{Computation of eigenspaces for $\PGL_2$ ramified at $x$.}
\label{fig:PGL2_ramified_computation}
\end{figure}

Note that as soon as we have $x\in\CC\Gamma'$, there is a unique extension of it to $\tilde x\in \CC[\Gamma\setminus\Gamma^{\infty, d}]$  satisfying the $\lambda$-eigenvalue condition on $\Gamma\setminus(\Gamma'\cup\Gamma^{\infty, d})$. Indeed, using the rule
$$
\lambda\cdot\text{vertex}=\sum\text{its neighbors}
$$
and the fact that every vertex has at least one neighbor, we first uniquely extend $x$ to the whole $0$ level, then to the $\PP^1\setminus\{0,\infty\}$ level, and finally to the $1,2,\ldots,d-1$ levels at the $\infty$ level.

We want to study extensions of $\tilde x$ to the whole graph such that it satisfies the $\lambda$-eigenvalue condition at any vertex. Let $M$ be the adjacency matrix of $\Gamma'$. Then the desired extensions must satisfy
\begin{align*}
    (\lambda-M)x+A_{i-2}x_{i-1}&=0,\\
    A_{j}x_{j+1}+B_{j}y_{j+1}&=\lambda x_j.
\end{align*}

Let $p$ be the dimension of the space of solutions of the first equation. The next equation says
$$
\lambda x_{i-1}-A_{i-1} x_{i}=B_{i-1}y_i.
$$
Since $y_i$ is known and $A_{i-1}$ is invertible, we get a unique solution $x_i$.  Continuing in this manner, we build all $x_j$ uniquely. Therefore,
$$
\dim_\lambda\Gamma=p.
$$
Using the same arguments as in the proof of Theorem \ref{t:eigenforms_for_graphs_with_decomposition}, we get
$$
\dim\im A_{i-2}\le p\le\dim\im A_{i-2}+\dim\ker(\lambda-M)=\dim\im A_{i-2}+\dim_\lambda\Gamma'.
$$
Since $A_{i-2}$ is invertible, $\dim\im A_{i-2}$ equals the number of vertices in the locus labelled by the variable $x_{i-1}$. There are $\deg x\cdot|\operatorname{Pic}^0(X)(k)|$ bundles there, and each of them has 
$$
|\cO_{d[x]}^\times/k^\times|=\frac{(|k_x|-1)|k_x|^{d-1}}{|k|-1}
$$
level structures (one for each matrix of the form (\ref{eq:matrix_form_at_infinity}) with zero at the upper-left corner). Therefore, the final statement is the following:
\begin{theorem} \label{t:PGL2_ramified_at_x}
    Let $x$ be a point of degree $r$, $k=\FF_q$, and let the Hecke operator $\Phi_x$ at $x$ be ramified at $d[x]$. Let $\lambda\ne 0$. Then
    $$
        r\cdot|\operatorname{Pic}^0(X)(\FF_q)|\cdot\frac{(q^r-1)q^{r(d-1)}}{q-1}\le\dim_\lambda\Gamma\le \dim_\lambda\Gamma'+r\cdot|\operatorname{Pic}^0(X)(\FF_q)|\cdot\frac{(q^r-1)q^{r(d-1)}}{q-1}.
    $$
        
    These are equalities for all but finitely many $\lambda$.
\end{theorem}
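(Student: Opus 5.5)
The plan is to assemble the statement from the computation carried out just before it, and to make each step of that computation rigorous. First I fix the decomposition of $\Gamma$ indicated in Figure \ref{fig:PGL2_ramified_computation}:
\begin{itemize}
\item the finite part $\Gamma'$;
\item the deep $0$-locus (level structures of the form (\ref{eq:matrix_form_away_from_infinity})), whose values are labelled $y_j$;
\item the deep $\infty$-locus at valuation $d$ (matrices of the form (\ref{eq:matrix_form_at_infinity}) with vanishing upper-left entry), whose values are labelled $x_j$.
\end{itemize}
Here $j$ indexes the degree gap $\deg\cL_1-\deg\cL_2$ in steps of $r$. By Theorem \ref{t:ramified_Hecke_correspondences} and Proposition \ref{p:cusp_connected_to_cusp}, every cusp vertex has edges only to the unique bundle $\cL_1\oplus\cL_2(-x)$ or $\cL_1(-x)\oplus\cL_2$. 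Case I and Case II then give the edges explicitly: (\ref{eq:ramified_connections_of_infinity}) and (\ref{eq:ramified_connections_away_from_infinity}). Proposition \ref{p:from_GL_to_PGL} justifies working with $\GL_2$ matrices modulo the centre.

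Second, I show that the eigen-equation determines everything from $v'=f|_{\Gamma'}$ and the single layer $x_{i-1}$. On the $0$-side each vertex has exactly one neighbour, with multiplicity $q^r$. The equation $\lambda\cdot(\text{vertex})=\sum(\text{neighbours})$ therefore propagates values deterministically into the cusp along $0$-chains and then across $\PP^1\setminus\{0,\infty\}$ and the valuations $1,\dots,d-1$ at $\infty$. The equations at the deep $\infty$-layers read $\lambda x_j=A_jx_{j+1}+B_jy_{j+1}$. Because the $A_j$ are bijections on vertices (the case $s=d$, $p=0$, $C=0$ sends a level structure to itself), each $x_{j+1}$ is uniquely solved from earlier data. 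The standing assumption $\lambda\neq0$ is needed precisely here: it lets us divide when inverting the relation at $0$-type vertices, whose only outgoing neighbour lies one step deeper.

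Third, I count. The eigenspace is isomorphic to the solution space of $(\lambda-M)v'+A_{i-2}x_{i-1}=0$, so its dimension $p$ satisfies
\[
\dim\im A_{i-2}\le p\le \dim\im A_{i-2}+\dim\ker(\lambda-M),
\]
exactly as in the proof of Theorem \ref{t:eigenforms_for_graphs_with_decomposition}. Since $A_{i-2}$ is invertible, $\dim\im A_{i-2}$ equals the size of one deep $\infty$-layer. That layer has $r|\Pic^0(X)(\FF_q)|$ bundles modulo twisting (the gaps in a window of length $r$ times the classes of degree-$0$ line bundles). Each bundle carries $|\cO_{d[x]}^\times/k^\times|=(q^r-1)q^{r(d-1)}/(q-1)$ level structures of the stated form, so the product gives the bound. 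For all but finitely many $\lambda$ we have $\dim\ker(\lambda-M)=0$, since $\Gamma'$ is finite, which yields equality as in Corollary \ref{cor:generic_eigenspace_for_graphs}.

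The main obstacle is the second step. I must check that the propagation through the intermediate $\infty$-valuations $1,\dots,d-1$ and the $\PP^1\setminus\{0,\infty\}$ level is genuinely forced, with no hidden freedom and no hidden constraint. This requires that the incoming edges from right to left (the $B_j$) only involve already-determined values. I would first verify this from (\ref{eq:ramified_connections_of_infinity}): the valuation $s$ drops by one along each edge until it reaches $d$. I would then check that $\Gamma'$ is chosen large enough to contain all non-cusp vertices, using the threshold $\deg\cL_1-\deg\cL_2>2g-2+d\deg x$.
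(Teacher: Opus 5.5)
Your outline follows the paper's argument: the explicit Case~I/II edges, forced propagation out of $\Gamma'$, the recursion $\lambda x_j=A_jx_{j+1}+B_jy_{j+1}$ with $A_j$ bijective, then the rank bound and the count. However, the second step, which you yourself call the crux, gets the edge directions wrong. These directions are exactly what makes the propagation work, so they need to be corrected.

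\emph{The Case~II direction is reversed.} In Case~II ($a_{11}$ a unit) the target bundle is $\cL_1(-x)\oplus\cL_2$, whose gap is smaller by $r$. So the unique neighbour (with multiplicity $q^r$) lies one step \emph{shallower}, towards $\Gamma'$, and not deeper. It sits in the $0$-fibre, with the valuation of the lower-left entry raised by one (capped at $d$). This is why $\lambda f(v)=q^rf(w)$ gives $f(v)=q^rf(w)/\lambda$ from a value already known closer to $\Gamma'$. If the neighbour were deeper, as you wrote, outward propagation would divide by $q^r$, not by $\lambda$. Also, $\lambda\neq0$ is used a second time: at the $\infty$-vertices of valuation $1\le s\le d-1$, all $q^r$ edges go one step deeper to valuation $s-1$, so $f(v)=\lambda^{-1}\sum_C f(w_C)$.

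\emph{The $B_j$ edges are misidentified.} They are not edges ``from right to left'' into the $0$-locus. They are the $C\neq0$ outgoing edges of the $x_j$-vertices, going one step deeper. By (\ref{eq:ramified_connections_of_infinity}) with $a_{11}=0$, their targets have $b_{11}=a_{21}\pi_x^{d-1}C$ of valuation exactly $d-1$. So they are $\infty$-vertices of valuation $d-1$, or $\PP^1\setminus\{0,\infty\}$-vertices if $d=1$. Your phrase ``the valuation drops by one until it reaches $d$'' is backwards. The reason $y_{j+1}$ is already determined is this:
\begin{itemize}
\item Case~I edges take valuation $s$ to $s-1$, from $d-1$ down to $0$, and never return to $d$.
\item Valuation $0$ (that is, $\PP^1\setminus\{0,\infty\}$) feeds into the $0$-fibre.
\item $0$-fibre edges run back toward $\Gamma'$.
\end{itemize}
Hence the recursion defining all non-$x$ values is well-founded, terminates in $\Gamma'$, and never touches the $x$-layers.

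\emph{The condition on $\Gamma'$ is too weak.} Your first equation, with $M$ the adjacency matrix of $\Gamma'$, requires the eigen-equations at vertices of $\Gamma'$ to involve only $v'$ and $x_{i-1}$. For that it is not enough that $\Gamma'$ contains all non-cusp vertices. It must be closed under outgoing edges, except those landing in the layer $x_{i-1}$. Concretely, take a staircase:
\begin{itemize}
\item the $\infty$-vertices of valuation $s$ up to depth $i-2+d-s$ (with $s=0$ meaning $\PP^1\setminus\{0,\infty\}$ and $s=d$ giving the $x$-layers up to $i-2$);
\item the $0$-fibre up to depth $i-3+d$.
\end{itemize}
Otherwise extra terms from already-determined vertices outside $\Gamma'$ enter, and the bound is no longer in terms of $\dim_\lambda\Gamma'$. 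With these corrections, your count and the generic-equality argument go through exactly as in the paper.
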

Using a similar reasoning as in Remark \ref{r:bounded_growth}, we see that each eigenform has bounded growth in this case, and therefore is an automorphic form.

Finally, notice that for ramification at $D=D'+d_x[x]$, the same arguments as above work and give the same structure of the graph as on Figure \ref{fig:PGL2_ramified_full}. The only additional argument we need is to show that the edges of multiplicity $q^r$ stay of multiplicity $q^r$ when additional ramification outside $x$ is added. In fact, in (\ref{eq:ramified_connections_away_from_infinity}), the last identification uses a strictly upper-triangular matrix, hence can be lifted to an automorphism of the whole bundle preserving the level structure outside $x$. This gives identification of the target level structures for all $C\in k_x$, so we are done.

Therefore, to get the most general result, we only need to compute the number of vertices in $x_i$ (see Figure \ref{fig:PGL2_ramified_computation}). We obtain

\begin{theorem} \label{t:PGL2_ramified_at_x_and_D}
    Let $x$ be a point of degree $r$, $k=\FF_q$, and let the Hecke operator $\Phi_x$ at $x$ be ramified at $D$. Assume that $d_x\ge 1$ and $D'\ne 0$. Let $\lambda\ne 0$. Then
    \begin{align*}
        r&\cdot|\operatorname{Pic}^0(X)(\FF_q)|\cdot\frac{(q^{r}-1)q^{2(\deg D'-\deg D'_{\mathrm{red}})+r(d_x-1)}}{q-1} \prod_{y\in\operatorname{supp}D'}(q^{2\deg y}-1)\\
        &\le\dim_\lambda\Gamma\\
        &\le \dim_\lambda\Gamma'+r\cdot|\operatorname{Pic}^0(X)(\FF_q)|\cdot\frac{(q^{r}-1)q^{2(\deg D'-\deg D'_{\mathrm{red}})+r(d_x-1)}}{q-1} \prod_{y\in\operatorname{supp}D'} (q^{2\deg y}-1),
    \end{align*}
    where $\Gamma'$ is some finite subgraph of $\Gamma$ defined using the methods above. These are equalities for all but finitely many $\lambda$.
\end{theorem}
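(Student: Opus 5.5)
The plan is to reduce Theorem \ref{t:PGL2_ramified_at_x_and_D} to the analysis already carried out for ramification $d_x[x]$ in Theorem \ref{t:PGL2_ramified_at_x}, applying the covering results of Section \ref{GLn case} with $D_1=D'$ and $D_2=d_x[x]$. These divisors satisfy (\ref{eq:d1_d2_condition}). By Lemma \ref{l:covering_map_GL} (transported to $\PGL_2$ via Proposition \ref{p:from_GL_to_PGL} and Remark \ref{r:GL_to_PGL}), the forgetful map $p_{D,d_x[x]}$ is a covering of graphs on the deep cusp. By Corollary \ref{cor:gln_preimage_size} in its $\PGL_2$ form with $D_2\neq 0$, each fiber has cardinality
$$
|T(\cO_{D'})|\cdot|\operatorname{Fl}_2(\cO_{D'})| = q^{2(\deg D'-\deg D'_{\mathrm{red}})}\prod_{y\in\operatorname{supp}D'}(q^{2\deg y}-1),
$$
with no division by $q-1$.

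The first step is to lift the description of Figure \ref{fig:PGL2_ramified_full} to level $D$. Case I and Case II are computed exactly as before, since the local computations at $x$ in (\ref{eq:ramified_connections_of_infinity}) and (\ref{eq:ramified_connections_away_from_infinity}) only involve $\tilde a$ at $x$. The level structure at $D'$ is then determined by the second condition of Proposition \ref{p:Hecke_correspondence_for_vector_bundles}.

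The one new point is the multiplicity of the Case II edges. As remarked before the statement, the normalizing matrix $\smat{1}{-\pi_x^{d-1}C}{0}{1}$ is strictly upper triangular. By Theorem \ref{t:gln_image_of_aut_group} with $D_2=d_x[x]$ and $D_1=D'$, it lifts to a global automorphism of $\cL_1\oplus\cL_2$ that is the identity on $D'$. Hence all $q^r$ choices of $C$ still yield the same target vertex, and each Case II vertex still has a single neighbour, with multiplicity $q^r$. The chain of vertices at $\infty$ of depth $d_x$ with the maps $A_i$ lifts in the same way. The covering property of Lemma \ref{l:covering_map_GL} guarantees that each lifted $A_i$ is again a bijection on vertices, because the underlying $A_i$ are bijections and a covering of a bijective edge set between two layers is bijective between their preimages. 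So the lifted maps are invertible.

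With this structure I will define $\Gamma'$ as the preimage of the finite subgraph of Figure \ref{fig:PGL2_ramified_computation}, together with all non-cusp vertices. This set is finite because $\Bun_{\PGL_2,D}$ has finitely many points of bounded instability. I then repeat the propagation argument verbatim: data on $\Gamma'$ extends uniquely through the level-$0$ part, the $\PP^1\setminus\{0,\infty\}$ part and levels $1,\dots,d_x-1$, and invertibility of the $A_i$ determines the rest. This gives $\dim_\lambda\Gamma=p$ with
$$
\dim\im A\le p\le\dim\im A+\dim_\lambda\Gamma'.
$$
Here $\dim\im A$ equals the number of vertices in the $x_{i-1}$ locus. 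That locus is the preimage of the corresponding locus at level $d_x[x]$, which has $r|\Pic^0(X)(\FF_q)|\frac{(q^r-1)q^{r(d_x-1)}}{q-1}$ vertices. Multiplying by the fiber size above gives the stated bound, and the generic equality follows as in Corollary \ref{cor:generic_eigenspace_for_graphs}, since $\Gamma'$ is finite.

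The main obstacle is the counting step: checking that the fiber count of Corollary \ref{cor:gln_preimage_size} applies uniformly on the $x_{i-1}$ locus in the $\PGL_2$ setting. In particular the $T(k)$ torus factor must not cancel, because $D_2\ne0$, and the central quotient by line bundles with level structure at $D$ must not collapse further fibers. I would verify this using Proposition \ref{p:from_GL_to_PGL}(i), noting that the $T(k)$ monodromy of Theorem \ref{t:graph_for_ramification_at_cusp_over_x} does not affect vertex counts.
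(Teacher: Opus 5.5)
Your argument is correct and follows the same reduction as the paper. You forget the level structure along $D'$, use the picture of Figure~\ref{fig:PGL2_ramified_full} at level $d_x[x]$, and multiply the size of the $x_{i-1}$ locus by the fiber cardinality $|\PGL_2(\cO_{D'})/U(\cO_{D'})|$.

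The difference is the key input, and here your choice is the sounder one. The paper's one-line proof invokes Theorem~\ref{t:graph_for_ramification_at_cusp} to say that the cusp consists of that many disjoint copies of Figure~\ref{fig:PGL2_ramified_full}. But necessarily $D_1=D'$ and $D_2=d_x[x]$, so $\supp D_2=\{x\}$, which is exactly the case that theorem excludes. Theorem~\ref{t:graph_for_ramification_at_cusp_over_x}, together with Example~\ref{ex:nontrivial_monodromy_when_ramified_at_x} (which is of precisely this type, $D=[x]+[y]$), shows the preimage is in general a covering with nontrivial $T(k)$ monodromy, not a union of copies.

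Your route uses only Lemma~\ref{l:covering_map_GL}, whose hypotheses do hold here, plus the fact that $p_{D,d_x[x]}$ sends edges to edges. That suffices:
\begin{itemize}
\item the preimage of the downstairs $\Gamma'$ is closed under out-edges except into the preimage of $x_{i-1}$;
\item the recursion order for the propagation lifts;
\item the lifted $A_{i-2}$ is injective (a permutation matrix onto the $x_{i-2}$ coordinates);
\item the vertex count follows from the constant fiber size.
\end{itemize}
The copies picture, where it is valid, would give an actual splitting of eigenspaces, which is more than the dimension count needs.

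Three small repairs.
\begin{itemize}
\item To lift $\smat{1}{-\pi_x^{d-1}C}{0}{1}$ to an automorphism that is the identity on $D'$, apply Theorem~\ref{t:gln_image_of_aut_group} with the roles reversed, $D_1=d_x[x]$ and $D_2=D'$. That theorem does not require $x\notin\supp D_1$. As you wrote it, it produces automorphisms trivial at $x$ instead. The multiplicity statement is not needed for the count anyway.
\item Choose the index $i$ so that the loci $x_{i-2}$ and $x_{i-1}$ already lie in the $D$-cusp, whose threshold is $2g-2+\deg D$ rather than $2g-2+d_x\deg x$. Also ensure all non-$D$-cusp vertices lie over the downstairs $\Gamma'$. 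Otherwise the covering lemma does not control the edges defining $A_{i-2}$.
\item The fiber count you flag as the main obstacle is simply Corollary~\ref{cor:deep_cusp_fiber_size_reductive} for $G=\PGL_2$ with $D_2\neq 0$ (equivalently Remark~\ref{r:GL_to_PGL}). No further check through Proposition~\ref{p:from_GL_to_PGL} is required.
\end{itemize}
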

\begin{proof}
    By Theorem \ref{t:graph_for_ramification_at_cusp}, adding ramification at $D'$ creates $|G(\cO_{D'})/U(\cO_{D'})|$ copies of the graph on Figure \ref{fig:PGL2_ramified_full} in the cusp. This multiplies the dimensions by $|G(\cO_{D'})/U(\cO_{D'})|$ (computed in Remark \ref{r:GL_to_PGL}), giving the stated formula.
\end{proof}

We note that the above results generalize and prove \cite[Conjecture 5.17]{AB24} for all but finitely many $\lambda$.

\begin{remark}\label{r:finite_set_of_lambda}
    {\rm
        We remark on the finite set of $\lambda$, for which the upper and lower bounds are distinct. These are precisely the eigenvectors of $\Gamma'$ defined in the above computations. Here are several properties of these eigenvectors giving a finite set without precise knowledge of $\Gamma'$:
        \begin{itemize}
            \item For fixed $D'$ and $d_x$, $\Gamma'$ is a graph on a finite set of vertices not in the $D$-cusp. Moreover, the outgoing degree of each vertex is $q^{\deg x}+1$ if $d_x=0$ (Proposition \ref{p:cusp_connections_degrees}) and $q^{\deg x}$ if $d_x>0$ (Theorem \ref{t:ramified_Hecke_correspondences}). The number of such graphs is clearly finite, which produces a finite number of possible eigenvalues.
            \item By Gershgorin circle theorem, eigenvalues of $\Gamma'$ are algebraic integers $\lambda$ of degree at most $|\Gamma'|$ with $|\lambda|\le q^{\deg x}+1$ if $d_x=0$ and $|\lambda|\le q^{\deg x}$ if $d_x>0$.
        \end{itemize}
    }
\end{remark}    

\begin{remark}\label{r:ramified_case_is_stictly_propagative}
    {\rm
        In the above computation, we could have used the methods from Section \ref{s:General graphs}. For this, instead of the subgraph $\Gamma'$ that we used in Figure \ref{fig:PGL2_ramified_computation}, we use the subgraph $\Gamma''$, which consists of all vertices excluding $x_{i-1},x_i,\ldots$. Then the decomposition 
        $$
        \Gamma=\Gamma''\sqcup \Gamma_{i-1}\sqcup\Gamma_{i}\sqcup\ldots
        $$
        is strictly propagative, and we can apply Theorem \ref{t:eigenforms_for_graphs_with_decomposition}. 
        
        As noted in the above computation, for any $\lambda\ne 0$, the values on $\Gamma'$ and the eigenform condition identify values on $\Gamma''$ uniquely. This implies that 
        $$
        \dim_{\lambda}\Gamma''=\dim_\lambda\Gamma',\qquad \text{for all }\lambda\ne 0,
        $$
        so we get Theorems \ref{t:PGL2_ramified_at_x} and \ref{t:PGL2_ramified_at_x_and_D} immediately.
    }
\end{remark}

\begin{remark}\label{r:vector_bundle_of_eigenforms_in_ramification}
    {\rm
        Although the graph $\Gamma''$ is infinite, we see that the values of any $\lambda$-eigenform on $\Gamma''\setminus \Gamma'$ depend linearly on the values on $\Gamma'$ and $\lambda$. In this case, the proof of Corollary \ref{cor:vector_bundle_of_eigenforms} still applies, and we get that in the case of any ramification, $\lambda$-eigenforms form a trivial vector bundle over $\CC^\times$ without eigenvalues of $\Gamma'$. 
    }
\end{remark}

\begin{example}
{\rm
    Let $X=\PP^1$, $\deg x=1$, $d=1$. In this example, we correct \cite[Figure 14]{Lor13} and \cite[Figure 5]{AB24}. For simplicity, we denote the vertex corresponding to $\cO(n)\oplus\cO$ with level structure $\phi\in\PP^1$ as $c_{n,\phi}$, and $c_0=\cO\oplus\cO$.
    
    We have already computed the edges from all points except $c_{1,\phi}$ for $\phi\ne \infty$ and $c_0$.

    Since every vertex has degree $q$ and we know that $c_{1,\phi}$ for $\phi\ne \infty$ are not connected to the bundle $\cO(2)\oplus\cO$, they are all connected to $c_0$ with multiplicities $q$.

    It remains to compute the outgoing edges from $c_0$. 
    Since any two level structures on $\cO\oplus\cO$ are equivalent, we may assume that the chosen level structure is given by 
    $$
    (a_{ij})=\begin{pmatrix}
        1&0\\
        0&1
    \end{pmatrix}.
    $$
    Since we want to represent bundles in the form $\cL_1\oplus\cL_2$ with $\deg \cL_1\ge\deg\cL_2$, we need to parametrize neighbors of $\cO\oplus\cO$ given by inclusions $\cO\oplus\cO(-1)\to \cO\oplus\cO$. Such inclusions are parametrized by matrices
    $$
    \sigma=\mat{1}{0}c{\pi_x}\text{ or }\mat{0}{\pi_x}{1}{0},\qquad c\in k.
    $$
    Since only the last matrix gives the lattice $\Delta(\cO_x\oplus\cO_x)=\pi_x\cO_x\oplus\cO_x$, it gives all the edges. We get that the level structures on $\cO(1)\oplus\cO$ connected to $(a_{ij})$ are
    \begin{align*}
    (b_{ij})&=\Delta^{-1}\begin{pmatrix}
    1&\pi_xC\\
    0&1
\end{pmatrix}\tilde a \sigma\\
&=\begin{pmatrix}
    \pi_x^{-1}&C\\
    0&1
\end{pmatrix}\begin{pmatrix}
    0&\pi_x\\
    1&0
\end{pmatrix}\nonumber\\
&=\begin{pmatrix}
    C&1\\
    1&0
\end{pmatrix},\qquad C\in k.
\end{align*}

Since $b_{21}\ne 0$, this gives all level structures but $0$. Summing everything up, we get the graph on Figure \ref{fig:PGL2_P1_ramified_example},
\begin{figure}
\centering
\scalebox{.9}{
\includegraphics{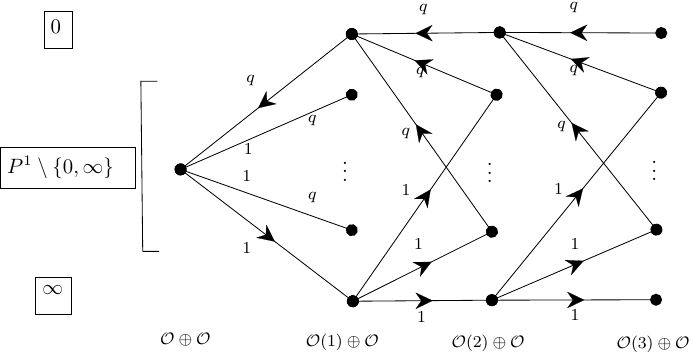}
}
\caption{Ramified graph for $G=\PGL_2$ and $X=\PP^1$ at $1\cdot[x]$, $\deg x=1$.}
\label{fig:PGL2_P1_ramified_example}
\end{figure}
    where the numbers mean edge multiplicities. 

    Let $f$ be a $\lambda$-eigenform for this graph. For simplicity, we denote the vertex corresponding to $\cO(n)\oplus\cO$ with level structure $\phi\in\PP^1$ as $c_{n,\phi}$, and $c_0=\cO\oplus\cO$. Using the structure of the graph and checking the $\lambda$-eigenform condition at all vertices, we get
    $$
    f(c_{n,\phi})=
    \begin{cases}
        a\left(\frac{q}{\lambda}\right)^n,&\phi\ne\infty,\\
        a\lambda^n-a(q-1)\sum_{i=0}^{n-1}\lambda^i\left(\frac{q}{\lambda}\right)^{n-i},&\phi=\infty.
    \end{cases}
    $$
    Therefore, $\dim_\lambda\Gamma=1$ for any $\lambda$.
}
\end{example}

\begin{example}\label{ex:nontrivial_monodromy_when_ramified_at_x}
{\rm
    We now describe a graph ramified both at $x$ and another point $y$, showing a non-trivial torus monodromy over $x$-ramification. Let $G=\PGL_2$, $X=\PP^1$, $x,y\in \PP^1(k)$. For simplicity, assume that $x=0$ and $y\ne\infty$. We will only describe the cusp.

Denote by $\cE_n$ the bundle $\cO(n)\oplus\cO$. Level structures on $\cE_n$ are given by
$$
\PGL_2(k)\times \PGL_2(k)/(T(k)\ltimes U(k)^2)\simeq \PGL_2(k)/B(k)\times \PGL_2(k)/U(k).
$$
Using this, we will represent level structures on $\cE_n$ as pairs $(p,a)$, where $p\in\PP^1(k)$ is a level structure at $x$ and $a\in \PGL_2(k)$ represents the $y$-level structure $aU(k)$. For the level structure at $x$, we represent a point $p\in k$ by $\smat 10p1$ and $\infty$ by $\smat 0110$.

    Recall that Hecke modifications are represented by inclusion matrices
$$
\sigma=\mat{\pi_x}{c}01\text{ or }\mat{1}{0}{0}{\pi_x},\qquad c\in k.
$$
At $y$, these matrices take the form
$$
\sigma=\mat{y}{c}01\text{ or }\mat{1}{0}{0}{y},\qquad c\in k,
$$
 transporting a level structure $a$ at $y$ to $a\sigma$. Transport of the level structure at $x$ depends on the choice of a double coset in $K\Delta K$ and was completely described above. We will look separately how different edges in Figure \ref{fig:PGL2_P1_ramified_example} lift to the $y$-ramified setting. We split them in the following types:
 \begin{itemize}
     \item {\bfseries Type I:} edges from $\infty$ to $\infty$.
     \item {\bfseries Type II:} edges from $\infty$ to $\PP^1\setminus\{0,\infty\}$.
     \item {\bfseries Type III:} edges from $\PP^1\setminus\{0,\infty\}$ to $0$.
     \item {\bfseries Type IV:} edges from $0$ to $0$.
 \end{itemize}
We describe them one-by-one. 

{\bfseries Types I-II:} Recall that there is precisely one inclusion matrix $\sigma$ for all edges from a given bundle with a level structure. In this case, 
$$
\sigma=\mat100{\pi_x}.
$$
Computation (\ref{eq:ramified_connections_of_infinity}) yields connections
\begin{align*}
    \text{Type I}:\,(\infty,a)&\stackrel{\smat1{0}01}\longrightarrow\left(\infty,a\smat100y\right),\\
    \text{Type II}:\,(\infty,a)&\stackrel{\smat1{-\pi c^{-1}}01}\longrightarrow\left(\smat {c^{-1}}110,a\smat100y\right)\stackrel{\smat{c}10{-c^{-1}}}\sim\left(c,a\smat{c}10{-c^{-1}y}\right)\sim \left(c,a\smat{c}00{-c^{-1}y}\right),
\end{align*}
where we indicated which double coset in $K\Delta K$ corresponds to these edges and which matrix in $T(k)\ltimes U(k)$ we are using to bring the level structure at $x$ to the form $\smat10p1$ or $\smat0110$. In the last identification, we used the group $U(k)$ identifying level structures over $y$.

{\bfseries Types III-IV:} Computation (\ref{eq:ramified_connections_away_from_infinity}) yields $\sigma=\smat{\pi_x}001$ and connections
\begin{align*}
    (p,a)&\stackrel{\smat1{\pi c}01}\longrightarrow\left(\smat1{c}01,a\smat y001\right)\stackrel{\smat1{-c}01}\sim\left(0,a\smat{y}{-yc}0{1}\right)\sim \left(0,a\smat{y}{0}0{1}\right).
\end{align*}
Since we are over $\PGL_2$, all the level structures are defined up to rescaling. Setting $y=1$ and setting $t_c:=\smat c001\in T(k)$ for $c\in k^\times$, we can summarize our connections as follows:
\begin{align*}
    (\infty,a)&\longrightarrow (\infty,a),\\
    (\infty,a)&\longrightarrow (c,at_{-c^2}),\\
    (p,a)&\longrightarrow (0,a).
\end{align*}
So, only Type II edges change the representative of the level structure at $y$. We can organize the change in a simple way as follows. Let $g$ be a primitive generator $k^\times$ (recall that $k$ is a finite field, hence $k^\times$ is cyclic). Then Type II edges take the form
$$
(\infty,a)\longrightarrow 
\begin{cases}
    (g^k,at_{g^{2k+\frac{q-1}{2}}}),&2\nmid q,\\
    (g^k,at_{g^{2k}}),&2\mid q.
\end{cases}
$$
See Figure \ref{fig:PGL2_P1_ramified_at_x_and_y} for an example when $k=\FF_4$. One can easily find a (topological) loop on the graph downstairs which lifts to a non-loop in the graph upstairs.
 }
\end{example}

\begin{figure}
\centering
\scalebox{.9}{

\includegraphics{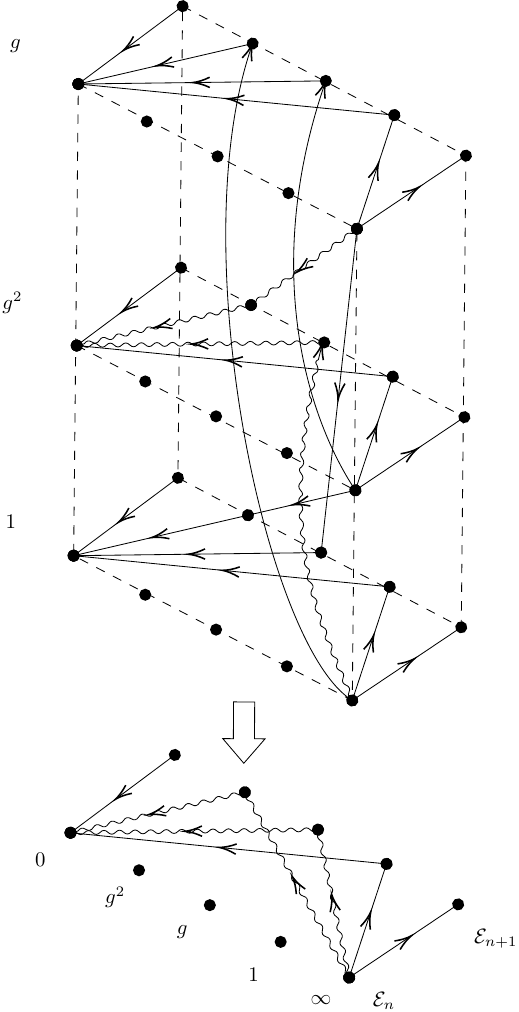}

}
\caption{Connected component of the cusp of the ramified graph for $G=\PGL_2$ and $X=\PP^1$ at $[x]+[y]$, $k=\FF_4$. Here, $g\in k$ satisfies $g^2=g+1$ and the vertical arrow forgets the level structure at $y$. Edge multiplicities are the same as on Figure \ref{fig:PGL2_P1_ramified_example}. Waved edges represent an example of a loop in the graph downstairs which lifts to a non-loop in the graph upstairs.}
\label{fig:PGL2_P1_ramified_at_x_and_y}
\end{figure}

We finish this subsection with generalizing the above results to the equation of the form 
$$
(\Phi-\lambda)f=g.
$$
\begin{theorem}\label{t:Phi-lambda_is_surjective}
    In all the above examples, assume that $\lambda\in\CC$ is not an eigenvalue for the adjacency matrix of $\Gamma'$ (see Remark \ref{r:finite_set_of_lambda}). Then the operator 
    $$
    \Phi-\lambda\cdot\mathrm{id}:\CC\Gamma\to \CC\Gamma
    $$
    is surjective. In particular, for any fixed $g\in\operatorname{Fun}(\Bun_G(k))$, the equation
    \begin{equation}\label{eq:Phi-lambda=g}
    (\Phi-\lambda)(f)=g
    \end{equation}
    has a finite-dimensional affine space of solutions whose dimension is given by the lower bound in Theorems \ref{t:PGL2_ramified_not_at_x}, \ref{t:PGL2_ramified_at_x}, and \ref{t:PGL2_ramified_at_x_and_D}.
\end{theorem}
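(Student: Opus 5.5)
We solve $(\Phi-\lambda)f=g$ block by block, following the proof of Theorem \ref{t:eigenforms_for_graphs_with_decomposition}. Fix the relevant decomposition $\Gamma=\Gamma'\sqcup\Gamma_1\sqcup\Gamma_2\sqcup\ldots$: the cusp layers in the case unramified at $x$, and the decomposition $\Gamma''\sqcup\Gamma_{i-1}\sqcup\Gamma_i\sqcup\ldots$ of Remark \ref{r:ramified_case_is_stictly_propagative} in the ramified cases. In the ramified cases the non-finite part of $\Gamma''\setminus\Gamma'$ is treated as explained below. Keep the block notation $M, A, B, M_i, A_i, B_i$ and write $f=(v',v_1,v_2,\ldots)$ and $g=(g',g_1,g_2,\ldots)$. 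Then the equation $(\Phi-\lambda)f=g$ becomes
\begin{align*}
(M-\lambda)v'+Av_1&=g',\\
(M_1-\lambda)v_1+Bv'+A_2v_2&=g_1,\\
(M_i-\lambda)v_i+B_iv_{i-1}+A_{i+1}v_{i+1}&=g_i,\qquad i\ge 2.
\end{align*}
Because $\lambda$ is not an eigenvalue of $M$, the first equation can be solved for every choice of $v_1\in\CC\Gamma_1$ by setting $v'=(M-\lambda)^{-1}(g'-Av_1)$. So $v_1$ can be chosen freely, and $v'$ is then determined affinely by it.

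Next we propagate. Suppose $v',v_1,\ldots,v_i$ have been found. The equation at level $i$ asks for a $v_{i+1}$ with $A_{i+1}v_{i+1}=g_i-(M_i-\lambda)v_i-B_iv_{i-1}$. This is solvable because every $A_{i+1}$ is surjective; the decomposition is propagative, and in all our examples it is even strictly propagative, so the $A_{i+1}$ are isomorphisms. Induction then produces a function $f$ on all of $\Gamma$. Infinitely many vertices cause no trouble here, since $\CC\Gamma=\CC^\Gamma$ consists of arbitrary functions and each equation involves only finitely many vertices. This proves surjectivity.

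The solution set is the affine space $f_0+\ker(\Phi-\lambda)$. Since $\dim_\lambda\Gamma'=0$ for our $\lambda$, the upper and lower bounds of Theorem \ref{t:eigenforms_for_graphs_with_decomposition} coincide, so this dimension equals the lower bound in Theorems \ref{t:PGL2_ramified_not_at_x}, \ref{t:PGL2_ramified_at_x} and \ref{t:PGL2_ramified_at_x_and_D}. One can also read this off from the construction: in the strictly propagative case the free parameter is exactly $v_1$.

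The main obstacle is the ramified case, where $\Gamma''$ is infinite and $M''$ is not a finite matrix. There we follow the computation before Theorem \ref{t:PGL2_ramified_at_x}.
\begin{itemize}
\item Fix values on the finite graph $\Gamma'$ and extend level by level to $\Gamma\setminus(\Gamma'\cup\Gamma^{\infty,d})$: to the $0$ level, then to $\PP^1\setminus\{0,\infty\}$, then to the levels $1,\ldots,d-1$ at $\infty$. Each step uses a vertex whose outgoing edges all lead to a single target, counted with multiplicity $|k_x|$ or $1$. The inhomogeneous equation at such a vertex, (edge multiplicity)$\cdot f(\text{target})=\lambda f(\text{vertex})+g(\text{vertex})$, therefore determines the value at the target uniquely.
\item At $\Gamma'$ itself the equation reads $(M-\lambda)x+A_{i-2}x_{i-1}=\text{(known)}$. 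This is solvable for arbitrary $x_{i-1}$ because $M-\lambda$ is invertible.
\item The remaining levels $x_i,x_{i+1},\ldots$ are then obtained through the invertible $A_j$, as in the propagation step above.
\end{itemize}
The point to check carefully is that every vertex of $\Gamma''\setminus\Gamma'$ is reached exactly once by this ordering. Otherwise some equation would be imposed twice and could become inconsistent.
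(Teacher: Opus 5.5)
Your argument for the case unramified at $x$ is the paper's proof: you invert $M-\lambda$ on the finite block $\Gamma'$ to get $v'$ from $v_1$, then propagate through the invertible $A_{i+1}$. The gap is in the ramified case, in your first bullet, where the equations are run in the wrong direction. A vertex $v$ of the $0$-fiber or of $\PP^1\setminus\{0,\infty\}$ at cusp level $n$ sends all $q^r$ of its edges to one vertex $w$ at level $n-1$. However, each such $w$ is the common target of $q^r$ different vertices $v$, because the map $a_{21}\mapsto\pi_x a_{21}$ on $\cO_{d[x]}$ in (\ref{eq:ramified_connections_away_from_infinity}) is $q^r$-to-one. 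Using the equation at $v$ to ``determine the value at the target'' therefore imposes $q^r$ conditions on the single unknown $f(w)$. The consistency check you defer to your last sentence fails precisely here. In addition, the vertices on the $\infty$-levels $1,\dots,d-1$ do not have a single target: by (\ref{eq:ramified_connections_of_infinity}) each has $q^r$ distinct targets of multiplicity one at level $n+1$.

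The fix is to reverse the roles. In these regions the targets are already known:
\begin{itemize}
\item for the $0$- and $\PP^1\setminus\{0,\infty\}$-parts, they lie one level lower;
\item for $\infty$-level $s$, they lie one level higher, in $\infty$-level $s-1$ or in $\PP^1\setminus\{0,\infty\}$, processed in the order $s=1,2,\dots,d-1$.
\end{itemize}
The equation at $v$ is then used exactly once, to define $f(v)=\lambda^{-1}\bigl((\Phi f)(v)-g(v)\bigr)$. Only along $\Gamma^{\infty,d}$ does the equation at a vertex determine a target, and that is consistent because $A_j$ is a bijection on vertices.

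This step divides by $\lambda$, so it needs $\lambda\neq0$, which your argument never invokes. It is a hypothesis of Theorems \ref{t:PGL2_ramified_at_x} and \ref{t:PGL2_ramified_at_x_and_D}, and it cannot be dropped. For $\lambda=0$, two vertices $v_1\neq v_2$ with a common target satisfy $(\Phi f)(v_1)=(\Phi f)(v_2)$ for every $f$, so $\Phi$ is not surjective.

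The correct order is therefore:
\begin{enumerate}
\item choose $x_{i-1}$ freely;
\item solve the $\Gamma'$-equation for $x$, using invertibility of $M-\lambda$;
\item extend to $\Gamma\setminus(\Gamma'\cup\Gamma^{\infty,d})$ as above;
\item obtain $x_i,x_{i+1},\dots$ through the $A_j$.
\end{enumerate}
This recovers your dimension count, with $x_{i-1}$ as the only free datum.
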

\begin{proof}
    The proof repeats the above arguments with $g$ added in all the equations. To illustrate it, let us show how the proof of Theorem \ref{t:eigenforms_for_graphs_with_decomposition} modifies (which is sufficient because of Remark \ref{r:ramified_case_is_stictly_propagative}). Using the notation from that proof and thinking of $g$ as a vector $(g',g_1,g_2,\ldots)$, we translate equation (\ref{eq:Phi-lambda=g}) into
    \begin{itemize}
        \item $(\lambda-M)v'+g'=Av_1$,
        \item $\lambda v_1+g_1=M_1v_1+Bv'+A_2 v_{2}$,
        \item $\lambda v_i+g_i=M_iv_i+B_iv_{i-1}+A_{i+1} v_{i+1}$.
    \end{itemize}
    Since we assumed that $\lambda$ is not an eigenvalue of $M$, the first equation has a unique solution in $v'$ for fixed $v_1$. The second equation has a unique solution in $v_2$, and the third equation has a unique solution in $v_{i+1}$. So, we are done.
\end{proof}

From this, we can immediately describe the generalized eigenspaces of $\Phi$. For $k\ge 1$, denote
$$
\dim_{\lambda,k}\Gamma:=\dim\frac{\ker(\Phi-\lambda)^k}{\ker(\Phi-\lambda)^{k-1}}.
$$
\begin{corollary}\label{cor:generalized_eigenspace_dimension}
    Keep the assumptions of Theorem \ref{t:Phi-lambda_is_surjective}. Then
    $$
    \dim_{\lambda,k}\Gamma=\dim_\lambda\Gamma.
    $$
    In particular,
    $$
    \dim\ker(\Phi-\lambda)^k=k\cdot \dim_{\lambda}\Gamma.
    $$
\end{corollary}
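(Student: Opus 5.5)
The plan is to argue by induction on $k$ using the surjectivity of $\Phi-\lambda$ from Theorem \ref{t:Phi-lambda_is_surjective}. Write $N:=\Phi-\lambda$ acting on $\CC\Gamma$, and $V_k:=\ker N^k$, with $V_0=0$. First I would note that $N$ maps $V_k$ into $V_{k-1}$. The claim is then that the induced map
$$
\bar N: V_k/V_{k-1}\longrightarrow V_{k-1}/V_{k-2}
$$
is an isomorphism for every $k\ge 2$. Given this, the equality $\dim_{\lambda,k}\Gamma=\dim_{\lambda,1}\Gamma=\dim_\lambda\Gamma$ follows by induction. Summing the graded pieces of the filtration $0=V_0\subset V_1\subset\ldots\subset V_k$ then gives $\dim\ker N^k=k\dim_\lambda\Gamma$. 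This dimension is finite because, by Theorem \ref{t:Phi-lambda_is_surjective} applied with $g=0$, $\dim_\lambda\Gamma$ equals the finite lower bound.

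Injectivity of $\bar N$ is formal. If $v\in V_k$ and $Nv\in V_{k-2}$, then $N^{k-1}v=0$, so $v\in V_{k-1}$.

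Surjectivity is where the hypothesis enters. Take $w\in V_{k-1}$. By Theorem \ref{t:Phi-lambda_is_surjective} there is $v\in\CC\Gamma$ with $Nv=w$. Then $N^kv=N^{k-1}w=0$, so $v\in V_k$, and $\bar N[v]=[w]$. Hence $\bar N$ is onto, and since $V_{k-1}\to V_{k-1}/V_{k-2}$ is onto, $\bar N$ is in fact an isomorphism.

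The only subtle point is that the surjectivity must hold on the full space $\CC\Gamma=\CC^\Gamma$ of all functions, not only on finitely supported ones. This is what Theorem \ref{t:Phi-lambda_is_surjective} provides, since $g$ there is arbitrary. I expect no further obstacle, because everything else is linear algebra of a nilpotent-on-kernel filtration. One remark I would add is that if moderate-growth eigenforms are wanted instead of all eigenvectors, one would also need the preimage $v$ to have moderate growth; this would follow from the explicit recursive construction in the proof of Theorem \ref{t:Phi-lambda_is_surjective}, as in Remark \ref{r:bounded_growth}.
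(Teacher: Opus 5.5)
Your proposal is correct and follows essentially the paper's argument. In both, the key step is that surjectivity of $\Phi-\lambda$ on all of $\CC\Gamma$ forces $\Phi-\lambda$ to map $\ker(\Phi-\lambda)^k$ onto $\ker(\Phi-\lambda)^{k-1}$; the paper packages this as the exact sequence $0\to\ker(\Phi-\lambda)\to\ker(\Phi-\lambda)^k\to\ker(\Phi-\lambda)^{k-1}\to 0$ and counts dimensions, whereas your graded isomorphism $V_k/V_{k-1}\cong V_{k-1}/V_{k-2}$ has the minor advantage of not needing finite-dimensionality for the first equality.
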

\begin{proof}
    We have an exact sequence
    $$
    0\to \ker(\Phi-\lambda)\to \ker(\Phi-\lambda)^k\xrightarrow{\Phi-\lambda}\ker(\Phi-\lambda)^{k-1}.
    $$
    By Theorem \ref{t:Phi-lambda_is_surjective}, the rightmost arrow is surjective. This proves the claim.
\end{proof}
\begin{remark}
    By the proof of Theorem \ref{t:Phi-lambda_is_surjective}, if $g$ satisfies the moderate growth condition, then all functions in $(\Phi-\lambda)^{-1}(g)$ do. In particular, all elements in $\ker(\Phi-\lambda)^k$ are automorphic forms. Moreover, using the same arguments as in Corollary \ref{cor:vector_bundle_of_eigenforms} and Remark \ref{r:vector_bundle_of_eigenforms_in_ramification}, we can see that such automorphic forms form a trivial vector bundle over a base in $\CC$.
\end{remark}

\subsection{Jets of eigenfamilies and explicit Jordan bases}
\label{ss:jets_of_eigenfamilies}

The preceding corollary determines the dimensions of the generalized
eigenspaces. We now refine it by constructing explicit Jordan bases following the unramified strategy in \cite[Section 11]{Lor12tor}.

Fix one of the $\PGL_2$ Hecke graphs considered above. Let $\Gamma'$ be
the finite subgraph used in the corresponding eigenspace computation,
let $M$ be its adjacency matrix, and set
\[
R:=
\left\{
\lambda\in\CC^\times:
\det(\lambda\operatorname{id}-M)\neq0
\right\}.
\]
By Corollary~\ref{cor:vector_bundle_of_eigenforms} in the unramified case
and Remark~\ref{r:vector_bundle_of_eigenforms_in_ramification} in the
ramified cases, the spaces
\[
\ker(\Phi-\lambda),
\; \lambda\in R,
\]
are the fibers of a trivial algebraic vector bundle of some rank $N$.
Choose an algebraic frame
$
F_1(\lambda),\ldots,F_N(\lambda)\in\CC\Gamma,
\; \lambda\in R.
$
Thus
$
\Phi F_j(\lambda)=\lambda F_j(\lambda),
$
and
$
F_1(\lambda),\ldots,F_N(\lambda)
$
form a basis of $\ker(\Phi-\lambda)$ for every $\lambda\in R$.
Here algebraicity means that, for every vertex $v\in\Gamma$, the function
$
\lambda\longmapsto F_j(\lambda)(v)
$
is regular on $R$.

\begin{theorem}[Explicit Jordan bases from jets]
\label{t:explicit_Jordan_bases_from_jets}
Fix $\lambda_0\in R$. For $i\geq0$, define
\[
F_j^{[i]}(\lambda_0)
:=
\left.
\frac{1}{i!}
\frac{d^i}{d\lambda^i}F_j(\lambda)
\right|_{\lambda=\lambda_0}.
\]
Then
$
(\Phi-\lambda_0)F_j^{[0]}(\lambda_0)=0
$
and
$
(\Phi-\lambda_0)F_j^{[i]}(\lambda_0)
=
F_j^{[i-1]}(\lambda_0),
\; i\geq1.
$
Consequently, for every $m\geq1$, the vectors
$
\left\{
F_j^{[i]}(\lambda_0):
1\leq j\leq N,\ 0\leq i<m
\right\}
$
form a basis of
$
\ker\bigl((\Phi-\lambda_0)^m\bigr).
$
More precisely,
\[
\ker\bigl((\Phi-\lambda_0)^m\bigr)
=
\bigoplus_{j=1}^N
\operatorname{span}_{\CC}
\left\{
F_j^{[0]}(\lambda_0),\ldots,
F_j^{[m-1]}(\lambda_0)
\right\}.
\]
Thus the restriction of $\Phi-\lambda_0$ to
$\ker\bigl((\Phi-\lambda_0)^m\bigr)$ is the direct sum of $N$ nilpotent
Jordan blocks of size $m$.
\end{theorem}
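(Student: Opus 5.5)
Differentiating the identity $\Phi F_j(\lambda)=\lambda F_j(\lambda)$ coordinatewise in $\lambda$ gives the chain relations; the basis statement then follows from the dimension count in Corollary~\ref{cor:generalized_eigenspace_dimension} together with linear independence, which we get from a leading-term argument. We first make sense of the differentiation. For each vertex $v$, $(\Phi F_j(\lambda))(v)=\sum_{v\to w}F_j(\lambda)(w)$ is a finite sum, because out-degrees are finite. So the identity $\Phi F_j(\lambda)=\lambda F_j(\lambda)$ is an identity of regular functions on $R$, one for each vertex. Since $\Phi$ is independent of $\lambda$ and acts by finite sums, it commutes with $\frac{1}{i!}\frac{d^i}{d\lambda^i}$. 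The Leibniz rule gives $\frac{1}{i!}\frac{d^i}{d\lambda^i}(\lambda F)=\lambda F^{[i]}+F^{[i-1]}$. Evaluating at $\lambda_0$ yields $\Phi F_j^{[i]}=\lambda_0F_j^{[i]}+F_j^{[i-1]}$, which is exactly the claimed chain relation, with $F_j^{[-1]}:=0$.

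By induction on $i$, $(\Phi-\lambda_0)^{i+1}F_j^{[i]}=0$, so all the vectors with $i<m$ lie in $\ker(\Phi-\lambda_0)^m$. The assumption $\lambda_0\in R$ means $\lambda_0$ is not an eigenvalue of $M$, so Corollary~\ref{cor:generalized_eigenspace_dimension} applies and gives $\dim\ker(\Phi-\lambda_0)^m=mN$. It therefore suffices to show that the $mN$ vectors are linearly independent.

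Linear independence is the only step with real content, and we prove it by induction on $m$. Suppose $\sum_{j}\sum_{i<m}c_{ij}F_j^{[i]}=0$. Applying $(\Phi-\lambda_0)^{m-1}$ kills every term with $i<m-1$ and sends $F_j^{[m-1]}$ to $F_j^{[0]}=F_j(\lambda_0)$, leaving $\sum_j c_{m-1,j}F_j(\lambda_0)=0$. Because the $F_j(\lambda_0)$ form a basis of $\ker(\Phi-\lambda_0)$, all $c_{m-1,j}=0$. The remaining relation involves only $i<m-1$, and the induction hypothesis finishes the argument.

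Finally, the decomposition follows directly. Each $\operatorname{span}\{F_j^{[0]},\ldots,F_j^{[m-1]}\}$ is $(\Phi-\lambda_0)$-stable, and by the chain relation $\Phi-\lambda_0$ acts on it as a single nilpotent Jordan block of size $m$ in the basis $F_j^{[m-1]},\ldots,F_j^{[0]}$. The sum of these $N$ spans is direct by the independence just proved, and it equals $\ker(\Phi-\lambda_0)^m$ by the dimension count.
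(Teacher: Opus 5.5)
Your proposal is correct and follows essentially the same route as the paper. You differentiate $\Phi F_j(\lambda)=\lambda F_j(\lambda)$ coordinatewise to get the chain relations, and you get the dimension count $\dim\ker(\Phi-\lambda_0)^m=mN$ from Corollary~\ref{cor:generalized_eigenspace_dimension}. For linear independence you apply a power of $\Phi-\lambda_0$ to isolate the top-level coefficients; you phrase this as induction on $m$, while the paper picks the maximal index $s$ with a nonzero coefficient, which is only a cosmetic difference.
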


\begin{proof}
Since $\Gamma$ is locally finite, every coordinate of $\Phi f$ is a
finite linear combination of coordinates of $f$. We may therefore
differentiate the identity
$
\Phi F_j(\lambda)=\lambda F_j(\lambda)
$
coordinatewise. Differentiating $i$ times, evaluating at $\lambda_0$,
and dividing by $i!$ gives
$
(\Phi-\lambda_0)F_j^{[i]}(\lambda_0)
=
F_j^{[i-1]}(\lambda_0),
\; i\geq1.
$
In particular,
$
(\Phi-\lambda_0)^{i+1}F_j^{[i]}(\lambda_0)=0.
$
Hence all the displayed vectors with $0\leq i<m$ belong to
$\ker\bigl((\Phi-\lambda_0)^m\bigr)$.

Put $A=\Phi-\lambda_0$. Suppose that
\[
\sum_{i=0}^{m-1}\sum_{j=1}^N
c_{i,j}F_j^{[i]}(\lambda_0)=0
\]
is a nontrivial linear relation, and let $s$ be maximal such that
$c_{s,j}\neq0$ for some $j$. Applying $A^s$ gives
$
\sum_{j=1}^N c_{s,j}F_j(\lambda_0)=0,
$
because
$
A^sF_j^{[s]}(\lambda_0)=F_j(\lambda_0),
\;
A^sF_j^{[i]}(\lambda_0)=0
\;\text{for }i<s.
$
This contradicts the fact that
$
F_1(\lambda_0),\ldots,F_N(\lambda_0)
$
form a basis of $\ker A$. Hence the displayed $mN$ vectors are linearly
independent.

Since $\lambda_0\in R$,
Corollary~\ref{cor:generalized_eigenspace_dimension} applies and gives
$
\dim\ker(A^m)
=
m\dim\ker A
=
mN.
$
Therefore the $mN$ independent vectors form a basis of $\ker(A^m)$.
\end{proof}

\subsection{Spaces of eigenforms for $\PGL_n$} \label{PGLn case}
Let $X=\PP^1$ and $x$ be a point of $X$ of degree $d$. Any vector bundle over $\PP^1$ splits into a direct sum of line bundles, and we will write such a bundle in the form $\cE=\cL_1\oplus\ldots\oplus\cL_n$, where $\deg\cL_i\ge\deg\cL_{i-1}$. Note that bundles of the same degree $e$ are isomorphic to $\cO(e)$, so such form is unique. We associate to it a sequence of positive integers
$$
l(\cE)=(l_1,\ldots,l_{n-1})=(\deg \cL_1-\deg \cL_2,\deg \cL_2-\deg \cL_3,\ldots,\deg \cL_{n-1}-\deg \cL_n).
$$
A Hecke modification with respect to $w_1$ at $x$ twists one of these line bundles by $-x$. It modifies $l(\cE)$ by adding
\begin{align*}
    (-d,0,\ldots,0),\quad (d,-d,0,\ldots,0),\quad\ldots\quad (0,\ldots,0,d,-d),\quad (0,\ldots,0,d),
\end{align*}
corresponding to twisting $\cL_1,\ldots,\cL_n$, respectively. Note that only te last modification increases $l_{n-1}$. Therefore, it is reasonable to define
$$
\Gamma_i:=\{\cE:di\le l_{n-1}(\cE)<d(i+1)\},\quad i=0,1,\ldots
$$
The decomposition is strictly propagative with $\Gamma'=\varnothing$, so we can apply Theorem \ref{t:eigenforms_for_graphs_with_decomposition}. Therefore, the space of $\lambda$-eigenfunctions on $\Gamma$ is in bijection with functions on $\Gamma_1$, and the space of $\lambda$-eigenforms is in bijection with functions on $\Gamma_1$ of moderate growth. In particular, the space is uncountably-dimensional.

\begin{example}
{\rm Let $\deg x=1$ and $n=3$. In \cite[Example 5.3]{Alv19}, the graph with all edge multiplicities was explicitly computed, and is presented on Figure \ref{fig:PGL3_unramified}.
\begin{figure}
\centering
\scalebox{.9}{
\includegraphics{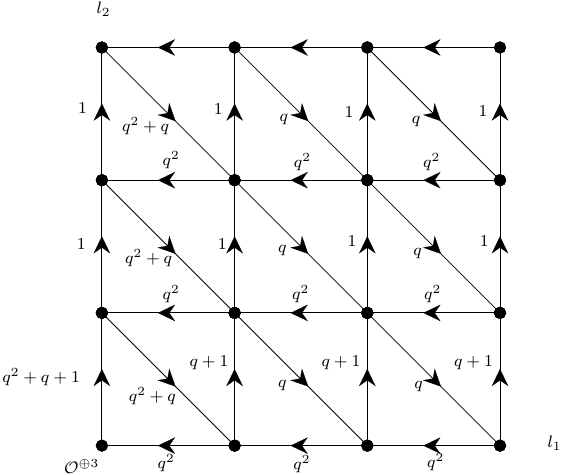}
}
\caption{Unramified graph for $\PGL_3$.}
\label{fig:PGL3_unramified}
\end{figure}
Here, $\Gamma_i$ is the horizontal line with the $l_2$-coordinate $i$.
}
\end{example}

\subsection{Connections to the theory of Eisenstein series}
We connect our findings to the known theory of Eisenstein series. For more detailed explanations, we refer to \cite{KZ2}. Let $K:=G(\cO)$ and $K':=K(D)$. In \cite{Li79}, where the dimensions of spaces of Eisenstein series were computed, the following vector space is defined:
$$
V^U_0:=\{v\in\CC[K/K']:\forall_{b\in T(\FF_q)U(\cO)}\, bv=v\}.
$$
Then we have
\begin{theorem}[\cite{Li79}, Theorem 7.1]\label{t:Li_dimension_formula}
    The space of all $\lambda$-eigenforms decomposes as the direct sum of the spaces of Eisenstein series and cusp forms. Moreover, the subspace of Eisenstein series has dimension
    $$
    \deg x\cdot |\Pic(X)(\FF_q)|\cdot\dim V^U_0.
    $$
\end{theorem}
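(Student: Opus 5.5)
The plan is to follow the classical route through the constant term along the unipotent radical $U$ of $B$. For a function $f$ on $G(F)\backslash G(\AA)/K'$, set
$$
f_U(g):=\int_{U(F)\backslash U(\AA)}f(ug)\,du .
$$
The map $f\mapsto f_U$ commutes with the Hecke operator $\Phi^\mu_{D,x}$, because $\Phi^\mu_{D,x}$ acts by right convolution. Its kernel on $\lambda$-eigenforms is by definition the space of cusp forms. The argument then has three steps: describe the space of possible constant terms of $\lambda$-eigenforms, construct an Eisenstein series realizing each such constant term, and conclude that $\lambda$-eigenforms split as (Eisenstein series)$\,\oplus\,$(cusp forms), with the Eisenstein part isomorphic to the space of admissible constant terms.

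\textbf{Step 1: the space of constant terms.} By the Iwasawa decomposition $G(\AA)=B(\AA)K$, a function on $T(F)U(\AA)\backslash G(\AA)/K'$ is determined by its restriction to $T(\AA)\times K/K'$. The invariance left over comes from $B(\AA)\cap K=T(\cO)U(\cO)$. At the deep cusp this reduces to invariance under $T(\FF_q)U(\cO)$, as in Theorem \ref{t:gln_image_of_aut_group}: the constants $T(\FF_q)$ survive, while the rest of $T(\cO)$ only shifts the torus coordinate. This identifies the space of constant terms with functions on $T(F)\backslash T(\AA)/T(\cO)$ valued in $V^U_0$.

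The Hecke eigen-condition at $x$ then becomes a linear recurrence in the $x$-direction of $T(F)\backslash T(\AA)/T(\cO)$, which is essentially $\Pic(X)$ (or the appropriate quotient for the torus of $G$). Its solution space has dimension equal to the number of cosets of the subgroup generated by $[x]$ in $\Pic(X)(\FF_q)$ modulo that direction. This is $\deg x\cdot|\Pic^0(X)(\FF_q)|$, the same count as $|\Gamma_1|$ in Subsection \ref{ss:unramified_PGL2}. Tensoring with $V^U_0$ gives the stated dimension.

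\textbf{Step 2: Eisenstein series.} For every admissible constant term $\phi$ from Step 1, I would build an eigenform $E(\phi)$ with $E(\phi)_U=\phi$ modulo cuspidal terms. The approach is to decompose $\phi$ into characters $\chi$ of $T(F)\backslash T(\AA)$ together with vectors $v\in V^U_0$. For each pair, take the Eisenstein series $E(\chi,v)$ attached to the induced representation $\operatorname{Ind}_{B(\AA)}^{G(\AA)}\chi$, and use that the $K'$-fixed vectors with trivial $T(\FF_q)U(\cO)$-action are exactly $V^U_0$. The constant term of $E(\chi,v)$ is $\chi v+M(\chi)v$, where $M$ is the intertwining operator. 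Inverting this triangular relation shows that the Eisenstein span surjects onto the admissible constant terms.

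\textbf{Step 3: splitting.} Given a $\lambda$-eigenform $f$, the difference $f-E(f_U)$ has vanishing constant term, so it is cuspidal. Constant terms of Eisenstein series are nonzero, so the sum is direct.

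The main obstacle is Step 2 at the finitely many special characters, namely when $M(\chi)$ has a pole or when $\chi$ and $w\chi$ coincide, so that the recurrence has a repeated root. There I would replace $E(\chi,v)$ by a residue or by a derivative in $\chi$, in the spirit of the jet construction of Theorem \ref{t:explicit_Jordan_bases_from_jets}. I would then check that these regularized series still realize every admissible constant term, so that the dimension count of Step 1 is preserved.
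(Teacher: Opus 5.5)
The paper does not prove this statement. It is quoted from \cite[Theorem 7.1]{Li79} and used only for comparison with Theorem \ref{t:PGL2_ramified_not_at_x}. (As printed, $|\Pic(X)(\FF_q)|$ is infinite; the intended factor is $|\Pic^0(X)(\FF_q)|$, which is what you use.) Read as a proof of Li's theorem, your sketch has a genuine gap, and it sits exactly where the dimension is supposed to come from.

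\textbf{The miscount in Step 1.} You claim that the Hecke condition on constant terms has a solution space of dimension $\deg x\cdot|\Pic^0(X)(\FF_q)|\cdot\dim V^U_0$. Take $\PGL_2$ with $x\notin\supp D$. Using $K_x\Delta K_x=\bigsqcup_{c\in k_x}\smat{\pi_x}{c}{0}{1}K_x\sqcup\smat{1}{0}{0}{\pi_x}K_x$ and left $U(\AA)$-invariance, the condition $\Phi f_U=\lambda f_U$ becomes $\lambda\phi(a)=q^{\deg x}\phi(a\pi_x)+\phi(a\pi_x^{-1})$ on $T(F)\backslash T(\AA)/T(\cO)\simeq\Pic(X)$.

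This is a second-order recurrence with nonzero extreme coefficients on each coset of $\langle[x]\rangle$. Each such coset is a full $\mathbb{Z}$-line, not a half-line. So the solution space has dimension $2\deg x\,|\Pic^0(X)(\FF_q)|\dim V^U_0$, twice your count.

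Already for $X=\PP^1$, $D=0$, $\deg x=1$, the recurrence on $\Pic(\PP^1)=\mathbb{Z}$ has a two-dimensional solution space. The $\lambda$-eigenspace, by contrast, is one-dimensional: the graph is the half-line $\cO(n)\oplus\cO$, with $\lambda f_0=(q+1)f_1$ at its end. You have transplanted the count $|\Gamma_1|$, which is a half-line phenomenon, onto the constant term, which lives on the whole line. (Separately, the reduction to $T(\FF_q)U(\cO)$-invariants is formal, since $T(F)\cap T(\cO)=T(\FF_q)$; it has nothing to do with the deep cusp.)

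\textbf{Consequences for Steps 2 and 3.} By the functional equation, $E(\chi,v)=E(w\chi,M(\chi)v)$. So the constant terms $\chi v+M(\chi)v$ of Eisenstein series fill only half of the recurrence solutions, and there is no ``triangular inversion'' onto the whole space.

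Step 3 needs $f_U$ to lie in the Eisenstein constant-term image before $f-E(f_U)$ even makes sense. That is, for an arbitrary $\lambda$-eigenform, the $\chi$- and $w\chi$-components of $f_U$ must be tied together by $M(\chi)$. This is a global constraint, invisible to the local recurrence, and it is the real content of the theorem. This is where the theory of Eisenstein series is genuinely needed: a functional equation together with a Maass--Selberg or square-integrability type argument.

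In the language of this paper, the constraint is encoded in the nucleus. On the deep cusp, $f_U$ agrees with $f$ up to a constant. For generic $\lambda$, the values on $\Gamma_2$ are forced by those on $\Gamma_1$ through $v'=(\lambda-M)^{-1}Av_1$. This cuts out a $|\Gamma_1|$-dimensional subspace of the $2|\Gamma_1|$-dimensional space of recurrence solutions on the cusp.

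\textbf{What is needed.} To close the gap you must show that the constant-term map on $\lambda$-eigenforms has image of dimension exactly $\deg x\,|\Pic^0(X)(\FF_q)|\dim V^U_0$. One possible route is self-adjointness of $\Phi$ combined with Theorem \ref{t:eigenforms_for_graphs_with_decomposition}; another is the square-integrability argument. The regularization at special $\chi$ that you flag as the main obstacle is secondary to this.
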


We compare this formula to the one found in Theorem \ref{t:PGL2_ramified_not_at_x}. We have
\begin{align*}
    V_0^U&=\{v\in V^U:\forall_{a,d\in\FF_q^\times}\smat a00d v=v\}\\
    &=\{v\in\CC[K/K']:\forall_{b\in T(\FF_q)U(\cO)}\, uv=v\}\\
    &=\mathrm{Fun}(T(\FF_q)U(\cO_x)\backslash G(\cO_x)/K'_x)\\
    &=\mathrm{Fun}(T(\FF_q)U(\cO_D)\backslash G(\cO_D)).
\end{align*}

Note that we get the same formula as in Theorems \ref{t:PGL2_ramified_not_at_x} and \ref{t:Li_dimension_formula}. However, the approach in \cite{Li79} does not tackle the case when the Hecke operator is taken in a ramified point. Theorems \ref{t:PGL2_ramified_at_x} and \ref{t:PGL2_ramified_at_x_and_D} provide dimension formulas in this case, which are clearly different from the unramified dimensions. Therefore, the Eisenstein series construction does not immediately give eigenforms for $\Phi_x$ when $x$ is ramified. In future work, we will define Eisenstein series in the ramified case which will give bases to spaces of eigenforms for generic eigenvalues.

\newpage

\section{General Reductive Case} \label{s:general_G}
\subsection{Notations and general assumptions}
As usual, we use notation of Section \ref{s:preliminaries_on_Hecke_graphs}. Additionally, we introduce the following notation,
compatible with \cite{Schieder2015}:
\begin{itemize}
    \item $T$ is a fixed split maximal torus of $G$, and $B$ is a fixed Borel subgroup of $G$
    containing $T$.
    \item $U$ is the unipotent radical of $B$.
    \item $\Phi=\Phi(G,T)$ is the root system of $G$ with respect to $T$, $W$ is the Weyl group of $G$.
    \item $\Phi^+=\Phi^+(G,T,B)$ and $\Phi^-=-\Phi^+$ are the sets of positive and negative roots.
    \item $\alpha_1,\ldots,\alpha_n$ and $\check\alpha_1,\ldots,\check\alpha_n$ are the simple roots
    and simple coroots, respectively.
    \item For a root $\alpha\in \Phi$, $U_\alpha$ is the corresponding root subgroup of $G$.
    \item $\check\Lambda_G:=X_*(T)$ and $\Lambda_G:=X^*(T)$ are the coweight and weight lattices,
    respectively.
    \item For $\chi\in X^*(T)$, we define $k_{\chi}$ to be the corresponding $1$-dimensional representation of $T$.
\end{itemize}

For a root $\alpha\in \Phi$ and a coweight $\nu\in X_*(T)$, we write
\[
\langle \alpha,\nu\rangle\in \mathbb Z
\]
for the natural pairing.

Let $P$ be a parabolic subgroup of $G$ containing $B$. We define the following notions:
\begin{itemize}
    \item $I_P\subset \{1,\ldots,n\}$
    is the set of $i$ such that
    $
    \fg_{-\alpha_i}\subset \operatorname{Lie}(P).
    $
    This set characterizes $P$ uniquely.
    \item $U(P)$ is the unipotent radical of $P$, and
    $
    M:=P/U(P)
    $
    is the Levi quotient.
    \item We write $\check\Lambda_M$ and $\Lambda_M$ for the coweight and weight lattices of $M$.
     \item For a character $\chi:P\to \mathbb G_m$, we also write $k_\chi$ for the corresponding
    $1$-dimensional representation of $P$.
    \item A character $\chi:P\to \mathbb G_m$ is called {\bfseries dominant} if its restriction to $T$
    is a nonnegative integral combination of the simple roots $\alpha_i$ with $i\notin I_P$.
    \item We write
    \[
    \Lambda_{G,P}:=\{\chi:P\to \mathbb G_m \mid \chi|_{Z(G)^\circ}=1\},
    \qquad
    \check\Lambda_{G,P}:=\operatorname{Hom}_{\mathbb Z}(\Lambda_{G,P},\mathbb Z).
    \]
    \item If $\mathcal P_P$ is a principal $P$-bundle on $X$, its degree is the element
    \[
    \deg_P(\mathcal P_P)\in \check\Lambda_{G,P}
    \]
    characterized by
    \[
    \langle \chi,\deg_P(\mathcal P_P)\rangle
    :=
    \deg\bigl(\mathcal P_P\times^P k_\chi\bigr)
    \qquad\text{for all }\chi\in \Lambda_{G,P}.
    \]
    \item An element $\check\lambda_P\in \check\Lambda_{G,P}$ is called {\bfseries dominant} if
    \[
    \langle \chi,\check\lambda_P\rangle\ge 0
    \]
    for every dominant character $\chi\in \Lambda_{G,P}$, and it is called
    {\bfseries dominant $P$-regular} if
    \[
    \langle \chi,\check\lambda_P\rangle>0
    \]
    for every nontrivial dominant character $\chi\in \Lambda_{G,P}$.
\end{itemize}

\begin{definition}
A principal $G$-bundle $\cP_G$ on $X$ is called {\bfseries semi-stable} if for every parabolic
subgroup $P\subset G$, every reduction $\cP_P$ of $\cP_G$ to $P$, and every dominant character
$\chi:P\to \mathbb G_m$ whose restriction to $Z(G)^\circ$ is trivial, one has
\[
\deg\bigl(\cP_P\times^P k_\chi\bigr)\le 0.
\]
\end{definition}

\begin{theorem}[Generalizes \ref{t:HN_filt}] \label{HN reduction}
Let $\mathcal P_G$ be a principal $G$-bundle on $X$. Then there exists a unique parabolic
subgroup $P\subset G$, a unique dominant $P$-regular element $\check\lambda_P\in \check\Lambda_{G,P}$, and a unique reduction $\mathcal P_P$ of $\mathcal P_G$ to $P$ such that 
\[
\mathcal P_P\in\Bun^{ss}_{P,\check\lambda}
:=
\left\{
\mathcal P_P \in \Bun_P
\;\middle|\;
\deg(\mathcal P_P)=\check\lambda
\text{ and }
\mathcal P_P/U(P)\text{ is semi-stable}
\right\}.
\]

Equivalently, if $M:=P/U(P)$ is the Levi quotient, then the induced $M$-bundle
$\mathcal P_M:=\mathcal P_P/U(P)$
is semi-stable, and the degree of the reduction $\mathcal P_P$ is dominant $P$-regular.
\end{theorem}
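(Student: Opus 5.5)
This is the Harder--Narasimhan reduction theorem for $G$-bundles (Atiyah--Bott, Behrend; see also \cite{Schieder2015}). The plan is to follow Behrend's approach via a canonical reduction characterized by a maximality property, mirroring the proof of Theorem \ref{t:HN_filt} for $\GL_n$.

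\emph{Step 1: boundedness.} Show that as $P$ ranges over standard parabolics and $\cP_P$ over reductions of $\cP_G$ to $P$, the degrees $\deg_P(\cP_P)\in\check\Lambda_{G,P}$ are bounded above in the dominance order. To do this, embed $G\hookrightarrow\GL(V)$ faithfully and reduce to the vector bundle case. A reduction to $P$ produces a filtration of $\cP_G\times^G V$ by subbundles whose degrees are controlled by the pairings $\langle\chi,\deg_P\cP_P\rangle$. Degrees of subbundles of a fixed vector bundle are bounded above, for instance by the maximal slope coming from Theorem \ref{t:HN_filt}. Hence the set of degrees attained is bounded, and each pairing with a fundamental dominant character takes only finitely many values above any given level.

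\emph{Step 2: existence.} Among all pairs $(P,\cP_P)$, choose one for which $\deg_P(\cP_P)$, transported to $\check\Lambda_G\otimes\QQ$ via the standard identification with the dominant cone, is maximal. Take $P$ minimal among parabolics realizing this maximum. We then need two facts.
\begin{itemize}
\item The Levi bundle $\cP_M$ is semistable. If it were not, an $M$-parabolic $Q_M$ destabilizing $\cP_M$ would lift to a smaller parabolic $Q\subset P$, and the resulting reduction would have strictly larger degree, contradicting maximality.
\item The degree is $P$-regular. If some nontrivial dominant $\chi$ had $\langle\chi,\check\lambda_P\rangle\le 0$, then passing to the larger parabolic obtained by adjoining the corresponding simple root would not decrease the degree, contradicting the minimality of $P$ or the maximality of the degree.
\end{itemize}

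\emph{Step 3: uniqueness.} This is the main obstacle. Given two such reductions $\cP_P$ and $\cP'_{P'}$, consider their relative position generically on $X$. This is a Bruhat cell, and it yields a reduction to $P\cap{}^wP'$. Using semistability of both Levi bundles together with Lemma \ref{l:homs_from_high_to_low}, applied to the associated bundles built from the root spaces in $\fg/\mathrm{Lie}(P)$ twisted appropriately, one shows that any map from the ``positive'' part to the ``negative'' part vanishes. This forces $w=1$, $P=P'$, and $\cP_P=\cP'_{P'}$. Concretely, I would argue that the reduction $\cP'_{P'}$ induces a section of $\cP_P\times^P(G/P')$, and that the components in the opposite unipotent directions must vanish by slope considerations, as in Lemma \ref{l:HN_functoriality}.

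I expect the uniqueness step to be the hardest, because it requires comparing reductions to different parabolics. The cleanest route may be simply to cite Behrend's complementary polyhedra argument, or \cite{Schieder2015}, rather than redo it.
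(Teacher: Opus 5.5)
The paper gives no argument of its own here. It quotes \cite[Theorem 2.3.3(a),(c) and Remark 2.4.1]{Schieder2015}, so your closing fallback of citing Behrend or Schieder is exactly the paper's proof and is fine. The self-contained sketch in Steps 2--3, however, does not work as written.

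\emph{Step 2.} The tie-break is backwards. Suppose $\langle\alpha_i,\check\lambda_P\rangle=0$ for some $i\notin I_P$. Then $\cP_P\times^P P'$ with $I_{P'}=I_P\cup\{i\}$ has the \emph{same} degree, which contradicts neither the maximality of the degree nor the minimality of $P$. For example, take $\cE=\cO\oplus\cO$. The $B$-reduction $\cO\subset\cO\oplus\cO$ has the maximal degree $0$, $B$ is minimal, and the Levi bundle is semistable. But $0$ is not $B$-regular, and the HN parabolic is $G$. You need $P$ \emph{maximal} among parabolics realizing the maximal degree.

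Likewise, refining by an \emph{arbitrary} destabilizing $Q_M$ need not raise the degree in the dominance order. The difference lies in the coroot span of $M$ and is only known to pair positively with some dominant character. For instance, with $M=\GL_3$, $Q_M=B$ and graded degrees $(1,-3,2)$, the difference is $\check\alpha_1-2\check\alpha_2$. You must refine by a reduction whose relative degree is $M$-dominant, e.g.\ the canonical reduction of $\cP_M$ obtained by induction on the semisimple rank. The difference is then a nonzero nonnegative combination of simple coroots of $M$.

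\emph{Step 3.} This is the genuine gap. Lemma \ref{l:homs_from_high_to_low} needs semistable vector bundles. For general $G$ you would apply it to $\cP_M\times^M V$ with $V$ a root-space representation of the Levi: e.g.\ $V_1\otimes V_2^\vee$ for block parabolics of $\GL_n$, or $\mathrm{Sym}^2$ of the standard representation for the Siegel parabolic of $\mathrm{Sp}_{2n}$. In characteristic $0$ such associated bundles of a semistable $M$-bundle are semistable. Over $k=\FF_q$ they need not be, since tensor products and symmetric powers of semistable bundles can be unstable for small $p$.

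The linear vanishing statements your slope argument would produce are of the type of Behrend's conjecture, $H^0(X,\cP_P\times^P\fg/\mathrm{Lie}(P))=0$, which fails in small characteristic. The $\GL_n$ proof of Lemma \ref{l:HN_functoriality} avoids this only because it applies Lemma \ref{l:homs_from_high_to_low} to the graded pieces themselves, never to tensor constructions of them. This is why uniqueness in arbitrary characteristic is proved degree-theoretically. Behrend compares degrees of the induced reduction to $P\cap{}^wP'$ via complementary polyhedra, using semistability only in its degree-of-reductions form. So either cite that result, as the paper does, or replace Step 3 by that degree argument.
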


\begin{proof}
Follows from \cite[Theorem 2.3.3(a),(c) and Remark 2.4.1]{Schieder2015}.

\end{proof}

In the situation of the above theorem, we call $\cP_P$ the {\bfseries Harder-Narasimhan (HN) reduction of $\cP_G$}.
\begin{corollary}\label{cor:Bcase-strictly-dominant} 
Assume the standing notation. Let $\mathcal P_B$ be a $B$-reduction of a $G$-bundle $\mathcal P_G$, and write
$\mathcal P_T:=\mathcal P_B/U$. For $\alpha\in \Phi^+$ set
$\mathcal L_\alpha:=\mathcal P_T\times^T k_\alpha$.
If $\deg(\mathcal L_\alpha)>0$ for all $\alpha\in \Phi^+$, then the Harder--Narasimhan parabolic of $\mathcal P_G$
is $B$ and $\mathcal P_B$ is the canonical Harder--Narasimhan reduction.
\end{corollary}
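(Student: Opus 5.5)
The plan is to apply the uniqueness part of Theorem \ref{HN reduction} directly. It suffices to show that $P=B$, $\check\lambda_B:=\deg_B(\mathcal P_B)$ and the reduction $\mathcal P_B$ satisfy the characterizing conditions. Uniqueness then forces this triple to be the Harder--Narasimhan reduction.

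We check the two conditions in turn.

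\textbf{Semistability of the Levi bundle.} The Levi quotient of $B$ is $T$, and $\mathcal P_T=\mathcal P_B/U$. For $M=T$ the only parabolic subgroup is $T$ itself, and its dominant characters trivial on $Z(T)^\circ=T$ are trivial. So the semistability condition is vacuous and every $T$-bundle is semistable.

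\textbf{Dominant $B$-regularity of the degree.} By definition $I_B=\varnothing$, so a character $\chi$ of $B$ is dominant iff $\chi|_T=\sum_i n_i\alpha_i$ with all $n_i\ge 0$. Any such $\chi$ is automatically trivial on $Z(G)^\circ$, since roots vanish on the center. The pairing is computed by
\[
\langle \chi,\check\lambda_B\rangle=\deg(\mathcal P_B\times^B k_\chi)=\deg(\mathcal P_T\times^T k_\chi)=\sum_i n_i\deg(\mathcal L_{\alpha_i}),
\]
using that $k_\chi$ is pulled back from $T=B/U$ and that degree is additive under tensor products of line bundles. Each simple root is positive, so $\deg\mathcal L_{\alpha_i}>0$ by hypothesis. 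If $\chi$ is nontrivial, some $n_i>0$, and hence the sum is strictly positive. Note that only the hypothesis for simple roots is needed; it implies the hypothesis for all positive roots.

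\textbf{Main subtlety.} The only potential obstacle is bookkeeping. One must confirm that the notion ``dominant $P$-regular'' in $\check\Lambda_{G,B}$ is tested only against characters in $\Lambda_{G,B}$, and that nontrivial dominant characters there correspond to nonzero nonnegative combinations of simple roots. Characters of $B$ trivial on $Z(G)^\circ$ may include more than the root lattice, for instance other weights vanishing on the connected center. However, dominance as defined requires $\chi|_T$ to lie in the nonnegative span of the $\alpha_i$, and on this cone the computation above applies verbatim.

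With both conditions verified, Theorem \ref{HN reduction} identifies $B$ as the HN parabolic and $\mathcal P_B$ as the canonical reduction.
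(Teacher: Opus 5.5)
Your proposal is correct and follows the same route as the paper: a $T$-bundle is automatically semistable, positivity of $\deg\mathcal L_{\alpha_i}$ on the simple roots gives dominant $B$-regularity of $\deg_B(\mathcal P_B)$, and the uniqueness in Theorem~\ref{HN reduction} identifies $\mathcal P_B$ as the canonical reduction. Your expansion $\langle\chi,\check\lambda_B\rangle=\sum_i n_i\deg\mathcal L_{\alpha_i}$ simply spells out the step the paper states as ``dominant $B$-regularity is equivalent to strict positivity on all simple roots.''
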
 

\begin{proof}
Since the Levi quotient of $B$ is $T$, the induced Levi bundle $\mathcal P_T:=\mathcal P_B/U$ is a $T$-bundle, hence is semi-stable. Now, let $\alpha_1,\dots,\alpha_n$ be the simple roots. Since each simple root is a positive root, the hypothesis implies
\[\deg(\mathcal L_{\alpha_i})>0
\qquad\text{for all } i=1,\dots,n.\]
Since dominant $B$-regularity is equivalent to strict positivity on all simple roots,
the above inequalities show that the degree of the $B$-reduction $\mathcal P_B$ is
dominant $B$-regular.
Hence $\mathcal P_B$ satisfies the defining properties of the canonical
Harder--Narasimhan reduction from Theorem~\ref{HN reduction}.

By the uniqueness statement in Theorem~\ref{HN reduction}, it follows that
$\mathcal P_B$ is the canonical Harder--Narasimhan reduction of $\mathcal P_G$.
Therefore the Harder--Narasimhan parabolic of $\mathcal P_G$ is $B$.

\end{proof}

We write $\mathrm{Bun}_G$ for the moduli stack of $G$-bundles on $X$, and
$\mathrm{Bun}_{G,D}$ for the moduli stack of $G$-bundles equipped with a $D$-level structure.
We will study the forgetful morphism
\[
p_{D,D_2}:\mathrm{Bun}_{G,D}\longrightarrow \mathrm{Bun}_{G,D_2},
\]
where $D=D_1+D_2$ satisfying (\ref{eq:d1_d2_condition}).

 Let $\cP_G$ be a $G$-bundle on $X$. When we say that {\bfseries $\cP_G$ has HN reduction to $B$},
we mean that the canonical Harder--Narasimhan reduction of $\cP_G$ is a reduction $\cP_B$ to the Borel subgroup $B$. In that case, we write
\[
\cP_T:=\cP_B/U
\]
for the induced $T$-bundle, and for each $\alpha\in \Phi^+$ we set
\[
\cL_\alpha:=\cP_T\times^T k_\alpha,
\]
where $k_\alpha$ is the one-dimensional $T$-representation of weight $\alpha$.

As in the case of $\GL_n$, we will be interested in the cusp locus of $\Bun_G$.

\begin{definition}[The $D$-cusp locus]\label{def:Dcusp}
A $G$-bundle $\cP$ on $X$ is {\bfseries in the $D$-cusp} if it has HN reduction to $B$ and satisfies
\[
\deg(\cL_\alpha)>2g-2+\deg(D)\qquad\text{for all }\alpha\in \Phi^+.
\]
The loci of such bundles (with level structures) in $\mathrm{Bun}_G$ and $\mathrm{Bun}_{G,D}$ are denoted by 
$\mathrm{Bun}_{G}^{D\text{-cusp}}$ and $\mathrm{Bun}_{G,D}^{D\text{-cusp}}$, respectively.
\end{definition}

As in Section \ref{s:preliminaries_on_Hecke_graphs}, fix a dominant coweight $\mu\in X_*(T)$.

\begin{definition}[Cusp locus]\label{def:deepcusp}
A $G$-bundle $\cP\in \mathrm{Bun}_{G}(k)$ is {\bfseries in the $(D,x,\mu)$-cusp}
if $\cP$ has HN reduction to $B$ and
\[
\deg(\cL_\alpha)>2g-2+\deg(D)+\langle \alpha,w(\mu)\rangle\,\deg(x)
\qquad\text{for all }\alpha\in \Phi^+, w\in W.
\]
The loci of such bundles (with level structures) in $\mathrm{Bun}_G$ and $\mathrm{Bun}_{G,D}$ are denoted respectively by 
$\mathrm{Bun}_{G}^{D\text{-cusp}}(D,x,\mu)$ and $\mathrm{Bun}_{G,D}^{D\text{-cusp}}(D,x,\mu)$.
\end{definition}

\begin{definition}[$P$-compatible trivialization along $D$]\label{def:Bcompatible}
Let $\cP_P$ be the HN reduction of $\cP_G$. A $D$-level structure
\[
\psi:\cP_G|_D\xrightarrow{\sim} G\times \operatorname{Spec}(\mathcal O_D)
\]
is called {\bfseries $P$-compatible} if it carries the induced $P$-reduction $\cP_P|_D$
to
\[
P\times \operatorname{Spec}(\mathcal O_D)\subset G\times \operatorname{Spec}(\mathcal O_D).
\]
\end{definition}

We use the shorthand
\[
G(\mathcal O_D),\qquad B(\mathcal O_D),\qquad U(\mathcal O_D),\qquad T(\mathcal O_D)
\]
for the $\mathcal O_D$-points. We regard $T(k)\subset T(\mathcal O_D)$ via constant sections.

\subsection{Generalizations of the $\GL_n$ results}

\begin{lemma}[A height filtration on $U$]\label{lem:height_filtration_U}
Assume the choices of $(G,B,T,U,\Phi^+,\Delta)$ and the standing notation  from the
previous section, and let $\mathrm{ht}(\alpha)$ denote
the height of $\alpha$ with respect to $\Delta$.

For each integer $m\ge 1$, let $U_{\ge m}\subset U$ be the closed subgroup scheme
generated by the root subgroups $U_\alpha$ for $\alpha\in\Phi^+$ with
$\mathrm{ht}(\alpha)\ge m$. Then

\begin{enumerate}
\item[(a)] $U_{\ge m}$ is a closed, normal, $T$-stable subgroup scheme of $U$; moreover
$U_{\ge 1}=U$, and $U_{\ge m}=1$ for $m$ larger than the maximal height among $\Phi^+$.
\item[(b)] For each $m$, the quotient $U_{\ge m}/U_{\ge m+1}$ is a commutative (vector)
group scheme, and there is a canonical $T$-equivariant isomorphism of group schemes
\[
U_{\ge m}/U_{\ge m+1}\ \cong\ \prod_{\substack{\alpha\in\Phi^+\\ \mathrm{ht}(\alpha)=m}} U_\alpha,
\]
where each $U_\alpha\simeq \mathbb G_a$ and $T$ acts on $U_\alpha$ via the character $\alpha$.
\end{enumerate}
\end{lemma}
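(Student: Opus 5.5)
The plan is to reduce everything to the standard structure theory of split unipotent groups normalized by a split torus, namely the commutator formula for root subgroups. Concretely, I will use three facts. First, multiplication gives an isomorphism of schemes
\[
\prod_{\alpha\in\Phi^+}U_\alpha\xrightarrow{\ \sim\ }U
\]
for any fixed ordering of $\Phi^+$. Second, for any closed subset $\Psi\subset\Phi^+$ (i.e.\ $\alpha,\beta\in\Psi$, $\alpha+\beta\in\Phi$ implies $\alpha+\beta\in\Psi$), the product $\prod_{\alpha\in\Psi}U_\alpha$ is a closed subgroup scheme $U_\Psi$, independent of the ordering. Third, Chevalley's commutator formula:
\[
[u_\alpha(a),u_\beta(b)]=\prod_{i,j>0,\ i\alpha+j\beta\in\Phi}u_{i\alpha+j\beta}\bigl(c_{ij}a^ib^j\bigr).
\]

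Set $\Psi_m:=\{\alpha\in\Phi^+:\mathrm{ht}(\alpha)\ge m\}$. Height is additive, and roots with positive coefficients sum to roots of larger height, so $\Psi_m$ is closed. Hence $U_{\Psi_m}$ is a closed subgroup containing every $U_\alpha$ with $\mathrm{ht}\alpha\ge m$. It is therefore contained in, and so equal to, the subgroup these generate, namely $U_{\ge m}$.

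For part (a), $T$-stability is immediate, since $t\,u_\alpha(a)\,t^{-1}=u_\alpha(\alpha(t)a)$. For normality in $U$, it suffices to conjugate a generator $u_\beta(b)$ with $\mathrm{ht}\beta\ge m$ by a generator $u_\alpha(a)$ with $\alpha\in\Phi^+$. By the commutator formula, the correction term lies in the product of the $U_{i\alpha+j\beta}$ with $j\ge1$, and each such root has height greater than $\mathrm{ht}\beta\ge m$, so the conjugate stays in $U_{\ge m}$. The boundary cases are clear: $\Psi_1=\Phi^+$ gives $U_{\ge1}=U$, and $\Psi_m=\varnothing$ for large $m$ gives $U_{\ge m}=1$.

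For part (b), the same commutator formula shows $[U_{\ge m},U_{\ge m}]\subset U_{\ge 2m}\subset U_{\ge m+1}$, since $m\ge 1$. So the quotient $U_{\ge m}/U_{\ge m+1}$ is commutative. Order the roots of $\Psi_m$ with the height-$m$ roots first. The product isomorphism $U_{\Psi_m}\cong\prod_{\mathrm{ht}=m}U_\alpha\times U_{\Psi_{m+1}}$ then shows that the composite
\[
\prod_{\mathrm{ht}\alpha=m}U_\alpha\to U_{\ge m}\to U_{\ge m}/U_{\ge m+1}
\]
is an isomorphism of schemes. It is a homomorphism because the quotient is commutative and each $U_\alpha$ maps homomorphically, and it is $T$-equivariant with $T$ acting on $U_\alpha\simeq\GG_a$ through $\alpha$.

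The only delicate point is quoting the structure results correctly over the base $k$ rather than an algebraically closed field, including that the fppf quotient exists as a scheme. For a split reductive group this is standard (e.g.\ SGA3 Exp.~XXII, or Conrad's notes on reductive group schemes), and I would cite it rather than reprove it.
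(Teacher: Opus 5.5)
Your proposal is correct and takes the same approach as the paper. The paper's proof just cites the Chevalley commutator formula (SGA3, Exp.~XXII, Cor.~5.5.4) to note that commutators strictly raise height, and calls the rest standard. You supply the details it omits: that $\Psi_m$ is closed, the product decomposition $U_{\Psi_m}\cong\prod_{\mathrm{ht}\alpha=m}U_\alpha\times U_{\Psi_{m+1}}$, and the argument that the quotient is the commutative product of the height-$m$ root groups.
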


\begin{proof}
 This is the standard height filtration on the unipotent radical.
By the Chevalley commutator/conjugation formula \cite[\S Exp.~XXII, Cor.~5.5.4]{SGA3III},
we have $[U_\alpha,U_\beta]\subset\langle U_{i\alpha+j\beta}\;:\; i,j\ge 1\rangle$,
hence commutators strictly raise height. The conclusions follow.

\end{proof}

Recall that $D=D_1+D_2$
is a decomposition into effective divisors satisfying (\ref{eq:d1_d2_condition}).
\begin{theorem}[Generalization of Theorem 4.9]\label{thm:deep_cusp_reductive}
Let \(P_G\) be a principal \(G\)-bundle on \(X\) which is \(D\)-cusp, and let
\[
\varphi_{D_2}:P_G|_{D_2}\xrightarrow{\sim}G\times D_2
\]
be a \(D_2\)-level structure. If \(D_2=0\), this means that no retained level structure is fixed. Then:

\begin{enumerate}
\item The canonical Harder--Narasimhan \(B\)-reduction \(P_B\) is induced from
\[
P_T:=P_B/U,
\]
i.e.
\[
P_B\simeq P_T\times^T B.
\]

\item Fix a \(B\)-compatible trivialization
\[
\psi:P_G|_{D_1}\xrightarrow{\sim}G\times D_1.
\]
Under the induced identification
\[
\Aut(P_G|_{D_1})\simeq G(O_{D_1}),
\]
the image of the restriction homomorphism
\[
\operatorname{res}_{D_1}:
\Aut(P_G,\varphi_{D_2})
\longrightarrow
\Aut(P_G|_{D_1})\simeq G(O_{D_1})
\]
is exactly
\[
\begin{cases}
T(k)\ltimes U(O_{D_1}), & D_2=0,\\
U(O_{D_1}), & D_2\neq 0.
\end{cases}
\]
\end{enumerate}
\end{theorem}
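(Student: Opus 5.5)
Part (1) will be proved by killing, one step at a time, the obstruction to splitting $P_B$, working down the height filtration of Lemma~\ref{lem:height_filtration_U}. A $B$-bundle $P_B$ with $P_B/U=P_T$ comes from $P_T$ exactly when $P_B/U_{\ge 1}$ lifts compatibly through all the quotients $B/U_{\ge m}$. Suppose we already have a reduction of $P_B/U_{\ge m+1}$ to $T\ltimes(U/U_{\ge m})$ that is induced from $P_T$. The next lifting step is a torsor under the twisted vector group
$$P_T\times^T (U_{\ge m}/U_{\ge m+1})\cong\bigoplus_{\mathrm{ht}(\alpha)=m}\cL_\alpha,$$
using part (b) of the lemma. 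The obstruction to splitting this step, i.e.\ to finding a section over $T$, lies in $\bigoplus_{\mathrm{ht}\alpha=m}H^1(X,\cL_\alpha)$. This is the same argument as Proposition~\ref{p:structure_of_very_unstable_bundles}, now written for nonabelian extensions.

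Each such $H^1$ vanishes. By Serre duality $H^1(X,\cL_\alpha)\simeq H^0(X,\cL_\alpha^\vee\otimes\omega_X)^*$, and this line bundle has degree $2g-2-\deg\cL_\alpha<-\deg D\le 0$ by the $D$-cusp hypothesis. Inducting over $m$ from the top height downward then gives $P_B\simeq P_T\times^TB$. The same vanishing also shows that $H^1(X,\cL_\alpha(-D))=0$, which part (2) needs.

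For part (2), with $P_G=P_T\times^TG$, I first describe $\Aut(P_G)$. Every automorphism preserves the canonical HN reduction $P_B$, by uniqueness in Theorem~\ref{HN reduction}, so $\Aut(P_G)=\Aut(P_B)$. Since $P_B$ is induced from $P_T$, $\Aut(P_B)$ consists of the global sections of the group scheme $P_T\times^TB$ with $T$ acting by conjugation. Its torus part is $T(k)$, because the twist of $T$ by conjugation is trivial and $X$ is proper and connected. Its unipotent part is filtered, through the height filtration, with graded pieces $H^0(X,\cL_\alpha)$.

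Next I restrict to $D$, using the $B$-compatible trivialization. The image lies in $T(k)\ltimes U(\cO_D)$, and I claim it is all of it. Surjectivity onto $U(\cO_D)$ goes by induction on height. At each graded step the map $H^0(\cL_\alpha)\to H^0(\cL_\alpha|_D)\simeq\cO_D$ is onto, since $H^1(\cL_\alpha(-D))=0$, exactly as in Theorem~\ref{t:gln_image_of_aut_group}. Given a target $u\in U(\cO_D)$, we correct it layer by layer, multiplying by global sections whose restrictions match modulo $U_{\ge m+1}(\cO_D)$.

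This layered lifting is the main obstacle. One must check that the twisted $U_{\ge m}/U_{\ge m+1}$-pieces behave like vector bundles globally, so that the nonabelian lifting at each stage really reduces to the surjectivity of $H^0\to H^0(\,\cdot\,|_D)$ for a vector bundle. I would handle this by noting that $H^1$ of each graded piece vanishes, so the map on global sections of the successive quotient groups $P_T\times^T(U/U_{\ge m})$ is surjective by the exact sequences of pointed sets.

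Finally, $\Aut(P_G,\varphi_{D_2})$ is the preimage of $G(\cO_{D_1})\times\{\mathrm{id}\}$, using $\cO_D=\cO_{D_1}\times\cO_{D_2}$. So its image is $(T(k)\ltimes U(\cO_D))\cap(G(\cO_{D_1})\times 1)$. If $D_2=0$ this is $T(k)\ltimes U(\cO_{D_1})$. If $D_2\ne0$, the constant $t\in T(k)$ must be trivial on $D_2$, which forces $t=1$, and the image is $U(\cO_{D_1})$. This uses that $U(\cO_D)=U(\cO_{D_1})\times U(\cO_{D_2})$, so every unipotent element restricting to $1$ on $D_2$ is realized.
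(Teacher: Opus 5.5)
Your proposal is correct and follows the paper's proof. Both use the same height-filtration dévissage:
\begin{itemize}
\item in part (1), vanishing of $H^1(X,\mathcal{L}_\alpha)$ kills the nonabelian $H^1$ classifying $B$-bundles over $P_T$;
\item in part (2), vanishing of $H^1(X,\mathcal{L}_\alpha(-D))$, together with the exact sequences of pointed sets, gives surjectivity of $H^0(X,U_{P_T})\to U(\mathcal{O}_D)$, and the final intersection with $G(\mathcal{O}_{D_1})\times\{1\}$ is the same.
\end{itemize}
The only difference is bookkeeping: you run the induction upward through the quotients $U/U_{\ge m}$, while the paper runs it downward through the subgroups $U_{\ge m}$. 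Your wording of the inductive step in part (1) should be fixed to match this. Replace ``a reduction of $P_B/U_{\ge m+1}$ to $T\ltimes(U/U_{\ge m})$'' and ``from the top height downward'' by: the pushforward of $P_B$ to $B/U_{\ge m}$ is induced from $P_T$, with $m$ increasing.
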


\begin{proof}
Because the Harder--Narasimhan reduction $\mathcal P_B$ is canonical and unique, every automorphism of
$\mathcal P$ preserves it. Hence restriction induces an identification
\[
\mathrm{Aut}(\mathcal P)\ =\ \mathrm{Aut}(\mathcal P_B).
\]

\smallskip

Proof of (i). Consider the exact sequence of group schemes
\[
1 \to U \to B \to T \to 1
\]
and let
\[
P_T := P_B/U.
\]
After fixing \(P_T\), isomorphism classes of \(B\)-bundles inducing \(P_T\) are classified by
the pointed set
\[
H^1_{\mathrm{fppf}}(X,U_{P_T}),
\qquad
U_{P_T}:=P_T\times^T U;
\]
see \cite[III.2.4.2]{GiraudCNA}. The given reduction \(P_B\) therefore determines a class in
\(H^1_{\mathrm{fppf}}(X,U_{P_T})\), so it suffices to prove
\[
H^1_{\mathrm{fppf}}(X,U_{P_T})=\{\ast\}.
\]

Let $U_{\ge m}$ be the height filtration from Lemma \ref{lem:height_filtration_U}, and put
$\cU_m := (U_{\ge m})_{P_T} = P_T \times^T U_{\ge m}$ and $\cQ_m := U_m/U_{m+1}$. Then Lemma~\ref{lem:height_filtration_U}(b) gives
\[
\cQ_m \cong \prod_{\substack{\alpha\in \Phi^+\\ \mathrm{ht}(\alpha)=m}} \mathbb G_a(\cL_\alpha),
\qquad
\cL_\alpha:=P_T\times^T k_\alpha.
\]
Since \(P\) is \(D\)-cusp, we have \(\deg(\cL_\alpha)>2g-2\) for every \(\alpha\in\Phi^+\). Hence
\[
H^1(X,\cL_\alpha)=0
\]
by Serre duality, and therefore
\[
H^1_{\mathrm{fppf}}(X,\cQ_m)=0
\]
for all \(m\).

Now apply the exact sequence of pointed sets associated to
\[
1\to \cU_{m+1}\to \cU_m\to \cQ_m\to 1,
\]
see \cite[III.3.3.1]{GiraudCNA}:
\[
H^1_{\mathrm{fppf}}(X,\cU_{m+1}) \longrightarrow
H^1_{\mathrm{fppf}}(X,\cU_m) \longrightarrow
H^1_{\mathrm{fppf}}(X,\cQ_m).
\]

Since the last pointed set is trivial, every class in
\(H^1_{\mathrm{fppf}}(X,\cU_m)\) comes from a class in
\(H^1_{\mathrm{fppf}}(X,\cU_{m+1})\). For \(m\gg 0\), we have \(\cU_m=1\), hence
\[
H^1_{\mathrm{fppf}}(X,\cU_m)=\{\ast\}.
\]
Descending induction on \(m\) yields
\[
H^1_{\mathrm{fppf}}(X,\cU_1)=H^1_{\mathrm{fppf}}(X,\cU)=\{\ast\}.
\]
Therefore the class of \(\cP_B\) is trivial, and
\[
\cP_B \cong \cP_T\times^T B.
\]

\smallskip
\noindent\emph{Proof of (ii).}
Extend $\psi$ to a $B$-compatible trivialization $\tilde\psi$ over $D$. We will first prove that the image of $\Aut(\cP_G)$ in $\Aut(\cP_G|_D)$ is precisely $T(k)\ltimes U(\cO_D)$.

From $(i)$, we identify $\mathcal P_B\simeq \mathcal P_T\times^T B$. Since the HN reduction is unique, any automorphism of $\cP_G$ fixes $\cP_B$, which implies that $\Aut(\cP_G)\simeq \Aut(\cP_B)$.
The group $\Aut(\cP_B)$ is the group of global sections of the adjoint group scheme
\[
\mathrm{Aut}(\mathcal P_B)=H^0\bigl(X,\mathcal P_B\times^{B,\text{ad}} B\bigr)=H^0\bigl(X,\mathcal P_T\times^{T,\text{ad}} B\bigr).
\]
Since $B=T\ltimes U$ and $T$ is abelian, conjugation of $T$ on itself is trivial, hence
$\cP_T\times^{T,\text{ad}} T \cong T\times X$.
Since $X$ is a smooth projective curve and $T$ is affine,
we get $\Hom(X,T)=T(H^0(X,\mathcal O_X))=T(k)$.
Consequently,
\[
\mathrm{Aut}(\cP_B)\cong T(k)\ltimes H^0(X,U_{\cP_T}).
\]

Thus it remains to compute the image of the restriction map
\[
H^0(X,U_{\mathcal P_T})\ \longrightarrow\ H^0(D,(U_{\mathcal P_T})|_D)\ \cong\ U(\mathcal O_D),
\]
where the last identification uses the chosen $B$-compatible trivialization on $D$.

We prove that this restriction map is surjective using the height filtration.
With $\mathcal U_m$ and $\mathcal Q_m$ as above, for each $m$ we have an exact sequence of sheaves of groups
\[
1\to \mathcal U_{m+1}\to \mathcal U_m\to \mathcal Q_m\to 1.
\tag{$\dagger_m$}
\]

 We claim, by descending induction on \(m\), that the restriction map
\[
H^0(X,\cU_m)\longrightarrow H^0(D,\cU_m|_D)
\]
is surjective. For \(m\gg 0\), we have \(\cU_m=1\), so there is nothing to prove.

Assume surjectivity holds for \(m+1\). For each root \(\alpha\) with \(\operatorname{ht}(\alpha)=m\), the \(D\)-cusp condition gives
\[
\deg(\cL_\alpha(-D))>2g-2,
\]
hence
\[
H^1(X,\cL_\alpha(-D))=0
\]
by Serre duality. Therefore the restriction map
\[
H^0(X,\cQ_m)\longrightarrow H^0(D,\cQ_m|_D)
\]
is surjective.

Now consider the commutative diagram
\[
\begin{tikzcd}
1 \arrow[r] & H^0(X,\cU_{m+1}) \arrow[r] \arrow[d] & H^0(X,\cU_m) \arrow[r] \arrow[d] & H^0(X,\cQ_m) \arrow[d] \\
1 \arrow[r] & H^0(D,\cU_{m+1}|_D) \arrow[r] & H^0(D,\cU_m|_D) \arrow[r] & H^0(D,\cQ_m|_D).
\end{tikzcd}
\]

The left vertical map is surjective by the induction hypothesis, and the right
vertical map is surjective by the previous paragraph. We now use exactness of
the associated sequence of pointed sets. Let \(s_D\in H^0(D,U_m|_D)\), and let
\(\bar s_D\) be its image in \(H^0(D,Q_m|_D)\). Choose a lift
\(\bar s\in H^0(X,Q_m)\) of \(\bar s_D\). Since
\[
H^1_{\mathrm{fppf}}(X,U_{m+1})=\{*\},
\]
the section \(\bar s\) lifts to some \(s\in H^0(X,U_m)\). Then \(s|_D\) and
\(s_D\) have the same image in \(H^0(D,Q_m|_D)\), so
\[
s|_D^{-1}s_D\in H^0(D,U_{m+1}|_D).
\]
By the induction hypothesis, \(s|_D^{-1}s_D\) lifts to an element
\(u\in H^0(X,U_{m+1})\). Therefore \(su\in H^0(X,U_m)\) restricts to \(s_D\).
This proves the desired surjectivity.\\
 In particular, for \(m=1\) we obtain
\[
H^0(X,\cU) = H^0(X,\cU_1)\twoheadrightarrow H^0(D,\cU_1|_D)=U(\cO_D).
\]On the torus part, the restriction map identifies the image of
\[
\operatorname{Aut}(P_T)=T(k)
\]
inside \(T(\cO_D)\) with \(T(k)\). Hence
\[
\operatorname{Im}\bigl(\operatorname{\Aut}(P_G)\to G(\cO_D)\bigr)
= T(k)\ltimes U(\cO_D).
\]
Now \(\Aut(P_G,\varphi_{D_2})\) is the preimage in \(\Aut(P_G)\) of
\[
G(O_{D_1})\times\{\operatorname{id}\}
\subset
G(O_{D_1})\times G(O_{D_2})=G(O_D).
\]
Hence its image in \(G(O_{D_1})\) is the projection to \(G(O_{D_1})\) of
\[
\bigl(T(k)\ltimes U(O_D)\bigr)
\cap
\bigl(G(O_{D_1})\times\{\operatorname{id}\}\bigr).
\]
If \(D_2=0\), this image is \(T(k)\ltimes U(O_{D_1})\). If \(D_2\neq 0\), then a constant torus element
\(t\in T(k)\) whose restriction to \(D_2\) is the identity must be \(t=1\). Therefore the image is
\(U(O_{D_1})\). This proves (ii).
\end{proof}

\begin{corollary}[Generalization of Corollary \ref{cor:gln_preimage_size}]\label{cor:deep_cusp_fiber_size_reductive}
Let \(k\) be a finite field. Let \(P_G\) be a \(D\)-cusp \(G\)-bundle, and let
\[
p_{D,D_2}:Bun_{G,D}(k)\longrightarrow Bun_{G,D_2}(k)
\]
be the forgetful map. Then \(p_{D,D_2}^{-1}(P_G,\varphi_{D_2})\) is finite and
\[
\#\,p_{D,D_2}^{-1}(P_G,\varphi_{D_2})
=
|T(O_{D_1})|/|T(k)|^{\delta_{D_2,0}}
\cdot
\bigl|(G/B)(O_{D_1})\bigr|.
\]
\end{corollary}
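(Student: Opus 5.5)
The plan is to identify the fiber $p_{D,D_2}^{-1}(P_G,\varphi_{D_2})$ with a double-coset set and then count it using Theorem~\ref{thm:deep_cusp_reductive}(ii).

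First, set up the fiber as an orbit set. A point of the fiber is a $D$-level structure $\psi_D$ on $P_G$ whose restriction to $D_2$ is $\varphi_{D_2}$, taken up to isomorphism. Since $\supp D_1\cap\supp D_2=\varnothing$, we have $\cO_D=\cO_{D_1}\times\cO_{D_2}$, so such a $\psi_D$ is the same as the pair $(\psi_{D_1},\varphi_{D_2})$ for an arbitrary trivialization $\psi_{D_1}$ of $P_G|_{D_1}$. Two such pairs define the same point of $\Bun_{G,D}(k)$ exactly when some automorphism of $P_G$ carries one to the other. That automorphism then fixes $\varphi_{D_2}$, so it lies in $\Aut(P_G,\varphi_{D_2})$. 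Fix one $B$-compatible trivialization $\psi$ over $D_1$. Then trivializations over $D_1$ form a $G(\cO_{D_1})$-torsor, identified with $G(\cO_{D_1})$ via $\psi$. Under this identification, $\Aut(P_G,\varphi_{D_2})$ acts through its restriction image $\operatorname{res}_{D_1}$, by multiplication on the side determined by composing $\psi_{D_1}\circ\alpha^{-1}$. Hence the fiber is in bijection with
\[
H\backslash G(\cO_{D_1}),\qquad H:=\operatorname{Im}\bigl(\operatorname{res}_{D_1}\bigr)
\]
(or with the mirror-image quotient; the count is the same). In particular the fiber is finite, because $G(\cO_{D_1})$ is finite.

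Next, count. By Theorem~\ref{thm:deep_cusp_reductive}(ii), $H=T(k)\ltimes U(\cO_{D_1})$ if $D_2=0$, and $H=U(\cO_{D_1})$ otherwise. The group $H$ acts freely, so
\[
\#\text{fiber}=\frac{|G(\cO_{D_1})|}{|H|}=\frac{|G(\cO_{D_1})|}{|U(\cO_{D_1})|\cdot|T(k)|^{\delta_{D_2,0}}} .
\]
This uses $T(k)\cap U(\cO_{D_1})=1$, so that $|T(k)\ltimes U(\cO_{D_1})|=|T(k)|\,|U(\cO_{D_1})|$. Writing $|G(\cO_{D_1})|=|(G/B)(\cO_{D_1})|\cdot|B(\cO_{D_1})|$ and $|B(\cO_{D_1})|=|T(\cO_{D_1})|\cdot|U(\cO_{D_1})|$ then gives exactly the stated formula.

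The main obstacle is justifying $|G(\cO_{D_1})|=|(G/B)(\cO_{D_1})|\cdot|B(\cO_{D_1})|$, that is, showing $G(\cO_{D_1})\to(G/B)(\cO_{D_1})$ is surjective with fibers the $B(\cO_{D_1})$-cosets. The ring $\cO_{D_1}$ is a finite product of local Artinian rings, so we may work one factor $\cO_y/\pi_y^{d_y}$ at a time. For surjectivity I would use that $G\to G/B$ is a $B$-torsor, which is Zariski-locally trivial (big cell translates $wU^-B$ cover $G/B$). Any $\cO_y/\pi_y^{d_y}$-point of $G/B$ lands in one such open, since the ring is local, and therefore lifts to $G$. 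Alternatively, surjectivity follows from $H^1(\Spec R,B)=1$ for $R$ local: $B$ is split solvable, so this reduces to the vanishing of $H^1$ of $\GG_m$ and $\GG_a$ over a local ring. Injectivity modulo $B(\cO_{D_1})$ is just the torsor property. Finally, for completeness, the action of $H$ is free because it is left multiplication inside a group.
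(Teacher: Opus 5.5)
Your proposal is correct and is essentially the paper's proof: identify the fiber with $G(\cO_{D_1})$ modulo the restriction image computed in Theorem~\ref{thm:deep_cusp_reductive}(ii), then count by factoring through $B(\cO_{D_1})=T(\cO_{D_1})\ltimes U(\cO_{D_1})$ and the bijection $G(\cO_{D_1})/B(\cO_{D_1})\simeq (G/B)(\cO_{D_1})$. The paper proves that bijection via $H^1_{\mathrm{fppf}}(\operatorname{Spec}\cO_{D_1},B)=1$, which is your second option; your big-cell Zariski-local-triviality argument works equally well. Like the paper, you tacitly need $D_1\neq 0$ when $D_2=0$, so that $T(k)$ embeds in $T(\cO_{D_1})$; for $D=0$ the stated formula would give $1/|T(k)|$.
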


\begin{proof}
Fix one trivialization \(\tau\) of \(\mathcal P_G|_{D_1}\). Then the set of all trivializations of
\(\mathcal P_G|_{D_1}\) is a right torsor under \(G(\mathcal O_{D_1})\), and two trivializations
define isomorphic objects in \(\mathrm{Bun}_{G,D}(k)\) over the same
\((\mathcal P_G,\phi_{D_2})\) if and only if they differ by the restriction of an automorphism of
\((\mathcal P_G,\phi_{D_2})\). Hence, after choosing \(\tau\), we have an identification of sets
\[
p_{D,D_2}^{-1}(\mathcal P_G,\phi_{D_2})
\cong
G(\mathcal O_{D_1})
/
\mathrm{Im}\Bigl(
\mathrm{Aut}(\mathcal P_G,\phi_{D_2})
\to
\mathrm{Aut}(\mathcal P_G|_{D_1})
\Bigr).
\]
By Theorem~\ref{thm:deep_cusp_reductive}(ii), after replacing \(\tau\) by a
\(G(\mathcal O_{D_1})\)-translate, which does not change the cardinality of the orbit set, the image
subgroup is
\[
T(k)\ltimes U(\mathcal O_{D_1})
\quad\text{if }D_2=0,
\]
and
\[
U(\mathcal O_{D_1})
\quad\text{if }D_2\neq 0.
\]
Therefore
\[
\#\,p_{D,D_2}^{-1}(\mathcal P_G,\phi_{D_2})
=
\begin{cases}
\bigl|G(\mathcal O_{D_1})/(T(k)\ltimes U(\mathcal O_{D_1}))\bigr|, & D_2=0,\\[4pt]
\bigl|G(\mathcal O_{D_1})/U(\mathcal O_{D_1})\bigr|, & D_2\neq 0.
\end{cases}
\]

We now compute these two cardinalities. Since
\[
B(\mathcal O_{D_1})
=
T(\mathcal O_{D_1})\ltimes U(\mathcal O_{D_1}),
\]
and all groups involved are finite, we have
\[
\bigl|G(\mathcal O_{D_1})/(T(k)\ltimes U(\mathcal O_{D_1}))\bigr|
=
\bigl|G(\mathcal O_{D_1})/B(\mathcal O_{D_1})\bigr|
\cdot
\bigl|B(\mathcal O_{D_1})/(T(k)\ltimes U(\mathcal O_{D_1}))\bigr|.
\]
The second factor is
\[
\bigl|T(\mathcal O_{D_1})/T(k)\bigr|.
\]
Similarly,
\[
\bigl|G(\mathcal O_{D_1})/U(\mathcal O_{D_1})\bigr|
=
\bigl|G(\mathcal O_{D_1})/B(\mathcal O_{D_1})\bigr|
\cdot
\bigl|B(\mathcal O_{D_1})/U(\mathcal O_{D_1})\bigr|,
\]
and the second factor is
\[
\bigl|T(\mathcal O_{D_1})\bigr|.
\]

It remains to identify the first factor. Since \(\mathcal O_{D_1}\) is a finite product of Artin
local rings, every line bundle on \(\operatorname{Spec}(\mathcal O_{D_1})\) is trivial. Since \(T\) is
split, this gives
\[
H^1_{\mathrm{fppf}}(\operatorname{Spec}(\mathcal O_{D_1}),T)=1.
\]
Moreover, \(\operatorname{Spec}(\mathcal O_{D_1})\) is affine, so
\[
H^1(\operatorname{Spec}(\mathcal O_{D_1}),\mathbb G_a)
=
H^1(\operatorname{Spec}(\mathcal O_{D_1}),\mathcal O_{\operatorname{Spec}(\mathcal O_{D_1})})
=
0.
\]
Since the split unipotent group \(U\) admits a filtration by normal subgroup schemes whose
successive quotients are isomorphic to \(\mathbb G_a\), descending induction on this filtration gives
\[
H^1_{\mathrm{fppf}}(\operatorname{Spec}(\mathcal O_{D_1}),U)=1.
\]
From the exact sequence
\[
1\to U\to B\to T\to 1
\]
it follows that
\[
H^1_{\mathrm{fppf}}(\operatorname{Spec}(\mathcal O_{D_1}),B)=1.
\]
Therefore every \(B\)-torsor on \(\operatorname{Spec}(\mathcal O_{D_1})\) is trivial. Applying this to
the pullback of the \(B\)-torsor \(G\to G/B\) along an \(\mathcal O_{D_1}\)-point of \(G/B\), we obtain
that the natural map
\[
G(\mathcal O_{D_1})/B(\mathcal O_{D_1})
\longrightarrow
(G/B)(\mathcal O_{D_1})
\]
is bijective. Hence
\[
\bigl|G(\mathcal O_{D_1})/B(\mathcal O_{D_1})\bigr|
=
\bigl|(G/B)(\mathcal O_{D_1})\bigr|.
\]

Combining the above equalities gives
\[
\#\,p_{D,D_2}^{-1}(\mathcal P_G,\phi_{D_2})
=
\frac{|T(\mathcal O_{D_1})|}{|T(k)|^{\delta_{D_2,0}}}
\cdot
\bigl|(G/B)(\mathcal O_{D_1})\bigr|.
\]
This proves the corollary.
\end{proof}

\begin{remark}
When $G=\mathrm{GL}_n$, $B$ is the upper triangular Borel, $T$ the diagonal torus, and $\Phi^+$ consists of
$\alpha_{ij}=e_i-e_j$ for $i<j$. If $\mathcal P$ corresponds to a vector bundle
$E\simeq \bigoplus_{i=1}^n \cL_i$ with degree gaps $\deg(\cL_i)-\deg(\cL_{i+1})>2g-2+\deg D$, then
$\cL_{\alpha_{ij}}\simeq \cL_i\otimes \cL_j^{-1}$ and
$\deg(\cL_i)-\deg(\cL_j)>2g-2+\deg D$ for all $i<j$.
In that case Theorem~\ref{thm:deep_cusp_reductive} and Corollary~\ref{cor:deep_cusp_fiber_size_reductive}
specialize to Theorem~\ref{t:gln_image_of_aut_group} and Corollary~\ref{cor:gln_preimage_size} in the $\mathrm{GL}_n$ case.
\end{remark}


\begin{definition}[Relative position at a point]\label{def:relpos}
Let $x\in |X|$ be a closed point. Let $\cP_T$ and $\cP'_T$ be principal $T$-bundles on $X$, and let
\[
\phi:\cP_T|_{X\setminus\{x\}}\xrightarrow{\sim} \cP'_T|_{X\setminus\{x\}}
\]
be an isomorphism. For $\chi\in X^*(T)$, denote by $k_\chi$ the corresponding $1$-dimensional representation of $T$ and set
\[
\cL_\chi:=\cP_T\times^T k_\chi,\qquad \cL'_\chi:=\cP'_T\times^T k_\chi.
\]
We say that $\phi$ has {\bfseries relative position} $\nu\in X_*(T)$ at $x$ if for every $\chi\in X^*(T)$
the induced isomorphism $\cL_\chi|_{X\setminus\{x\}}\simeq \cL'_\chi|_{X\setminus\{x\}}$ extends across $x$
to an isomorphism of line bundles on $X$
\[
\cL'_\chi \ \simeq\ \cL_\chi\bigl(-\langle \chi,\nu\rangle\cdot x\bigr).
\]
Equivalently, after choosing trivializations of $\cP_T$ and $\cP'_T$ over $\Spec(\mathcal O_x)$,
the transition element of $\phi$ lies in the coset
$T(\mathcal O_x)\cdot \pi_x^\nu \cdot T(\mathcal O_x)\subset T(F_x)$.
\end{definition}

\noindent\textit{Standing notation for Proposition~\ref{prop:mu-mod-preserves-BHN}.}
Let $\cP$ be a principal $G$-bundle in the  cusp locus so that its canonical HN $B$-reduction
$\cP_B$ is induced from $\cP_T:=\cP_B/U$ (by Theorem~\ref{thm:deep_cusp_reductive} (i)).
Given $f:\cP|_{X\setminus\{x\}}\xrightarrow{\sim}\cP^1|_{X\setminus\{x\}}$, let $\cP_B^1$ be the
$B$-reduction of $\cP^1$ obtained by transporting the section $X\setminus\{x\}\to \cP/B$
corresponding to $\cP_B$ along $f$ and extending across $x$ (uniquely, since $G/B$ is proper),
and set $\cP_T^1:=\cP_B^1/U$.

\smallskip
\begin{proposition}[Split reductive $G$ version of Proposition~\ref{p:cusp_connected_to_cusp}]\label{prop:mu-mod-preserves-BHN}
Assume the standing notation above. Let $\nu\in X_*(T)$ be the relative position at $x$
of the induced identification $\cP_T|_{X\setminus\{x\}}\simeq \cP_T^1|_{X\setminus\{x\}}$
(in the sense of Definition~\ref{def:relpos}). Assume that for every $\alpha\in\Phi^+$,
the root line bundle $\cL_\alpha:=\cP_T\times^T k_\alpha$ satisfies
\[
\deg(\cL_\alpha)>\langle \alpha,\nu\rangle\deg(x). \tag{$\dagger$}
\]
Then $\cP_B^1$ is the canonical Harder--Narasimhan $B$-reduction of $\cP^1$.
\end{proposition}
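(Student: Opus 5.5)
The plan is to invoke Corollary~\ref{cor:Bcase-strictly-dominant} for the $B$-reduction $\cP_B^1$ of $\cP^1$. That corollary says a $B$-reduction whose root line bundles all have positive degree is the canonical Harder--Narasimhan reduction. So it suffices to show
\[
\deg\bigl(\cP^1_T\times^T k_\alpha\bigr)>0\qquad\text{for all }\alpha\in\Phi^+ .
\]
The proof thus reduces to computing the root line bundles $\cL^1_\alpha:=\cP^1_T\times^T k_\alpha$ in terms of $\cL_\alpha$ and $\nu$.

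\textbf{Step 1: the relative position of $\cP_T^1$.} By construction, $\cP_B^1|_{X\setminus\{x\}}$ is the transport of $\cP_B|_{X\setminus\{x\}}$ along $f$. Passing to quotients by $U$ gives an isomorphism $\cP_T|_{X\setminus\{x\}}\simeq\cP_T^1|_{X\setminus\{x\}}$, whose relative position at $x$ is $\nu$ by hypothesis. One should check that this relative position is well defined. Over $\Spec\cO_x$, choose trivializations of $\cP_B$ and $\cP_B^1$, which exist because $B$-torsors over a complete local ring with finite residue field are trivial (as in Corollary~\ref{cor:deep_cusp_fiber_size_reductive}). The transition element then lies in $B(F_x)$. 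Its image in $T(F_x)$ is well defined up to $T(\cO_x)$ on both sides, so it lies in a single coset $T(\cO_x)\pi_x^{\nu}T(\cO_x)$.

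\textbf{Step 2: the line bundles.} Definition~\ref{def:relpos} applied with $\chi=\alpha$ gives $\cL^1_\alpha\simeq\cL_\alpha(-\langle\alpha,\nu\rangle x)$. Hence
\[
\deg\cL^1_\alpha=\deg\cL_\alpha-\langle\alpha,\nu\rangle\deg x>0,
\]
where the inequality is exactly hypothesis $(\dagger)$. Corollary~\ref{cor:Bcase-strictly-dominant} then shows that $\cP^1_B$ is the canonical HN reduction of $\cP^1$, with HN parabolic $B$.

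\textbf{Main obstacle.} The only delicate point is Step 1, namely the compatibility between extending the $B$-reduction across $x$ (via properness of $G/B$) and the local transition element. We need the extended reduction to correspond to the $B(F_x)$-valued transition element described above, so that its torus image is indeed $\pi_x^\nu$ up to units. I would verify this locally. The section of $\cP^1/B$ over $\Spec\cO_x$ is the unique extension of the generic section, and it coincides with the one produced by the Iwasawa decomposition $G(F_x)=B(F_x)G(\cO_x)$. Everything else is a direct degree computation.
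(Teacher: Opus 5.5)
Your proposal is correct and follows essentially the same route as the paper. You read off $\cL^1_\alpha\simeq\cL_\alpha(-\langle\alpha,\nu\rangle x)$ from the definition of relative position, deduce positivity of all root degrees from $(\dagger)$, and conclude by HN uniqueness. The paper cites Theorem~\ref{HN reduction} directly, which is what Corollary~\ref{cor:Bcase-strictly-dominant} packages. Your extra well-definedness check in Step~1 is harmless but not needed, since $\nu$ is given as a hypothesis.
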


\begin{proof}
\emph{Step 1: extension of the transported reduction.}
Let $s$ be the section of $\mathcal P/B$ defining $\mathcal P_B$. Transporting by $f$ gives
a section $s^\circ$ of $\mathcal P^1/B$ over $X\setminus\{x\}$.
Since $G/B$ is projective (hence proper), $s^\circ$ extends uniquely across $x$ by the valuative criterion
for properness, giving a global section $s'$ and hence a $B$-reduction $\widetilde{\mathcal P}^1_B$ of $\mathcal P^1$.

\emph{Step 2: computation of the induced $T$-bundle.}
Let $\widetilde{\mathcal P}^1_T:=\widetilde{\mathcal P}^1_B/U$.
By construction, $\mathcal P_T$ and $\widetilde{\mathcal P}^1_T$ are identified on $X\setminus\{x\}$ via $f$.
Therefore, for each $\chi\in X^*(T)$ the line bundles $\mathcal L_\chi$ and
$\widetilde{\mathcal L}^1_\chi:=\widetilde{\mathcal P}^1_T\times^T k_\chi$ are isomorphic on $X\setminus\{x\}$,
hence differ by a twist supported at $x$:
\[
\widetilde{\mathcal L}^1_\chi \ \simeq\ \mathcal L_\chi(-m_\chi\cdot x)
\qquad\text{for some }m_\chi\in\mathbb Z.
\]
By the definition of $\nu$ as the relative position at $x$ of the induced identification
\[
\cP_T|_{X\setminus\{x\}} \xrightarrow{\sim} \cP_T^1|_{X\setminus\{x\}},
\]
we have $m_\chi=\langle\chi,\nu\rangle$, hence
\[
\widetilde{\mathcal L}^1_\chi \ \simeq\ \mathcal L_\chi\bigl(-\langle\chi,\nu\rangle\cdot x\bigr)
\qquad\text{for all }\chi\in X^*(T).
\]

\emph{Step 3: $\widetilde{\mathcal P}^1_B$ is the HN reduction.}
For each positive root $\alpha$, we obtain
\[
\deg(\widetilde{\mathcal L}^1_\alpha)=\deg(\mathcal L_\alpha)-\langle\alpha,\nu\rangle\deg(x).
\]
By hypothesis \((\dagger)\) this degree is strictly positive for every $\alpha\in\Phi^+$,, so the degree datum of
$\widetilde{\mathcal P}^1_T$ lies in the interior of the dominant chamber corresponding to $B$.
Since the Levi of $B$ is $T$ (hence $\tilde{\cP^1_{B}}$ is semistable), Theorem~\ref{HN reduction}
implies that $\widetilde{\mathcal P}^1_B$ is the canonical Harder--Narasimhan reduction of $\mathcal P^1$.
This shows that $\widetilde{\mathcal P}^1_B$ is the canonical Harder--Narasimhan reduction of $\mathcal P^1$,
hence the Harder--Narasimhan parabolic of $\mathcal P^1$ is $B$.
\end{proof}

Now, we describe a generalization of the structure in Propositions \ref{p:cusp_connected_to_cusp} to \ref{p:cusp_connections_degrees}. Suppose that two $G$-bundles $\cP_G$ and $\cP_G'$ are connected by the Hecke correspondence $\Phi_x^\mu$ via 
$$
\tau:\cP_G|_{X\setminus x}\simeq \cP_G'|_{X\setminus x}.
$$
Choose a reduction $\cP_B$ of $\cP_G$ to $B$. It corresponds to a section $s$ of $(G/B)_{\cP_G}$. Transporting this section across $\tau$ and extending it uniquely to a section $s'$ of $(G/B)_{\cP_G'}$, we obtain a $B$-reduction $\cP_B'$ of $\cP_G'$. Denote the corresponding $T$-bundles by $\cP_T$ and $\cP_T'$.

We are interested in classifying edges $\tau$ in the Hecke graph by relative positions of $\cP_T$ and $\cP_{T'}$. Recall that $\tau$ is represented by a conjugacy class $\sigma\Delta^\mu K$ in $K\Delta^\mu K$ with $\sigma_y=\Delta_y^\mu=1$ for all $y\ne x$, where we denoted $G(\cO_x)$ by $K$. Arithmetically, reductions $\cP_B$ to $\cP_B'$ correspond to choosing elements $(g_y),(g_y')\in B(\AA)$ representing $\cP_G$ and $\cP_G'$ in $G(F)\backslash G(\AA)/K$. Then the compatibility of these reductions over $\tau$ means that $g_y=g_y'$ for all $y\ne x$ and $g_{x}'g_x^{-1}\in \sigma \Delta^\mu K_x\in G(F_x)/K_x$. The relative position is the reduction of this element in $U(F_x)\backslash G(F_x)/K_x$.

Based on this, we reason in the following way. Recall the Iwasawa decomposition
$$
G(F_x)/K_x=B(F_x)K_x/K_x=\bigsqcup_{\nu\in X_*(T)}U(F_x)\Delta_x^\nu K_x/K_x.
$$
Set $S_\nu:=U(F_x)\Delta^\nu K_x/K_x$ and $\mathrm{Gr}^\mu:=K_x\Delta_x^\mu K_x/K_x$. Then
$$
\mathrm{Gr}^\mu=\bigsqcup_{\nu\in X_*(T)}\mathrm{Gr}^\mu\cap S_\nu,
$$
and we just obtained that the bundles $\cP_G$ and $\cP_G'$ are in relative position $\nu$ if and only if $\sigma_x\Delta_x^\mu K_x\in \mathrm{Gr}^\mu\cap S_\nu$. Using \cite[Theorem 3.2, Remark 3.3]{MV07}, we see that this intersection is non-empty if and only if $\nu$ is a weight of the highest weight representation of the Langlands dual group $\check G$ of highest weight $\mu$. Equivalently, $\nu$ lies in the convex hull of the $W$-orbit of $\mu$ and $\mu-\nu$ lies in the coroot lattice of $G$. Therefore, we have the following result:
\begin{proposition}
    Fix $\cP_G$. The set of edges $\cP_G\to\cP_G'$ in $\Gamma^\mu_x$ of relative position $\nu$ is in bijection with $k_x$-points in $\mathrm{Gr}^\mu\cap S_\nu$. This set is non-empty if and only if $\nu$ lies in the convex hull of the orbit $W\mu$ and $\mu-\nu$ lies in the coroot lattice of $G$. Moreover, if $\cP_G$ and $\cP_G'$ are both $D$-cusp, then $\cP_G'$ is unique up to an isomorphism.
\end{proposition}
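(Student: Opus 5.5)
The proof has three parts: a bijection with points of $\mathrm{Gr}^\mu\cap S_\nu$, non-emptiness via Mirković--Vilonen, and uniqueness of $\cP_G'$ in the cusp.

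\textbf{Bijection.} Fix $\cP_G$ with its HN reduction $\cP_B$. Choose an adelic representative $(g_y)\in B(\AA)$, possible since $\cP_B$ is a $B$-reduction. Edges out of $\cP_G$ are indexed by the left cosets $\sigma\Delta^\mu K_x\in K_x\Delta^\mu_x K_x/K_x=\mathrm{Gr}^\mu(k_x)$. For an edge $\tau$, transport $\cP_B$ along $\tau$ and extend it across $x$ by properness of $G/B$, as in Step 1 of Proposition \ref{prop:mu-mod-preserves-BHN}. By the discussion preceding the statement, the relative position is the image of $g_x^{-1}g_x'\in\sigma\Delta^\mu K_x$ in $U(F_x)\backslash G(F_x)/K_x=X_*(T)$. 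Left multiplication by $g_x\in B(F_x)$ preserves each semi-infinite orbit $S_\nu$ up to the torus part of $g_x$. Normalizing $g_x$ so that its $T$-part is integral, the relative position is exactly the $\nu$ with $\sigma\Delta^\mu K_x\in S_\nu$. Restricting the bijection $\{\text{edges}\}\leftrightarrow\mathrm{Gr}^\mu(k_x)$ to $S_\nu$ then gives the first claim. The one point to check is that the decomposition $\mathrm{Gr}^\mu=\bigsqcup_\nu(\mathrm{Gr}^\mu\cap S_\nu)$ is compatible with taking $k_x$-points. This holds because the Iwasawa decomposition is valid over $F_x$ itself.

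\textbf{Non-emptiness.} This is quoted from \cite[Theorem 3.2, Remark 3.3]{MV07}: $\mathrm{Gr}^\mu\cap S_\nu\neq\varnothing$ iff $\nu$ is a weight of $V_\mu$, iff $\nu\in\operatorname{conv}(W\mu)$ and $\mu-\nu$ lies in the coroot lattice. One subtlety is existence of a $k_x$-point and not just a geometric point. I would handle it by noting that these intersections are unions of affine-space cells (MV cycles over a split group), or more simply that $\Delta^{\nu'}$ for $\nu'$ dominant in the orbit, together with explicit root-group elements, gives rational points. If necessary I would instead cite the statement that $|\mathrm{Gr}^\mu\cap S_\nu(\mathbb F_q)|$ is a polynomial in $q$ with positive leading coefficient.

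\textbf{Uniqueness.} Suppose $\cP_G$ and $\cP_G'$ are $D$-cusp. Their HN reductions split by Theorem \ref{thm:deep_cusp_reductive}(i): $\cP_B\simeq\cP_T\times^TB$, and likewise for $\cP_G'$. By Proposition \ref{prop:mu-mod-preserves-BHN}, the transported reduction is the HN reduction of $\cP_G'$, since the cusp condition gives $(\dagger)$ for every $\nu\in\operatorname{conv}(W\mu)$. By Definition \ref{def:relpos}, $\cP_T'$ is determined by $\nu$: the bundles $\cL'_\chi\simeq\cL_\chi(-\langle\chi,\nu\rangle x)$ pin down the $T$-bundle because $T$ is split. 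Hence $\cP_G'\simeq\cP_T'\times^TG$ depends only on $(\cP_G,\nu)$. I expect the main obstacle to be making this reconstruction of $\cP_T'$ from the line bundles $\cL'_\chi$ precise, including the case where $\cP_G'$ is only assumed to be $D$-cusp while $\cP_G$ is deep. Both points are handled by the splitting of Theorem \ref{thm:deep_cusp_reductive}(i) together with functoriality $\cP_T\mapsto(\cL_\chi)_\chi$ for split tori.
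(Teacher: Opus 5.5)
\textbf{The gap is in the uniqueness step.} Your first two parts follow the paper's own route: adelic $B$-representatives with $g_x^{-1}g_x'K_x=\sigma\Delta^\mu K_x$, the Iwasawa decomposition, and \cite{MV07}. You also raise the rational-points issue, which the paper passes over.

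You assert that ``the cusp condition gives $(\dagger)$ for every $\nu\in\operatorname{conv}(W\mu)$''. But the statement only assumes that $\cP_G$ and $\cP_G'$ are $D$-cusp, and the inequality $\deg\cL_\alpha>2g-2+\deg D$ does not involve $\deg x$ at all. What you are actually using is that $\cP_G$ lies in the $(D,x,\mu)$-cusp of Definition \ref{def:deepcusp}; your closing phrase ``while $\cP_G$ is deep'' concedes this. Under the $D$-cusp hypothesis alone, the transported reduction $\cP_B'$ need not be the HN reduction of $\cP_G'$. Then Theorem \ref{thm:deep_cusp_reductive}(i) says nothing about it, and the conclusion in fact fails.

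\textbf{Counterexample.} Take $X=\PP^1$, $D=0$, $G=\GL_2$, $\mu=\omega_1$, and $\cE=\cO(1)\oplus\cO$. Let $x$ have degree $e\ge 4$ and lie in the affine chart with coordinate $t$, and set $\theta:=t(x)$. For $c\in k_x$, let $\cE'_c$ be the kernel of $(a,b)\mapsto a(x)+c\,b(x)$, with fibres at $x$ trivialized by the chart.
\begin{itemize}
\item Every $\cE'_c$ meets $\cO(1)$ in $\cO(1-e)$ with quotient $\cO$, so all these edges have $\nu=(1,0)$.
\item $\cE'_0=\cO(1-e)\oplus\cO$.
\item For $c=-\theta^2$ we have $\Hom(\cO,\cE'_c)=0$, because $\theta^2\notin k+k\theta$. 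On the other hand $(t^2,1)$ gives an embedding $\cO(-1)\hookrightarrow\cE'_c$. Hence $\cE'_c\simeq\cO(-1)\oplus\cO(2-e)$.
\end{itemize}
All three bundles are $D$-cusp, yet the two targets are not isomorphic. The paper's one-line proof (``by Theorem \ref{thm:deep_cusp_reductive}, the extension of $\cP_T'$ to $G$ is isomorphic to $\cP_G'$'') tacitly makes the same step.

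\textbf{How to repair it.} You must state an extra hypothesis explicitly; no cleverer argument will do.
\begin{itemize}
\item Assume $\cP_G$ is in the $(D,x,\mu)$-cusp. Then for every $\nu\in\operatorname{conv}(W\mu)$ and $\alpha\in\Phi^+$ you get $\deg\cL'_\alpha=\deg\cL_\alpha-\langle\alpha,\nu\rangle\deg x>2g-2+\deg D\ge 2g-2$. The $H^1$-vanishing argument from the proof of Theorem \ref{thm:deep_cusp_reductive}(i) then applies directly to the transported $B$-bundle. It gives $\cP'_B\simeq\cP'_T\times^TB$, so $\cP'_G\simeq\cP'_T\times^TG$ with $\cP'_T$ determined by $(\cP_T,\nu)$. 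This route needs neither Proposition \ref{prop:mu-mod-preserves-BHN} (whose hypothesis $(\dagger)$ is not implied when $2g-2+\deg D<0$) nor any assumption on $\cP'_G$.
\item Alternatively, build into ``relative position $\nu$'' the requirement that the transported reduction is the HN reduction of $\cP_G'$. Then Theorem \ref{thm:deep_cusp_reductive}(i) applied to $\cP_G'$ suffices.
\end{itemize}

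\textbf{Two smaller points.}
\begin{itemize}
\item In the bijection part, a general $\cP_G$ need not have HN parabolic $B$. Use an arbitrary $B$-reduction there, which exists since $G(\AA)=B(\AA)G(\cO)$. Your normalization of the $T$-part of $g_x$ is also unnecessary, since $g_x^{-1}g_x'\in B(F_x)$ already maps to $\nu$.
\item Your fallback that $|(\mathrm{Gr}^\mu\cap S_\nu)(\FF_q)|$ is a polynomial with positive leading coefficient does not by itself give a $k_x$-point for small $q$. A decomposition of $\mathrm{Gr}^\mu\cap S_\nu$ into pieces $\AA^a\times\GG_m^b$ defined over $k$ would.
\end{itemize}
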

\begin{proof}
    What remains is to prove the last statement. Since we assume that $\cP_T$ and $\cP_T'$ are in relative position $\nu$, then $\cP_T'$ is uniquely determined. By Theorem \ref{thm:deep_cusp_reductive}, the extension of $\cP_T'$ to $G$ is isomorphic to $\cP_G'$. Therefore, $\cP_G'$ is unique as well.
\end{proof}

Write \(D=D_1+D_2\) for effective divisors satisfying condition (\ref{eq:d1_d2_condition}).
Let
\[
\Gamma_D:=\Gamma_{D,x}^{\mu},
\qquad
\Gamma_{D_2}:=\Gamma_{D_2,x}^{\mu}.
\]
Let \(\Gamma_D^{\mathrm{cusp}}\subset \Gamma_D\) and
\(\Gamma_{D_2}^{\mathrm{cusp}}\subset \Gamma_{D_2}\) be the full subgraphs spanned by the
\((D,x,\mu)\)-cusp vertices. 
\begin{lemma}[Generalization of Lemma \ref{l:covering_map_GL}]\label{l:covering_map_G}
    Assume that $x\notin\supp D_1$. Then the map $$p_{D,D_2}:\Gamma_{D}^{\mathrm{cusp}}\to \Gamma_{D_2}^{\mathrm{cusp}}$$ is a covering map of graphs.
\end{lemma}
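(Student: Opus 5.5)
I will follow the proof of Lemma \ref{l:covering_map_GL} essentially verbatim, replacing the vector-bundle inputs by their reductive analogues: Theorem~\ref{thm:deep_cusp_reductive} in place of Theorem~\ref{t:gln_image_of_aut_group}, Corollary~\ref{cor:deep_cusp_fiber_size_reductive} in place of Corollary~\ref{cor:gln_preimage_size}, and Proposition~\ref{prop:mu-mod-preserves-BHN} together with the subsequent proposition on relative positions in place of Proposition~\ref{p:cusp_connected_to_cusp}. Surjectivity of $p_{D,D_2}$ on cusp vertices is immediate, because every $(D,x,\mu)$-cusp bundle with a $D_2$-level structure admits some extension of the level structure to $D_1$.

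\textbf{Second axiom (outgoing edges).} By Proposition~\ref{t:general_hecke_correspondence}, an edge $(\cP,\psi|_{D_2})\to(\cP',\psi')$ in $\Gamma_{D_2}$ is a pair $(\cP',\beta)$ with $\beta$ an isomorphism over $X\setminus x$. Since $x\notin\supp D_1$, $\beta$ is an isomorphism near $D_1$, and condition (ii) of that proposition forces the $D_1$-level structure on $\cP'$ to be $\psi|_{D_1}\circ\beta^{-1}$. The condition at $x$ involves only $D_2$-data, since $K(D)_x=K(D_2)_x$. Hence each edge lifts uniquely to an edge from $(\cP,\psi)$ into $p^{-1}(\cP',\psi')$, and distinct lifts remain distinct. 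One must also check that the cusp conditions match; this holds because the cusp conditions depend only on the underlying bundle.

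\textbf{Third axiom (incoming edges): counting reduction.} I will use the same counting trick as before. Since all fibers over cusp vertices have the same cardinality $N$ (Corollary~\ref{cor:deep_cusp_fiber_size_reductive}), and the second axiom gives $N$ times the number of edges $w\to v'$ as the total number of edges $p^{-1}(w)\to p^{-1}(v')$, it suffices to prove the following: for each $v\in p^{-1}(v')$, the number of edges $p^{-1}(w)\to v$ is at most the number of edges $w\to v'$.

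Suppose this fails. Then two edges from distinct vertices $(\cP,\varphi_{D_2},\psi^1_{D_1})$ and $(\cP,\varphi_{D_2},\psi^2_{D_1})$ of $p^{-1}(w)$ to $v$ arise from the same downstairs datum $(\cP',\beta)$. Their targets are $(\cP',\varphi'_{D_2},\psi^i_{D_1}\beta^{-1})$, and these are isomorphic via some $\alpha\in\Aut(\cP',\varphi'_{D_2})$. Set $\gamma:=\beta^{-1}\alpha\beta$. Over $X\setminus x$ this is an automorphism of $\cP$; since $\Aut$ is a group of sections of an affine group scheme and $\cP\to\cP$ is defined away from a point, I need to control it at $x$. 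The cleaner route avoids extension and works only on $D_1$: the element $\gamma|_{D_1}\in G(\cO_{D_1})$ satisfies $\psi^2=\psi^1\gamma|_{D_1}$ (up to convention), so it suffices to show that $\gamma|_{D_1}$ lies in the image described in Theorem~\ref{thm:deep_cusp_reductive}(ii) for $(\cP,\varphi_{D_2})$. That would give $\psi^1\sim\psi^2$, a contradiction.

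\textbf{Main obstacle: $B$-compatibility of $\gamma|_{D_1}$.} By Theorem~\ref{thm:deep_cusp_reductive}(ii) applied to $\cP'$ (which is $D$-cusp), we have $\alpha|_{D_1}\in T(k)\ltimes U(\cO_{D_1})$, with trivial torus part if $D_2\ne0$. Here the trivialization is $B$-compatible for $\cP'_B$. By Proposition~\ref{prop:mu-mod-preserves-BHN}, which applies because of the cusp inequality with $w(\mu)$ covering all possible $\nu$, $\beta$ carries the HN reduction $\cP_B$ to $\cP'_B$. Restricted to $D_1$, where $\beta$ is an honest isomorphism, $\beta$ is therefore $B$-valued in $B$-compatible trivializations, so conjugation by $\beta|_{D_1}$ preserves $U(\cO_{D_1})$.

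For the torus part, the induced map $\cP_T\to\cP'_T$ away from $x$ is a twist by $\nu$ at $x$. Its effect on the constant $T(k)$-automorphisms is trivial, since $T$ is commutative and the constants are global. Hence $\gamma|_{D_1}$ lies in $T(k)\ltimes U(\cO_{D_1})$, respectively in $U(\cO_{D_1})$ when $D_2\neq 0$. In the latter case the torus part of $\alpha$ vanishes because $\alpha$ fixes $\varphi'_{D_2}$ and $\supp D_2\ne\varnothing$. Theorem~\ref{thm:deep_cusp_reductive}(ii) then lifts $\gamma|_{D_1}$ to an automorphism of $(\cP,\varphi_{D_2})$ identifying $\psi^1$ with $\psi^2$, which is the desired contradiction.

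The delicate verification is this conjugation step: checking that the identification of $B$-compatible trivializations on $D_1$ for $\cP$ and $\cP'$ via $\beta$ is genuinely $B(\cO_{D_1})$-valued. This is the reductive replacement for the upper-triangularity of $f$ in the $\GL_n$ argument.
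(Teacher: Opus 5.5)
Your proposal is correct and follows the paper's proof essentially verbatim. The paper likewise uses equal fiber sizes to reduce the third covering axiom to the claim that two edges with the same downstairs datum force the two $D_1$-level structures to coincide. It proves this by noting that the Hecke isomorphism restricted to $D_1$ lies in $B(\cO_{D_1})$, while $\alpha|_{D_1}$ lies in $T(k)U(\cO_{D_1})$ (resp.\ $U(\cO_{D_1})$ if $D_2\neq 0$), so the conjugate lifts by Theorem~\ref{thm:deep_cusp_reductive}(ii). Your appeal to Proposition~\ref{prop:mu-mod-preserves-BHN}, and your observation that conjugation by a $B$-valued element preserves the torus part, simply spell out what the paper's one-line ``both $f$ and $\alpha$ preserve the HN reduction'' leaves implicit.
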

\begin{proof}
    The proof goes along the lines of the proof of Lemma \ref{l:covering_map_GL}, with only modifications in the proof of the claim. In that proof, we replace the embedding $f$ by an isomorphism $f:\tilde \cE|_{X\setminus x}\simeq \cE_{X\setminus x}$. Choose $B$-compatible trivializations of $\cE$ and $\tilde \cE$ over $D$. Since both $f$ and $\alpha$ preserve the HN reduction $\cP_B$, we get
    $$
    f|_{D_1}\in B(\cO_{D_1}),\quad \alpha|_{D_1}\in T(k)U(\cO_{D_1}). 
    $$
    Moreover, if $D_2\ne 0$, then $\alpha|_D\in U(\cO_{D_1})$ by Theorem \ref{thm:deep_cusp_reductive}. Then
    $$
    (f^{-1}\alpha f)|_{D_1}\in
    \begin{cases}
        T(k)U(\cO_{D_1}),&D_2=0,\\
        U(\cO_{D_1}),&D_2\ne 0,
    \end{cases}
    $$
    under a $B$-compatible trivialization of $\cP_G|_{D_1}$. This finishes the proof.
\end{proof}

\begin{theorem}[Generalization of Theorem \ref{t:graph_for_ramification_at_cusp}]
\label{thm:deep_cusp_splitting_reductive}
Assume that \(\operatorname{supp}(D_2)\neq\{x\}\). Then the forgetful map
\[
p_{D,D_2}:\Gamma_D^{\mathrm{cusp}}\longrightarrow \Gamma_{D_2}^{\mathrm{cusp}}
\]
is a disjoint covering of constant degree
\[
N=
|T(\mathcal O_{D_1})|/|T(k)|^{\delta_{D_2,0}}
\cdot
\bigl|(G/B)(\mathcal O_{D_1})\bigr|.
\]
More precisely, there is a decomposition
\[
\Gamma_D^{\mathrm{cusp}}=\bigsqcup_{i=1}^{N}\Gamma^{(i)}
\]
into full subgraphs such that there are no edges between \(\Gamma^{(i)}\) and
\(\Gamma^{(j)}\) for \(i\neq j\), and each restriction
\[
p_{D,D_2}:\Gamma^{(i)}\xrightarrow{\sim}\Gamma_{D_2}^{\mathrm{cusp}}
\]
is an isomorphism of graphs.
\end{theorem}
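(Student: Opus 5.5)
The plan is to follow the proof of Theorem \ref{t:graph_for_ramification_at_cusp} almost verbatim, replacing the vector-bundle inputs by their reductive analogues. By Lemma \ref{l:covering_map_G}, the map $p_{D,D_2}:\Gamma_D^{\mathrm{cusp}}\to\Gamma_{D_2}^{\mathrm{cusp}}$ is a covering map of graphs. By Corollary \ref{cor:deep_cusp_fiber_size_reductive}, every fiber has exactly $N$ elements.

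So it suffices to work one connected component $\Gamma'_{D_2}$ of $\Gamma_{D_2}^{\mathrm{cusp}}$ at a time. Fix a base vertex $(\cP_G,\varphi_{D_2})$ in it, with fiber $\{(\cP_G,\varphi^i)\}_{i=1}^N$. Let $\Gamma^{(i)}$ be the component of $\Gamma_D^{\mathrm{cusp}}$ through $(\cP_G,\varphi^i)$. The key claim is that these $N$ components are pairwise distinct. Granting it, each restriction $\Gamma^{(i)}\to\Gamma'_{D_2}$ is a connected covering that is injective over the base point. Since a covering of a connected graph has constant fiber size over each component, such a covering is bijective on vertices and hence an isomorphism of graphs. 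Taking unions over all components $\Gamma'_{D_2}$ then gives the decomposition into $N$ full subgraphs with no edges between them.

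To prove the claim, suppose $(\cP_G,\varphi^i)$ and $(\cP_G,\varphi^j)$ are joined by a path $\cP^1=\cP_G,\cP^2,\dots,\cP^k=\cP_G$ in the cusp, with edges given by isomorphisms $f_s:\cP^s|_{X\setminus x}\simeq\cP^{s+1}|_{X\setminus x}$ preserving level structure away from $x$. Write $D=D'+d_x[x]$. By Proposition \ref{prop:mu-mod-preserves-BHN}, whose hypothesis $(\dagger)$ follows from the $(D,x,\mu)$-cusp condition, each $f_s$ carries the HN $B$-reduction to the HN $B$-reduction. Hence, in $B$-compatible trivializations over $D'$ (which is disjoint from $x$), each $f_s|_{D'}$ lies in $B(\cO_{D'})$. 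Along the way, Theorem \ref{thm:deep_cusp_reductive}(i) lets us write every $\cP^s$ as $\cP^s_T\times^T G$.

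The composite $f=f_{k-1}\circ\dots\circ f_1|_{D'}$ is an automorphism of $\cP_G|_{D'}$ in $B(\cO_{D'})$ sending $\varphi^i|_{D'}$ to $\varphi^j|_{D'}$. The obstacle, and the step needing real care, is showing that its torus part is \emph{constant}, i.e.\ $f\in T(k)U(\cO_{D'})$, rather than merely lying in $B(\cO_{D'})$. I would argue this by choosing the $B$-compatible trivializations of all $\cP^s|_{D'}$ to be induced from a global trivialization of the $T$-bundles $\cP^s_T$ on a neighbourhood of $D'$, transported along the $f_s$ (possible since the $\cP^s_T$ are identified away from $x$). In these trivializations each $f_s|_{D'}$ then has trivial $T(\cO_{D'})$-component, up to a constant from $\Aut(\cP_T)=T(k)$.

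With that in hand, two cases remain. If $D_2=0$, then Theorem \ref{thm:deep_cusp_reductive}(ii) lifts $f|_{D_1}\in T(k)U(\cO_{D_1})$ to an automorphism of $\cP_G$ identifying $\varphi^i$ with $\varphi^j$, so $i=j$. If $D_2$ has a point $y\ne x$, first pick $a\in\Aut(\cP_G)$ with $a\varphi^i|_{D_2}=\varphi^j|_{D_2}$ (the two lie over the same vertex of $\Gamma_{D_2}$), and correct $f$ by an automorphism $b$ so that $(bf)|_{D'}$ matches $\varphi^i|_{D'}$ to $\varphi^j|_{D'}$. Then $a^{-1}bf$ is the identity on $\supp D_2\cap\supp D'\ne\varnothing$. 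This forces its $T(k)$-part to be $1$, because a constant torus element restricting to the identity at a point is $1$. Hence $(a^{-1}bf)|_{D_1}\in U(\cO_{D_1})$, which lifts by Theorem \ref{thm:deep_cusp_reductive}(ii) to an automorphism preserving $\varphi_{D_2}$, again giving $\varphi^i=\varphi^j$.
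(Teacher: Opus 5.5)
Your overall architecture matches the paper's: the covering lemma, the fiber count, distinctness of the $N$ lifts of a base vertex, and then the two cases $D_2=0$ and $y\in\operatorname{supp}D_2$ with $y\neq x$. However, the step you flag as the crux is not proved. Transporting trivializations of the $\cP^s_T$ along the $f_s$ only makes the \emph{intermediate} edges have trivial torus component. Since $\cP^k_T=\cP^1_T$, the whole discrepancy is pushed into comparing the transported trivialization of $\cP^1_T$ with the original one. That comparison is exactly the torus part $f_T$ of the composite loop, an automorphism of $\cP_T$ defined only over $X\setminus\{x\}$. Calling it ``a constant from $\Aut(\cP_T)=T(k)$'' presupposes that $f_T$ extends across $x$, which is what has to be shown. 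A priori $f_T\in T(\cO(X\setminus\{x\}))$, and nothing in your argument rules out a nonconstant $T(\cO_{D'})$-component of $f|_{D'}$. Your conclusion $a_T=g_T$ in the case $D_2\neq 0$ rests on this constancy.

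The missing idea, which the paper uses, is that $\Gamma(X\setminus\{x\},\cO_X^\times)=k^\times$. For each character $\chi$, $f_T$ induces an automorphism of $\cL_\chi|_{X\setminus\{x\}}$, i.e.\ multiplication by a unit on $X\setminus\{x\}$. The divisor of that unit is supported at the single point $x$ and has degree $0$, so it is trivial. Hence the unit is constant and $f_T\in T(k)$. Equivalently, the loop's total relative position $\nu$ at $x$ satisfies $\langle\chi,\nu\rangle\deg x=\deg\cL_\chi-\deg\cL_\chi=0$ for all $\chi$, so $\nu=0$ and $f_T$ extends to a global automorphism of $\cP_T$. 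This uses essentially that all modifications occur at one point: with two points, e.g.\ $0,\infty\in\PP^1$, nonconstant units exist. Once you insert this fact, the rest of your argument goes through. The extra correction $b$ (carried over from the $\GL_n$ proof) is unnecessary here, since the composite $f$ already carries $\varphi^i|_{D'}$ to $\varphi^j|_{D'}$.
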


\begin{proof}
 Let \(\Gamma_{D_2}^{\circ}\) be a connected component of
\(\Gamma_{D_2}^{\mathrm{cusp}}\), and choose
\[
(\mathcal P_G,\phi_{D_2})\in \Gamma_{D_2}^{\circ}.
\]   
By Theorem~\ref{thm:deep_cusp_reductive}(ii), the image of
\[
\Aut(\cP_G,\phi_{D_2})\longrightarrow \Aut(\mathcal \cP_G|_{D_1})
\]
is
\[
T(k)\ltimes U(\mathcal O_{D_1})
\quad\text{if }D_2=0,
\]
and
\[
U(\mathcal O_{D_1})
\quad\text{if }D_2\neq 0.
\]
Consequently, the same counting argument as in Corollary~\ref{cor:deep_cusp_fiber_size_reductive}
gives
\[
\#\,p_{D,D_2}^{-1}(\mathcal P_G,\phi_{D_2})
=
|T(\mathcal O_{D_1})|/|T(k)|^{\delta_{D_2,0}}
\cdot |(G/B)(\mathcal O_{D_1})|=:N.
\]
Choose representatives
\[
p_{D,D_2}^{-1}(\mathcal P_G,\phi_{D_2})
=
\{(\mathcal P_G,\psi_i)\}_{i=1}^N.
\]
For each \(i\), let \(\Gamma^{(i)}\) be the connected component of
\(p_{D,D_2}^{-1}(\Gamma_{D_2}^{\circ})\) containing \((\mathcal P_G,\psi_i)\).
We first prove that the components \(\Gamma^{(i)}\) are pairwise distinct. Suppose that
\(\Gamma^{(i)}=\Gamma^{(j)}\). Then there exists a sequence of adjacent vertices in
\(\Gamma_D^{\mathrm{cusp}}\) joining \((\mathcal P_G,\psi_i)\) to
\((\mathcal P_G,\psi_j)\). Composing the corresponding Hecke modifications gives an
isomorphism
\[
f:\mathcal P_G|_{X\setminus\{x\}}
\xrightarrow{\sim}
\mathcal P_G|_{X\setminus\{x\}}.
\]
By Proposition~\ref{prop:mu-mod-preserves-BHN}, every edge in the cusp preserves the
canonical Harder--Narasimhan \(B\)-reduction. Hence \(f\) preserves the canonical
\(B\)-reduction \(\mathcal P_B\) of \(\mathcal P_G\).

Decompose $D=D'+d_x[x]$ with $x\notin\supp D'$. Choose a \(B\)-compatible trivialization of \(\mathcal P_G\) along \(D'\), and set
\[
g:=\psi_j^{-1}\circ\psi_i\in \Aut(\cP_G|_{D'})\simeq G(\mathcal O_{D'}).
\]
Since \(f\) preserves \(\mathcal P_B\) and $g=f|_{D'}$, the element \(g\) preserves the corresponding
\(B\)-subbundle of the trivial \(G\)-bundle over \(D'\). Therefore
\[
g\in B(\mathcal O_{D'})
=
T(\mathcal O_{D'})\ltimes U(\mathcal O_{D'}).
\]
Write \(g=g_Tg_U\), with
\[
g_T\in T(\mathcal O_{D'}),
\qquad
g_U\in U(\mathcal O_{D'}).
\]
Passing to \(\mathcal P_T:=\mathcal P_B/U\), the map \(f\) induces an automorphism of
\(\mathcal P_T|_{X\setminus\{x\}}\). For every character \(\chi\in X^*(T)\), this gives an
automorphism of
\[
\mathcal L_\chi|_{X\setminus\{x\}},
\qquad
\mathcal L_\chi:=\mathcal P_T\times^{T,\chi}k.
\]
Such an automorphism is multiplication by an element of
\(\Gamma(X\setminus\{x\},\mathcal O_X^\times)=k^\times\). Hence the induced automorphism
of \(\mathcal P_T\) extends uniquely to \(X\), and is given by an element of \(T(k)\). Thus
\[
g_T\in T(k).
\]

By assumption, $(\cP_G,\psi^i|_{D_2})\simeq (\cP_G,\psi^j|_{D_2})$. Fix an automorphism $a$ of $\cP_G$ which sends $\psi^j|_{D_2}$ to $\psi^i|_{D_2}$. As before, we have $a\in T(k)U(\cO_{D_2})$, which allows us to write $a=a_Ta_U$ for $a_T\in T(k)$ and $a_U\in U(\cO_{D_2})$. We claim that the automorphism 
$$
(f|_{D_1},a|_{D_2})\in \Aut(\cP_G|_{D_1})\times\Aut(\cP_G|_{D_2})=\Aut(\cP_G|_D)
$$
lifts to an automorphism of $\cP_G$. Such an automorphism will send $\psi^j$ to $\psi^i$, which will show that \((\mathcal P_G,\psi_i)\simeq(\mathcal P_G,\psi_j)\), so \(i=j\). Equivalently, we wish to prove that
$$
((a^{-1}\circ f)|_{D_1},\mathrm{id}|_{D_2})=(a^{-1}|_{D_1}\circ g,\mathrm{id}|_{D_2})
$$
lifts to an automorphism of $\cP_G$.

If \(D_2=0\), then by
Theorem~\ref{thm:deep_cusp_reductive}(ii), it is enough to prove that 
$$
a^{-1}|_{D_1}\circ g|_{D_1}\in T(k)\ltimes U(\cO_{D_1}),
$$
which is true since both $a|_{D_1}$ and $g$ lie in the group on the right-hand side. 

If \(D_2\neq0\), then by
Theorem~\ref{thm:deep_cusp_reductive}(ii), it is enough to prove that 
$$
a^{-1}|_{D_1}\circ g|_{D_1}\in U(\cO_{D_1}),
$$
which is equivalent to showing that $a_T=g_T$. By assumption, \(D_2\) contains a point \(y\neq x\). Since both $a|_{d_y[y]}$ and $f|_{d_y[y]}$ send $\phi^j|_{d_y[y]}$ to  $\phi^i|_{d_y[y]}$, the composition $(a^{-1}\circ f)|_{d_y[y]}=\mathrm{id}|_{d_y[y]}$. In particular, $(a_T^{-1}g_T)|_{d_y[y]}=\mathrm{id}$. Since $T(k)$ is embedded diagonally into $G(\cO_D)$, this implies that $a_T^{-1}g_T=\mathrm{id}$, as desired.

Thus, the components \(\Gamma^{(i)}\) are pairwise distinct, and hence there are no edges
between \(\Gamma^{(i)}\) and \(\Gamma^{(j)}\) for \(i\neq j\). This shows that the map
\[
p_{D,D_2}:\Gamma^{(i)}\to\Gamma_{D_2}^{\circ}
\]
is a bijection on edges. By Lemma \ref{l:covering_map_G}, it is also a covering map, and hence is an isomorphism of graphs. Taking the union over all connected components
\(\Gamma_{D_2}^{\circ}\) finishes the proof.
\end{proof}

We finish with a description of monodromy of the covering map $p_{D',d_x[x]}$.
\begin{theorem}[Generalization of Theorem \ref{t:graph_for_ramification_at_cusp_over_x}]
\label{thm:deep_cusp_torus_monodromy_reductive}
Assume that
\[
D=D'+d_x[x],
\qquad
x\notin\operatorname{supp}(D'),
\qquad
d_x\geq1.
\]
Take
\[
(\mathcal P_G,\psi^1),\qquad
(\mathcal P_G,\psi^2)
\]
in \(\Gamma_D^{\mathrm{cusp}}\) lying in the same connected component of
\(\Gamma_D^{\mathrm{cusp}}\), and assume that their images in
\(\Gamma_{d_x[x]}^{\mathrm{cusp}}\) are equal. Then there exists an automorphism
\[
\alpha\in \Aut(\mathcal P_G)
\]
which identifies the two \(d_x[x]\)-level structures and satisfies
\[
\alpha\psi^2|_{D'}=t\,\psi^1|_{D'}
\]
for some
\[
t\in T(k).
\]
Here the equality along \(D'\) is written after choosing a \(B\)-compatible trivialization.
\end{theorem}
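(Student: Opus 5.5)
The plan is to follow the proof of Theorem \ref{t:graph_for_ramification_at_cusp_over_x}, using the reductive tools from the proof of Theorem \ref{thm:deep_cusp_splitting_reductive}. Take a path of adjacent vertices in $\Gamma_D^{\mathrm{cusp}}$ from $(\cP_G,\psi^1)$ to $(\cP_G,\psi^2)$. Composing the corresponding Hecke modifications gives an isomorphism
\[
f:\cP_G|_{X\setminus\{x\}}\xrightarrow{\sim}\cP_G|_{X\setminus\{x\}}.
\]
By Proposition \ref{prop:mu-mod-preserves-BHN}, each edge in the cusp preserves the canonical Harder--Narasimhan $B$-reduction, so $f$ preserves $\cP_B$. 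Since $x\notin\supp D'$, $f$ restricts to an automorphism of $\cP_G|_{D'}$. In a $B$-compatible trivialization along $D'$, compatibility of level structures along edges away from $x$ (Proposition \ref{t:general_hecke_correspondence}(ii)) gives $f|_{D'}\psi^1|_{D'}$-transport to $\psi^2|_{D'}$. We also get $g:=f|_{D'}\in B(\cO_{D'})$.

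Write $g=g_Tg_U$. As in the proof of Theorem \ref{thm:deep_cusp_splitting_reductive}, the induced automorphism of $\cP_T|_{X\setminus\{x\}}$ acts on each $\cL_\chi|_{X\setminus x}$ by a unit in $\Gamma(X\setminus\{x\},\cO^\times)=k^\times$. Hence $g_T\in T(k)$, embedded as constants. Next, by hypothesis the images in $\Gamma^{\mathrm{cusp}}_{d_x[x]}$ coincide, so there is $a\in\Aut(\cP_G)$ sending $\psi^2|_{d_x[x]}$ to $\psi^1|_{d_x[x]}$. By Theorem \ref{thm:deep_cusp_reductive}(ii) with $D_2=0$, applied with divisor $D$, the restriction $a|_{D}$ lies in $T(k)\ltimes U(\cO_D)$; write $a|_{D'}=a_Tu$.

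Set $t:=a_T g_T^{-1}\in T(k)$, or its inverse depending on the direction conventions, which will be fixed when writing details. Then $(a|_{D'})\circ g^{-1}\circ t^{-1}$ lies in $U(\cO_{D'})$ up to $T(k)$-conjugation. Since $T(k)$ normalizes $U(\cO_{D'})$, we can arrange the element $h:=t\,(g)^{-1}|_{D'}$ so that $a^{-1}|_{D'}h\in U(\cO_{D'})$. Now apply Theorem \ref{thm:deep_cusp_reductive}(ii) with $D_1=D'$ and $D_2=d_x[x]\neq0$. The pair $(a^{-1}|_{D'}h,\mathrm{id}|_{d_x[x]})$ lifts to an automorphism $\beta$ of $(\cP_G,\psi^1|_{d_x[x]})$. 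Then $\alpha:=a\beta$ (or its inverse) restricts to $a$ on $d_x[x]$, so it identifies the $d_x[x]$-level structures. On $D'$ it equals $h=t g^{-1}$, hence $\alpha\psi^2|_{D'}=t\psi^1|_{D'}$.

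The main obstacle is bookkeeping rather than new ideas. One must verify carefully that $f$ really transports $\psi^1|_{D'}$ to $\psi^2|_{D'}$ in the chosen trivialization even when some edges are ramified at $x$. One must also check that the torus factor extracted is a genuine constant in $T(k)$ commuting correctly with the $U(\cO_{D'})$ part. For the first, I would appeal to Proposition \ref{t:general_hecke_correspondence}(ii) edge by edge. For the second, I would use that $T(k)\ltimes U(\cO_{D'})$ is a group in which $T(k)$ normalizes $U$, so the factor $t$ may be moved to either side at the cost of conjugating the unipotent part.
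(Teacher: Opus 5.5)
Your proposal is correct and is essentially the paper's argument: the $D'$-discrepancy coming from the path lies in $T(k)\ltimes U(\cO_{D'})$, its unipotent part is realized by an automorphism fixing the $d_x[x]$-level structure via Theorem \ref{thm:deep_cusp_reductive}(ii) with $D_2=d_x[x]\neq0$, and composing with $a$ leaves only the constant torus factor $t$. On the bookkeeping you deferred, your two displayed claims need different choices of $t$: with $h=t\,g^{-1}$ you need $t=a_Tg_T$, so that $h$ and $a|_{D'}$ have the same torus part $a_T$, whereas $t=a_Tg_T^{-1}$ works only with $h=t\,g$.
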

\begin{proof}
Repeat the argument from the proof of Theorem~\ref{thm:deep_cusp_splitting_reductive}
up to the point where the torus part is shown to lie in \(T(k)\), now with \(D_2=d_x[x]\). It shows that the discrepancy between the two \(D'\)-level structures
lies in
\[
T(k)\ltimes U(\mathcal O_{D'}).
\]
Write it as \(tu\), with \(t\in T(k)\) and \(u\in U(\mathcal O_{D'})\). Since \(d_x\geq1\),
Theorem~\ref{thm:deep_cusp_reductive}(ii) implies that \(u\) is the restriction to \(D'\) of
an automorphism of \(\mathcal P_G\) preserving the \(d_x[x]\)-level structure. Composing by
this automorphism removes the unipotent part and leaves exactly the constant torus factor
\(t\). This gives the required automorphism \(\alpha\).
\end{proof}

\bibliographystyle{alphaurl}
\bibliography{references}
\end{document}